\tikzset{
  shift left/.style ={commutative diagrams/shift left={#1}},
  shift right/.style={commutative diagrams/shift right={#1}}
}
\setlist{itemsep=1ex, topsep=0ex}
\definecolor{brown}{rgb}{0.43, 0.21, 0.1}
\definecolor{green}{cmyk}{0.64,0,0.95,0.40}
\definecolor{dodger}{rgb}{0.0,0.5,1.0}
\definecolor{ballblue}{rgb}{0.13, 0.67, 0.8}
\newcommand{\vs}[1]{{\color{blue}VG: #1}}
\newcommand{\dm}[1]{{\color{green}DM: #1}}
\def\ml{l\kern-0.035cm\char39\kern-0.03cm}
\newcounter{enuAlph}
\theoremstyle{plain}
\newtheorem{theorem}{Theorem}[section]
\newtheorem{lemma}[theorem]{Lemma}
\newtheorem{mainlemma}[theorem]{Main Lemma}
\newtheorem{fact}[theorem]{Fact}
\newtheorem{corollary}[theorem]{Corollary}
\newtheorem{mainthm}[enuAlph]{Theorem}
\theoremstyle{definition}
  \newtheorem{definition}[theorem]{Definition}
  \newtheorem{example}[theorem]{Example}
  \newtheorem{remark}[theorem]{Remark}
  \newtheorem{question}[theorem]{Question}
  \numberwithin{equation}{theorem}
\newcommand{\N}{{\mathbb N}}
\newcommand{\Z}{{\mathbb Z}}
\newcommand{\Q}{{\mathbb Q}}
\newcommand{\R}{{\mathbb R}}
\newcommand{\meager}{\mathcal{M}}
\newcommand{\Nideal}{\mathcal{N}}
\newcommand{\NidealJ}[1]{\mathcal{N}_{#1}}
\newcommand{\NstaridealJ}[1]{\mathcal{N}^*_{#1}}
\newcommand{\fin}{\mathrm{Fin}}
\newcommand{\Fin}{\mathrm{Fin}}
\newcommand{\seqn}[2]{\left\langle #1 :\, #2\right\rangle}
\newcommand{\set}[2]{\left\{ #1 :\, #2 \right\}}
\newcommand{\leb}[1]{\mu\left(#1\right)}
\newcommand{\leqRB}{\mathrel{\leq_{\mathrm{RB}}}}
\newcommand{\leqKB}{\mathrel{\leq_{\mathrm{KB}}}}
\newcommand{\leqKBpr}{\mathrel{\leq_{\overline{\mathrm{KB}}}}}
\newcommand{\cantor}{{}^\omega 2}
\newcommand{\baire}{{}^\omega \omega}
\DeclareMathOperator{\add}{\mathrm{add}}
\DeclareMathOperator{\cov}{\mathrm{cov}}
\DeclareMathOperator{\non}{\mathrm{non}}
\DeclareMathOperator{\cof}{\mathrm{cof}}
\DeclareMathOperator{\cf}{\mathrm{cf}}
\DeclareMathOperator{\cl}{\mathrm{cl}}
\DeclareMathOperator{\sqw}{\mathrm{sq}_{<\omega}}
\newcommand{\Lc}{\mathbf{Lc}}
\newcommand{\Rbf}{\mathbf{R}}
\newcommand{\Sbf}{\mathbf{S}}
\newcommand{\cbf}{\mathbf{c}}
\newcommand{\Cbf}{\mathbf{C}}
\newcommand{\Acal}{A}%{\mathcal{A}}
\newcommand{\Ecal}{\mathcal{E}}
\newcommand{\Fcal}{F}%{\mathcal{F}}
\newcommand{\Jcal}{J}%{\mathcal{J}}
\newcommand{\Kcal}{K}%{\mathcal{K}}
\newcommand{\Mcal}{\mathcal{M}}
\newcommand{\Ncal}{\mathcal{N}}
\newcommand{\SNcal}{\mathcal{SN}}
\newcommand{\Scal}{S}%{\mathcal{S}}
\newcommand{\Ucal}{U}%{\mathcal{U}}
\newcommand{\Icl}{\mathcal{I}}
\newcommand{\Jcl}{\mathcal{J}}
\newcommand{\Kcl}{\mathcal{K}}
\DeclareMathOperator{\pts}{\mathcal{P}}
\DeclareMathOperator{\dom}{\mathrm{dom}}
\newcommand{\Cbb}{\mathbb{C}}
\newcommand{\Pbb}{\mathbb{P}}
\newcommand{\Qnm}{\dot{\mathbb{Q}}}
\newcommand{\afrak}{\mathfrak{a}}
\newcommand{\bfrak}{\mathfrak{b}}
\newcommand{\blc}{\mathfrak{b}^{\rm Lc}}
\newcommand{\cfrak}{\mathfrak{c}}
\newcommand{\dfrak}{\mathfrak{d}}
\newcommand{\efrak}{\mathfrak{e}}
\newcommand{\dlc}{\mathfrak{d}^{\rm Lc}}
\newcommand{\gfrak}{\mathfrak{g}}
\newcommand{\hfrak}{\mathfrak{h}}
\newcommand{\ifrak}{\mathfrak{i}}
\newcommand{\sfrak}{\mathfrak{s}}
\newcommand{\rfrak}{\mathfrak{r}}
\newcommand{\pfrak}{\mathfrak{p}}
\newcommand{\ufrak}{\mathfrak{u}}
\newcommand{\la}{\langle}
\newcommand{\ra}{\rangle}
\newcommand{\leqT}{\mathrel{\preceq_{\mathrm{T}}}}
\newcommand{\eqT}{\mathrel{\cong_{\mathrm{T}}}}
\newcommand{\frestr}{{\upharpoonright}}
\newcommand{\mnula}{{\mathcal{N}}}
\newcommand{\lone}[1]{{\ell^1({#1})}}
\newcommand{\omegabar}{{\overline{\Omega}}}
\newcommand{\con}{{\mathfrak c}}
\newcommand{\be}{\mathfrak{b}}
\newcommand{\de}{\mathfrak{d}}
    \newcommand{\imp}{\mathrel{\mbox{$\Rightarrow$}}}
    \newcommand{\sii}{\mathrel{\mbox{$\Leftrightarrow$}}}
\begin{document}

\title{Lebesgue measure zero modulo ideals on the natural numbers}

\author[V.\ Gavalová]{Viera Gavalová}
\address{Department of Applied Mathematics and Business Informatics\\ Faculty of Economics\\ Technical University of Ko\v{s}ice\\ N\v emcovej 32\\ 040 01 Ko\v{s}ice\\ Slovakia}
\email{viera.gavalova@tuke.sk}
\urladdr{http://www.researchgate.net/profile/Viera-Gavalova}

\author[D.\,A.\ Mej\'{i}a]{Diego A.\ Mej\'{i}a}
\address{
Creative Science Course (Mathematics)\\
Faculty of Science\\
Shizuoka University\\
836 Ohya, Suruga-ku, Shizuoka City\\
Shi\-zu\-o\-ka Prefecture, 422-8529\\
Japan
}
\email{diego.mejia@shizuoka.ac.jp}
\urladdr{http://www.researchgate.net/profile/Diego\_Mejia2}

\date{\today}

\begin{abstract}
    We propose a reformulation of the ideal $\mathcal{N}$ of Lebesgue measure zero sets of reals modulo an ideal $J$ on $\omega$, which we denote by $\mathcal{N}_J$. In the same way, we reformulate the ideal $\mathcal{E}$ generated by $F_\sigma$ measure zero sets of reals modulo $J$, which we denote by $\mathcal{N}^*_J$. We show that these are $\sigma$-ideals and that $\mathcal{N}_J=\mathcal{N}$ iff $J$ has the Baire property, which in turn is equivalent to $\mathcal{N}^*_J=\mathcal{E}$. Moreover, we prove that $\mathcal{N}_J$ does not contain co-meager sets and $\mathcal{N}^*_J$ contains non-meager sets when $J$ does not have the Baire property. We also prove a deep connection between these ideals modulo $J$ and the notion of \emph{nearly coherence of filters} (or ideals).
    
    We also study the cardinal characteristics associated with $\mathcal{N}_J$ and $\mathcal{N}^*_J$. We show their position with respect to Cicho\'n's diagram and prove consistency results in connection with other very classical cardinal characteristics of the continuum, leaving just very few open questions. To achieve this, we discovered a new characterization of $\mathrm{add}(\mathcal{N})$ and $\mathrm{cof}(\mathcal{N})$. We also show that, in Cohen model, we can obtain many different values to the cardinal characteristics associated with our new ideals.
\end{abstract}

\subjclass[2010]{Primary 28A05; Secondary 03E17, 03E35}

\keywords{Ideals on the natural numbers, measure zero modulo an ideal, Baire property, nearly coherence of filters, cardinal characteristics of the continuum.}

\thanks{The first author was supported by APVV-20-0045 of 
the Slovak Research and Development Agency, and the second author was supported by the Grant-in-Aid for Early Career Scientists 18K13448 and the Grant-in-Aid for Scientific Research (C)  23K03198, Japan Society for the Promotion of Science.}

\maketitle

%%%%%%%%%%%%%%%%%%%%%% new one %%%%%%%%%%%
\section{Introduction}\label{sec:pre}

Many notions of topology and combinatorics of the reals have been reformulated and investigated in terms of ideals on the natural numbers (always assuming that an ideal contains all the finite sets of natural numbers). For instance, the usual notion of convergence on a topological space, which states that a sequence $\la x_n:\, n<\omega\ra$ in a topological space converges to a point $x\in X$ when the set $\set{n<\omega}{x_n\notin U}$ is finite for any open neighborhood $U$ of $x$, is generalized in terms of ideals $J$ on the natural numbers by changing the latter requirement by $\set{n<\omega}{ x_n\notin U}\in J$ (see e.g.~\cite{I-conv}). More recent and remarkable examples are the so called \emph{selection principles}, which are reformulated in terms of ideals, and show deep connections with cardinal characteristics of the real line~\cite{BDS,SJ,SS,Miro1,Miro2}.

In combinatorics of the real line,
some classical cardinal characteristics have been reformulated in terms of ideals (and in many cases they are connected to selection principles in topology). The most natural examples are the reformulations of the bounding number $\bfrak_J$ and the dominating number $\dfrak_J$ in terms of an ideal $J$ on $\omega$, more concretely, with respect to the relation $\leq ^J$ on $\baire$, which states that $x \leq^J y$ iff $\set{n<\omega}{x(n)\nleq y(n)}\in J$. These have been investigated by e.g.\ Canjar~\cite{Canjar}, Blass and Mildenberger~\cite{BM}, also in connection with arithmetic in the sense that, for any maximal ideal $J$, $\bfrak_J=\dfrak_J$ is the cofinality of the ultrapower (on the dual filter of $J$) of $\omega$. Other classical cardinal characteristics have been reformulated in terms of ideals on $\omega$, like the almost disjointness number~\cite{FaSo,Bakke,RaSt} and the pseudo-intersection number~\cite{BNF,SJpseudo}, among others (see \cite[Sec.~8.4]{Hrusak}).

In the~present paper we offer a reformulation, in terms of ideals on $\omega$, of the ideal of Lebesgue measure zero subsets of the reals. Our reformulation does not come from a definition of the Lebesgue measure in terms of an ideal $J$ on $\omega$, but it is inspired from one combinatorial characterization of measure zero. Details of this new definition are provided in \autoref{sec:NmodI}.
We work in the Cantor space $\cantor$ for simplicity, but the same reformulations and results can be obtained in other standard Polish spaces with a measure %Sec7
(\autoref{sec:var} presents a detailed discussion). We denote by $\Ncal_J$ the collection of \emph{null subsets modulo $J$} of $\cantor$. It will be clear that $\Ncal_\Fin=\Ncal$, the ideal of Lebesgue measure subsets of $\cantor$, where $\Fin$ denotes the ideal of finite subsets of $\omega$.

We also provide a reformulation of $\Ecal$, the ideal generated by the $F_\sigma$ measure zero subsets of $\cantor$, in terms of an ideal $J$ on $\omega$, which we denote by $\Ncal^*_J$. As expected, we have $\Ncal^*_\Fin = \Ecal$.

We obtain that, for any ideal $J$ on the natural numbers, $\Ncal_J$ and $\Ncal^*_J$ are actually $\sigma$-ideals on $\cantor$ and that, whenever $K$ is another ideal on $\omega$ and $J\subseteq K$,
\[\Ecal\subseteq \Ncal^*_J\subseteq \Ncal^*_K\subseteq \Ncal_K\subseteq \Ncal_J\subseteq \Ncal.\]
In fact, it will be clear from the definitions that, whenever $K$ is a maximal ideal on $\omega$, $\Ncal^*_K=\Ncal_K$.

Our first set of main results work as interesting characterizations of ideals on $\omega$ with the Baire property:

\begin{mainthm}\label{mainBP}
Let $J$ be an ideal on $\omega$. Then, the following statements are equivalent:
\begin{enumerate}[label = \rm (\roman*)]
    \item $J$ has the Baire property.
    \item $\Ncal_J=\Ncal$.
    \item $\Ncal^*_J=\Ecal$.
\end{enumerate}
\end{mainthm}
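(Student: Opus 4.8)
The plan is to prove the two equivalences $\mathrm{(i)}\Leftrightarrow\mathrm{(ii)}$ and $\mathrm{(i)}\Leftrightarrow\mathrm{(iii)}$ separately, exploiting that the inclusions $\Ecal\subseteq\NstaridealJ{J}$ and $\NidealJ{J}\subseteq\Ncal$ already hold for every $J$ (they are the case $\Fin\subseteq J$ of the displayed chain). Thus $\mathrm{(ii)}$ reduces to the single inclusion $\Ncal\subseteq\NidealJ{J}$ and $\mathrm{(iii)}$ to $\NstaridealJ{J}\subseteq\Ecal$. Unravelling the definitions, I would write a witness as an interval partition $\seqn{I_n}{n<\omega}$ of $\omega$ together with sets $c_n\subseteq{}^{I_n}2$ of summable measure, and for $x\in\cantor$ abbreviate the \emph{catch set} $A(x):=\set{n<\omega}{x\frestr I_n\in c_n}$; then $X\in\NidealJ{J}$ means some code has $A(x)\notin J$ for all $x\in X$, while $X\in\NstaridealJ{J}$ means some code has $\omega\setminus A(x)\in J$ for all $x\in X$ (so $\Fin$ recovers "$A(x)$ infinite", i.e.\ $\Ncal$, and "$A(x)$ cofinite", i.e.\ $\Ecal$). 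The one external tool I would invoke is the Jalali--Naini--Talagrand characterization: $J$ has the Baire property iff there is an interval partition $\seqn{Q_\ell}{\ell<\omega}$ such that $\set{\ell}{Q_\ell\subseteq A}$ is finite for every $A\in J$; equivalently, $\set{\ell}{Q_\ell\subseteq A}$ infinite forces $A\notin J$.

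For $\mathrm{(i)}\Rightarrow\mathrm{(iii)}$ I would argue cleanly by a \emph{union-merge} along the blocks. Given $X\in\NstaridealJ{J}$ with code $(\la I_n\ra,\la c_n\ra)$ and the partition $\seqn{Q_\ell}{\ell<\omega}$ from Jalali--Naini--Talagrand, the hypothesis $\omega\setminus A(x)\in J$ makes $\set{\ell}{Q_\ell\subseteq\omega\setminus A(x)}$ finite, i.e.\ $A(x)$ meets all but finitely many blocks $Q_\ell$. I would then pass to the code indexed by $\ell$ with coordinate block $\widehat I_\ell:=\bigcup_{n\in Q_\ell}I_n$ and test "caught somewhere in $Q_\ell$'', whose catch set is $\set{\ell}{A(x)\cap Q_\ell\neq\emptyset}$ and hence cofinite; the merged test has measure at most $\sum_{n\in Q_\ell}\mu(\cdot)$, so summability is preserved. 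A cofinite catch set is exactly membership in $\Ecal=\NstaridealJ{\Fin}$, so $X\in\Ecal$.

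The implication $\mathrm{(i)}\Rightarrow\mathrm{(ii)}$ is the dual, but here the naive merge fails, and this is where I expect the main difficulty. From $X\in\Ncal$ I only know $A(x)$ is infinite, i.e.\ it \emph{meets} infinitely many $Q_\ell$; but to force $A(x)\notin J$ the partition requires $A(x)$ to \emph{contain} infinitely many whole blocks, and union-merge only propagates "meets''. My plan is therefore to rebuild the code so that each $x\in X$ is caught at \emph{every} index of infinitely many blocks $Q_\ell$. Concretely I would first write $X\subseteq\limsup_j[s_j]$ for basic clopen sets with $\sum_j 2^{-|s_j|}<\infty$, where crucially the strings $s_j$ may be taken arbitrarily long by refining open covers; then I would realize each $[s_j]$ as a box $\bigcap_{n\in Q_\ell}\set{x}{x\frestr I_n=s_j\frestr I_n}$, distributing the coordinates of $s_j$ evenly across the assigned block $Q_\ell$, so that "caught throughout $Q_\ell$'' means exactly $x\in[s_j]$. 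The cost of block $\ell$ is then about $|Q_\ell|\,2^{-|s_j|/|Q_\ell|}$, and the main obstacle is the bookkeeping: since the block sizes $|Q_\ell|$ are forced by $J$ and may tend to infinity (e.g.\ for the summable ideal), I must choose each $|s_j|$ large relative to the weight $|Q_\ell|$ of the block it lands in, so that $\sum_\ell|Q_\ell|\,2^{-|s_j|/|Q_\ell|}<\infty$ while still covering $X$ infinitely often. Checking that this matching always works — long strings cover $X$ once per "round'', rounds are disjoint and infinite, and the length thresholds beat the weights — is the technical heart of the argument.

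Finally, for the converses I would establish $\lnot\mathrm{(i)}\Rightarrow\lnot\mathrm{(ii)}$ and $\lnot\mathrm{(i)}\Rightarrow\lnot\mathrm{(iii)}$ via the negation of Jalali--Naini--Talagrand: if $J$ lacks the Baire property, then for \emph{every} interval partition there is $A\in J$ absorbing infinitely many whole blocks. For $\lnot\mathrm{(iii)}$ I would exhibit the explicit set $X_J:=\set{x\in\cantor}{\set{i}{x\frestr I_i\neq\bar 0}\in J}$ for a fixed fast partition; it lies in $\NstaridealJ{J}$ by construction, and I would show $X_J\notin\Ecal$ precisely because any $\Ecal$-code would convert its defining "$\in J$'' condition into a "cofinite'' one along some interval partition, contradicting that $J$ is captured by no partition. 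The witness for $\lnot\mathrm{(ii)}$ is dual. The delicate point in both converses is that membership must fail against \emph{all} codes, so I would run a fusion/diagonalization feeding the absorbing sets supplied by the failure of Jalali--Naini--Talagrand against a running enumeration of candidate codes; arranging a single null set to defeat every code simultaneously is the subtle part of this half.
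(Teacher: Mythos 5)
Your forward direction is workable but you have made (i)$\Rightarrow$(ii) much harder than it is, because you are fighting the wrong representation of a code. The paper's $\omegabar$ is defined only by ``$N(\bar c)$ is null'', not by summability of measures, and the Jalali--Naini--Talagrand theorem gives $\Fin\leqRB J$, i.e.\ a finite-to-one $f$ with $f^{\to}(J)=\Fin$. The dual of your union-merge is then simply \emph{repetition}: set $c^-_k:=c_{f(k)}$. The new catch set of $x$ is $f^{-1}[A(x)]$, which is the union of the whole blocks $f^{-1}(n)$ for $n\in A(x)$ and hence $J$-positive whenever $A(x)$ is infinite; and $N(\bar c^-)=N(\bar c)$ since $f$ is finite-to-one, so $\bar c^-\in\omegabar$ with no bookkeeping at all. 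Your ``distribute the coordinates of $s_j$ across the block'' construction, with its $\sum_\ell|Q_\ell|2^{-|s_j|/|Q_\ell|}$ estimates, is the obstacle one genuinely faces for the summability-based variant $\Ncal^-_J$ of \autoref{sec:var}, but for $\Ncal_J$ it is unnecessary, and as written you have not completed it.

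The real gap is in the converses. Your plan to show $X_J\notin\Ecal$ (and its dual for $\Ncal_J\subsetneq\Ncal$) by ``a fusion/diagonalization \ldots against a running enumeration of candidate codes'' cannot work as stated: there are $2^{\aleph_0}$ many $\bar c\in\omegabar$, so no $\omega$-length diagonalization meets them all, and in any case one does not build the witness by diagonalizing against codes --- one fixes the witness first and then defeats each code separately. The tool that makes this possible, and which your proposal never invokes, is the Bartoszy\'nski--Scheepers \emph{game} characterization of non-meager filters (\autoref{nonMgame}): for a fixed code $\bar c$ and a fixed descending sequence of dense open sets, Player~I's strategy interleaves ``enter the next dense set'' with ``announce how far into $\bar c$ I have committed'', and the failure of that strategy produces the required point. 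The static negation of Talagrand that you quote (every partition has a $J$-set absorbing infinitely many blocks) is formally equivalent to the game statement, but the construction you need is adaptive --- the choice of $s_{k+1}$ depends on which $c_n$ must be avoided, which depends on Player~II's previous moves --- so the static form does not plug in directly; deriving the adaptive form is exactly the content of the Bartoszy\'nski--Scheepers theorem. Concretely, the paper proves $\Ecal\subsetneq\NstaridealJ{J}$ by showing $N^*_J(\bar c^T)$ is non-meager (\autoref{capcomeag2}, via the game; a Kuratowski--Ulam argument for the block map $x\mapsto\set{i}{x\frestr I_i\neq\bar 0}$ would also work for your particular $X_J$), and proves $\NidealJ{J}\subsetneq\Ncal$ by showing no $N_J(\bar c)$ is co-meager in a closed nowhere dense set of positive measure (\autoref{notcomeager2}, again via the game) and then taking a null set co-meager in such a set. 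Without the game (or an equivalent category argument) your converse direction does not go through.
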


Although no new ideals on the reals are obtained from ideals with the Baire property, we obtain new characterizations of the ideals $\Ncal$ and $\Ecal$. Moreover, ideals without the Baire property offer new ideals on the reals that are worth of research: the previous result can be expanded in connection with $\Mcal$, the ideal of meager subsets of $\cantor$.

\begin{mainthm}\label{mainBP2}
Let $J$ be an ideal on $\omega$. Then, the following statements are equivalent:
\vspace{-5pt}
\begin{multicols}{2}
\begin{enumerate}[label = \rm (\roman*)]
    \item $J$ does not have the Baire property.
    \item $\Ncal_J\subsetneq\Ncal$.
    \item $\Ecal \subsetneq \Ncal^*_J$.
    \item No member of $\Ncal_J$ is co-meager.
    \item $\Mcal\cap\Ncal\nsubseteq\Ncal_J$.
    \item $\Ncal^*_J\nsubseteq\Mcal$.
\end{enumerate}
\end{multicols}
\end{mainthm}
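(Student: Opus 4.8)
The plan is to reduce everything to \autoref{mainBP} together with a handful of classical facts about $\Ncal$, $\Ecal$ and $\Mcal$, and then to isolate the genuinely new content in three ``forward'' implications that convert the failure of the Baire property of $J$ into measure--category witnesses inside $\cantor$. First I would record the equivalences (i)$\Leftrightarrow$(ii)$\Leftrightarrow$(iii): since we always have $\NidealJ{J}\subseteq\Ncal$ and $\Ecal\subseteq\NstaridealJ{J}$, condition (ii) is literally the negation of ``$\NidealJ{J}=\Ncal$'' and (iii) the negation of ``$\NstaridealJ{J}=\Ecal$'', so these three statements together are exactly the contrapositive of \autoref{mainBP}.

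Next I would dispose of the three ``easy'' directions (iv)$\Rightarrow$(i), (v)$\Rightarrow$(i) and (vi)$\Rightarrow$(i) by proving their contrapositives through \autoref{mainBP}. If $J$ has the Baire property then $\NidealJ{J}=\Ncal$ and $\NstaridealJ{J}=\Ecal$; now invoke the classical decomposition $\cantor=M\sqcup N$ with $M\in\Mcal$ and $N\in\Ncal$. The set $N$ is co-meager and lies in $\Ncal=\NidealJ{J}$, refuting (iv); since $\Mcal\cap\Ncal\subseteq\Ncal=\NidealJ{J}$ we refute (v); and since every closed null set is nowhere dense we have $\Ecal\subseteq\Mcal$, so $\NstaridealJ{J}=\Ecal\subseteq\Mcal$ refutes (vi). Thus each of (iv), (v), (vi) already implies (i).

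The core of the proof consists of the forward implications (i)$\Rightarrow$(iv), (i)$\Rightarrow$(vi) and (i)$\Rightarrow$(v), and here the driving tool is the Jalali-Naini--Talagrand characterization of the Baire property for ideals: $J$ fails to have the Baire property exactly when, for every partition of $\omega$ into consecutive finite intervals, some member of $J$ swallows infinitely many whole intervals. For (vi) I would fix such an interval partition, build a small-set code $\langle\sigma_n:n<\omega\rangle$ whose clopen pieces are tied to the blocks, and arrange (from the definition of $\NstaridealJ{J}$) that the continuous map $x\mapsto\{n:x\notin\sigma_n\}$ transports the non-meagerness of $J$ to the canonical member of $\NstaridealJ{J}$ coded by $\langle\sigma_n:n<\omega\rangle$; the essential point is the measure--category asymmetry, namely that a proper nonempty clopen set is neither meager nor co-meager even though its measure is tiny, which is what lets a summable (hence measure-degenerate) code still act faithfully on category. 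For (iv) I would run the dual bookkeeping: from any code presenting a member of $\NidealJ{J}$ one reads off a Talagrand partition that would force $J$ to be meager if that member were co-meager, so no co-meager set can be captured.

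The step I expect to be the main obstacle is (i)$\Rightarrow$(v), because it cannot be deduced from (iv) or (vi). A co-meager witness from (iv) cannot be thinned to a meager one: a co-meager set is never contained in a countable union of meager sets, so no covering argument converts (iv) into (v); and the witnesses produced for (vi) lie \emph{inside} $\NstaridealJ{J}\subseteq\NidealJ{J}$, whereas (v) asks for a set \emph{outside} $\NidealJ{J}$, so (vi) is of no direct help either. Instead I would construct the meager null set directly, diagonalizing against all small-set codes: using the interval partition supplied by non-meagerness I would place, for each code, a point whose associated index set falls into $J$, while keeping the entire set confined to a nowhere dense null $F_\sigma$. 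Verifying simultaneously that this set is meager, null, and escapes every code --- that is, that it genuinely lies in $(\Mcal\cap\Ncal)\setminus\NidealJ{J}$ --- is the delicate part, and is where the full strength of the Talagrand characterization is needed.
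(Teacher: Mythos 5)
Your architecture --- getting (i)$\Leftrightarrow$(ii)$\Leftrightarrow$(iii) from \autoref{mainBP}, and the three backward implications from $\NidealJ{J}=\Ncal$, $\NstaridealJ{J}=\Ecal$, the existence of a co-meager null set, and $\Ecal\subseteq\Mcal$ --- is sound and matches the paper (though note that the paper proves the hard directions of \autoref{mainBP} precisely via the forward implications of the present theorem, so the reduction saves nothing). The first genuine gap is in the forward implications. The paper's tool there is not the static Talagrand characterization (\autoref{thm:talagrand}) but the Bartoszy\'nski--Scheepers meager game (\autoref{nonMgame}). In both \autoref{notcomeager2} and \autoref{capcomeag2} the witnessing branch $x$ is built \emph{adaptively}: the node $s_{k+1}$ is chosen only after Player~II's finite set $B_k$ is revealed, because $s_{k+1}$ must dodge $\bigcup_{n\in B_k}c_n$ (resp.\ absorb the nodes $t^{\ell}_{n_k}$ for $\ell\in B_k$) \emph{and} enter $D_{k+1}$, and only then is the next threshold $n_{k+1}$ determined. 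A single interval partition fixed in advance cannot anticipate these responses (the admissible $B_k$ range over all finite subsets of a cofinite set, so the thresholds are unbounded over the possible plays). Your phrases ``one reads off a Talagrand partition'' and ``arrange that $x\mapsto\set{n}{x\in c_n}$ transports non-meagerness'' therefore hide the entire content of the step: preimages of non-meager sets under continuous maps are not non-meager in general, and making this particular map category-faithful while keeping $\mu(c_n)$ summable is exactly what the tree construction $\bar c^{T}$ of \autoref{def:cT} together with the game accomplish. Carrying your plan out with the static characterization alone would amount to reproving the game theorem by a fusion argument, which your sketch does not supply.

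The second gap is in (i)$\Rightarrow$(v), and here your diagnosis of the difficulty is itself mistaken. The paper \emph{does} deduce (v) from a version of (iv): \autoref{notcomeager2} is proved for an arbitrary closed $C$ of positive measure whose nonempty relative clopen pieces have positive measure, so one takes such a $C$ \emph{nowhere dense} and a null $G\subseteq C$ co-meager in $C$; then $G\in\Mcal\cap\Ncal$, while $G\subseteq N_J(\bar c)$ would make $N_J(\bar c)\cap C$ co-meager in $C$, contradicting the relativized lemma. Your proposed substitute --- diagonalizing against all codes inside a fixed nowhere dense null $F_\sigma$ set $P$ --- is circular: to place in $P$, for each $\bar c\in\omegabar$, a point with $\set{n}{x\in c_n}\in J$, you need $P\nsubseteq N_J(\bar c)$ for every $\bar c$, i.e.\ $P\notin\NidealJ{J}$, which is the statement being proved (and is false for many candidate $P$: every countable set lies in $\NstaridealJ{J}$). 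If instead you drop the ambient null set and pick one point per code inside a positive-measure nowhere dense closed set --- which is possible for free, since each $N_J(\bar c)$ is contained in the null set $N(\bar c)$ --- the resulting set of size $\cfrak$ is meager but there is no ZFC reason for it to be null. So the relativization of the co-meager lemma to closed positive-measure sets is not an optional refinement; it is the step that makes (v) work, and your proof is missing it.
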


\autoref{mainBP} and~\ref{mainBP2} summarize \autoref{BaireForNJ2}, \autoref{cor:notcomeager}, \autoref{charaktNj2} and~\ref{conseqforE2}.
There are two elements providing the proof of these results. The first corresponds to monotonicity results with respect to the well-known \emph{Kat\v etov-Blass order $\leqKB$} and \emph{Rudin-Blass order $\leqRB$} between ideals (\autoref{RBeq}), %We provide monotonicity results with these orders, namely, $K\leqKB J$ implies $\Ncal^*_K\subseteq \Ncal^*_J$ and $\Ncal_J\subseteq \Ncal_K$, and $K\leqRB J$ implies $\Ncal^*_J=\Ncal^*_K$ and $\Ncal_J=\Ncal_K$ (so $\Ncal_J=\Ncal$ and $\Ncal^*_J=\Ecal$ for ideals $J$ with the Baire property become an easy consequence of Jalani-Naini Talagrand Theorem~\cite{Talagrand}, see \autoref{thm:talagrand}). 
and the second is \emph{Bartoszy\'nski's and Scheepers'} game ~\cite{BSch} that characterizes filters (and hence ideals) with the Baire property, which we use to prove many properties of $\Ncal_J$ and $\Ncal^*_J$ for any ideal $J$ on $\omega$ without the Baire property, specifically that $\Ncal_J$ cannot contain co-meager subsets of $\cantor$, and that $\Ncal^*_J$ contains non-meager sets.

About the connection between $\Ncal_J$ and $\Ncal_K$ for different ideals $J$ and $K$ on $\omega$, (and likewise for $\Ncal^*_J$ and $\Ncal^*_K$), we discovered a deep connection between these ideals and the notion of \emph{nearly coherence of ideals (or filters)} on $\omega$, original from Blass~\cite{blass86}. The ideals $J$ and $K$ are \emph{nearly coherent} if there is some finite-to-one function $f\colon \omega\to \omega$ such that $\set{y\subseteq\omega}{f^{-1}[y]\in J\cup K}$ generates an ideal. We prove that nearly coherence of ideals is characterized as follows:

\begin{mainthm}[\autoref{nc-char}]\label{maincoherence}
Let $J$ and $K$ be ideals on $\omega$. Then the following statements are equivalent:
\begin{enumerate}[label = \rm (\roman*)]
    \item\label{it:coh1} $J$ and $K$ are nearly coherent.
    \item\label{it:coh2} There is some ideal $K'$ such that $\Ncal^*_J\cup \Ncal^*_K\subseteq \Ncal^*_{K'}\subseteq \Ncal_{K'}\subseteq \Ncal_J\cap \Ncal_K$.
\item $\Ncal^*_J\subseteq\Ncal_K$.
\end{enumerate}
\end{mainthm}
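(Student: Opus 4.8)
The plan is to prove the equivalence cyclically, as $(i)\Rightarrow(ii)\Rightarrow(iii)\Rightarrow(i)$. The implication $(ii)\Rightarrow(iii)$ is immediate: if $K'$ is as in $(ii)$, then chaining the inclusions gives $\Ncal^*_J\subseteq\Ncal^*_{K'}\subseteq\Ncal_{K'}\subseteq\Ncal_J\cap\Ncal_K\subseteq\Ncal_K$, which is exactly $(iii)$. So the real content lies in $(i)\Rightarrow(ii)$, a construction, and in $(iii)\Rightarrow(i)$, which I would prove contrapositively.

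For $(i)\Rightarrow(ii)$ I would fix a finite-to-one $f\colon\omega\to\omega$ witnessing near coherence and set $K':=\langle f_*(J)\cup f_*(K)\rangle$, where $f_*(I):=\set{y\subseteq\omega}{f^{-1}[y]\in I}$; near coherence is precisely the assertion that this $K'$ is a proper ideal. The map $f$ reduces both $J$ and $K$ to $K'$ in the Rudin--Blass/Kat\v etov--Blass sense, so the four inclusions follow from the monotonicity of $\Ncal^*$ and $\Ncal$ under such reductions (\autoref{RBeq}). Concretely, for $\Ncal^*_J\subseteq\Ncal^*_{K'}$ (and symmetrically for $K$) I would take a witness $\seqn{C_n}{n<\omega}$ for a set $X\in\Ncal^*_J$ and push it forward along the fibres of $f$ by $D_k:=\bigcup_{n\in f^{-1}[\{k\}]}C_n$: the union bound keeps $\sum_k\leb{D_k}\le\sum_n\leb{C_n}$ finite, while $f^{-1}[\set{k}{x\notin D_k}]\subseteq\set{n}{x\notin C_n}\in J$ shows $\set{k}{x\notin D_k}\in f_*(J)\subseteq K'$, so $X\in\Ncal^*_{K'}$. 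The inclusion $\Ncal^*_{K'}\subseteq\Ncal_{K'}$ is the general fact already recorded, and $\Ncal_{K'}\subseteq\Ncal_J\cap\Ncal_K$ is the dual half of \autoref{RBeq}. The one genuinely delicate point in this implication is summability in this dual direction, where pushing a cover through $f$ threatens to divide the available measure over the fibres; this is exactly what the slalom formulation underlying \autoref{RBeq} is designed to control, and I would match the correct order ($\leqRB$ versus $\leqKB$) to each of $\Ncal$ and $\Ncal^*$ accordingly.

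For $(iii)\Rightarrow(i)$ I argue by contraposition: assuming $J$ and $K$ are \emph{not} nearly coherent, I must produce $X\in\Ncal^*_J\setminus\Ncal_K$. In its combinatorial form (Blass~\cite{blass86}), non-near-coherence says that for every finite-to-one $f$ --- equivalently, for every partition of $\omega$ into finite intervals --- the set $\omega$ splits into an $f$-saturated member of $J$ and an $f$-saturated member of $K$. I would take $X$ to be a canonical member of $\Ncal^*_J$: fix a fast interval partition $\seqn{I_n}{n<\omega}$ with $\sum_n 2^{-|I_n|}<\infty$ and strings $s_n\in 2^{I_n}$, and put $X:=\set{x}{\set{n}{x\frestr I_n\neq s_n}\in J}$, which lies in $\Ncal^*_J$ via the singleton slaloms $S_n=\{s_n\}$. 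To see $X\notin\Ncal_K$, I take an arbitrary candidate witness $\seqn{I'_m}{m<\omega}$, $\seqn{S'_m}{m<\omega}$ for $X\in\Ncal_K$ and build $x^*\in X$ defeating it, namely with $\set{m}{x^*\frestr I'_m\in S'_m}\in K$. For this I pass to a common interval coarsening $\seqn{L_j}{j<\omega}$ refining neither partition across its boundaries, and apply non-near-coherence to split the blocks $L_j$ into a part whose $I_n$-blocks form a $J$-set and a complementary part whose $I'_m$-blocks form a $K$-set. On the latter part I let $x^*$ follow the $s_n$ (so the deviation set stays in $J$ and $x^*\in X$), and on the former part I set $x^*$ to avoid each $S'_m$ (possible since $\sum_m|S'_m|2^{-|I'_m|}<\infty$ forces $S'_m\neq 2^{I'_m}$ eventually), so that $x^*$ meets the slaloms only on a $K$-set.

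The main obstacle is this last construction. The difficulty is that $J$ is tested along $\seqn{I_n}{n<\omega}$ while $K$ is tested along the adversary's partition $\seqn{I'_m}{m<\omega}$, so a single application of the non-near-coherence splitting must simultaneously serve both ideals; reconciling the two block-structures (the maps $n\mapsto j$ and $m\mapsto j$) into one invocation of non-near-coherence, and arranging the argument uniformly against \emph{every} candidate witness, is exactly where the depth of Blass's theory enters. I anticipate that the cleanest way to organise this is through the Bartoszy\'nski--Scheepers game~\cite{BSch} characterising ideals with the Baire property, applied here in the relative form comparing $J$ and $K$ --- the same engine the paper uses for \autoref{mainBP} and \autoref{mainBP2}.
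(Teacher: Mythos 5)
Your overall architecture --- proving (i)$\Rightarrow$(ii)$\Rightarrow$(iii)$\Rightarrow$(i), with the first step via the ideal $K'$ generated by $f^\to(J)\cup f^\to(K)$ and the monotonicity theorem --- coincides with the paper's proof of \autoref{nc-char}, which routes (i) through the observation that $J\leqKBpr K'$ and $K\leqKBpr K'$ and then quotes \autoref{RBeq}~\ref{it:underKB'}; your (ii)$\Rightarrow$(iii) is the same triviality. One small correction in that part: no summability has to be controlled in either direction, because membership in $\omegabar$ only asks that $N(\bar c)$ be null, and both $N(\bar c')=N(\bar c)$ and $N(\bar c^-)\subseteq N(\bar c)$ are immediate; the ``measure divided over the fibres'' issue you flag is genuine only for the $\ell^1$ variant of \autoref{sec:var}, not for the definition in force here.

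The implication (iii)$\Rightarrow$(i) is where your proposal has a genuine gap, and you partly concede it. Three concrete problems. First, an arbitrary witness $\bar c'\in\omegabar$ for $X\in\Ncal_K$ is a sequence of clopen sets that are \emph{not} coordinate-local: $c'_m$ is not determined by the coordinates in any block $I'_m$, so the adversary cannot be normalized to ``slaloms $S'_m\subseteq 2^{I'_m}$'', and ``setting $x^*$ to avoid each $S'_m$ on its block'' is not a well-defined instruction --- whether $x^*\in c'_m$ may depend on coordinates you have already committed (to equal $s_n$) on other blocks, and the set of $x$ that follow the $s_n$ on infinitely many blocks has measure zero, so there is no measure-theoretic room inside that constraint to dodge the $c'_m$. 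Second, your single splitting would have to be applied to \emph{two different} finite-to-one maps ($n\mapsto j$ for $J$ and $m\mapsto j$ for $K$), whereas \autoref{def:NC} quantifies over one map; this is repairable via Blass's equivalence of the one-map and two-map formulations, but you do not address it. Third, the tool you anticipate, the Bartoszy\'nski--Scheepers game of \autoref{nonMgame}, characterizes non-meagerness of a \emph{single} filter and cannot coordinate two ideals tested along two different block structures. The paper resolves exactly these difficulties with Eisworth's nearly coherence game \cite{Eis01}: in \autoref{notnear2}, Player~I builds the point $x$ \emph{adaptively}, alternating between extending along the tree of \autoref{def:cT} so as to enter $c^T_\ell$ for the $\ell$ in Player~II's even-round moves (this handles $J$) and, using $\mu\left(\bigcup_{\ell\geq n}c'_\ell\right)\to 0$, committing to a long enough initial segment of a suitable point to exclude the finitely many clopen sets named in the odd rounds (this handles $K$ and the non-locality of the $c'_m$). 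Non-near-coherence then says this strategy is not winning, and a run defeating it yields $x\in N^*_J(\bar c^T)\smallsetminus N_K(\bar c')$ for every $\bar c'\in\omegabar$ simultaneously. Without this game (or an equivalent adaptive device in its place), your construction of $x^*$ does not go through.
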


This means that, whenever $J$ and $K$ are \underline{not} nearly coherent, the ideals $\Ncal_J$ and $\Ncal_K$ are quite different, likewise for $\Ncal^*_J$ and $\Ncal^*_K$.

Blass and Shelah~\cite{blassshelah} proved that it is consistent with ZFC that any pair of ideals are nearly coherent, which is known as \ref{NCF}, the principle of \emph{nearly coherence of filters}. \autoref{maincoherence} %, and the fact that $\Ncal^*_J=\Ncal_J$ for any maximal ideal $J$ on $\omega$, 
implies that, under \ref{NCF}, there is only one $\Ncal_J$ for maximal ideals $J$ on $\omega$. We still do not know whether \ref{NCF} implies that there is just one $\Ncal_J$ (or $\Ncal^*_J$) for $J$ without the Baire property. On the other hand, when we assume that there are not nearly coherent ideals (which is consistent with ZFC, e.g.\ it is valid under CH and in random model, see~\cite[Sec.~4]{blass86}), we can construct a non-meager ideal $K$ on $\omega$ such that $\Ncal^*_K\neq \Ncal_K$ (\autoref{noncoh:sum}).  %(this equation is obvious when $K$ has the Baire property because $\Ecal\neq\Ncal$). 
In contrast with the previous question, we do not know whether ZFC proves the existence of an ideal $K$ without the Baire property such that $\Ncal^*_K\neq \Ncal_K$.

The proof of \autoref{maincoherence} uses Eisworth's game that characterizes nearly coherence~\cite{Eis01}. Another element relevant to this proof is the order $\leqKBpr$, which is the dual of the Kat\v{e}tov-Blass order (see \autoref{def:RB}). %: $K\leqKBpr J$ iff there is some finite-to-one function $f\colon\omega\to\omega$ such that, for any $y\subseteq\omega$, if $f^{-1}[y]\in J$ then $y\in K$. We show that $K\leqKBpr J$ implies that $\Ncal^*_J\subseteq\Ncal^*_K$ and $\Ncal_K\subseteq\Ncal_J$. 
If $J$ and $K$ are nearly coherent then it is clear that there is some ideal $K'$ such that $J,K\leqKBpr K'$, %(so $\ref{it:coh1}\imp \ref{it:coh2}$ follows), 
but the converse is also true thanks to \autoref{maincoherence}. This equivalence is claimed in~\cite{blass86}, but here we present an alternative proof using our new ideals.

We also study the cardinal characteristics associated with the ideals $\Ncal_J$ and $\Ncal^*_J$, i.e., additivity, covering, uniformity, and cofinality. Recall that $\sfrak$ denotes the \emph{splitting number} and $\rfrak$ the \emph{reaping number}.\footnote{We assume that the reader is somewhat familiar with classical cardinal characteristics of the continuum, so we do not repeat their definitions in this paper. The reader can refer to e.g.~\cite{blassbook}.} In ZFC, we can prove:

\begin{mainthm}\label{mainZFCcard}
Let $J$ be an ideal on $\omega$. With respect to Cicho\'n's diagram (see \autoref{fig:cichon}):
\begin{enumerate}[label =\rm (\alph*)]
    \item\label{it:covnon} $\cov(\Ncal)\leq \cov(\Ncal_J) \leq \cov(\Ncal^*_J)\leq \cov(\Ecal) \leq \min\{\cof(\Mcal),\rfrak\}$ and \[\max\{\add(\Mcal),\sfrak\}\leq \non(\Ecal)\leq \non(\Ncal^*_J)\leq \non(\Ncal_J) \leq \non(\Ncal),\] 
    i.e.\ the coverings of $\Ncal_J$ and $\Ncal^*_J$ are between $\cov(\Ncal)$ and $\min\{\cof(\Mcal),\rfrak\}$, %(on the top row of Cicho\'n's diagram), 
    and their uniformities are between $\min\{\add(\Mcal),\sfrak\}$ and $\non(\Ncal)$.% (on the bottom row of Cicho\'n's diagram).
    
    \item\label{it:addcof} The additivites of $\Ncal_J$ and $\Ncal^*_J$ are between $\add(\Ncal)$ and $\cov(\Mcal)$, %(on the bottom row of Cicho\'n's diagram), 
    and their cofinalities are between $\non(\Mcal)$ and $\cof(\Ncal)$.% (on the top row of Cicho\'n's diagram).
\end{enumerate}
\end{mainthm}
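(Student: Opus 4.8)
The plan is to treat each of the four characteristics of each ideal separately, exploiting that covering and uniformity are monotone under inclusion while additivity and cofinality are not. Throughout, write $\Ncal^{(*)}_J$ for either $\Ncal_J$ or $\Ncal^*_J$. For part~\ref{it:covnon}, the inner inequalities are immediate from the chain $\Ecal\subseteq\Ncal^*_J\subseteq\Ncal_J\subseteq\Ncal$ recalled above, together with the elementary fact that $\Icl\subseteq\Jcl$ implies $\cov(\Jcl)\le\cov(\Icl)$ and $\non(\Icl)\le\non(\Jcl)$; applied along the chain these give $\cov(\Ncal)\le\cov(\Ncal_J)\le\cov(\Ncal^*_J)\le\cov(\Ecal)$ and the dual string of $\non$-inequalities.

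For the outer bounds of part~\ref{it:covnon} I would invoke the classical identities $\add(\Ecal)=\add(\Mcal)$ and $\cof(\Ecal)=\cof(\Mcal)$ (Bartoszy\'nski--Shelah) and combine them with the general inequalities $\add(\Kcl)\le\non(\Kcl)$ and $\cov(\Kcl)\le\cof(\Kcl)$, valid for any ideal $\Kcl$ containing the singletons, to get $\add(\Mcal)\le\non(\Ecal)$ and $\cov(\Ecal)\le\cof(\Mcal)$. The bounds involving $\sfrak$ and $\rfrak$ I would prove directly from the $F_\sigma$ null sets $C_a:=\set{x\in\cantor}{x\restriction a\text{ is eventually constant}}\in\Ecal$, defined for $a\in[\omega]^\omega$. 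If $X\notin\Ecal$, then $X\not\subseteq C_a$ for every infinite $a$; since $X\subseteq C_a$ holds exactly when $a$ is split by none of the sets $\set{n}{x(n)=1}$ for $x\in X$, this means that $\set{\set{n}{x(n)=1}}{x\in X}$ is a splitting family, whence $\sfrak\le\non(\Ecal)$. Dually, if $\set{a_\xi}{\xi<\rfrak}$ is a reaping family then every $x\in\cantor$ lies in some $C_{a_\xi}$, so $\cantor=\bigcup_\xi C_{a_\xi}$ and $\cov(\Ecal)\le\rfrak$.

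For part~\ref{it:addcof}, monotonicity is unavailable, so I would establish the two endpoints of each interval by separate Galois--Tukey morphisms. The $\Ncal$-side inequalities $\add(\Ncal)\le\add(\Ncal^{(*)}_J)$ and $\cof(\Ncal^{(*)}_J)\le\cof(\Ncal)$ are exactly where the new characterizations of $\add(\Ncal)$ and $\cof(\Ncal)$ enter: the idea is to recast these two cardinals as the bounding and dominating numbers of a localization-type relational system whose localizing objects are summable sequences of clopen sets and whose covering relation is \emph{insensitive to $J$}, in the sense that if $\bar d$ covers $\bar c$ then the null-modulo-$J$ set coded by $\bar c$ is contained in that coded by $\bar d$, for every ideal $J$ at once. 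This furnishes a morphism from $(\cantor,\Ncal^{(*)}_J,{\in})$ into that system, independent of $J$, yielding the two displayed inequalities. The $\Mcal$-side inequalities $\add(\Ncal^{(*)}_J)\le\cov(\Mcal)$ and $\non(\Mcal)\le\cof(\Ncal^{(*)}_J)$ I would obtain from a morphism in the opposite direction, out of the anti-localization system $\aLc$ (for suitable parameters, whose bounding and dominating numbers are $\cov(\Mcal)$ and $\non(\Mcal)$), realized by $y\mapsto G_y$, where, for a fixed interval partition $\seqn{I_n}{n<\omega}$ with $|I_n|\to\infty$ and $\sum_n 2^{-|I_n|}=\infty$, the set $G_y:=\set{x}{\forall^\infty n\ x\restriction I_n\neq y\restriction I_n}$ is $F_\sigma$, null, hence a member of $\Ecal\subseteq\Ncal^*_J$, and ``$G_y$ is contained in a given $\Ncal^{(*)}_J$-set'' translates into an anti-localization of $y$.

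The main obstacle is the $\Ncal$-side of part~\ref{it:addcof}, that is, the new characterizations themselves: one must design the localization relation on summable clopen sequences so that it is simultaneously (i)~provably equivalent to Bartoszy\'nski's slalom characterization of $\add(\Ncal)$ and $\cof(\Ncal)$, and (ii)~strong enough that covering of sequences transfers to inclusion of the coded null-modulo-$J$ sets for \emph{arbitrary} $J$, not merely for $\Fin$. Reconciling these two demands is the crux. A secondary technical point, on the $\Mcal$-side, is the tension between forcing the witnessing sets $G_y$ to remain inside $\Ecal$ (which requires $\sum_n 2^{-|I_n|}=\infty$ together with $|I_n|\to\infty$) and arranging the anti-localization parameters so that the associated cardinals come out to be exactly $\cov(\Mcal)$ and $\non(\Mcal)$.
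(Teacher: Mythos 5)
Your overall architecture coincides with the paper's. Part~\ref{it:covnon} is, as you say, pure monotonicity along $\Ecal\subseteq\Ncal^*_J\subseteq\Ncal_J\subseteq\Ncal$ plus facts about $\Ecal$; the paper simply cites $\add(\Ecal)=\add(\Mcal)$, $\cof(\Ecal)=\cof(\Mcal)$, $\sfrak\leq\non(\Ecal)$ and $\cov(\Ecal)\leq\rfrak$ (its \autoref{thm:E} and the lemma following it), whereas you reprove the last two with the sets $C_a$; your argument is correct and is essentially the standard proof of the cited lemma. On the $\Mcal$-side of part~\ref{it:addcof} the paper is much shorter than you are: it uses $\add(\Ecal,\Ncal)=\cov(\Mcal)$ and $\cof(\Ecal,\Ncal)=\non(\Mcal)$ together with the trivial monotonicity $\add(\Ncal^{(*)}_J,\Ncal^{(*)}_J)\leq\add(\Ecal,\Ncal)$ and $\cof(\Ecal,\Ncal)\leq\cof(\Ncal^{(*)}_J,\Ncal^{(*)}_J)$ coming from $\Ecal\subseteq\Ncal^{(*)}_J\subseteq\Ncal$ (\autoref{cofIJrel}); your anti-localization construction with the sets $G_y$ amounts to re-deriving that cited identity from scratch, which is workable but unnecessary.

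The genuine gap is the $\Ncal$-side of part~\ref{it:addcof}. You correctly identify that everything hinges on a new characterization of $\add(\Ncal)$ and $\cof(\Ncal)$ as the bounding and dominating numbers of a relational system on clopen sequences with a $J$-insensitive covering relation, but you leave that characterization unproved, and it is the real content of the theorem (\autoref{omegabarFin} in the paper). The paper's proof is: one inequality is free since $\Ncal=\Ncal_\Fin\leqT\la\omegabar,\omegabar,\subseteq^\Fin\ra$; for the other, given $F\subseteq\omegabar$ with $|F|<\add(\Ncal)$, first use $\add(\Ncal)\leq\bfrak$ to find a single increasing $f$ with $\mu\bigl(\bigcup_{k\geq f(n)}c_k\bigr)<\frac{1}{(n+1)2^n}$ for all $\bar c\in F$ and all $n$, then view each restriction $\bar c\frestr[f(n),f(n+1))$ as a point of a finite set $b^f(n)$ and apply Bartoszy\'nski's $\Lc(b^f,h)\eqT\Ncal$ with $h(n)=n+1$ to capture $F$ by a slalom $\varphi$; taking $d_k:=\bigcup_{s\in\varphi(n)}s_k$ gives block measure below $(n+1)\cdot\frac{1}{(n+1)2^n}=2^{-n}$, so $\bar d\in\omegabar$ and $\subseteq^\Fin$-bounds $F$ (dually for $\cof$). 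A second, smaller but real, error: you claim the characterization ``furnishes a morphism from $(\cantor,\Ncal^{(*)}_J,\in)$,'' but a Tukey connection out of the membership system only bounds $\cov$ and $\non$, not $\add$ and $\cof$. What is needed is a connection from $(\Ncal^{(*)}_J,\Ncal^{(*)}_J,\subseteq)$ into $\la\omegabar,\omegabar,\subseteq^J\ra$, given by $X\mapsto\bar c_X$ with $X\subseteq N_J(\bar c_X)$ and $\bar d\mapsto N_J(\bar d)$, using that $\bar c\subseteq^J\bar d$ implies $N_J(\bar c)\subseteq N_J(\bar d)$, followed by the monotonicity $\la\omegabar,\omegabar,\subseteq^J\ra\leqT\la\omegabar,\omegabar,\subseteq^\Fin\ra$ coming from $\Fin\subseteq J$ (\autoref{omegabarvsNJ} and \autoref{KBomegabar} in the paper).
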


\begin{figure}[ht]
\centering
\begin{tikzpicture}%[scale=0.9]
\small{
 \node (aleph1) at (-1.5,0) {$\aleph_1$};
 \node (addn) at (0,0){$\add(\Ncal)$};
 \node (adNJ) at (1,2) {$\add(\Ncal_J)$};
 \node (adNJ*) at (3,3) {$\add(\Ncal^*_J)$};
 \node (covn) at (0,10){$\cov(\Ncal)$};
 \node (cvNJ) at (1,8) {$\cov(\Ncal_J)$};
 \node (cvNJ*) at (3,7) {$\cov(\Ncal^*_J)$};
 \node (cove) at (5,6) {$\cov(\Ecal)$};
 \node (b) at (4,5) {$\bfrak$};
 \node (addm) at (4,0) {$\add(\Mcal)$} ;
 \node (nonm) at (4,10) {$\non(\Mcal)$} ;
 \node (none) at (7,4) {$\non(\Ecal)$};
 \node (d) at (8,5) {$\dfrak$};
 \node (covm) at (8,0) {$\cov(\Mcal)$} ;
 \node (cfm) at (8,10) {$\cof(\Mcal)$} ;
 \node (nonn) at (12,0) {$\non(\Ncal)$} ;
 \node (nnNJ) at (11,2) {$\non(\Ncal_J)$};
 \node (nnNJ*) at (9,3) {$\non(\Ncal^*_J)$};
 \node (cfn) at (12,10) {$\cof(\Ncal)$} ;
 \node (cfNJ) at (11,8) {$\cof(\Ncal_J)$};
 \node (cfNJ*) at  (9,7){$\cof(\Ncal^*_J)$};
 \node (s) at (-1,3) {$\sfrak$};
 \node (r) at (13,7) {$\rfrak$};
 \node (c) at (13.5,10) {$\cfrak$};

\foreach \from/\to in {
aleph1/addn, cfn/c, aleph1/s, r/c,
s/none, s/d, b/r, cove/r,
adNJ/nnNJ, adNJ*/nnNJ*, cvNJ/cfNJ, cvNJ*/cfNJ*,
addm/b, b/nonm, b/d, addm/covm, covm/d, nonm/cfm, d/cfm,
addm/none, cove/cfm,
covm/cove, none/nonm,
addn/adNJ, addn/adNJ*,
covn/cvNJ, cvNJ/cvNJ*, cvNJ*/cove,
addn/covn, adNJ/cvNJ, adNJ*/cvNJ*,
addn/addm, covn/nonm, covm/nonn, cfm/cfn, nonn/cfn,
adNJ/covm, adNJ*/covm,
none/nnNJ*, nnNJ*/nnNJ, nnNJ/nonn,
nnNJ/cfNJ, nnNJ*/cfNJ*,
cfNJ*/cfn, cfNJ/cfn,
nonm/cfNJ*, nonm/cfNJ}
{
\path[-,draw=white,line width=3pt] (\from) edge (\to);
\path[->,] (\from) edge (\to);
}
}
\end{tikzpicture}
%
% \begin{tikzpicture}[scale=1.5]
% \small{
% \node (aleph1) at (-1,0) {$\aleph_1$};
% \node (addn) at (1,0){$\add(\Ncal)$};
% \node (covn) at (1,3){$\cov(\Ncal)$};
% \node (nonn) at (7,0) {$\non(\Ncal)$} ;
% \node (cfn) at (7,3) {$\cof(\Ncal)$} ;
% \node (addm) at (3,0) {$\add(\Mcal)$} ;
% \node (covm) at (5,0) {$\cov(\Mcal)$} ;
% \node (nonm) at (3,3) {$\non(\Mcal)$} ;
% \node (cfm) at (5,3) {$\cof(\Mcal)$} ;
% \node (b) at (3,1.5) {$\bfrak$};
% \node (d) at (5,1.5) {$\dfrak$};
% \node (c) at (9,3) {$\cfrak$};
% \node (none) at (2.3,0.8) {$\non(\Ecal)$};
% \node (cove) at (5.7,2) {$\cov(\Ecal)$};
% \node (nnNJ*) at (4,0.8) {$\non(\Ncal^*_J)$};
% \node (nnNJ) at (6,0.8) {$\non(\Ncal_J)$};
% \node (cvNJ*) at (4,2) {$\cov(\Ncal^*_J)$};
% \node (cvNJ) at (2,2.2) {$\cov(\Ncal_J)$};
% \node (s) at (0,0.8) {$\sfrak$};
% \node (r) at (8,2) {$\rfrak$};
% \node (adNJ*) at (4,-0.7) {$\add(\Ncal^*_J)$};
% \node (adNJ) at (2.3,-0.7) {$\add(\Ncal_J)$};
% \node (cfNJ*) at  (4,3.7){$\cof(\Ncal^*_J)$};
% \node (cfNJ) at (5.7,3.7) {$\cof(\Ncal_J)$};
%
% \foreach \from/\to in {s/d, b/r, aleph1/addn, addn/covn, addn/addm, addm/covm, addm/b, covn/nonm, b/d, b/nonm, nonm/cfm, covm/nonn, covm/d, d/cfm, nonn/cfn, cfm/cfn, cfn/c, covn/cvNJ, cvNJ/cvNJ*, cvNJ*/cove, covm/cove, cove/cfm, addm/none, none/nonm, none/nnNJ*, nnNJ*/nnNJ, nnNJ/nonn, aleph1/s, s/none, cove/r, r/c, addn/adNJ*, addn/adNJ, adNJ/covm, adNJ*/covm, nonm/cfNJ*, nonm/cfNJ, cfNJ*/cfn, cfNJ/cfn}
% {
% \path[-,draw=white,line width=3pt] (\from) edge (\to);
% \path[->,] (\from) edge (\to);
% }
% }
% \end{tikzpicture}
\caption{Cicho\'n's diagram including the cardinal characteristics associated with our ideals, $\sfrak$ and $\rfrak$, as stated in \autoref{mainZFCcard}.}
\label{fig:cichon}
\end{figure}

The previous theorem summarizes \autoref{covnonvscichon} and \autoref{addN-covM}.
Item~\ref{it:covnon} follows directly by the subset relation between the ideals, and also because $\add(\Ecal)=\add(\Mcal)$ and $\cof(\Ecal)=\cof(\Mcal)$ due to Bartoszy\'nski and Shelah~\cite{BartSh}. Results from the latter reference guarantee easily the connections with $\cov(\Mcal)$ and $\non(\Mcal)$ in~\ref{it:addcof}, but the connections with $\add(\Ncal)$ and $\cof(\Ncal)$ require quite some work. To prove this, we define two cardinal characteristics $\bfrak_J(\omegabar)$ and  $\dfrak_J(\omegabar)$. It will not be hard to show that the additivites and cofinalities of $\Ncal_J$ and $\Ncal^*_J$ are between $\bfrak_J(\omegabar)$ and $\dfrak_J(\omegabar)$ (\autoref{def:Sbf}), and that $\bfrak_\Fin(\omegabar)\leq \bfrak_J(\omegabar)$ and $\dfrak_J(\omegabar)\leq \dfrak_\Fin(\omegabar)$. The real effort is to prove the following new characterization of $\add(\Ncal)$ and $\cof(\Ncal)$.

\begin{mainthm}[\autoref{omegabarFin}]\label{maincharadd}
$\bfrak_\Fin(\omegabar)=\add(\Ncal)$ and $\dfrak_\Fin(\omegabar)=\cof(\Ncal)$.
\end{mainthm}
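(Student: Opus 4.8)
The plan is to identify, for $J=\Fin$, the cardinals $\bfrak_\Fin(\omegabar)$ and $\dfrak_\Fin(\omegabar)$ with the localization numbers that Bartoszy\'nski used to characterize $\add(\Ncal)$ and $\cof(\Ncal)$. Unfolding the definition for $J=\Fin$, the relation ``$\varphi$ localizes $f$ modulo $\Fin$'', i.e.\ $\set{n<\omega}{f(n)\notin\varphi(n)}\in\Fin$, is precisely eventual containment $\forall^\infty n\,f(n)\in\varphi(n)$. Thus $\bfrak_\Fin(\omegabar)$ is the least size of a family $F\subseteq\prod_{n}\Omega_n$ that cannot be localized modulo finite by a single (summable) slalom, and $\dfrak_\Fin(\omegabar)$ is the least size of a family of such slaloms localizing every point of $\prod_n\Omega_n$. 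These are exactly the bounding and dominating numbers $\blc$, $\dlc$ of a localization relational system, so the statement amounts to the assertion that this system is Tukey-equivalent to Bartoszy\'nski's localization system, whose invariants are $\add(\Ncal)$ and $\cof(\Ncal)$.

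First I would prove the two inequalities that do not require translating between slalom widths, namely $\add(\Ncal)\leq\bfrak_\Fin(\omegabar)$ and $\dfrak_\Fin(\omegabar)\leq\cof(\Ncal)$. For the first, given $F\subseteq\prod_n\Omega_n$ with $|F|<\add(\Ncal)$, attach to each $f\in F$ the null set $N_f$ it determines via the limsup coding underlying the definition of $\Ncal$, note that $\bigcup_{f\in F}N_f$ is null because $|F|<\add(\Ncal)$, and read off from a code of this union a single summable slalom that localizes every $f\in F$ modulo finite; hence $F$ is not a witness and $\bfrak_\Fin(\omegabar)\geq\add(\Ncal)$. The dual inequality is obtained symmetrically: a cofinal family of null sets of size $\cof(\Ncal)$ yields, after coding, a family of summable slaloms localizing all points of $\prod_n\Omega_n$.

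For the reverse inequalities $\bfrak_\Fin(\omegabar)\leq\add(\Ncal)$ and $\cof(\Ncal)\leq\dfrak_\Fin(\omegabar)$ I would go through the converse coding, turning a summable slalom into a null set and a point of $\prod_n\Omega_n$ into a code for a real, so that localization modulo finite corresponds to the Bartoszy\'nski localization relation. The device making the two relations match is a grouping of coordinates into consecutive blocks $\langle B_k:\,k<\omega\rangle$ chosen so that on each block the total measure budget $\sum_{n\in B_k}\leb{\varphi(n)}$ of an admissible slalom is small; passing to the product spaces $\prod_{n\in B_k}\Omega_n$ converts a summable slalom into one of width tending to infinity, which is the regime in which Bartoszy\'nski's characterization applies, and it converts the standard width-$k$ slaloms back into summable ones. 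Because grouping changes a localization relation only on an initial segment, the quantifier $\forall^\infty$ is preserved in both directions.

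The main obstacle I anticipate is precisely this measure/width bookkeeping: one must choose the block partition and the admissible widths so that the summability condition $\sum_n\leb{\varphi(n)}<\infty$ defining the slaloms of $\omegabar$ corresponds, on the grouped space, to a width bound $h(k)\to\infty$ of the kind Bartoszy\'nski's theorem requires, and simultaneously ensure that the coding of points as reals respects the measures $\leb{\cdot}$, so that ``null'' on the real side matches ``non-localizable modulo finite'' on the $\omegabar$ side. Once the blocks are fixed and the two coding maps are shown to form morphisms of relational systems in both directions, the equalities $\bfrak_\Fin(\omegabar)=\add(\Ncal)$ and $\dfrak_\Fin(\omegabar)=\cof(\Ncal)$ follow from the Tukey invariance of $\bfrak$ and $\dfrak$.
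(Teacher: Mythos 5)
There is a genuine gap, and it sits exactly where the real work of the theorem is. Your second paragraph claims that, given $F\subseteq\omegabar$ with $|F|<\add(\Ncal)$, one can form the null set $\bigcup_{\bar c\in F}N(\bar c)$ and ``read off from a code of this union'' a single $\bar d\in\omegabar$ with $\bar c\subseteq^\Fin\bar d$ for every $\bar c\in F$. That step fails: a code $\bar d$ of a null set containing all the $N(\bar c)$ gives only the containments $N(\bar c)\subseteq N(\bar d)$, and containment of the coded null sets does not imply the coordinatewise relation $c_n\subseteq d_n$ for almost all $n$ (two codes of the \emph{same} null set need not be $\subseteq^\Fin$-comparable: shift or permute the entries of $\bar c$ and $N(\bar c)$ is unchanged). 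If this implication held, the map $\bar c\mapsto N(\bar c)$ together with any coding of null sets would give $\Sbf_\Fin\eqT\Ncal$ outright and the theorem would need no proof beyond \autoref{omegabarvsNJ}. You have in fact swapped the difficulty of the two directions: $\bfrak_\Fin(\omegabar)\leq\add(\Ncal)$ and $\cof(\Ncal)\leq\dfrak_\Fin(\omegabar)$ are the easy inequalities (they follow from the monotone map $\bar d\mapsto N(\bar d)$, i.e.\ from $\Ncal\leqT\Sbf_\Fin$), while $\add(\Ncal)\leq\bfrak_\Fin(\omegabar)$ and $\dfrak_\Fin(\omegabar)\leq\cof(\Ncal)$ are precisely the ones that need the localization machinery you reserve for the other side.

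The block-partition idea in your third paragraph is the right tool, but when applied to the hard direction it needs an ingredient your outline omits: the partition cannot be fixed in advance. A member of $\omegabar$ satisfies $\mu\left(\bigcup_{k\geq m}c_k\right)\to 0$ only at its own rate, so to fit every $\bar c\in F$ into blocks with measure budget $\frac{1}{(n+1)2^n}$ you must first record that rate by a function $f^{\bar c}$ and dominate the family $\set{f^{\bar c}}{\bar c\in F}$ by a single increasing $f$; this is where $\add(\Ncal)\leq\bfrak$ (and, on the cofinality side, a dominating family of size $\dfrak\leq\cof(\Ncal)$ indexing the block structures) enters. Only after this uniformization can one regard $F$ as a subset of $\prod_n b^f(n)$, where $b^f(n)$ consists of the block-sequences of total measure below $\frac{1}{(n+1)2^n}$, apply $\Lc(b^f,h)\eqT\Ncal$ with $h(n)=n+1$ to obtain a slalom $\varphi$, and set $d_k:=\bigcup_{s\in\varphi(n)}s_k$ for $k$ in the $n$-th block, the point being that $\mu\left(\bigcup_{k\in I_n}d_k\right)<(n+1)\cdot\frac{1}{(n+1)2^n}=2^{-n}$, so $\bar d\in\omegabar$ and $\bar d$ is the required $\subseteq^\Fin$-bound. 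With the two directions reassigned and the domination step added, your outline becomes the paper's proof.
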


In terms of inequalities with classical characteristics of the continuum, \autoref{mainZFCcard} seems to be the most optimal: we also manage to prove that, in most cases, no further inequalities can be proved, not just with the cardinals in Cicho\'n's diagram, but with many classical cardinal characteristics as illustrated in \autoref{fig:all20}. We just leave few open questions, for example, whether it is consistent that $\cov(\Ncal_J)<\add(\Mcal)$ (and even smaller than the pseudo-intersection number $\pfrak$) for some maximal ideal $J$ (likewise for $\cof(\Mcal)<\non(\Ncal_J)$). This is all dealt with in \autoref{sec:cons}.

Many consistency results supporting the above comes from the forcing model after adding uncountably many Cohen reals.

\begin{mainthm}[\autoref{cohenthm}]\label{mainCohen}
Let $\lambda$ be an uncountable cardinal. After adding $\lambda$-many Cohen reals: for any regular uncountable $\kappa\leq\lambda$ there is some (maximal) ideal $J^\kappa$ on $\omega$ such that $\add(\Ncal_{J^\kappa})=\cof(\Ncal_{J^\kappa})=\kappa$.
\end{mainthm}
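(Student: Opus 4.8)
The plan is to reduce the statement to a simultaneous control of the two cardinals $\bfrak_J(\omegabar)$ and $\dfrak_J(\omegabar)$ attached to the localization relational system. By the squeezing established in \autoref{def:Sbf}, for every ideal $J$ we have
\[\bfrak_J(\omegabar)\leq\add(\Ncal_J)\leq\cof(\Ncal_J)\leq\dfrak_J(\omegabar),\]
so it suffices to produce, for each regular uncountable $\kappa\leq\lambda$, a maximal ideal $J^\kappa$ with $\bfrak_{J^\kappa}(\omegabar)\geq\kappa$ and $\dfrak_{J^\kappa}(\omegabar)\leq\kappa$; the chain then collapses to $\add(\Ncal_{J^\kappa})=\cof(\Ncal_{J^\kappa})=\kappa$. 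I would work over a ground model $V\models\mathrm{GCH}$ and force with the finite-support product $\Cbb_\lambda=\mathrm{Fn}(\lambda,2)$, fixing once and for all a Borel coding that turns the Cohen real on a block of coordinates into a generic slalom for the space $\omegabar$. Recall that in this model $\add(\Ncal)=\aleph_1$ and $\cof(\Ncal)=\lambda$, i.e.\ $\bfrak_\Fin(\omegabar)=\aleph_1$ and $\dfrak_\Fin(\omegabar)=\lambda$ by \autoref{maincharadd}; the point of the construction is that passing to a suitable maximal $J^\kappa$ lets us lower the (mod $J^\kappa$) localization cofinality from $\lambda$ down to any prescribed regular $\kappa$.

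The combinatorial engine is the following feature of Cohen genericity. A Cohen-generic function $c$ escapes every ground-model slalom $S$ on an infinite set, so mod $\Fin$ it is never localized; however, the capture set $\set{n<\omega}{c(n)\in S(n)}$ is also infinite (and co-infinite), and by genericity one has complete freedom to steer it into or out of an ultrafilter under construction. Dually, a generic slalom $S$ captures each old function $f$ on an infinite set. Thus, although mod $\Fin$ the localization cofinality in the extension is as large as $\lambda$, modulo an ultrafilter $\mathcal{U}$ built to contain the right capture sets one can force a small family of slaloms to localize everything. Controlling which capture sets enter $\mathcal{U}$ is exactly what pins down $\bfrak_{J^\kappa}(\omegabar)$ and $\dfrak_{J^\kappa}(\omegabar)$.

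The construction of $J^\kappa$ (equivalently of its dual ultrafilter $\mathcal{U}^\kappa$) is a Canjar-style recursion~\cite{Canjar} of length $\lambda$. I would fix an increasing continuous filtration $\seqn{A_\xi}{\xi<\kappa}$ of $\lambda$ with union $\lambda$, and from the Cohen coordinates in each block $A_{\xi+1}\setminus A_\xi$ read off a slalom $S_\xi$ generic over $V[G\frestr A_\xi]$. Enumerating in $\lambda$ steps all nice names for functions and for families of fewer than $\kappa$ slaloms, I would build an increasing chain of filters whose union $\mathcal{U}^\kappa$ is an ultrafilter, maintaining two invariants. For the upper bound $\dfrak_{J^\kappa}(\omegabar)\leq\kappa$: every $f\in\baire$ has countable support, hence $f\in V[G\frestr A_\xi]$ for some $\xi<\kappa$ by regularity of $\kappa$ and continuity of the filtration, and the recursion puts a capture set of $f$ against a suitable $S_\eta$ (with $\eta\geq\xi$) into $\mathcal{U}^\kappa$, so the $\kappa$ slaloms $\seqn{S_\xi}{\xi<\kappa}$ localize everything mod $J^\kappa$. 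For the lower bound $\bfrak_{J^\kappa}(\omegabar)\geq\kappa$: any family of fewer than $\kappa$ functions reflects into some $V[G\frestr A_\xi]$ with $\xi<\kappa$, and a later generic slalom captures all of them on a single $\mathcal{U}^\kappa$-large set, so no such family is $\sqsubseteq^{J^\kappa}$-unbounded. Finally I would close off $\mathcal{U}^\kappa$ to a genuine ultrafilter, making $J^\kappa$ maximal.

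The hard part is carrying out the recursion so that the two invariants coexist. Lowering the localization cofinality to $\kappa$ forces $\mathcal{U}^\kappa$ to concentrate on many capture sets, whereas keeping $\bfrak_{J^\kappa}(\omegabar)$ from dropping below $\kappa$ requires that $\mathcal{U}^\kappa$ not localize prematurely, i.e.\ that for each small family there still be a generic function escaping it on a $\mathcal{U}^\kappa$-large set. Reconciling these is precisely where mutual genericity of the Cohen reals across distinct blocks is used: at each of the $\lambda$ steps only countably much of $\mathcal{U}^\kappa$ has been decided, and the as-yet-untouched blocks supply fresh generics giving room to satisfy the next requirement without violating earlier commitments. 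Threading the scale $\seqn{S_\xi}{\xi<\kappa}$ through a cofinal set of stages of order type $\kappa$, together with the reflection argument and the regularity of $\kappa$, is what forces the cofinal type of the localization structure mod $J^\kappa$ to be exactly $\kappa$ rather than $\lambda$.
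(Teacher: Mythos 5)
Your overall route is the same as the paper's: squeeze $\add(\Ncal_{J^\kappa})$ and $\cof(\Ncal_{J^\kappa})$ between $\bfrak_{J^\kappa}(\omegabar)$ and $\dfrak_{J^\kappa}(\omegabar)$ (this is \autoref{omegabarvsNJ}, not \autoref{def:Sbf}), then run a Canjar-style recursion turning the Cohen generics over a continuous $\kappa$-filtration of the index set into a $\subseteq^{J^\kappa}$-increasing, $\subseteq^{J^\kappa}$-dominating chain of length $\kappa$; the paper packages your filtration as the re-indexing $\Cbb_\lambda\cong\Cbb_{\lambda+\kappa}$ and derives the result from the single-step \autoref{cohenlemma} via \autoref{thm:FSit}. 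The one genuinely technical step, however, is exactly the one you dispatch with ``by genericity one has complete freedom to steer'': you must show that the previously built ideal together with \emph{all} sets $\set{k<\omega}{c_k\nsubseteq e_k}$, for $\bar c\in\omegabar$ ranging over the full intermediate model, has the finite union property, so that the next ideal exists at all. This is the content of \autoref{cohenlemma}, and it needs the specific density argument in which a condition carries a measure budget $2^{-(k+1)}$ per block and one chooses a single index $n'\notin a$ far enough out that $\bigcup_{\bar c\in F}c_{n'}$ fits inside that budget for the finitely many $\bar c$ mentioned by the requirement. Without some such argument the recursion could stall, so this needs to be written out.

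Your closing paragraph also describes a tension that is not there and states the $\bfrak$-invariant backwards. To get $\bfrak_{J^\kappa}(\omegabar)\geq\kappa$ you need every family of size $<\kappa$ to be $\subseteq^{J^\kappa}$-\emph{bounded}, i.e.\ captured by a single later generic on a set in the dual filter --- the same ``concentrate on capture sets'' demand as for $\dfrak_{J^\kappa}(\omegabar)\leq\kappa$ --- and not, as you write, that some generic \emph{escape} each small family on a $\mathcal{U}^\kappa$-large set; the latter would be the lower bound $\dfrak_{J^\kappa}(\omegabar)\geq\kappa$, which is free since $\bfrak(\Sbf_J)\leq\dfrak(\Sbf_J)$ always. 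Once this is sorted out there is nothing to reconcile: the single increasing dominating chain of length $\kappa$ yields both inequalities simultaneously (by transitivity of $\subseteq^{J^\kappa}$ and regularity of $\kappa$), which is why the paper's proof of \autoref{thm:FSit} needs no bookkeeping of ``requirements'' at all. For the same reason your enumeration of names for all families of fewer than $\kappa$ slaloms is unnecessary (and may not fit into $\lambda$ steps); the reflection argument you already give does the whole job. The GCH assumption on the ground model is likewise not needed.
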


This shows that there are many different values for the cardinal characteristics associated with different $\Ncal_J$ after adding many Cohen reals. This potentially shows that many of these values can be strictly between $\non(\Mcal)$ and $\cov(\Mcal)$ because, after adding $\lambda$-many Cohen reals, $\non(\Mcal)=\aleph_1$ and $\lambda\leq\cov(\Mcal)$ (and $\cfrak=\lambda$ when $\lambda^{\aleph_0}=\lambda$). See more in \autoref{sec:cons}, specifically item~\ref{modelCohen}. This is inspired in Canjar's result~\cite{Canjar} stating that, after adding $\lambda$ many Cohen reals, for any uncountable regular $\kappa\leq\lambda$ there is some (maximal) ideal $J^\kappa$ such that $\bfrak_{J^\kappa}=\dfrak_{J^\kappa}=\kappa$.

\subsection*{Structure of the paper}
In \autoref{sec:NmodI} we define $\Ncal_J$ and $\Ncal^*_J$, prove their basic properties, the monotonicity with respect to the orders $\leqKB$, $\leqKBpr$ and $\leqRB$, and that $\Ncal_J=\Ncal$ and $\Ncal^*_J=\Ecal$ when $J$ has the Baire property. In \autoref{sec:nBP} we deal with ideals without the Baire property, finish the proof of \autoref{mainBP}, and prove \autoref{mainBP2}. \autoref{sec:near} is devoted to our results related to nearly coherence of ideals, specifically with the proof of \autoref{maincoherence}. \autoref{sec:cardinalityNJ} presents ZFC results about the cardinal characteristics associated with our new ideals, mainly the proof of \autoref{mainZFCcard} and \autoref{maincharadd}, and \autoref{sec:cons} deals with the consistency results and \autoref{mainCohen}.

%Sec7
In addition, we discuss in \autoref{sec:var} the notions of $\Ncal_J$ and $\Ncal^*_J$ in other classical Polish spaces (like $\R$) and some weak version of our ideals. Finally, in \autoref{sec:Q} we summarize some open questions related to this work.

\subsection*{Acknowledgments} The authors thank the~participants of Ko\v sice set-theoretical seminar, especially Jaroslav \v Supina and Miguel Cardona, for very fruitful discussions about this work, and the anonymous referee for the constructive comments.

%%%%%%%%%%%%%%%%%%%%%%%%

\section{Measure zero modulo ideals}\label{sec:NmodI}

We first present some basic notation. In general, by an~\emph{ideal on $M$} we understand a~family $\Jcal\subseteq \pts(M)$ that is hereditary (i.e. $a\in \Jcal$ for any $a \subseteq b\in \Jcal$), closed under finite unions, containing all finite subsets of~$M$ and  such that $M\notin J$.  Let us emphasize that ideals on $\omega$ or on any countable set can not be $\sigma$-ideals. We focus on ideals on $\omega$ and we use the letters $\Jcal$ and $\Kcal$ exclusively to denote such ideals. For $A\subseteq\pts(M)$ we denote 
\[
A^{d}=\set{a\subseteq M}{M\smallsetminus a\in\Acal}.
\]
Recall that $\Fcal\subseteq\pts(M)$ is a~\emph{filter} when $\Fcal^d$ is an~ideal. A~maximal filter $\Ucal\subseteq\pts(M)$ with respect to inclusion is called an~\emph{ultrafilter}. For an~ideal $\Kcal\subseteq\pts(M)$ we denote $\Kcal^{+}=\pts(M)\smallsetminus\Kcal$. One can see that $a\in\Kcal^+$ if and only if $M\smallsetminus a\notin \Kcal^d$.

A set $A\subseteq\pts(M)$ \emph{generates an ideal on $M$} iff it has the so called \emph{finite union property}, i.e. $M\smallsetminus \bigcup C$ is infinite for any finite $C\subseteq A$. In this case, the ideal generated by $A$ is\footnote{Considering that an ideal must contain all finite sets.}
\[\set{a\subseteq M}{a\smallsetminus \bigcup C \text{ is finite for some finite }C\subseteq A}.\]

When $s$ and $t$ are functions (or sequences $s=\la s_i:\, i\in a\ra$ and $t:=\la t_i:\, i\in b\ra$), $s\subseteq t$ means that $s$ extends $t$, i.e. $\dom s\subseteq \dom t$ and $t\frestr \dom s=s$ (or, $a\subseteq b$ and $s_i=t_i$ for all $i\in a$).
We denote by $\mu$ the~Lebesgue measure defined on~the Cantor space ${}^\omega2$, that is, the (completion of the) product measure on ${}^\omega 2=\prod_{n<\omega}2$ where $2=\{0,1\}$ is endowed with the probability measure that sets $\{0\}$ of measure $\frac{1}{2}$. In fact, for any $s\in 2^{<\omega}$, $\mu([s])=2^{-|s|}$ where $|s|$ denotes the length of $s$ and $[s]:=\set{x\in \cantor}{s\subseteq x}$. Recall that $\set{[s]}{s\in 2^{<\omega}}$ is a base of clopen sets of the topology of $\cantor$.
Then
\[
    \mathcal{N}:=\set{A\subseteq {}^\omega2}{\leb{A}=0} \text{ \quad and \quad } \mathcal{M}:=\set{A\subseteq {}^\omega2}{A \text{ is meager}}
\]
are $\sigma$-ideals, i.e.\ the union of any countable subset of the ideal belongs to the ideal.

The ideal $\Ncal$ has a combinatorial characterization in terms of clopen sets. To present this, we fix the following terminology.
We usually write $\bar{c} :=\seqn{c_n}{n\in\omega}$ for sequences of sets.

\begin{definition}\label{def:clpN}
Denote $\Omega:=\set{c\subseteq {}^\omega2}{c \text{ is a~clopen set}}$. For $\varepsilon\colon \omega\to(0,\infty)$, consider the set
\[
\Omega^*_\varepsilon := \set{\bar{c}\in{}^\omega\Omega}{(\forall\, n\in\omega)\ \leb{c_n}\leq \varepsilon_n}.
\]
For each $\bar{c}\in{}^\omega\Omega$ (or in ${}^\omega\pts(\omega)$ in general) denote 
\[
\begin{split}
N(\bar{c}) & :=\bigcap_{m<\omega}\bigcup_{n\geq m}c_n =\set{x\in {}^\omega2}{|\set{n\in\omega}{x\in c_n}|=\aleph_0},\\
N^*(\bar c) & :=\bigcup_{m<\omega}\bigcap_{n\geq m}c_n =\set{x\in\cantor}{|\set{n\in\omega}{x\notin c_n}|<\aleph_0}.
\end{split}
\]
\end{definition}

\begin{fact}[{\cite[Lemma~2.3.10]{BJ}}]\label{basicN2}
Let $\varepsilon:\omega\to(0,\infty)$, and assume that $\sum_{i\in\omega}\varepsilon_i<\infty$. Then:
\begin{enumerate}[label=\rm (\alph*)]
    \item For any $\bar{c}\in\Omega^*_\varepsilon$, $N(\bar{c})\in\mathcal{N}$.
    \item For any $X\subseteq {}^\omega2$, $X\in\mathcal{N}$ iff $(\exists\, \bar{c}\in\Omega^*_\varepsilon)\ X\subseteq N(\bar{c})$.
\end{enumerate}
\end{fact}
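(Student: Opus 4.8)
The plan is to handle the three assertions in increasing order of difficulty, reserving essentially all the work for the forward direction of (b). For (a), observe that $N(\bar c)\subseteq\bigcup_{n\ge m}c_n$ for every $m$, so by countable subadditivity of $\mu$ and $\bar c\in\Omega^*_\varepsilon$ we get $\mu(N(\bar c))\le\sum_{n\ge m}\mu(c_n)\le\sum_{n\ge m}\varepsilon_n$. Since $\sum_n\varepsilon_n<\infty$, the tails $\sum_{n\ge m}\varepsilon_n$ tend to $0$ as $m\to\infty$, whence $\mu(N(\bar c))=0$ and $N(\bar c)\in\Ncal$; this is simply the convergence half of the Borel--Cantelli lemma. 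The backward direction of (b) is then immediate: if $X\subseteq N(\bar c)$ for some $\bar c\in\Omega^*_\varepsilon$, then $X\in\Ncal$ because $\Ncal$ is hereditary and $N(\bar c)\in\Ncal$ by~(a).

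For the forward direction of (b) I must, given $X\in\Ncal$, manufacture $\bar c\in\Omega^*_\varepsilon$ with $X\subseteq N(\bar c)$, that is, a sequence of clopen sets with $\mu(c_n)\le\varepsilon_n$ such that every point of $X$ lies in infinitely many $c_n$. First I would fix a partition $\omega=\bigsqcup_{j\in\omega}P_j$ into infinitely many infinite pieces and set $\sigma_j:=\sum_{n\in P_j}\varepsilon_n$, which is positive since $\varepsilon$ takes values in $(0,\infty)$. The coordinates in $P_j$ will be devoted to a single covering of $X$ of total measure below $\sigma_j$: since $\mu(X)=0<\sigma_j$, choose an open $G_j\supseteq X$ with $\mu(G_j)<\sigma_j$ and write it as a countable union of basic clopen sets $[s]$.

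It then remains, for each $j$ separately, to redistribute the basic clopen pieces of $G_j$ among the coordinates $n\in P_j$ into clopen sets $c_n$ with $\mu(c_n)\le\varepsilon_n$ and $\bigcup_{n\in P_j}c_n\supseteq X$. Here I use two degrees of freedom: a basic clopen set splits into its two clopen halves $[s{}^\frown0]$ and $[s{}^\frown1]$ of half the measure, so the pieces may be refined as small as needed; and $P_j$ is infinite, so one covering of $X$ may be spread over infinitely many coordinates. A routine greedy bin-packing argument, using that $\mu(G_j)$ is strictly below the total budget $\sigma_j=\sum_{n\in P_j}\varepsilon_n$ and that pieces can be subdivided to fit any positive remaining budget, assigns finitely many refined pieces to each coordinate $n\in P_j$ so that $\mu(c_n)\le\varepsilon_n$ while every piece is used; keeping the number of pieces per coordinate finite guarantees each $c_n$ is clopen, and using every piece guarantees $\bigcup_{n\in P_j}c_n\supseteq G_j\supseteq X$. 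With $\bar c=\seqn{c_n}{n\in\omega}$ so defined we have $\bar c\in\Omega^*_\varepsilon$, and each $x\in X$ lies, for every $j$, in some $c_n$ with $n\in P_j$; as the $P_j$ are pairwise disjoint, this places $x$ in infinitely many $c_n$, i.e.\ $x\in N(\bar c)$, so $X\subseteq N(\bar c)$ as required.

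I expect the packing step to be the only genuine obstacle. The naive wish is to cover $X$ inside each level by finitely many clopen sets, but this fails for non-compact $X$ (for instance a dense measure zero set is contained in no proper clopen set), so the covering of each level must truly use infinitely many coordinates. Reconciling this with the prescribed, summable---hence eventually very small---budgets $\varepsilon_n$ is exactly what forces the combination of subdividing clopen sets and spreading each level over an infinite index set $P_j$.
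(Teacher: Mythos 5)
The paper does not prove this statement at all: it is quoted verbatim from \cite[Lemma~2.3.10]{BJ}, so there is no in-paper argument to compare against. Judged on its own terms, your proof of (a) (Borel--Cantelli) and of the backward direction of (b) is correct, and your overall plan for the forward direction --- partition $\omega$ into infinitely many infinite blocks $P_j$, cover $X$ once per block by an open set of small measure, decompose that open set into pairwise disjoint basic clopen sets, and distribute those among the coordinates of $P_j$ --- is exactly the standard route.

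The gap is in the distribution step, which you dismiss as ``a routine greedy bin-packing argument.'' As described, the greedy procedure does not obviously satisfy both of your requirements (finitely many pieces per coordinate, and every piece used). Two concrete failure modes: (i) if at some stage the remaining pieces are numerous and tiny, with total measure below the current coordinate's remaining budget, the greedy happily places \emph{infinitely} many whole pieces into that single coordinate, destroying clopenness of $c_n$; (ii) if instead you close each coordinate early to keep it finite, you waste part of its budget, and without quantifying the total wastage against the slack $\sigma_j-\mu(G_j)$ you cannot conclude that every piece eventually gets placed. Both issues are reparable, but the repair is the actual content of the step. The cleanest fix avoids subdivision entirely: enumerate $P_j=\{m_0<m_1<\cdots\}$, choose $G_j\supseteq X$ open with $\mu(G_j)\le\varepsilon_{m_0}/2$, write $G_j=\bigsqcup_k[t_k]$ with $\sum_k\mu([t_k])=\mu(G_j)$, and let $k_i$ be the least $k$ with $\sum_{k'\ge k}\mu([t_{k'}])\le\frac12\min\{\varepsilon_{m_0},\dots,\varepsilon_{m_i}\}$. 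Since $\sum_n\varepsilon_n<\infty$ forces $\varepsilon_{m_i}\to0$, the $k_i$ are finite, non-decreasing, and tend to $\infty$; setting $c_{m_i}:=\bigcup_{k_i\le k<k_{i+1}}[t_k]$ gives finite (hence clopen) unions with $\mu(c_{m_i})\le\sum_{k\ge k_i}\mu([t_k])\le\varepsilon_{m_i}$ and $\bigcup_i c_{m_i}=G_j\supseteq X$. With this substituted for the bin-packing paragraph, your proof is complete.
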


The analog of \autoref{basicN2} using $N^*(\bar c)$ becomes the characterization of $\Ecal$, the $\sigma$-ideal generated by the closed measure zero subsets of $\cantor$.

\begin{fact}\label{basicE}
Let $\varepsilon:\omega\to(0,\infty)$ and assume that $\liminf_{i\to\infty}\varepsilon_i=0$. Then:
\begin{enumerate}[label=\rm (\alph*)]
    \item\label{it:inE} For any $\bar{c}\in\Omega^*_\varepsilon$, $N^*(\bar{c})\in\mathcal{E}$.
    \item\label{it:charE} For any $X\subseteq {}^\omega2$, $X\in\mathcal{E}$ iff $(\exists\, \bar{c}\in\Omega^*_\varepsilon)\ X\subseteq N^*(\bar{c})$.
\end{enumerate}
\end{fact}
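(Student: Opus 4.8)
The plan is to mirror the proof of \autoref{basicN2}, replacing the ``infinitely often'' data captured by $N(\bar c)$ with the ``eventually in'' data captured by $N^*(\bar c)$, and to exploit that closed null sets in $\cantor$ can be covered by clopen sets of arbitrarily small measure.

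For item~\ref{it:inE}, I would fix $\bar c\in\Omega^*_\varepsilon$ and set $F_m:=\bigcap_{n\geq m}c_n$ for each $m<\omega$. Each $c_n$ is clopen, hence closed, so every $F_m$ is closed. Since $\liminf_{i\to\infty}\varepsilon_i=0$ and the sequence $\seqn{\inf_{n\geq m}\varepsilon_n}{m<\omega}$ is non-decreasing with supremum equal to this liminf, we get $\inf_{n\geq m}\varepsilon_n=0$ for every $m$; hence $\leb{F_m}\leq\inf_{n\geq m}\leb{c_n}\leq\inf_{n\geq m}\varepsilon_n=0$. Thus each $F_m$ is a closed measure zero set and $N^*(\bar c)=\bigcup_{m<\omega}F_m\in\Ecal$. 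The right-to-left implication of item~\ref{it:charE} is then immediate: if $X\subseteq N^*(\bar c)$ for some $\bar c\in\Omega^*_\varepsilon$, then $X\in\Ecal$ because $\Ecal$ is hereditary and $N^*(\bar c)\in\Ecal$ by item~\ref{it:inE}.

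For the left-to-right implication, suppose $X\in\Ecal$, so $X\subseteq\bigcup_{k<\omega}D_k$ for some closed measure zero sets $D_k$. Replacing $D_k$ by $\bigcup_{j\leq k}D_j$, I may assume the sets $C_k:=\bigcup_{j\leq k}D_j$ are closed, null and increasing, with $X\subseteq\bigcup_{k<\omega}C_k$. The key (routine) observation is that a closed measure zero subset $C\subseteq\cantor$ can be covered by a clopen set of arbitrarily small measure: given $\delta>0$, pick an open $U\supseteq C$ with $\leb{U}<\delta$, write $U$ as a union of basic clopen sets, and use compactness of $C$ to extract a finite subcover, whose union is clopen of measure $<\delta$. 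Applying this with $\delta=\varepsilon_n$ (recall $\varepsilon_n>0$) to $C_n$, I choose for each $n$ a clopen $c_n\supseteq C_n$ with $\leb{c_n}\leq\varepsilon_n$, so that $\bar c:=\seqn{c_n}{n<\omega}\in\Omega^*_\varepsilon$. Finally, if $x\in X$ then $x\in C_{k_0}$ for some $k_0$, and since the $C_k$ increase, $x\in C_{k_0}\subseteq C_n\subseteq c_n$ for all $n\geq k_0$; hence $\set{n<\omega}{x\notin c_n}\subseteq k_0$ is finite and $x\in N^*(\bar c)$. Therefore $X\subseteq N^*(\bar c)$, as required.

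I expect the forward direction of~\ref{it:charE} to be the only genuine work, and within it the single delicate point is the clopen covering of a closed null set. It is worth noting that, unlike~\ref{it:inE}, this direction uses only $\varepsilon_n>0$ and not the hypothesis $\liminf_{i\to\infty}\varepsilon_i=0$; the liminf hypothesis is exactly what forces the sets $F_m$ to be null in~\ref{it:inE}, which is where it is indispensable.
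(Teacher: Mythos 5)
Your proof is correct and follows essentially the same route as the paper's: part~\ref{it:inE} via the tail intersections $\bigcap_{n\geq m}c_n$ being closed and null thanks to the liminf hypothesis, and the forward direction of~\ref{it:charE} by covering an increasing sequence of closed null sets with clopen sets of measure below $\varepsilon_n$ using compactness. Your extra remarks (explicitly making the sequence increasing, and observing that the liminf hypothesis is only needed for~\ref{it:inE}) are accurate but do not change the argument.
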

\begin{proof}
Item~\ref{it:inE} is clear because, for any $n\in\omega$, $\bigcap_{m\geq n}c_m$ is closed and, since $\liminf\limits_{i\to\infty}\varepsilon_i=0$ and $\bar c\in\Omega^*_\varepsilon$, it has measure zero.

For~\ref{it:charE}, the implication $\Leftarrow$ is clear by~\ref{it:inE}. To see $\Rightarrow$, let $X\in\Ecal$, i.e. $X\subseteq \bigcup_{n<\omega}F_n$ for some increasing sequence $\la F_n:\, n<\omega\ra$ of closed measure zero sets. For each $n<\omega$, we can cover $F_n$ with countably many basic clopen sets $[s_{n,k}]$ ($k<\omega$) such that $\sum\limits_{k<\omega}\mu([s_{n,k}])<\varepsilon_n$, but by compactness only finitely many of them cover $F_n$, so $F_n\subseteq c_n:=\bigcup_{k<m_n}[s_{n,k}]$ for some $m_n<\omega$, and $\mu(c_n)<\varepsilon_n$. Then $\bar c:=\la c_n:\, n<\omega\ra$ is as required.
\end{proof}

Motivated by the~combinatorial characterization of $\Ncal$ and $\Ecal$ presented in \autoref{basicN2} and~\ref{basicE}, we introduce a smooth modification of these via ideals on~$\omega$. To start, we fix more terminology and strengthen the previous characterizations.

\begin{definition}\label{def:omegabar}
Denote $\omegabar :=\set{\bar{c}\in{}^\omega\Omega}{ N(\bar{c})\in \Nideal}$.
\end{definition}

By \autoref{basicN2}~\ref{it:inE} we have that $\Omega^*_\varepsilon\subseteq\omegabar$ whenever $\varepsilon\colon\omega\to(0,\infty)$ and $\sum_{i<\omega}\varepsilon_i<\infty$. Hence, as a direct consequence of \autoref{basicN2}:

\begin{fact}\label{basicNE}
For any $X\subseteq\cantor$, $X\in\Ncal$ iff $X\subseteq N(\bar c)$ for some $\bar c\in\omegabar$.
\end{fact}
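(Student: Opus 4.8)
The plan is to read this off directly from \autoref{basicN2} together with the inclusion $\Omega^*_\varepsilon\subseteq\omegabar$ (valid for any summable $\varepsilon$) that was just recorded above the statement. Since $\omegabar$ is by \autoref{def:omegabar} nothing but the collection of $\bar c\in{}^\omega\Omega$ whose $N(\bar c)$ is already null, the whole content of the equivalence lies in knowing that the witnesses $\bar c$ produced by \autoref{basicN2} can be taken inside $\omegabar$, and conversely that membership in $\Ncal$ is inherited by subsets.

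For the implication $\Leftarrow$, I would argue as follows. Suppose $X\subseteq N(\bar c)$ for some $\bar c\in\omegabar$. By the defining property of $\omegabar$ we have $N(\bar c)\in\Ncal$, and since $\Ncal$ is an ideal (in particular hereditary), every subset of $N(\bar c)$ is null; hence $X\in\Ncal$. Note this direction does not even use \autoref{basicN2}, only \autoref{def:omegabar} and heredity of $\Ncal$.

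For the implication $\Rightarrow$, I would first fix a summable weight sequence, say $\varepsilon_n:=2^{-n}$, so that $\sum_{i<\omega}\varepsilon_i<\infty$ and thus $\Omega^*_\varepsilon\subseteq\omegabar$ by the inclusion noted above. Given $X\in\Ncal$, \autoref{basicN2}\,(b) (applied to this $\varepsilon$) provides $\bar c\in\Omega^*_\varepsilon$ with $X\subseteq N(\bar c)$. Since $\Omega^*_\varepsilon\subseteq\omegabar$, this $\bar c$ already lies in $\omegabar$, which is exactly what is required.

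There is no genuine obstacle here: the statement is a repackaging of \autoref{basicN2} in the language of \autoref{def:omegabar}, and the only real choice is to fix \emph{some} absolutely summable $\varepsilon$ so that the witnesses supplied by \autoref{basicN2}\,(b) automatically satisfy $N(\bar c)\in\Ncal$. The point of isolating this as \autoref{basicNE} is presumably that it reformulates $\Ncal$ using the single parameter-free family $\omegabar$ rather than a fixed $\Omega^*_\varepsilon$, which is the form needed for the subsequent definition of $\Ncal_J$.
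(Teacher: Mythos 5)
Your proposal is correct and is essentially the paper's own argument: the paper derives \autoref{basicNE} as a direct consequence of \autoref{basicN2} after observing that $\Omega^*_\varepsilon\subseteq\omegabar$ for any summable $\varepsilon$, with the backward direction coming from the definition of $\omegabar$ and heredity of $\Ncal$, exactly as you spell out. No differences worth noting.
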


We also have the analogous version of $\Ecal$. Before stating it, we characterize $\omegabar$ as follows.

\begin{lemma}\label{chomegabar}
 For any sequence $\bar c=\la c_i:\, i<\omega\ra$, the following statements are equivalent.
 \begin{enumerate}[label=\rm (\roman*)]
     \item\label{it:inombar} $\bar c\in\omegabar$.
     \item\label{it:borelch} $(\forall\, \varepsilon\in \Q^+)\ (\exists\, N<\omega)\ (\forall\, n\geq N)\  \mu\left(\bigcup_{N\leq i<n}c_i\right)<\varepsilon$
     \item\label{it:anypart} For any $\varepsilon\colon \omega\to(0,\infty)$ there is some interval partition $\bar I=\la I_n:\, n<\omega\ra$ of $\omega$ such that $\mu\left(\bigcup_{i\in I_n}c_i\right)<\varepsilon_n$ for all $n>0$.
     \item\label{it:expart} There is an interval partition $\bar I=\la I_n:\, n<\omega\ra$ of $\omega$ such that \[\sum_{n<\omega}\mu\left(\bigcup_{i\in I_n}c_i\right)<\infty.\]
 \end{enumerate}
\end{lemma}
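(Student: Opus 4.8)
The plan is to prove the equivalences as a cycle, treating the pair \ref{it:inombar}$\iff$\ref{it:borelch} separately. The central observation is a continuity-from-above computation: writing $A_m := \bigcup_{n \geq m} c_i$ (i.e.\ $A_m=\bigcup_{n\geq m}c_n$), the sets $A_m$ decrease to $N(\bar c) = \bigcap_m A_m$, and since $\mu(A_0) \leq 1 < \infty$ we get $\mu(A_m) \to \mu(N(\bar c))$ as $m\to\infty$. Hence \ref{it:inombar} is equivalent to the single condition $\lim_{m \to \infty} \mu(A_m) = 0$, and I would phrase everything in terms of this.

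For \ref{it:inombar}$\iff$\ref{it:borelch}, I would note that $\bigcup_{N \leq i < n} c_i$ increases to $A_N$ as $n \to \infty$, so the requirement in \ref{it:borelch} that $\mu(\bigcup_{N \leq i < n} c_i) < \varepsilon$ hold for all $n \geq N$ is equivalent, after letting $n\to\infty$, to $\mu(A_N) \leq \varepsilon$. Thus \ref{it:borelch} asserts that $\mu(A_N)$ can be made arbitrarily small by taking $N$ large which, because the $A_m$ are decreasing, is exactly $\lim_m \mu(A_m) = 0$, i.e.\ \ref{it:inombar}. Restricting $\varepsilon$ to $\Q^+$ costs nothing since an arbitrary positive real can be bounded below by a positive rational.

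For \ref{it:inombar}$\Rightarrow$\ref{it:anypart}, given $\varepsilon \colon \omega \to (0,\infty)$ I would build the interval partition $I_n = [k_n, k_{n+1})$ recursively. The key point is that $\bigcup_{i \in I_n} c_i \subseteq A_{k_n}$, so it suffices to arrange $\mu(A_{k_n}) < \varepsilon_n$ for every $n > 0$; since $\mu(A_m) \to 0$ and the $A_m$ decrease, I set $k_0 = 0$ and pick each $k_n > k_{n-1}$ large enough that $\mu(A_{k_n}) < \varepsilon_n$. Then \ref{it:anypart} holds automatically, regardless of how the intervals are later closed off.

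Finally, \ref{it:anypart}$\Rightarrow$\ref{it:expart} is immediate by applying \ref{it:anypart} to a summable sequence such as $\varepsilon_n = 2^{-n}$ (the single $n=0$ block contributes at most $1$). For \ref{it:expart}$\Rightarrow$\ref{it:inombar} I would set $d_n := \bigcup_{i \in I_n} c_i$ and invoke the Borel--Cantelli lemma: $\sum_n \mu(d_n) < \infty$ forces $N(\bar d) = \bigcap_m \bigcup_{n \geq m} d_n$ to be null. Since each $I_n$ is a finite interval, any $x$ lying in infinitely many $c_i$ must lie in infinitely many of the blocks $d_n$, giving $N(\bar c) \subseteq N(\bar d) \in \Ncal$ and hence \ref{it:inombar}. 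The only points needing care are the two containments, namely $\bigcup_{i \in I_n} c_i \subseteq A_{k_n}$ in the construction and $N(\bar c) \subseteq N(\bar d)$ at the end, together with the citations of continuity from above and Borel--Cantelli; I expect no genuine obstacle.
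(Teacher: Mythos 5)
Your proposal is correct and follows essentially the same route as the paper: the engine in both is continuity of measure applied to the decreasing tail unions $A_m=\bigcup_{i\geq m}c_i$ (so that \ref{it:inombar} becomes $\lim_m\mu(A_m)=0$), a recursive choice of cut points for \ref{it:anypart}, and the observation that blocking into finite intervals sends $N(\bar c)$ into the null set $N(\bar d)$, with your explicit Borel--Cantelli invocation playing the role of the paper's citation of \autoref{basicN2}. The only differences are organizational (you prove \ref{it:inombar}$\iff$\ref{it:borelch} directly and route \ref{it:inombar}$\Rightarrow$\ref{it:anypart} instead of \ref{it:borelch}$\Rightarrow$\ref{it:anypart}), which changes nothing of substance.
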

\begin{proof}
$\ref{it:inombar} \imp \ref{it:borelch}$: Let $\varepsilon$ be a positive rational number. Since $\bar c\in\omegabar$, $\mu(N(\bar c))=0$, so $\lim\limits_{n\to\infty}\mu\left(\bigcup_{i\geq n}c_i\right)=0$. Then $\mu\left(\bigcup_{i\geq N}c_i\right)<\varepsilon$ for some $N<\omega$, which clearly implies that $\mu\left(\bigcup_{N\leq i<n}c_i\right)<\varepsilon$ for all $n\geq N$.

$\ref{it:borelch}\imp\ref{it:anypart}$: Let $\varepsilon\colon \omega\to(0,\infty)$. Using~\ref{it:borelch}, by recursion on $n<\omega$ we define an increasing sequence $\la m_n:\, n<\omega\ra$ with $m_0=0$ such that $\mu\left(\bigcup_{m_{n+1}\leq i<k}c_i\right)<\varepsilon_{n+1}$ for all $k\geq m_{n+1}$. Then $I_n:=[m_n,m_{n+1})$ is as required.

$\ref{it:anypart}\imp\ref{it:expart}$: Apply~\ref{it:anypart} to $\varepsilon_n:=2^{-n}$.

$\ref{it:expart}\imp\ref{it:inombar}$: Choose $\bar I$ as in~\ref{it:expart}, and let $c'_n:=\bigcup_{i\in I_n}c_i$. It is clear that $N(\bar c)=N(\bar c')$ and $\bar c'\in\Omega^*_\varepsilon$ where $\varepsilon\colon\omega\to(0,\infty)$, $\varepsilon_n:=\mu(c'_n)+2^{-n}$. Since $\sum_{n<\omega}\mu(c'_n)<\infty$, by \autoref{basicN2} we obtain that $N(\bar c)=N(\bar c')\in\Ncal$. Thus $\bar c\in\omegabar$.
\end{proof}

As a consequence of \autoref{chomegabar}~\ref{it:borelch}, considering $\Omega$ as a countable discrete space:

\begin{corollary}\label{cor:omegabarBorel}
  The set $\omegabar$ is Borel in ${}^\omega\Omega$.
\end{corollary}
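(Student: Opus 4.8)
The plan is to deduce Borelness of $\omegabar$ directly from the equivalence $\ref{it:inombar}\Leftrightarrow\ref{it:borelch}$ in \autoref{chomegabar}, by exhibiting the condition in~\ref{it:borelch} as a countable Boolean combination of clopen conditions on the space ${}^\omega\Omega$, where $\Omega$ carries the discrete topology. Since $\Omega$ is countable and discrete, a point of ${}^\omega\Omega$ is just a sequence $\bar c=\la c_i:\,i<\omega\ra$ of clopen sets, and the basic open sets of ${}^\omega\Omega$ are determined by fixing finitely many coordinates $c_{i_0},\dots,c_{i_k}$ to prescribed clopen values. First I would observe that for any fixed $N\leq i<n$ the map $\bar c\mapsto c_i$ depends only on the single coordinate $i$, hence is continuous; consequently for fixed $N<n$ the finite union $\bigcup_{N\leq i<n}c_i$ is a function of finitely many coordinates, and the predicate $\mu\bigl(\bigcup_{N\leq i<n}c_i\bigr)<\varepsilon$ partitions ${}^\omega\Omega$ according to the values of those finitely many coordinates into a clopen set.

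The key step is therefore to write out~\ref{it:borelch} quantifier by quantifier and track the resulting Borel hierarchy. Concretely, for fixed rational $\varepsilon>0$ and fixed $N<\omega$, the set
\[
A_{\varepsilon,N} := \set{\bar c\in{}^\omega\Omega}{(\forall\, n\geq N)\ \mu\Bigl(\bigcup_{N\leq i<n}c_i\Bigr)<\varepsilon}
\]
is a countable intersection over $n\geq N$ of clopen sets, hence closed (indeed $G_\delta$, so Borel). Then~\ref{it:borelch} states that $\bar c\in\omegabar$ iff
\[
\bar c\in\bigcap_{\varepsilon\in\Q^+}\bigcup_{N<\omega}A_{\varepsilon,N},
\]
which is a countable intersection (over the countable set $\Q^+$) of countable unions of Borel sets, hence Borel. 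I would spell out that each $\bigcup_{N<\omega}A_{\varepsilon,N}$ is $F_{\sigma\delta}$ and the final intersection over $\Q^+$ keeps us inside the Borel $\sigma$-algebra, so $\omegabar$ is Borel.

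The only point that needs a word of care is the claim that each predicate $\mu\bigl(\bigcup_{N\leq i<n}c_i\bigr)<\varepsilon$ is clopen, i.e.\ depends continuously on $\bar c$. This is immediate once one notes that, with $\Omega$ discrete, evaluation of finitely many coordinates is locally constant, so the measure of the finite union $\bigcup_{N\leq i<n}c_i$ is a locally constant real-valued function of $\bar c$; the sublevel set $\{\bar c : \mu(\cdot)<\varepsilon\}$ is then a union of basic clopen cylinders and hence clopen. I do not anticipate a genuine obstacle here; the whole statement is a routine descriptive-set-theoretic bookkeeping on top of the arithmetical characterization~\ref{it:borelch} already established in \autoref{chomegabar}, and the real content of the corollary is precisely that this characterization is \emph{arithmetic} in the coordinates rather than referring to the uncountable object $N(\bar c)$ directly.
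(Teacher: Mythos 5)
Your proof is correct and follows exactly the route the paper intends: the paper's own justification is simply the remark that Borelness follows from \autoref{chomegabar}~\ref{it:borelch} once $\Omega$ is given the discrete topology, and you have merely written out the routine bookkeeping (each condition $\mu\bigl(\bigcup_{N\leq i<n}c_i\bigr)<\varepsilon$ is clopen since it depends on finitely many coordinates, and $\omegabar$ is then a countable intersection of countable unions of countable intersections of such sets). The only cosmetic slip is calling $\bigcup_{N<\omega}A_{\varepsilon,N}$ an $F_{\sigma\delta}$ set when it is in fact $F_\sigma$; this does not affect the conclusion.
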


\begin{lemma}\label{charE}
  The ideal $\Ecal$ is characterized as follows.
  \begin{enumerate}[label=\rm (\alph*)]
      \item\label{it:N*c} $N^*(\bar c)\in\Ecal$ for any $\bar c\in\omegabar$.
      \item\label{it:Ech} For $X\subseteq\cantor$, $X\in\Ecal$ iff $(\exists\, \bar c\in\omegabar)\ X\subseteq N^*(\bar c)$.
  \end{enumerate}
\end{lemma}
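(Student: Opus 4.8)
The plan is to reduce everything to the two facts already available, \autoref{basicN2} and \autoref{basicE}. The only genuinely new observation required is that membership in $\omegabar$ forces $\mu(c_n)\to 0$; from this, part~\ref{it:N*c} follows at once, and part~\ref{it:Ech} becomes a matter of choosing the weight $\varepsilon$ so that the hypotheses of both facts hold simultaneously.

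For~\ref{it:N*c}, fix $\bar c=\la c_i:\, i<\omega\ra\in\omegabar$, so $\mu(N(\bar c))=0$. First I would use that the sets $\bigcup_{n\geq m}c_n$ decrease as $m$ grows and that $\mu$ is finite, so by continuity from above $\mu(N(\bar c))=\lim_{m}\mu\!\left(\bigcup_{n\geq m}c_n\right)$; hence this limit is $0$, and since $\mu(c_m)\leq\mu\!\left(\bigcup_{n\geq m}c_n\right)$ we obtain $\lim_m\mu(c_m)=0$. Now write $N^*(\bar c)=\bigcup_{m<\omega}F_m$ with $F_m:=\bigcap_{n\geq m}c_n$. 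Each $F_m$ is closed (an intersection of clopen sets), and $F_m\subseteq c_k$ for every $k\geq m$, so $\mu(F_m)\leq\inf_{k\geq m}\mu(c_k)=0$. Thus $N^*(\bar c)$ is a countable union of closed measure zero sets and therefore lies in $\Ecal$.

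For~\ref{it:Ech}, the implication $\Leftarrow$ is immediate: if $X\subseteq N^*(\bar c)$ for some $\bar c\in\omegabar$, then $N^*(\bar c)\in\Ecal$ by~\ref{it:N*c}, and since $\Ecal$ is hereditary, $X\in\Ecal$. For $\Rightarrow$, the idea is to invoke \autoref{basicE}\ref{it:charE} with a \emph{summable} weight, say $\varepsilon_i:=2^{-i}$. As $\liminf_{i}\varepsilon_i=0$, that fact produces some $\bar c\in\Omega^*_\varepsilon$ with $X\subseteq N^*(\bar c)$. The key point is that this same $\varepsilon$ also satisfies $\sum_i\varepsilon_i<\infty$, so part~(a) of \autoref{basicN2} applies to $\bar c\in\Omega^*_\varepsilon$ and yields $N(\bar c)\in\Ncal$, that is, $\bar c\in\omegabar$; hence the witness already lies in $\omegabar$, as needed.

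The step deserving care---and the one I would flag as the crux---is precisely this choice of $\varepsilon$ in $\Rightarrow$: the two facts have mismatched hypotheses ($\liminf_i\varepsilon_i=0$ for \autoref{basicE} against $\sum_i\varepsilon_i<\infty$ for \autoref{basicN2} and for membership in $\omegabar$), and selecting a summable sequence is exactly what makes both applicable simultaneously. Everything else is routine; in particular, part~\ref{it:N*c} could instead be extracted from \autoref{chomegabar}\ref{it:expart} by grouping the $c_i$ along an interval partition, but the direct continuity-from-above argument seems shorter.
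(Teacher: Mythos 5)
Your proof is correct and follows essentially the same route as the paper: part~\ref{it:N*c} is the argument of \autoref{basicE}\ref{it:inE} once one notes that $\bar c\in\omegabar$ forces $\mu(c_i)\to 0$ (you derive this by continuity from above, the paper cites \autoref{chomegabar}\ref{it:borelch} — same fact), and part~\ref{it:Ech} combines \ref{it:N*c} with \autoref{basicE} using a summable weight so that the witness lands in $\Omega^*_\varepsilon\subseteq\omegabar$. Your identification of the summable-$\varepsilon$ choice as the crux is exactly the point the paper leaves implicit.
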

\begin{proof}
\ref{it:N*c} is proved similarly as \autoref{basicE}~\ref{it:inE}, noting that $\bar c\in\omegabar$ implies that $\lim_{i\to\infty}\mu(c_i)=0$ (by \autoref{chomegabar}~\ref{it:borelch}). \ref{it:Ech} follows by~\ref{it:N*c} and \autoref{basicE}.
\end{proof}

We use this characterization to introduce the promised generalized versions of $\Ncal$ and $\Ecal$.
Consider the ideal $\Fin$ of finite subsets of $\omega$. For $\bar c\in{}^\omega\Omega$ and $x\in\cantor$, note that,
\begin{linenomath}
\begin{align*}
    x\in N(\bar c) & \sii \set{n\in\omega}{x\in c_n}\in\Fin^+,\\
    x\in N^*(\bar c) & \sii \set{n\in\omega}{x\in c_n}\in\Fin^d.
\end{align*}
\end{linenomath}
Replacing $\Fin$ by an arbitrary ideal on $\omega$, we obtain the following notion.

\begin{definition}\label{def:NJ}
Fix an ideal $\Jcal$ on $\omega$. For $\bar c\in{}^\omega\Omega$, define
\begin{linenomath}
\begin{align*}
 N_{\Jcal}(\bar{c})&:=\set{x\in {}^\omega2}{\set{n\in\omega}{x\in c_n}\in\Jcal^+},\\
 N^*_{\Jcal}(\bar{c})&:=\set{x\in {}^\omega2}{\set{n\in\omega}{x\in c_n}\in \Jcal^d}.
\end{align*}
\end{linenomath}
These sets are used to define the families
\begin{linenomath}
\begin{align}\label{Nj2}
    \NidealJ{\Jcal}& := \set{X\subseteq {}^\omega2}{(\exists\, \bar{c}\in \omegabar)\ X\subseteq N_{J}(\bar{c})},\\ \label{Nstarj2}
     \Ncal^*_{\Jcal}&:=\set{X\subseteq {}^\omega2}{(\exists\, \bar{c}\in\omegabar)\ X\subseteq N^*_{\Jcal}(\bar{c})}.
\end{align}
\end{linenomath}
We say that the members of $\Ncal_\Jcal$ \emph{have measure zero (or are null) modulo $\Jcal$}.
\end{definition}

Due to \autoref{basicNE} and~\autoref{charE}, we obtain $\NidealJ{\fin}=\Ncal$ and $\Ecal=\Ncal^*_{\Fin}$. %In~general,
Moreover, one can easily see that, if $\Jcal\subseteq\Kcal$ are ideals on $\omega$, then 
\begin{equation}\label{eq:cont}
\Ncal^*_{\Jcal}\ \subseteq\ \Ncal^*_{\Kcal}\ \subseteq\ \Ncal_\Kcal\ \subseteq\ \Ncal_\Jcal.    
\end{equation}
In particular, we obtain 
\begin{equation}\label{eq:contfin}
\Ecal=\Ncal^*_\fin\subseteq\Ncal^*_{\Jcal}\subseteq\Ncal_\Jcal\subseteq\Ncal_\fin=\Ncal.    
\end{equation}
Furthermore, if $\Jcal$ is a~maximal ideal (i.e.\ its dual $\Jcal^d$ is an~ultrafilter), then $\NidealJ{\Jcal}=\NstaridealJ{\Jcal}$.

We can prove that, indeed, both $\Ncal_\Jcal$ and $\Ncal^*_\Jcal$ are $\sigma$-ideals on the reals, exactly as the original notions.

\begin{lemma}\label{NJsigma2}
Let $\Jcal$ be an ideal on~$\omega$. Then both
$\NidealJ{\Jcal}$ and $\NstaridealJ{\Jcal}$ are $\sigma$-ideals on~$\cantor$.
\end{lemma}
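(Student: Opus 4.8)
The plan is to verify the $\sigma$-ideal axioms separately for $\NidealJ{\Jcal}$ and $\NstaridealJ{\Jcal}$, reducing everything to the behaviour of the sets $N_\Jcal(\bar c)$ and $N^*_\Jcal(\bar c)$ under the combinatorial operations on sequences $\bar c\in\omegabar$. The three things to check are: (1) heredity, which is immediate since both families are defined by the clause $X\subseteq N_\Jcal(\bar c)$ (resp.\ $X\subseteq N^*_\Jcal(\bar c)$) for some $\bar c\in\omegabar$, so any subset of a member uses the same witness; (2) that $\cantor\notin\NidealJ{\Jcal}$ (and likewise for the starred ideal), which follows from \eqref{eq:contfin} since $\NidealJ{\Jcal}\subseteq\Ncal$ and $\cantor\notin\Ncal$; and (3) closure under countable unions, which is the only substantial point.

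\emph{Closure under countable unions.} Suppose $X_k\in\NidealJ{\Jcal}$ for each $k<\omega$, witnessed by $\bar c^k=\seqn{c^k_i}{i<\omega}\in\omegabar$, so $X_k\subseteq N_\Jcal(\bar c^k)$. The idea is to interleave the sequences $\bar c^k$ into a single sequence $\bar c\in\omegabar$ so that $\bigcup_k N_\Jcal(\bar c^k)\subseteq N_\Jcal(\bar c)$. First I would use \autoref{chomegabar}\ref{it:expart} to choose, for each $k$, an interval partition witnessing $\sum_n\mu\bigl(\bigcup_{i\in I^k_n}c^k_i\bigr)<\infty$; after passing to tails I may assume each $\bar c^k$ already satisfies $\sum_i\mu(c^k_i)\le 2^{-k}$ by the argument in the proof of \autoref{chomegabar}\ref{it:expart}\,$\imp$\,\ref{it:inombar} (replacing $\bar c^k$ by the coarsened sequence $c'^k_n=\bigcup_{i\in I^k_n}c^k_i$, which has the same $N$-value). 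Then I would enumerate the doubly-indexed family $\set{c^k_i}{k,i<\omega}$ into a single sequence $\bar c$ by any bijection $\omega\cong\omega\times\omega$. Since $\sum_{k,i}\mu(c^k_i)\le\sum_k 2^{-k}<\infty$, we get $\bar c\in\omegabar$ directly from \autoref{basicN2} (or \autoref{chomegabar}). The key point to justify is the inclusion $N_\Jcal(\bar c^k)\subseteq N_\Jcal(\bar c)$ for each $k$: if $x\in N_\Jcal(\bar c^k)$ then $\set{i}{x\in c^k_i}\in\Jcal^+$, and under the bijection this is an infinite subset sitting inside $\set{m}{x\in c_m}$; since $\Jcal$ is an ideal, $\Jcal^+$ is upward closed, so $\set{m}{x\in c_m}\in\Jcal^+$ as well, i.e.\ $x\in N_\Jcal(\bar c)$. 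Hence $\bigcup_k X_k\subseteq N_\Jcal(\bar c)\in\NidealJ{\Jcal}$.

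\emph{The starred case.} For $\NstaridealJ{\Jcal}$ the same interleaving strategy needs adjustment, because $N^*_\Jcal$ involves the \emph{dual filter} $\Jcal^d$, which is \emph{not} upward closed under enlarging $\set{m}{x\in c_m}$; in fact the naive union makes each coordinate appear in fewer relative positions, so I expect this to be the main obstacle. The right move is to pass to a coarsening rather than an interleaving: using \autoref{chomegabar}\ref{it:anypart} I would choose a single interval partition $\bar I=\seqn{I_n}{n<\omega}$ of $\omega$ fine enough to work simultaneously for all $\bar c^k$, then set $c_n:=\bigcup_{k\le n}\bigcup_{i\in I_n}c^k_i$ (a finite union of clopen sets, hence clopen), arranging by a diagonal choice of the $I_n$ that $\sum_n\mu(c_n)<\infty$ so that $\bar c\in\omegabar$. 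One then checks that if $x\in N^*_\Jcal(\bar c^k)$, meaning $\set{i}{x\in c^k_i}\in\Jcal^d$, then for all large $n$ (namely $n\ge k$) the point $x$ lies in $c_n$ whenever the block $I_n$ is contained in the co-small set $\set{i}{x\in c^k_i}$; since $\Jcal^d$ is a filter and $\bar I$ is an interval partition, $\set{n}{I_n\subseteq\set{i}{x\in c^k_i}}$ still lies in $\Jcal^d$, giving $\set{n}{x\in c_n}\in\Jcal^d$ up to the finitely many $n<k$ (which do not affect membership in $\Jcal^d$ since $\Jcal\supseteq\Fin$). Thus $x\in N^*_\Jcal(\bar c)$, and $\bigcup_k X_k\subseteq N^*_\Jcal(\bar c)\in\NstaridealJ{\Jcal}$. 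The delicate bookkeeping is exactly in aligning the interval partition with the filter condition so that "eventually all coordinates" is preserved; once the partition is fixed, the verification that $\Fin\subseteq\Jcal$ absorbs the initial segment is routine.
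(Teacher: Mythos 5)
The heredity and properness checks are fine, but the closure under countable unions---the only substantial step---has a genuine gap in \emph{both} cases, and the two gaps are of the same nature: your constructions re-index the sets $c^k_i$, and membership in $\Jcal^+$ (respectively $\Jcal^d$) is not preserved under re-indexing. In the non-starred case you interleave via a bijection $h\colon\omega\to\omega\times\omega$ and claim that $h^{-1}\bigl[\{k\}\times\set{i}{x\in c^k_i}\bigr]$, being ``an infinite subset'' of $\set{m}{x\in c_m}$, forces the latter into $\Jcal^+$. Infinitude is not enough: for a general ideal $\Jcal$ on $\omega$, infinite sets may lie in $\Jcal$, and a bijection of $\omega$ need not send $\Jcal^+$-sets to $\Jcal^+$-sets. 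Concretely, let $\Jcal$ be the ideal generated by $\Fin$ together with the columns $P_k:=h^{-1}[\{k\}\times\omega]$; if $x\in N_\Jcal(\bar c^0)$ but $x\notin c^k_i$ for all $k\geq 1$ and all $i$, then $\set{m}{x\in c_m}\subseteq P_0\in\Jcal$, so $x\notin N_\Jcal(\bar c)$ even though $x\in N_\Jcal(\bar c^0)$. In the starred case the coarsening $c_n:=\bigcup_{k\leq n}\bigcup_{i\in I_n}c^k_i$ re-indexes by blocks, and the key claim that $\set{n}{I_n\subseteq A}\in\Jcal^d$ whenever $A\in\Jcal^d$ is false: $A$ can meet every block without containing any block entirely, in which case $\set{n}{I_n\subseteq A}=\emptyset$. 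Neither ``$\Jcal^d$ is a filter'' nor ``$\bar I$ is an interval partition'' rules this out, since $\Jcal$ need not be related to $\bar I$ in any way (this is exactly the $f^\to(\Jcal)$ versus $\Jcal$ distinction behind \autoref{def:RB}).

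The repair is to keep the index fixed and only enlarge the sets, which handles both ideals simultaneously; this is what the paper does. Choose $n_0<n_1<\cdots$ so that $\mu\bigl(\bigcup_{n\geq n_l}c^k_n\bigr)<\frac{1}{(l+1)2^l}$ for all $k\leq l$, and set $c_n:=\bigcup_{k\leq l}c^k_n$ for $n\in[n_l,n_{l+1})$. Then $\bar c\in\omegabar$ by \autoref{chomegabar}, and $c_n\supseteq c^k_n$ for every $n\geq n_k$. Hence $\set{n}{x\in c_n}\supseteq\set{n}{x\in c^k_n}\smallsetminus n_k$ and $\set{n}{x\notin c_n}\subseteq\set{n}{x\notin c^k_n}\cup n_k$; since $\Fin\subseteq\Jcal$, the first containment preserves membership in $\Jcal^+$ and the second preserves membership in $\Jcal$, giving $N_\Jcal(\bar c^k)\subseteq N_\Jcal(\bar c)$ and $N^*_\Jcal(\bar c^k)\subseteq N^*_\Jcal(\bar c)$ at once. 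Your instinct that the starred case requires ``all but finitely many positions'' to survive is right, but the way to achieve it is a position-preserving union, not a block compression.
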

\begin{proof}
Thanks to \eqref{eq:contfin}, both $\NidealJ{\Jcal}$ and $\NstaridealJ{\Jcal}$ contain all finite subsets of $\cantor$ and the whole space $\cantor$ does not belong to them. It is also clear that both families are downwards closed under $\subseteq$, so
it is enough to verify that both $\NidealJ{\Jcal}$ and $\NstaridealJ{\Jcal}$ are closed under countable unions.

Consider $\bar{c}^k \in\omegabar$ for each $k\in\omega$. By recursion, using \autoref{chomegabar}~\ref{it:borelch}, we define an increasing sequence $\seqn{n_l}{l<\omega}$ of natural numbers such that 
%Let $n_0\in\omega$ such that $\displaystyle \sum_{n\geq n_0}\leb{c^0_n}<\frac{1}{2}$ and we find $n_k$ such that $n_{k-1}\leq n_{k}$ for $k\in\omega$ and
%\red{$\displaystyle \sum_{n\geq n_l}\leb{c^k_n}
%<2^{-2^{l+1}}$ for all $k<l+1$} \dm{This would be true if $\bar c^i\in\ell(\Omega)$ but it does not follow from $\bar c^i\in\omegabar$; what we get is}
$\mu\left(\bigcup_{n\geq n_l}c^k_n\right)<\frac{1}{(l+1)2^l}$ for all $k\leq l$. Let $I_0:=[0,n_1)$ and $I_l:=[n_l,n_{l+1})$ for $l>0$.
Then, we define the~sequence $\bar{c}$ by
\begin{linenomath}
\begin{equation*}
 c_n = \begin{cases}
  \bigcup\limits_{k\leq l}c^k_n & \text{if } n\in[n_l,n_{l+1}),\\[2ex]
 \emptyset & \text{if }n<n_0.
 \end{cases}
\end{equation*}
\end{linenomath}
%\red{Finally,
%$\displaystyle\sum_{n\in\omega}\mathrm{\leb{c_n}< 1}$, thus $c$ is even in $\lone{\Omega}$} \dm{This is not true: when $n\in[n_k,n_{k+1})$ the best we can say is that $\mu(c_n)<(k+1)2^{-2^{k+1}}$, but the interval $[n_k,n_{k+1})$ could be too long and it could happen that $\sum_{n_k\leq n<n_{k+1}}\mu(c_n)>1$. On the other hand, if the previous were true then $\omegabar=\ell(\Omega)$, but this is not true.}
Finally,
\[\sum_{l<\omega}\mu\left(\bigcup_{n\in I_l}c_n\right)\leq \sum_{l<\omega}\sum_{k\leq l}\mu\left(\bigcup_{n\geq n_l}c^k_n\right)\leq \sum_{l<\omega}\sum_{k\leq l}\frac{1}{(l+1)2^l}=\sum_{l<\omega}\frac{1}{2^l}<\infty,\]
so $\bar c\in\omegabar$ by \autoref{chomegabar}~\ref{it:expart}. It is clear that
$\bigcup_{k\in\omega}N_\Jcal(\bar{c}^k)\subseteq N_\Jcal(\bar{c})$ and $\bigcup_{k\in\omega}N^*_\Jcal(\bar{c}^k)\subseteq N^*_\Jcal(\bar{c})$.
\end{proof}

\begin{remark}\label{rem:opennonew}
  The following alternative definition does not bring anything new: Let $\omegabar_0$ be the set of countable sequences $\bar a=\la a_n:n<\omega\ra$ of open subsets of ${}^\omega2$ such that $N(\bar a)\in\Ncal$. Define $N_\Jcal(\bar{a})$ similarly, and $\Ncal^0_\Jcal$ as the family of subsets of ${}^\omega2$ that are contained in some set of the form $N_\Jcal(\bar a)$ for some $\bar a\in\omegabar_0$. Define $\Ncal^{*0}_\Jcal$ analogously.
   It is not hard to show that $\Ncal^{*0}_\Jcal=\Ncal^0_\Jcal=\Ncal$. The inclusions $\Ncal^{*0}_\Jcal\subseteq\Ncal^0_\Jcal\subseteq\Ncal$ are clear; 
   to see $\Ncal\subseteq\Ncal^{*0}_\Jcal$, if $B\in\Ncal$, then we can find some $\bar a\in\omegabar_0$ such that $B\subseteq\bigcap_{n\in\omega}a_n$ and $\mu(a_n)<2^{-n}$, so it is clear that $B\subseteq N^*_\Jcal(\bar a)$. For this reason, it is uninteresting to consider sequences of open sets instead of clopen sets.   
\end{remark}

\begin{remark}\label{rem:expand}
    We expand our discussion by allowing ideals on an arbitrary infinite countable set $W$ instead of $\omega$. Namely, for $\bar c \in {}^W \Omega$ (or in ${}^W \pts(\cantor)$), we define $N_J(\bar c)$ and $N^*_J(\bar c)$ similar to \autoref{def:NJ}, $\Fin(W)$ as the ideal of finite subsets of $W$,
    \[\omegabar(W):=\set{\bar c \in {}^W \Omega}{N_{\Fin(W)}(\bar c)\in\Ncal} \text{ (so $\omegabar(\omega)= \omegabar$)}\]
    and $\Ncal_J$ and $\Ncal^*_J$ as in \eqref{Nj2} and~\eqref{Nstarj2}, respectively. We need this expansion to allow ideals obtained by operations as in~\autoref{exm:sum}.

    Using a one-to-one enumeration $W=\set{w_n}{n<\omega}$ by \autoref{chomegabar} we get that, for $\bar c\in {}^W \Omega$,
    \begin{multline}\label{eq:OmW}
      \bar c\in \omegabar(W) \text{ iff, }\\
      \text{for any }\varepsilon>0 \text{, there is some finite set }a\subseteq W\text{ such that }\mu\left(\bigcup_{n\in W\smallsetminus a} c_n\right)<\varepsilon.  
    \end{multline}
    In fact, considering the bijection $f\colon \omega\to W$ defined by $f(n):=w_n$ for $n<\omega$, and the ideal $J':=\set{f^{-1}[a]}{a\in J}$ (which is isomorphic to $J$), we obtain $\Ncal_J = \Ncal_{J'}$ and $\Ncal^*_J =\Ncal^*_{J'}$. %(being $J$ and $J'$ isomorphic ideals). %The latter is in fact a consequence of \autoref{RBeq}~\ref{it:underRB}. 
    As a consequence, $\Ncal_{\Fin(W)}=\Ncal_\Fin=\Ncal$ and $\Ncal^*_{\Fin(W)}=\Ncal^*_{\Fin}=\Ecal$.
\end{remark}

\begin{example}\label{exm:sum}
Consider $\omega=\N_1\cup\N_2$ as a disjoint union of infinite sets, let $J_1$ be an ideal on $\N_1$ and $J_2$ an ideal on $\N_2$. Recall the ideal \[J_1\oplus J_2=\set{x\subseteq\omega}{x\cap\N_1\in J_1 \text{ and } x\cap\N_2 \in J_2}.\]
Note that
\begin{enumerate}[label =\rm (\arabic*)]
    \item\label{it:omN1} $\bar c\in\omegabar$ iff $\bar c\frestr\N_1\in\omegabar(\N_1)$ and $\bar c\frestr\N_2\in\omegabar(\N_2)$.
    \item\label{it:Ncplus} For $\bar c\in\omegabar$,
    \begin{linenomath}
    \begin{align*}
    N_{J_1\oplus J_2}(\bar c) & =N_{J_1}(\bar c\frestr\N_1)\cup N_{J_2}(\bar c\frestr\N_2)) \text{ and }\\
    N^*_{J_1\oplus J_2}(\bar c) & =N^*_{J_1}(\bar c\frestr\N_1)\cap N^*_{J_2}(\bar c\frestr\N_2).
    \end{align*}
    \end{linenomath}
\end{enumerate}

As a consequence:
\begin{enumerate}[resume*]
    \item\label{it:Nplus} $\Ncal_{J_1\oplus J_2}$ is the ideal generated by $\Ncal_{J_1}\cup \Ncal_{J_2}$, in fact
    \[\Ncal_{J_1\oplus J_2}=\set{X\cup Y}{X\in \Ncal_{J_1} \text{ and } Y\in \Ncal_{J_2} }.\]

    \item\label{it:N*plus} $\Ncal^*_{J_1\oplus J_2}=\Ncal^*_{J_1}\cap \Ncal^*_{J_2}$.
\end{enumerate}
The inclusion $\subseteq$ in both~\ref{it:Nplus} and~\ref{it:N*plus} follows from~\ref{it:Ncplus}; the converse follows by the fact that, whenever $\bar c^1\in\omegabar(\N_1)$ and $\bar c^2\in\omegabar(\N_2)$, $\bar c\in\omegabar$ where $\bar c = \bar c^1\cup \bar c^2$, i.e. $\bar c=\la c_n:\, n<\omega\ra$ such that $c_n:=c^i_n$ when $n\in\N_i$ ($i\in\{1,2\}$), and
\begin{linenomath}
    \begin{align*}
    N_{J_1\oplus J_2}(\bar c) & =N_{J_1}(\bar c^1)\cup N_{J_2}(\bar c^2)) \text{ and }\\
    N^*_{J_1\oplus J_2}(\bar c) & =N^*_{J_1}(\bar c^1)\cap N^*_{J_2}(\bar c^2),
    \end{align*}
    \end{linenomath}
which follow by~\ref{it:omN1} and~\ref{it:Ncplus} because $\bar c^i=\bar c\frestr\N_i$ for $i\in\{1,2\}$.

By allowing $\pts(\omega)$ instead of an ideal, $N_{\pts(\omega)}(\bar c)=\emptyset$ and $N^*_{\pts(\omega)}(\bar c)=\cantor$. Since
\[J_1\oplus\pts(\N_2)=\set{a\subseteq\omega}{a\cap\N_1\in J_1}\]
is an ideal, for any $\bar c\in\omegabar$, we obtain $N_{J_1\oplus \pts(\N_2)}(\bar c)  =N_{J_1}(\bar c\frestr\N_1)$ and $N_{J_1\oplus \pts(\N_2)}^*(\bar c)  =N_{J_1}^*(\bar c\frestr\N_1)$ by~\ref{it:Ncplus}. Therefore, $\Ncal_{J_1\oplus \pts(\N_2)}=\Ncal_{J_1}$ and $\Ncal^*_{J_1\oplus \pts(\N_2)}=\Ncal^*_{J_1}$. Similar conclusions are valid for $\pts(\N_1)\oplus J_2$.

% Then, considering $\cantor={}^{\N_1}2 \times {}^{\N_2}2$,
% \begin{linenomath}
% \begin{align*}
% N_{J_1\oplus J_2}(\bar c) & =(N_{J_1}(\bar c\frestr\N_1)\times {}^{\N_2}2) \cup ({}^{\N_1}2\times N_{J_2}(\bar c\frestr\N_2)) \text{ and }\\
% N^*_{J_1\oplus J_2}(\bar c) & =N^*_{J_1}(\bar c\frestr\N_1)\times N^*_{J_2}(\bar c\frestr\N_2).
% \end{align*}
% \end{linenomath}

% %Therefore, $\Ncal^*_{J_1\oplus J_2}=\Ncal^*_{J_1}\cap\Ncal^*_{J_1}$ and $\Ncal_{J_1\oplus J_2}$ is the ideal generated by $\Ncal_{J_1}\cup\Ncal_{J_2}$.

% By allowing $\pts(\omega)$ instead of an ideal, $N_{\pts(\omega)}(\bar c)=\emptyset$ and $N^*_{\pts(\omega)}(\bar c)=\cantor$, so %for any ideal $J_1$ on $\N_1$, 
% $N_{J_1\oplus\pts(\N_2)}(\bar c)=N_{J_1}(\bar c\frestr \N_1)\times {}^{\N_2}2$ and $N^*_{\pts(\N_1)\oplus J_2}(\bar c)= {}^{\N_1}2\times N^*_{J_2}(\bar c\frestr\N_2)$. %Therefore, for any ideal $J$ on $\omega$, $\Ncal_{J\oplus\pts(\omega)}=\Ncal_J$ and $\Ncal^*_{J\oplus \pts(\omega)}=\Ncal^*_J$.
% Therefore
% \begin{linenomath}
% \begin{align*}
%     N_{J_1\oplus J_2}(\bar c) & = N_{J_1\oplus \pts(\N_2)}(\bar c)\cup N_{\pts(\N_1)\oplus J_2}(\bar c),\\
%     N^*_{J_1\oplus J_2}(\bar c) & = N^*_{J_1\oplus \pts(\N_2)}(\bar c)\cap N^*_{\pts(\N_1)\oplus J_2}(\bar c),
% \end{align*}
% \end{linenomath}
% hence $\Ncal_{J_1\oplus J_2}$ is generated by $\Ncal_{J_1\oplus \pts(\N_2)}\cup \Ncal_{\pts(\N_1)\oplus J_2}$, and $\Ncal^*_{J_1\oplus J_2}=\Ncal^*_{J_1\oplus \pts(\N_2)}\cap \Ncal^*_{\pts(\N_1)\oplus J_2}$.
\end{example}

We now review the following classical orders on ideals.

\begin{definition}\label{def:RB}
Let $M_1$ and $M_2$ be infinite sets, $\Kcal_1\subseteq\pts(M_1)$ and $\Kcal_2\subseteq\pts(M_2)$. If $\varphi\colon M_2\to M_1$, the 
\emph{projection of $\Kcal_2$ under $\varphi$} is the family 
\[
\varphi^{\to}(\Kcal_2)=\set{A\subseteq M_1}{\varphi^{-1}(A)\in\Kcal_2}.
\]
We write $\Kcal_1\leq_{\varphi}\Kcal_2$ when $\Kcal_1\subseteq\varphi^{\to}(\Kcal_2)$, i.e., $\varphi^{-1}(I)\in\Kcal_2$ for any $I\in\Kcal_1$. 

\begin{enumerate}[label = \rm (\arabic*)]
    \item \cite{katetov,blass86}  The \emph{Kat\v{e}tov-Blass order} is defined by $\Kcal_1\leq_{\text{KB}}\Kcal_2$ iff there is a~finite-to-one function $\varphi\colon M_2\to M_1$ such that $I\in\Kcal_1$ implies $\varphi^{-1}(I)\in\Kcal_2$, i.e.\ $\Kcal_1\subseteq\varphi^{\to}(\Kcal_2)$.
    
    \item \cite{blass86,LZ} The \emph{Rudin-Blass order} is defined by $\Kcal_1\leq_{\text{RB}}\Kcal_2$ iff there is a~finite-to-one function $\varphi\colon M_2\to M_1$ such that $I\in\Kcal_1$ if and only if $\varphi^{-1}(I)\in\Kcal_2$, i.e.\ $\varphi^{\to}(\Kcal_2)=\Kcal_1$.
    
    \item We also consider the ``dual" of the Kat\v{e}tov-Blass order: $K_1\leqKBpr K_2$ iff there is some finite-to-one function $\varphi\colon M_1\to M_2$ such that $\varphi^\to(K_1)\subseteq K_2$.
\end{enumerate}

Recall that the relations $\leqKB$ and $\leqRB$ are reflexive and transitive, and it can be proved easily that $\leqKBpr$ also has these properties.

Note that $K_1\leqRB K_2$ implies $K_1\leqKB K_2$ and $K_2\leqKBpr K_1$. Also recall that $K_1\subseteq K_2$ implies $K_1\leqKB K_2$ and $K_1\leqKBpr K_2$ (using the identity function).

Recall that, if $\Kcal_2$ is an~ideal on $M_2$, then $\varphi^{\to}(\Kcal_2)$ is downwards closed under $\subseteq$ and closed under finite unions, and $M_1\not\in\varphi^{\to}(\Kcal_2)$. If $\varphi$ is in addition finite-to-one then $\varphi^{\to}(\Kcal_2)$ is an ideal.
\end{definition}

We show that our defined $\sigma$-ideals behave well under the previous orders.
In fact, this is a somewhat expected result that can usually be obtained for many well-known objects in topology. For example,
given an ideal $J$ on $\omega$, consider the relation $\leq^J$ on $\baire$ defined by $x\leq^J y$ iff $\set{n<\omega}{x(n) \nleq y(n)}\in J$, and define the cardinal characteristics
\begin{align*}
    \bfrak_J & := \min\set{|F|}{F\subseteq\baire,\ \neg(\exists\, y\in\baire)\ (\forall\, x\in F)\ x\leq^J y},\\
    \dfrak_J & := \min\set{|D|}{D\subseteq\baire,\ (\forall\, x\in\baire)\ (\exists\, y\in D)\ x\leq^J y}.
\end{align*}
It is known from \cite{FaSo} that:
\begin{enumerate}[label = \rm (\arabic*)]
    \item If $K\leqKB J$ then $\bfrak_K\leq \bfrak_J$ and $\dfrak_J\leq \dfrak_K$.
\item If $K \leqKBpr J$ then $\bfrak_K\leq \bfrak_J$ and $\dfrak_J\leq \dfrak_K$.
\item If $K\leqRB J$ then $\bfrak_I=\bfrak_J$ and $\dfrak_I=\dfrak_J$.
\end{enumerate}
We present another similar example in \autoref{KBomegabar}.\footnote{More similar examples of such implications can be found in~\cite{Hrusak,SS,Miro2}. %\vs{Repick\'y's paper could be added there as well, but I don't know if we want to include it in the literature for this reason alone.} 
%\dm{I inserted Miro's references, are these the ones you had in mind? How is \cite{BA84} related?}\vs{it's wrong. I guess it was older remark to something about Talagard's theorem. not relevant yet. It was mentioned just in the second Repicky's reference. }
}

\begin{theorem}\label{RBeq}
  Let $\N_1$ and $\N_2$ be countable infinite sets, 
  $\Jcal$ an ideal on $\N_1$ and let $\Kcal$ be an ideal on $\N_2$. Then
  \begin{enumerate}[label=\rm(\alph*)]
      \item\label{it:underKB} If $\Kcal\leqKB \Jcal$ then $\Ncal^*_\Kcal\subseteq\Ncal^*_\Jcal$ and $\Ncal_\Jcal\subseteq\Ncal_\Kcal$.
      \item\label{it:underKB'} If $K\leqKBpr J$ then $\Ncal^*_K\subseteq \Ncal^*_J$ and $\Ncal_J\subseteq \Ncal_K$.
      \item\label{it:underRB} If $\Kcal\leqRB \Jcal$ then $\Ncal^*_\Jcal=\Ncal^*_\Kcal$ and $\Ncal_\Jcal=\Ncal_\Kcal$.
  \end{enumerate}
\end{theorem}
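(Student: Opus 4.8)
The plan is to realize each order-inequality by transporting clopen sequences along the witnessing finite-to-one map, using one of two dual operations. Fix a finite-to-one $\varphi\colon W'\to W$. Given $\bar c=\seqn{c_j}{j\in W}\in{}^W\Omega$, define its \emph{pullback} $\varphi^*\bar c:=\seqn{c_{\varphi(i)}}{i\in W'}\in{}^{W'}\Omega$; given $\bar d=\seqn{d_i}{i\in W'}\in{}^{W'}\Omega$, define its \emph{pushforward} $\varphi_*\bar d:=\seqn{\bigcup_{i\in\varphi^{-1}(j)}d_i}{j\in W}\in{}^W\Omega$ (empty union read as $\emptyset$; each coordinate is clopen because $\varphi$ is finite-to-one). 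First I would record, via the characterization of $\omegabar(\cdot)$ in~\eqref{eq:OmW}, that both operations preserve membership: if $\bar c\in\omegabar(W)$ then $\varphi^*\bar c\in\omegabar(W')$ (given $\varepsilon$ and a finite $a\subseteq W$ with $\mu\left(\bigcup_{j\notin a}c_j\right)<\varepsilon$, the finite set $\varphi^{-1}(a)$ witnesses $\varphi^*\bar c\in\omegabar(W')$, since $\varphi(i)\notin a$ forces $c_{\varphi(i)}\subseteq\bigcup_{j\notin a}c_j$); and if $\bar d\in\omegabar(W')$ then $\varphi_*\bar d\in\omegabar(W)$ (given a finite $a'\subseteq W'$ with $\mu\left(\bigcup_{i\notin a'}d_i\right)<\varepsilon$, the finite set $\varphi[a']$ works, since $\varphi(i)\notin\varphi[a']$ forces $i\notin a'$). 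This is the only quantitative content of the argument, and it uses finite-to-oneness twice.

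Next I would record the two set identities translating these operations into ideal combinatorics. For $x\in\cantor$ write $A_x:=\set{j\in W}{x\in c_j}$ and $B_x:=\set{i\in W'}{x\in d_i}$. Then
\[\set{i\in W'}{x\in (\varphi^*\bar c)_i}=\varphi^{-1}(A_x)\quad\text{and}\quad\set{j\in W}{x\in(\varphi_*\bar d)_j}=\varphi[B_x],\]
so $x\in N_\Kcal(\varphi_*\bar d)$ iff $\varphi[B_x]\in\Kcal^+$, $x\in N^*_\Kcal(\varphi_*\bar d)$ iff $\varphi[B_x]\in\Kcal^d$, and symmetrically for $\varphi^*\bar c$ with $\varphi^{-1}(A_x)$. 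With this, item~\ref{it:underKB} is immediate. Let $\Kcal\leqKB\Jcal$ be witnessed by a finite-to-one $\varphi\colon\N_1\to\N_2$, so $I\in\Kcal\Rightarrow\varphi^{-1}(I)\in\Jcal$. For $\Ncal_\Jcal\subseteq\Ncal_\Kcal$, take $X\subseteq N_\Jcal(\bar d)$ with $\bar d\in\omegabar(\N_1)$ and put $\bar c:=\varphi_*\bar d\in\omegabar(\N_2)$; if $B_x\in\Jcal^+$ then $\varphi[B_x]\in\Kcal^+$ (otherwise $\varphi^{-1}(\varphi[B_x])\in\Jcal$ and $B_x\subseteq\varphi^{-1}(\varphi[B_x])$ give $B_x\in\Jcal$), whence $N_\Jcal(\bar d)\subseteq N_\Kcal(\bar c)$. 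For $\Ncal^*_\Kcal\subseteq\Ncal^*_\Jcal$, take $X\subseteq N^*_\Kcal(\bar c)$ with $\bar c\in\omegabar(\N_2)$ and put $\bar d:=\varphi^*\bar c\in\omegabar(\N_1)$; if $\N_2\setminus A_x\in\Kcal$ then $\varphi^{-1}(\N_2\setminus A_x)=\N_1\setminus\varphi^{-1}(A_x)\in\Jcal$, i.e.\ $\varphi^{-1}(A_x)\in\Jcal^d$, so $N^*_\Kcal(\bar c)\subseteq N^*_\Jcal(\bar d)$.

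Item~\ref{it:underKB'} is the exact mirror image: the witnessing map $\varphi\colon\N_2\to\N_1$ now points the other way, so the roles of pullback and pushforward are swapped. The hypothesis $\varphi^{\to}(\Kcal)\subseteq\Jcal$ reads $\varphi^{-1}(A)\in\Kcal\Rightarrow A\in\Jcal$; for $\Ncal_\Jcal\subseteq\Ncal_\Kcal$ I would use the pullback $\bar c:=\varphi^*\bar d$ and the contrapositive $A\in\Jcal^+\Rightarrow\varphi^{-1}(A)\in\Kcal^+$, while for $\Ncal^*_\Kcal\subseteq\Ncal^*_\Jcal$ I would use the pushforward $\bar d:=\varphi_*\bar c$ together with the inclusion $\varphi^{-1}(\N_1\setminus\varphi[A_x])\subseteq\N_2\setminus A_x$, which places the left-hand side in $\Kcal$ and hence, by hypothesis, forces $\N_1\setminus\varphi[A_x]\in\Jcal$, i.e.\ $\varphi[A_x]\in\Jcal^d$. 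Item~\ref{it:underRB} then needs no new work: since $\Kcal\leqRB\Jcal$ implies both $\Kcal\leqKB\Jcal$ and $\Jcal\leqKBpr\Kcal$, applying~\ref{it:underKB} and~\ref{it:underKB'} (the latter with $\Jcal$ and $\Kcal$ interchanged) yields both inclusions in each direction, hence the two equalities.

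The main obstacle I anticipate is purely organizational: keeping straight, in each of the four sub-claims, which base set the transported sequence lives on, which way $\varphi$ points, and whether the relevant combinatorial fact concerns $\Jcal^+$ (for the $N$-sets) or $\Jcal^d$ (for the $N^*$-sets). The one genuinely mathematical point—that pullback and pushforward both preserve $\omegabar$—is short once~\eqref{eq:OmW} is available and relies only on finite-to-oneness; everything else is the two displayed set identities combined with the defining implications of $\leqKB$ and $\leqKBpr$.
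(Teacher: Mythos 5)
Your proposal is correct and follows essentially the same route as the paper: the paper's $\bar c'$ and $\bar c^-$ are exactly your pushforward and pullback, and each of the four inclusions is established by the same transported sequence and the same combinatorial implication. The only cosmetic differences are that you verify membership in $\omegabar$ via the characterization \eqref{eq:OmW} where the paper simply observes $N(\bar c')=N(\bar c)$ and $N(\bar c^-)\subseteq N(\bar c)$, and that you keep the two base sets distinct where the paper reduces to $\N_1=\N_2=\omega$ without loss of generality.
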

\begin{proof}
Without loss of generality, we may assume $\N_1=\N_2=\omega$ in this proof.
Fix a finite-to-one function $f\colon \omega\to\omega$ and let $I_n:=f^{-1}[\{n\}]$ for any $n\in\omega$. Given $\bar c\in \omegabar$ we define sequences $\bar c'$ and $\bar c^-$ by $c'_n:=\bigcup_{k\in I_n}c_k$ and $c^-_k:= c_{f(k)}$. Since $N(\bar c')= N(\bar c)$ and $N(\bar c^-)\subseteq N(\bar c)$, we have that $\bar c',\bar c^- \in \omegabar$.

It is enough to show 
\begin{enumerate}[label= \rm (\roman*)]
    \item\label{it:conKB} $K\subseteq f^\to(J)$ implies $N^*_K(\bar c) \subseteq N^*_J(\bar c^-)$ and $N_J(\bar c) \subseteq N_K(\bar c')$, and
    \item\label{itconKB'} $f^\to(K)\subseteq J$ implies $N^*_K(\bar c)\subseteq N^*_J(\bar c')$ and $N_J(\bar c)\subseteq N_K(\bar c^-)$.
\end{enumerate}

\ref{it:conKB}: Assume  $K\subseteq f^\to(J)$. If $x\in N^*_\Kcal(\bar c)$ then $\{n<\omega:\, x\notin c_n\}\in\Kcal$, so
\[\{k<\omega:\, x\notin c^-_k\}=f^{-1}[\set{n<\omega}{x\notin c_n}]\in\Jcal,\]
i.e.\ $x\in N^*_\Jcal(\bar c^-)$; and if $x\in N_\Jcal(\bar c)$, i.e.\ $\{k<\omega:\, x\in c_k\}\notin\Jcal$, then
\[
    f^{-1}[\{n<\omega:\, x\in c'_n\}] = \{k<\omega:\, x\in c'_{f(k)}\}\supseteq\{k<\omega:\, x\in c_k\},
\]
so  $f^{-1}[\{n<\omega:\, x\in c'_n\}]\notin\Jcal$, i.e.\ $\{n<\omega:\, x\in c'_n\}\notin\Kcal$, which means that $x\in N_\Kcal(\bar c')$.

\ref{itconKB'} Assume $f^\to(K)\subseteq J$. If $x\in N^*_K(\bar c)$, i.e.\ $\{k<\omega:\, x\notin c_k\}\in K$, then
\[
    f^{-1}[\{n<\omega:\, x\notin c'_n\}] = \{k<\omega:\, x\notin c'_{f(k)}\}\subseteq\{k<\omega:\, x\notin c_k\},
\]
so $f^{-1}[\{n<\omega:\, x\notin c'_n\}]\in K$, which implies that $\{n<\omega:\, x\notin c'_n\}\in J$, i.e.\ $x\in N^*_J(\bar c')$; and if $x\in N_J(\bar c)$ then $\{n<\omega:\, x\in c_n\}\notin J$, so
\[\{k<\omega:\, x\in c^-_k\}=f^{-1}[\set{n<\omega}{x\in c_n}]\notin K,\]
i.e.\ $x\in N_K(\bar c^-)$.
\end{proof}

\begin{example}\label{exm:sum2}
Notice that $J\leqRB J\oplus\pts(\omega)$, however, $J$ and $\pts(\omega)$ should come from different sets. Concretely, if $J$ is an ideal on $\omega$ then $J\oplus\pts(\omega)$ should be formally taken as $J\oplus\pts(\N')$ where $\N'$ is an infinite countable set and $\omega\cap \N'=\emptyset$. Therefore, by \autoref{RBeq}, $\Ncal_{J\oplus\pts(\omega)}=\Ncal_J$ and $\Ncal^*_{J\oplus\pts(\omega)}=\Ncal^*_J$ (already known at the end of \autoref{exm:sum}).

%so while $\Ncal_{J}$ is an ideal on $\cantor$, $\Ncal_{J\oplus\pts(\N')}$ is an ideal on $\cantor\times {}^{\N'}2$. 
%In spite of this, \autoref{RBeq} hints us that $\Ncal_{J}$ and $\Ncal_{J\oplus\pts(\N')}$ are, morally, the same ideal. More precisely, for any bijection $g\colon \omega\cup \N' \to \omega$, we get $\Ncal_{g^\to(J\oplus \pts(\N'))}=\Ncal_J$ (if $f\colon \omega\cup\N' \to \omega$ is a finite-to-one function that witnesses $J\leqRB J\oplus\pts(\N')$ and $h:=f\circ g^{-1}$, then $J=h^\to(g^\to(J\oplus \pts(\N')))$). In other words, there is an isomorphism from $\cantor\times {}^{\N'}2$ onto $\cantor$ that makes $\Ncal_{J\oplus\pts(\N')}$ and $\Ncal_J$ isomorphic ideals. The same discussion applies to $\Ncal^*_{J\oplus \pts(\omega)}$.
\end{example}

Recall the following well-known result that characterizes ideals on $\omega$ with the Baire property.

\begin{theorem}[Jalani-Naini and Talagrand~{\cite{Talagrand}}]\label{thm:talagrand}
Let $\Jcal$ be an ideal on $\omega$. Then the following statements are equivalent.

\begin{multicols}{2}
\begin{enumerate}[label=\rm (\roman*)]
   \item $\Jcal$ has the Baire property in $\pts(\omega)$.
   \item $\Jcal$ is meager in $\pts(\omega)$.
   \item $\Fin\leqRB \Jcal$.
\end{enumerate}
\end{multicols}
\end{theorem}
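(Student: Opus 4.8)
The plan is to prove the cyclic chain (i)$\Rightarrow$(ii)$\Rightarrow$(iii)$\Rightarrow$(i), identifying $\pts(\omega)$ with $\cantor$ via characteristic functions. The one structural fact I would use throughout is that, since $\Jcal$ is an ideal containing $\Fin$, it is invariant under finite symmetric differences: if $A\in\Jcal$ and $A\triangle g$ is finite then $A\triangle g\subseteq A\cup g\in\Jcal$, so $A\triangle g\in\Jcal$; in particular $\Jcal$ is a tail set. I would dispatch (iii)$\Rightarrow$(i) directly: given a finite-to-one $\varphi\colon\omega\to\omega$ witnessing $\Fin\leqRB\Jcal$, put $P_n:=\varphi^{-1}(\{n\})$. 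No $A\in\Jcal$ can contain infinitely many of the (finite) fibers $P_n$, since otherwise $\bigcup_{n\in B}P_n=\varphi^{-1}(B)\subseteq A$ for some infinite $B$ would force $\varphi^{-1}(B)\in\Jcal$, contradicting $B\notin\Fin$. Hence $\Jcal\subseteq\bigcup_{m<\omega}\set{A\subseteq\omega}{\forall n\geq m\ P_n\not\subseteq A}$, a countable union of closed nowhere dense sets, so $\Jcal$ is meager and in particular has the Baire property.

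For (i)$\Rightarrow$(ii) I would invoke the topological zero--one law. The space $\cantor=(\Z/2)^\omega$ is a Polish group under $\triangle$, and $\Fin$ is a countable dense subgroup under which $\Jcal$ is invariant by the observation above. A set with the Baire property that is invariant under translation by a dense subgroup is either meager or comeager; this is the standard $0$--$1$ law, proved from the Baire-property approximation (some $\Jcal\triangle U$ meager with $U$ open) together with density of the translations. It remains to exclude the comeager alternative, and here I would use that the complementation map $A\mapsto\omega\smallsetminus A$ is a homeomorphism of $\cantor$ carrying $\Jcal$ to a disjoint copy of itself, disjoint because $A,\omega\smallsetminus A\in\Jcal$ would give $\omega\in\Jcal$. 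Two comeager sets cannot be disjoint, so $\Jcal$ must be meager.

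The heart of the argument is (ii)$\Rightarrow$(iii), and this is where I expect the real work. Here I would feed the meagerness of $\Jcal$ into Bartoszy\'nski's combinatorial characterization of meager sets \cite{BJ}: there are $x^*\in\cantor$ and an interval partition $\la I_n:n<\omega\ra$ of $\omega$ such that every $A\in\Jcal$ satisfies $A\cap I_n\neq (x^*)^{-1}(1)\cap I_n$ for all but finitely many $n$. The task is then to manufacture a finite-to-one $\varphi$ realizing the arbitrary pattern $x^*$; the obstacle is that membership in a single fiber can only realize the all-ones pattern on a block, whereas $x^*\frestr I_n$ is mixed. My fix is to route the two kinds of coordinates into neighboring blocks: define $\varphi(j):=n$ when $j\in I_n$ and $x^*(j)=1$, and $\varphi(j):=n+1$ when $j\in I_n$ and $x^*(j)=0$. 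Each fiber $\varphi^{-1}(\{n\})$ is then contained in $I_{n-1}\cup I_n$, hence finite.

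To finish I would check that no infinite union of fibers lies in $\Jcal$. Fix infinite $B\subseteq\omega$ and set $A:=\varphi^{-1}(B)$. On the block $I_n$ the set $A$ picks up exactly the ones of $x^*$ when $n\in B$ and exactly the zeros of $x^*$ when $n+1\in B$, and no other fiber meets $I_n$; consequently $A\cap I_n$ equals $(x^*)^{-1}(1)\cap I_n$ precisely when $n\in B$ and $n+1\notin B$. If $B$ is not cofinite there are infinitely many such $n$, so $A$ matches the forbidden pattern infinitely often and $A\notin\Jcal$; if $B$ is cofinite then $A$ is cofinite, so again $A\notin\Jcal$ as $\Jcal$ is proper. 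Either way $B\notin\Fin$ implies $\varphi^{-1}(B)\notin\Jcal$, which is exactly $\Fin\leqRB\Jcal$. The delicate points I would write out carefully are the bookkeeping that the contributions of $\varphi^{-1}(\{n\})$ and $\varphi^{-1}(\{n+1\})$ reconstruct $x^*\frestr I_n$ exactly and the separation of the cofinite case; the conceptual crux, and the step I regard as the main obstacle, is this neighboring-block routing that converts a mixed pattern into a property detectable by plain fiber membership.
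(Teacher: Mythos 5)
The paper does not prove this theorem at all: it is quoted as a classical result of Jalali-Naini and Talagrand with a citation, so there is nothing internal to compare against. Judged on its own, your proof is correct and essentially reproduces the classical argument, with one genuinely nonstandard twist in the hard implication. The cycle (iii)$\Rightarrow$(i)$\Rightarrow$(ii) is the usual one (fibers give a countable union of closed nowhere dense sets covering $\Jcal$; the topological zero--one law plus the complementation homeomorphism rules out comeagerness), and your use of invariance of $\Jcal$ under finite symmetric differences is exactly the right structural input. For (ii)$\Rightarrow$(iii) the textbook route (e.g.\ via \cite{BJ}) uses heredity of $\Jcal$ to reduce to the statement that no $A\in\Jcal$ contains $I_n\cap (x^*)^{-1}(1)$ for infinitely many $n$, and then takes $\varphi$ constant on each block $I_n$; your neighboring-block routing instead realizes the mixed pattern directly and, interestingly, uses of $\Jcal$ only that it is proper and contained in the Bartoszy\'nski-style meager set --- a slightly more economical hypothesis. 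One small inaccuracy: the claim that $A\cap I_n=(x^*)^{-1}(1)\cap I_n$ \emph{precisely} when $n\in B$ and $n+1\notin B$ fails on degenerate blocks where $x^*\frestr I_n$ is constant (there the match is governed by only one of the two conditions); but you only ever use the implication ``$n\in B$ and $n+1\notin B$ implies a match,'' which holds unconditionally, so the argument is unaffected. You should also record, in (iii)$\Rightarrow$(i), the routine check that all but finitely many fibers avoid any fixed finite set and that infinitely many fibers are nonempty, which is what makes the sets $\set{A\subseteq\omega}{\forall n\geq m\ P_n\not\subseteq A}$ nowhere dense.
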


Therefore, as a consequence of \autoref{RBeq}:

\begin{theorem}\label{BaireForNJ2}
If $\Jcal$ is an ideal on $\omega$ with the Baire property, then $\Ncal_\Jcal=\Ncal$ and $\Ncal^*_\Jcal=\Ecal$.  
\end{theorem}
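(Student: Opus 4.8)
The plan is to recognize that this statement is an immediate corollary of the two results that precede it, namely Talagrand's characterization (\autoref{thm:talagrand}) and the Rudin--Blass monotonicity established in \autoref{RBeq}. So I would not prove anything from scratch; instead I would chain these together.

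First I would invoke \autoref{thm:talagrand}: since $\Jcal$ has the Baire property, the equivalence there gives $\Fin\leqRB\Jcal$. This is the only hypothesis-using step, and it translates the topological assumption on $\Jcal$ into a purely combinatorial comparison with $\Fin$ in the Rudin--Blass order. Next I would apply \autoref{RBeq}\ref{it:underRB} with $\Kcal=\Fin$: from $\Fin\leqRB\Jcal$ it yields the equalities $\Ncal_\Jcal=\Ncal_\Fin$ and $\Ncal^*_\Jcal=\Ncal^*_\Fin$. Finally I would substitute the identifications recorded right after \autoref{def:NJ}, namely $\Ncal_\Fin=\Ncal$ (from \autoref{basicNE}) and $\Ncal^*_\Fin=\Ecal$ (from \autoref{charE}), to conclude $\Ncal_\Jcal=\Ncal$ and $\Ncal^*_\Jcal=\Ecal$.

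There is essentially no obstacle in this final assembly: all of the genuine content has already been absorbed into \autoref{RBeq}, whose proof handles the transport of the $N$ and $N^*$ operators along finite-to-one maps, and into Talagrand's theorem, which supplies the finite-to-one witness of $\Fin\leqRB\Jcal$. The one place to be slightly careful is the direction of the orders: Talagrand gives $\Fin\leqRB\Jcal$ (with $\Fin$ on the left), which matches exactly the hypothesis format $\Kcal\leqRB\Jcal$ of \autoref{RBeq}\ref{it:underRB}, so the two-sided equalities apply directly rather than only the one-sided inclusions of parts \ref{it:underKB} and \ref{it:underKB'}. Thus the whole argument is a one-line deduction once the preceding machinery is in place.

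<br>

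I would remark that, should one wish to avoid citing \autoref{thm:talagrand}, an alternative route is to unfold the finite-to-one witness directly: given a finite-to-one $\varphi\colon\omega\to\omega$ with $\varphi^\to(\Jcal)=\Fin$, one checks by hand that $\varphi^{-1}[\cdot]$ sends $\Fin^+$ to $\Jcal^+$ and $\Fin^d$ to $\Jcal^d$, recovering the relevant equalities of the sets $N_\Jcal(\bar c)$ and $N^*_\Jcal(\bar c)$ with their $\Fin$-counterparts. However, this merely re-derives the special case of \autoref{RBeq}, so the cleaner presentation is the three-step chain above.
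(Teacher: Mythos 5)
Your proposal is correct and is exactly the paper's argument: the paper states the theorem as an immediate consequence of \autoref{thm:talagrand} (giving $\Fin\leqRB\Jcal$) together with \autoref{RBeq}\ref{it:underRB} and the identifications $\Ncal_\Fin=\Ncal$, $\Ncal^*_\Fin=\Ecal$. Nothing to add.
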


So the $\sigma$-ideals associated with definable (analytic) ideals do not give new $\sigma$-ideals on $\cantor$, but they give new characterizations of $\Ncal$ and $\Ecal$. The converse of the previous result is also true, which we fully discuss in the next section.

\section{Ideals without the Baire property}\label{sec:nBP}

In this section, we study the ideals $\Ncal_J$ and $\Ncal^*_J$ when $J$ does not have the Baire property. With respect to our main results,
we finish to prove \autoref{mainBP}: an ideal $\Jcal$ on $\omega$ without the Baire property gives us new $\sigma$-ideals $\Ncal_\Jcal$ and $\Ncal^*_\Jcal$. We prove \autoref{mainBP2} as well (see \autoref{conseqforE2}).

One of the~main tools in our study of $\NidealJ{\Jcal}$ and $\NstaridealJ{\Jcal}$ is the~technique of \emph{filter games}.\footnote{One can find a~good systematic treatment of filter games and their dual ideal versions in~\cite{Laf, Laflear}.}
For the main results mentioned in the previous paragraph, we will use the meager game.

\begin{definition}\label{def:nonMgame}
Let $\Fcal$ be a~filter on~$\omega$. The following game of length $\omega$ between two players is called the \emph{meager game $M_\Fcal$}:
\begin{itemize}
    \item In the $n$-th move, Player~I plays a~finite set $A_n\in[\omega]^{<\omega}$ 
    and Player~II responds with a~finite set $B_n\in[\omega]^{<\omega}$ disjoint from $A_n$.
    \item After $\omega$ many moves, Player~II wins if $\bigcup\set{B_n}{n\in\omega}\in\Fcal$, and Player~I  wins otherwise.
\end{itemize}
\end{definition}   

Let us recall an~important result from Bartoszy\'nski and Scheepers about the~aforementioned game.\footnote{See also~{\cite[Thm.~2.11]{Laf}}.}

\begin{theorem}[{\cite{BSch}}]\label{nonMgame}
Let $\Fcal$ be a~filter on~$\omega$. Then
Player~I does not have a~winning strategy in the meager game for the filter $\Fcal$ if and only if $\Fcal$ is not meager in $\pts(\omega)$.
\end{theorem}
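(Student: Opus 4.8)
The statement is an ``iff'', and I find it cleanest to prove the logically equivalent dichotomy: \emph{Player~I has a winning strategy in $M_\Fcal$ if and only if $\Fcal$ is meager.} Negating both sides yields the theorem. Throughout I would use \autoref{thm:talagrand} to pass freely between ``$\Fcal$ is meager'' and the interval-partition description of meagerness: $\Fcal$ is meager iff there is a partition of $\omega$ into finite intervals $\langle I_n : n<\omega\rangle$ such that every $A\in\Fcal$ meets all but finitely many of the $I_n$. Here the nontrivial direction (producing the partition) is exactly Talagrand's condition $\Fin\leqRB\Fcal^d$ for the dual ideal, while the converse is the elementary remark that $\set{X\subseteq\omega}{X\cap I_n\neq\emptyset \text{ for all but finitely many } n}$ is meager in $\pts(\omega)$.

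First, the easy direction: if $\Fcal$ is meager, fix such a partition $\langle I_n : n<\omega\rangle$ and let Player~I play initial segments that ``swallow'' fresh intervals. Concretely, having seen $B_0,\dots,B_{k-1}$, Player~I picks an interval $I_{m_k}$ lying entirely above $\max\bigcup_{j<k}B_j$ and plays $A_k=[0,\max I_{m_k}+1)$, keeping the $A_k$ increasing and the $m_k$ strictly increasing. Since every later $A_j$ again contains $I_{m_k}$ and every $B_j$ is disjoint from $A_j$, no $B_j$ ever meets $I_{m_k}$; hence $\bigcup_n B_n$ misses infinitely many intervals and so cannot belong to $\Fcal$. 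Thus this is a winning strategy for Player~I.

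For the converse, suppose $\sigma$ is a winning strategy for Player~I; I want to produce an interval partition witnessing that $\Fcal$ is meager, i.e.\ one with every member of $\Fcal$ meeting cofinitely many intervals. I would build the boundaries $0=n_0<n_1<\cdots$ by a breadth-first exploration of $\sigma$: maintain at stage $k$ the finite set $T_k$ of all $\sigma$-consistent partial plays $\langle B_0,\dots,B_{k-1}\rangle$ in which each $B_j$ is a subset of the already-fixed region, compute the finitely many responses $\sigma(p)$ for $p\in T_k$ (each a finite set), and choose $n_{k+1}$ beyond all of them. Finiteness of each $T_k$ — hence the legitimacy of the recursion — is exactly what keeps Player~I's relevant moves bounded, so that the next boundary can always be chosen. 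Set $I_k=[n_k,n_{k+1})$.

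It then remains to show $\Fcal\subseteq\set{X}{X\cap I_k\neq\emptyset \text{ for all but finitely many } k}$. Arguing by contradiction, suppose some $F\in\Fcal$ misses infinitely many intervals. The plan is to convert $F$ into a run of $M_\Fcal$ consistent with $\sigma$ whose outcome $\bigcup_n B_n$ lies in $\Fcal$, contradicting that $\sigma$ is winning: Player~II plays, interval by interval, the portion of $F$ it can, while the infinitely many intervals entirely avoided by $F$ act as buffers that let Player~II sidestep the finite set $\sigma$ currently blocks. \textbf{The main obstacle is precisely this last step.} Player~I's responses are adaptive and can reach into the very interval where Player~II wishes to deposit part of $F$, so one must organize the partition (through the finite-branching bookkeeping above) so that the intervals skipped by $F$ leave Player~II enough uninterfered room to recover $F$ modulo a set that the filter absorbs. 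Making this evasion succeed against an arbitrary adaptive $\sigma$, so that the reconstructed outcome genuinely stays in $\Fcal$, is the technical heart of the argument; \autoref{thm:talagrand} then packages the conclusion into meagerness of $\Fcal$.
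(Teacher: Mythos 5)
First, a remark on the comparison you asked for: the paper does not prove this statement at all — it is quoted as a black box from Bartoszy\'nski and Scheepers \cite{BSch} — so your proposal can only be judged on its own terms. The direction ``$\Fcal$ meager $\Rightarrow$ Player~I has a winning strategy'' is complete and correct, and your reduction of the converse to the interval-partition form of \autoref{thm:talagrand}, together with the finitely-branching construction of the boundaries $n_0<n_1<\cdots$, is the right skeleton. But the proposal has a genuine gap, and you flag it yourself: you never carry out the evasion argument showing that if some $X\in\Fcal$ misses infinitely many of the intervals $I_k=[n_k,n_{k+1})$, then Player~II can defeat the strategy $\sigma$. As written, the ``technical heart'' is announced as an obstacle rather than resolved, so the hard direction is not proved.

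The step does go through, and with exactly the bookkeeping you already set up; the worry that $\sigma$ ``reaches into the very interval where Player~II wishes to deposit part of $X$'' dissolves once the skipped intervals themselves are used as the buffers. Arrange the recursion so that for every $\sigma$-consistent position $p$ of length $l\leq k$ all of whose moves lie in $[0,n_k)$, the response $\sigma(p)$ is contained in $[0,n_{k+1})$. Given $X\in\Fcal$, let $k_0<k_1<\cdots$ enumerate indices with $X\cap I_{k_j}=\emptyset$ (so $k_j\geq j$), and let Player~II answer at round $j$ with $B_j:=X\cap[n_{k_j},n_{k_{j+1}})$. Inductively the position $p_j$ has length $j\leq k_j$ and all its moves lie in $[0,n_{k_j})$, so $A_j=\sigma(p_j)\subseteq[0,n_{k_j+1})$; since $X\cap[n_{k_j},n_{k_j+1})=\emptyset$, the set $B_j$ is automatically disjoint from $A_j$, and $B_j\subseteq[0,n_{k_{j+1}})$ keeps the induction going. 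Then $\bigcup_j B_j=X\cap[n_{k_0},\omega)$ is a tail of $X$, hence belongs to the filter, and Player~II wins this $\sigma$-consistent run — contradiction. Adding this paragraph (plus the routine refinement of the fibers $\varphi^{-1}[\{n\}]$ to an honest interval partition when you invoke Talagrand) closes the proof.
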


We use the meager game to show that, whenever $J$ does not have the Baire property, no member of $\Ncal_J$ can be co-meager with respect to any \emph{self-supported} closed subset of $\cantor$, i.e.\ a closed subset of positive measure such that each of its non-empty (relative) open subsets have positive measure.

\begin{mainlemma}\label{notcomeager2}
Let $C\subseteq{}^\omega 2$ be a self-supported closed set. %such that $\leb{C}>0$ and %\[(\forall\, s\in 2^{<\omega})\ ([s]\cap C\neq \emptyset\Rightarrow \leb{[s]\cap C}> 0).\]
If $\Jcal$ is not meager then $\displaystyle N_\Jcal(\bar{c})\cap C$ is not co-meager in $C$ for each $\bar{c}\in\omegabar$. As a consequence, $Z\cap C$ is not co-meager in $C$ for any $Z\in\Ncal_\Jcal$.
\end{mainlemma}
\begin{proof}
Let $C\subseteq{}^\omega 2$ be a~self-supported closed set, and %as considered in the hypothesis. 
let $G\subseteq C$ be a~co-meager subset in $C$. Then there is a~sequence $\seqn{D_n}{n\in\omega}$ of open dense sets in $C$ such that $\bigcap_{n\in\omega}D_n\subseteq G$. Moreover, since $C$ is closed, there is a~tree $T$ such that $C=[T]$. 

Consider $\bar{c}\in\omegabar$ and construct the~following strategy of Player~I for the meager game for $\Jcal^d$.\smallskip

\underline{The~first move}:

Player~I picks an $s_0\in T$ such that $[s_0]\cap C\subseteq D_0$ and chooses $n_0<\omega$ such that $\leb{\bigcup_{n\geq n_0}c_n}<\leb{C\cap[s_0]}$ (which exists because $\bar c\in\omegabar$). 
Player~I's move is $n_0$.\smallskip
 
\underline{Second move and further}:

Player~II replies with $B_0\in[\omega]^{<\omega}$ such that $n_0\cap B_0=\emptyset$.
Since $\leb{\bigcup_{n\in B_0} c_n}<\leb{C\cap[s_0]}$, $C\cap [s_0]\nsubseteq \bigcup_{n\in B_0}c_n$, so there exists an $x_0\in C\cap [s_0]\smallsetminus \bigcup_{n\in B_0} c_n$. Then Player~I finds an
$m_0>|s_0|$ such that $[x_0{\upharpoonright}m_0]\cap \bigcup_{n\in B_0} c_n=\emptyset$. 
Since $D_1$ is dense in $C$, Player~I can pick an $s_1\in T$ such that $s_0\subseteq x_0{\upharpoonright} m_0\subseteq s_1$ and $[s_1]\cap C\subseteq D_1$. Then Player~I moves with an $n_1\in\omega$ such that $\leb{\bigcup_{n\geq n_1} c_n}<\leb{C\cap [s_1]}$.  
Player~II responds with $B_1\in[\omega]^{<\omega}$ such that $n_1\cap B_1=\emptyset$, and the~game continues in the~same way we just described. 

Finally, since $\Jcal$ is not meager, Player~I does not have a~winning strategy in the meager game for $\Jcal^d$. Hence, there is some match $\seqn{(n_k, B_k)}{k\in\omega}$ where Player~I uses the~aforementioned strategy and Player~II wins. Thus, we have $F:=\bigcup_{k\in\omega} B_k\in\Jcal^d$. 
Define $x:=\bigcup_{k\in\omega}s_k$. Then, $x\in\bigcap_{k\in\omega}D_k$. On the~other hand, $x\notin \bigcup_{n\in F}c_n$ (because $[s_{k+1}]\cap\bigcup_{n\in B_k}c_k=\emptyset$) and hence $x\notin N_{\Jcal}(\bar{c})$.
\end{proof}

\autoref{notcomeager2} shows a connection between $\Ncal_\Jcal$ and meager sets. First recall:

\begin{lemma}[See e.g.~{\cite{kechris}}]\label{ncominclp}
Let $A\subseteq {}^\omega2$. If $A$ is meager in $\cantor$ then, for any  $s\in 2^{<\omega}$, $[s]\cap A$ is not co-meager in $[s]$. The~converse is true when $A$ has the~Baire property.
\end{lemma}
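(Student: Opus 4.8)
The plan is to prove both directions using the basic topology of the clopen subspaces $[s]$ of $\cantor$, each of which is homeomorphic to $\cantor$ and in particular a Baire space.

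For the forward implication, I would start by writing the meager set $A$ as a countable union $A=\bigcup_{n<\omega}F_n$ of nowhere dense sets. Fix $s\in 2^{<\omega}$. Since $[s]$ is clopen, each $[s]\cap F_n$ is nowhere dense \emph{in the subspace} $[s]$ (a nowhere dense subset of $\cantor$ remains nowhere dense after intersecting with a clopen set), so $[s]\cap A=\bigcup_{n<\omega}([s]\cap F_n)$ is meager in $[s]$. Now suppose toward a contradiction that $[s]\cap A$ were co-meager in $[s]$; then its relative complement $[s]\smallsetminus A$ would be meager in $[s]$, whence $[s]=([s]\cap A)\cup([s]\smallsetminus A)$ would be meager in itself. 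This contradicts the Baire category theorem, since $[s]$ is a non-empty closed (indeed clopen) subset of the Polish space $\cantor$ and hence a Baire space. Therefore $[s]\cap A$ is not co-meager in $[s]$.

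For the converse, assuming $A$ has the Baire property, I would argue by contraposition: suppose $A$ is not meager, and produce an $s$ for which $[s]\cap A$ is co-meager in $[s]$. Using the Baire property, write $A=U\triangle M$ with $U$ open and $M$ meager. If $U$ were empty then $A\subseteq M$ would be meager, so $U\neq\emptyset$; pick a basic clopen $[s]\subseteq U$. On this piece $U\cap[s]=[s]$, so $[s]\cap A=[s]\triangle(M\cap[s])=[s]\smallsetminus(M\cap[s])$, which is co-meager in $[s]$ because $M\cap[s]$ is meager in $[s]$. This is exactly the negation of the conclusion assumed in the converse, completing the contrapositive.

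There is no deep obstacle here, as this is a classical fact (see e.g.~\cite{kechris}). The only points requiring care are the subspace bookkeeping---verifying that ``nowhere dense'', ``meager'', and ``co-meager'' behave correctly under restriction to the clopen set $[s]$, and invoking that $[s]$ is itself a Baire space---together with the standard characterization $A=U\triangle M$ of the Baire property used in the converse. Neither step demands a new idea.
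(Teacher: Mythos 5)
Your proof is correct, and the paper itself gives no argument for this lemma --- it is stated as a classical fact with a citation to Kechris --- so your argument is exactly the standard one being invoked: a meager set cannot be relatively co-meager on any clopen piece because each $[s]$ is a Baire space, and for the converse the open set $U$ in the decomposition $A=U\triangle M$ must be non-empty, yielding a basic clopen set on which $A$ is co-meager. Both the subspace bookkeeping and the use of the Baire property are handled correctly.
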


Consequences of \autoref{notcomeager2} are:

\begin{corollary}\label{cor:notcomeager}
If $\Jcal$ is an ideal on $\omega$, then $\Jcal$ does not have the Baire property iff $Z$ is not co-meager in $\cantor$ for any $Z\in\Ncal_\Jcal$.
\end{corollary}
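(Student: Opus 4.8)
The plan is to prove the biconditional in \autoref{cor:notcomeager} by establishing each direction separately, using \autoref{notcomeager2} for the forward implication and \autoref{thm:talagrand} together with \autoref{BaireForNJ2} for the reverse.

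For the direction $(\Rightarrow)$, suppose $\Jcal$ does not have the Baire property. By \autoref{thm:talagrand}, this means $\Jcal$ is not meager in $\pts(\omega)$. I would then apply \autoref{notcomeager2} directly with $C=\cantor$, which trivially satisfies the hypotheses of that lemma: $\cantor$ is closed with $\mu(\cantor)=1>0$, and every non-empty basic clopen set $[s]\cap\cantor=[s]$ has measure $2^{-|s|}>0$. The ``As a consequence'' clause of \autoref{notcomeager2} then yields that $Z\cap\cantor=Z$ is not co-meager in $\cantor$ for any $Z\in\Ncal_\Jcal$, which is exactly the desired conclusion.

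For the direction $(\Leftarrow)$, I would argue by contraposition: assume $\Jcal$ \emph{has} the Baire property and produce a member of $\Ncal_\Jcal$ that is co-meager. By \autoref{BaireForNJ2}, the assumption gives $\Ncal_\Jcal=\Ncal$. So it suffices to exhibit a Lebesgue null set that is co-meager in $\cantor$. This is a standard fact: $\cantor$ decomposes as the disjoint union of a measure zero set and a meager set, so one can take the complement of a meager null-complement, i.e.\ a co-meager set of measure zero. Concretely, one constructs a meager $F_\sigma$ set of full measure (for instance, a countable union of nowhere dense closed sets whose measures tend to $1$), whose complement is then a co-meager set in $\Ncal=\Ncal_\Jcal$.

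The main (and only mild) obstacle is making sure the forward direction's appeal to \autoref{notcomeager2} is clean, since that lemma is phrased for a general closed set $C$ but we only need $C=\cantor$; the verification that $\cantor$ meets the hypotheses is immediate. The reverse direction rests entirely on the classical existence of a co-meager null set together with \autoref{BaireForNJ2}, so no genuinely new work is required there. Overall this corollary is a short packaging of the Main Lemma and the earlier Baire-property characterization.
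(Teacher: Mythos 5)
Your proposal is correct and follows essentially the same route as the paper: the forward direction applies \autoref{notcomeager2} with $C=\cantor$ (after noting, via \autoref{thm:talagrand}, that failure of the Baire property is the same as non-meagerness), and the reverse direction uses \autoref{BaireForNJ2} together with the classical existence of a co-meager null set (the paper cites Rothberger's theorem for this). No gaps.
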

\begin{proof}
For the implication from left to right, apply \autoref{notcomeager2} to $C=\cantor$. For the converse, if $\Jcal$ has the Baire property then $\Ncal_\Jcal=\Ncal$ by \autoref{BaireForNJ2}, and it is well-known that $\Ncal$ contains a co-meager set in $\cantor$ (Rothberger's Theorem~\cite{Roth38}).
\end{proof} 

\begin{corollary}\label{Njmeag}
Assume that $\Jcal$ is an ideal on $\omega$ without the Baire property.
Then, for any 
$Z\in\Ncal_\Jcal$, $Z$ is meager in $\cantor$ iff it has the Baire property.
\end{corollary}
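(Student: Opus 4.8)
The statement has one trivial direction and one substantive one. Since every meager set automatically has the Baire property (it is the symmetric difference of the empty open set with itself), the forward implication ``meager $\Rightarrow$ Baire property'' requires no hypothesis on $\Jcal$ at all. So the plan is to focus entirely on the converse: assuming $Z\in\Ncal_\Jcal$ has the Baire property, I want to conclude that $Z$ is meager.

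The key observation is that \autoref{ncominclp} gives a criterion for meagerness that is sharp precisely for sets with the Baire property: if $A$ has the Baire property, then $A$ is meager iff $[s]\cap A$ is not co-meager in $[s]$ for every $s\in 2^{<\omega}$. Thus, to show $Z$ is meager, it suffices to verify that $[s]\cap Z$ fails to be co-meager in $[s]$ for each basic clopen set $[s]$. This is where the non-Baire-property hypothesis on $\Jcal$ enters, via the Main Lemma.

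First I would note that, by Talagrand's theorem (\autoref{thm:talagrand}), $\Jcal$ not having the Baire property is equivalent to $\Jcal$ being non-meager, which is exactly the hypothesis needed to invoke \autoref{notcomeager2}. Next I would observe that for any $s\in 2^{<\omega}$, the clopen set $C:=[s]$ satisfies the hypotheses of \autoref{notcomeager2}: it is closed with $\leb{[s]}=2^{-|s|}>0$, and any nonempty relative clopen piece $[t]\cap[s]$ is again a basic clopen set, hence of positive measure. Applying \autoref{notcomeager2} with this $C$ therefore yields that $Z\cap[s]$ is not co-meager in $[s]$, for every $s$ and every $Z\in\Ncal_\Jcal$.

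Combining these two ingredients finishes the argument: the Main Lemma supplies the ``not co-meager on every $[s]$'' condition unconditionally for members of $\Ncal_\Jcal$, and the Baire property of $Z$ then lets \autoref{ncominclp} upgrade this to genuine meagerness. I do not anticipate a serious obstacle here, since the heavy lifting (the meager-game construction showing non-co-meagerness) has already been carried out in \autoref{notcomeager2}; the only point requiring care is the bookkeeping that $[s]$ meets the structural hypotheses of the Main Lemma and that the converse half of \autoref{ncominclp} is applied in the correct direction, using the assumed Baire property of $Z$.
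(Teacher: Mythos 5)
Your proof is correct and follows essentially the same route as the paper: apply \autoref{notcomeager2} with $C=[s]$ for each $s\in 2^{<\omega}$ (the paper leaves the choice of $C$ implicit) to get that $Z\cap[s]$ is never co-meager in $[s]$, then invoke the converse direction of \autoref{ncominclp}, which is available because $Z$ is assumed to have the Baire property. Your additional remarks — the trivial forward direction and the Talagrand equivalence between ``no Baire property'' and ``non-meager'' — are correct bookkeeping that the paper simply takes for granted.
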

\begin{proof}
Assume that $Z\in \Ncal_\Jcal$ has the Baire property. By \autoref{notcomeager2}, $Z\cap[s]$ is not co-meager in $[s]$ for all $s\in 2^{<\omega}$. Therefore, by \autoref{ncominclp}, $Z$ is meager in $\cantor$.
\end{proof}

\autoref{notcomeager2} gives us the converse of \autoref{BaireForNJ2} for $\Ncal_\Jcal$.
Furthermore, we can prove that there is a~meager set of Lebesgue measure zero which is not contained in $\NidealJ{\Jcal}$ for non-meager $\Jcal$.

\begin{theorem}\label{charaktNj2} Let $\Jcal$ be an~ideal on~$\omega$.
   Then, the following statements are equivalent.
\begin{multicols}{3}
\begin{enumerate}[label=\rm (\roman*)]
       \item\label{it:NJneqN} $\Ncal_\Jcal\subsetneq\Ncal$.
    \item\label{it:MNnsubNJ} $\meager\cap\mnula \nsubseteq \NidealJ{\Jcal}$.
     \item\label{it:JnB} $\Jcal$ is not meager.
\end{enumerate}
\end{multicols}
\end{theorem}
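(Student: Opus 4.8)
The plan is to prove the cycle \ref{it:NJneqN} $\Rightarrow$ \ref{it:JnB} $\Rightarrow$ \ref{it:MNnsubNJ} $\Rightarrow$ \ref{it:NJneqN}; only the middle implication needs real work. Throughout I use that, by \autoref{thm:talagrand}, the hypothesis ``$\Jcal$ is not meager'' in \ref{it:JnB} is the same as ``$\Jcal$ does not have the Baire property''.

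The two short implications go as follows. For \ref{it:MNnsubNJ} $\Rightarrow$ \ref{it:NJneqN}, any witness $X\in(\meager\cap\Ncal)\smallsetminus\Ncal_\Jcal$ already shows $\Ncal_\Jcal\neq\Ncal$, and since $\Ncal_\Jcal\subseteq\Ncal$ always holds by \eqref{eq:contfin}, we get $\Ncal_\Jcal\subsetneq\Ncal$. For \ref{it:NJneqN} $\Rightarrow$ \ref{it:JnB} I argue by contraposition: if $\Jcal$ is meager then it has the Baire property, so \autoref{BaireForNJ2} yields $\Ncal_\Jcal=\Ncal$, contradicting \ref{it:NJneqN}.

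The core is \ref{it:JnB} $\Rightarrow$ \ref{it:MNnsubNJ}. Assuming $\Jcal$ is not meager, I must produce one set that is at once meager, null, and outside $\Ncal_\Jcal$. Rothberger's null co-meager set cannot be used directly (it is not meager), so the idea is to localize inside a nowhere dense piece. First I would fix a closed \emph{nowhere dense} set $C\subseteq\cantor$ with $\leb{C}>0$ all of whose nonempty relative basic clopen pieces have positive measure --- a fat Cantor set, e.g.\ $C:=\set{x\in\cantor}{(\forall\, n<\omega)\ x\frestr I_n\neq \mathbf 1_{I_n}}$ for a partition of $\omega$ into consecutive finite intervals $I_n$ with $\sum_n 2^{-|I_n|}<\infty$ (here $\mathbf 1_{I_n}$ is the constant string $1$ on $I_n$). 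One checks that $\leb{C}=\prod_n(1-2^{-|I_n|})>0$, that every $s$ has an extension realizing some $\mathbf 1_{I_n}$ and hence forced out of $C$ (so $C$ is nowhere dense), and that $\leb{[s]\cap C}>0$ whenever $[s]\cap C\neq\emptyset$. Thus $C$ satisfies the hypotheses of \autoref{notcomeager2}.

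Inside $C$ I would run the standard Rothberger--Oxtoby argument: choose a countable dense $Q\subseteq C$; since $\mu$ is non-atomic, $\leb{Q}=0$, so by outer regularity there are open $G_n\subseteq\cantor$ with $Q\subseteq G_n$ and $\leb{G_n}<2^{-n}$. Then $G:=\bigcap_{n<\omega}(C\cap G_n)$ is a dense $G_\delta$ in $C$ (each $C\cap G_n$ is relatively open and, containing $Q$, dense), hence co-meager in $C$, while $\leb{G}=0$. As $G\subseteq C$ and $C$ is nowhere dense, $G$ is meager in $\cantor$, so $G\in\meager\cap\Ncal$. Finally, were $G\in\Ncal_\Jcal$, the consequence of \autoref{notcomeager2} (valid because $\Jcal$ is not meager) would make $G=G\cap C$ non-co-meager in $C$, contradicting the construction; hence $G\notin\Ncal_\Jcal$, giving \ref{it:MNnsubNJ}. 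I expect the main obstacle to be exactly the simultaneous demands on $C$ --- nowhere dense yet of positive measure with no null relative clopen piece --- which the fat-Cantor-set construction reconciles; after that, everything reduces to the relativized Rothberger decomposition and a direct appeal to the Main Lemma.
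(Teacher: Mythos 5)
Your proposal is correct and follows essentially the same route as the paper: the two easy implications are handled identically (via \autoref{BaireForNJ2} and the inclusion $\meager\cap\Ncal\subseteq\Ncal$), and the core implication \ref{it:JnB}~$\Rightarrow$~\ref{it:MNnsubNJ} is exactly the paper's argument --- take a closed nowhere dense $C$ of positive measure satisfying the hypothesis of \autoref{notcomeager2}, find a null set co-meager in $C$, and invoke the Main Lemma. The only difference is that you spell out the fat-Cantor-set construction and the relativized Rothberger decomposition, which the paper leaves implicit; both steps check out.
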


\begin{proof} The~implication $\ref{it:NJneqN}\rightarrow\ref{it:JnB}$ follows directly from \autoref{BaireForNJ2}. On the other hand, since $\meager\cap\mnula \subseteq \mathcal{N}$ the~implication $\ref{it:MNnsubNJ}\to\ref{it:NJneqN}$ is obvious.

It remains to show $\ref{it:JnB}\to\ref{it:MNnsubNJ}$. First, we choose a closed nowhere dense $C\subseteq ^\omega2$ of positive measure, which can be found self-supported (as in the hypothesis of \autoref{notcomeager2}). 
Then, we find $G\subseteq C$ which is co-meager in $C$ and has Lebesgue measure zero. Hence $G\in\meager\cap\Ncal$, but $G\notin\Ncal_\Jcal$ by \autoref{notcomeager2}.
\end{proof}

The proof of $\ref{it:JnB}\to\ref{it:MNnsubNJ}$ is similar to the proof of $\Ecal\subsetneq\meager\cap\Ncal$ from~\cite[Lemma~2.6.1]{BJ}. Actually, the latter is already implied by $\ref{it:MNnsubNJ}$ (see \eqref{eq:contfin}).

However, $\Ncal_\Jcal$, and even $\Ncal^*_\Jcal$, contain many non-meager sets when $\Jcal$ is not meager. Examples can be obtained from the following construction.

\begin{definition}\label{def:cT}
For any non-empty tree $T\subseteq2^{<\omega}$ without maximal nodes we define $\bar{c}^T$ as follows. Enumerate $T=\{t_n:n\in\omega\}$ in such a way that $t_m\subseteq t_n$ implies $m\leq n$. By recursion on $n$, construct $\{t^n_k:k\leq n\}\subseteq T$ such that:
\begin{enumerate}[label=\rm (\roman*)]
    \item\label{it:t0} $t^0_0\in T$ extends $t_0$ and  $|t^0_0|\geq1$
    \item\label{it:tn+1} $t^{n+1}_{n+1}\supseteq t_{n+1}$,
    \item\label{it:tn+1'} for each $k\leq n$, $t^{n+1}_k\supseteq t^n_k$ and,
    \item for each $k\leq n+1$, $|t^{n+1}_k|\geq 2n+2$.
\end{enumerate}
Define $c^T_n:=\bigcup_{k\leq n}[t^n_k]$ and $\bar{c}^T:=\seqn{c^T_n}{n\in\omega}$. In addition, the~inequality $\leb{c^T_n}\leq2^{-n}$ holds, so $\bar c^T\in\omegabar$.
\end{definition}

In general, for any sequence $\bar\varepsilon=\seqn{\varepsilon_n}{n<\omega}$ of positive reals, it is possible to construct a similar $\bar{c}^{T,\bar\varepsilon}$ such that $\leb{c^{T,\bar\varepsilon}_n}<\varepsilon_n$ for all $n<\omega$, that is, $\bar{c}^{T,\bar\varepsilon}\in\Omega^*_{\bar\varepsilon}$. For this, we just need to modify the length of each $t^n_k$ accordingly.

\begin{mainlemma}\label{capcomeag2}
   Let $T\subseteq2^{<\omega}$ be a non-empty tree without maximal nodes and let $\Jcal$ be a non-meager ideal on~$\omega$. Then $N^*_\Jcal(\bar{c}^T)\cap G\neq\emptyset$ for every  co-meager set \mbox{$G\subseteq[T]$} in $[T]$, i.e.\ $N^*_\Jcal(\bar{c}^T)$ is not meager in $[T]$. 
   
   In other words, for any non-empty closed $H\subseteq\cantor$ there is some $\bar{c}^H\in\omegabar$ such that, for any non-meager ideal $\Jcal$ on $\omega$, $N^*_\Jcal(\bar c^H)\cap H$ is non-meager in $H$.
\end{mainlemma}
\begin{proof} 
Let $G\subseteq[T]$ be co-meager in $[T]$. Then,
there is a sequence $\seqn{D_n}{n\in\omega}$ of open dense sets in $[T]$ such that $\bigcap_{n\in\omega}D_n\subseteq G$.
We look for an $x\in N^*_\Jcal(\bar{c}^T)\cap\bigcap_{n\in\omega}D_n$ by using the non-meager game. 
   
Construct the following strategy of Player~I for the non-meager game for $\Jcal^d$ along with fragments of the~desired $x$.

\underline{The~first move}: 

Player~I chooses some $s_0\in T$ extending $t^0_0$ such that $[s_0]\cap[T]\subseteq D_0$. Since $s_0\in T$, $s_0=t_{n_0}$ for some $n_0\in\omega$. Player~I moves with $A_0:=n_0+1$.

\underline{Second move and further}:   

Player~II replies with $B_0$. Since $B_0\cap A_0=\emptyset$, by~\ref{it:tn+1}--\ref{it:tn+1'} of \autoref{def:cT}, Player~I can extend $s_0$ to some $s'_0\in T$ such that, for any $\ell\in B_0$, $s'_0$ extends $t^\ell_{n_0}$, 
and further finds $s_1\in T$ extending $s'_0$ such that $[s_1]\cap[T]\subseteq D_1$. Here, $s_1=t_{n_1}$ for some $n_1\in\omega$, and Player~I moves with $A_1:=n_1+1$. 
Player~II would reply with some $B_1$ not intersecting $A_1$, and Player~I continues playing in the same way.
   
Now, since $\Jcal$ is not meager, Player~I does not have a winning strategy. In particular, there is some match $\seqn{(A_n,B_n)}{n\in\omega}$ of the game where Player~I uses the strategy defined above and Player~II wins, i.e.\ $F:=\bigcup_{n\in\omega}B_n$ is in $\Jcal^d$. 
On the~other hand, Player~I constructed the increasing sequence $\seqn{s_n}{n\in\omega}$ of members of $T$, so $x:=\bigcup_{n\in\omega}s_n$ is a branch of the tree. By the definition of the strategy, we have that $x\in c^T_m$ for any $m\in F$, so $x\in N^*_\Jcal(\bar{c}^T)$. Also $x\in D_n$ for every $n\in\omega$.
\end{proof}

As a consequence for $T=2^{<\omega}$, 
we conclude that 
$\NstaridealJ{\Jcal}$ 
contains non-meager subsets of $\cantor$ when $\Jcal$ is an ideal on $\omega$ without the Baire property. 
Moreover, let us emphasize that the~same is true for $\NidealJ{\Jcal}$ as well (because $N^*_\Jcal(\bar c)\subseteq N_\Jcal(\bar c)$).

\begin{corollary}\label{cor:ML2}
  If $\Jcal$ is not meager then $N^*_\Jcal(\bar{c}^T)$ and $N_\Jcal(\bar{c}^T)$ are not meager in $^\omega2$.
\end{corollary}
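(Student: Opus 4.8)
The plan is to deduce \autoref{cor:ML2} as a direct specialization of \autoref{capcomeag2} together with \autoref{notcomeager2}. The key observation is that \autoref{capcomeag2} was proved for an \emph{arbitrary} non-empty tree $T\subseteq 2^{<\omega}$ without maximal nodes, and the statement ``$N^*_\Jcal(\bar c^T)$ is not meager in $[T]$'' is exactly what it delivers. So the first step is simply to apply \autoref{capcomeag2} with the choice $T=2^{<\omega}$: then $[T]=\cantor$, so the conclusion reads that $N^*_\Jcal(\bar c^T)$ is not meager in $\cantor$, which is half of what we want.

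The second step handles $N_\Jcal(\bar c^T)$. Here I would invoke the containment $N^*_\Jcal(\bar c)\subseteq N_\Jcal(\bar c)$, which is immediate from \autoref{def:NJ} since $\Jcal^d\subseteq\Jcal^+$ (every set whose complement lies in $\Jcal$ certainly fails to lie in $\Jcal$, as $\Jcal$ is proper). Because $N^*_\Jcal(\bar c^T)$ is already non-meager in $\cantor$ and it is a subset of $N_\Jcal(\bar c^T)$, the larger set $N_\Jcal(\bar c^T)$ is non-meager in $\cantor$ as well. This is exactly the parenthetical remark made just before the corollary in the excerpt, so no new work is required.

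One point worth being careful about is the hypothesis: \autoref{capcomeag2} asks for $\Jcal$ to be a \emph{non-meager ideal}, and the corollary states its hypothesis as ``$\Jcal$ is not meager.'' These are the same assumption, so the application is clean; by \autoref{thm:talagrand}, this is equivalent to $\Jcal$ not having the Baire property, which is how the hypothesis is phrased elsewhere in the section. Thus the whole argument is a two-line invocation, and there is essentially no obstacle: the real content lives in \autoref{capcomeag2}, whose proof already did the heavy lifting via the meager game.

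If one wanted to present the argument self-containedly rather than citing \autoref{capcomeag2} directly, the main (and only) subtlety would be verifying that the tree $T=2^{<\omega}$ genuinely satisfies the hypotheses of \autoref{capcomeag2}, i.e.\ that it is non-empty and has no maximal nodes; both are obvious since every node $s\in 2^{<\omega}$ has the proper extensions $s^\frown 0$ and $s^\frown 1$. I expect no genuine difficulty here, and I would simply write the corollary's proof as ``Apply \autoref{capcomeag2} to $T=2^{<\omega}$, and use $N^*_\Jcal(\bar c)\subseteq N_\Jcal(\bar c)$.''
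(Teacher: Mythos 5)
Your proposal is correct and matches the paper's own argument exactly: the authors likewise obtain the corollary by applying \autoref{capcomeag2} with $T=2^{<\omega}$ (so that $[T]=\cantor$) and then passing to $N_\Jcal(\bar c^T)$ via the inclusion $N^*_\Jcal(\bar c)\subseteq N_\Jcal(\bar c)$. The only cosmetic slip is your opening mention of \autoref{notcomeager2}, which plays no role in the argument and can be dropped.
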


In contrast with \autoref{capcomeag2}, if $\Jcal$ has the Baire property then $N^*_\Jcal(\bar c^T)$ is meager in $\cantor$ (and even $N^*_\Jcal(\bar c^T)\cap[T]$ is meager in $[T]$ when $\leb{[T]}>0$) because $N_\Jcal^*(\bar c^T)\in\Ecal$ by \autoref{BaireForNJ2}. 

As a consequence of \autoref{capcomeag2}, we can finally conclude that $\Ncal^*_\Jcal=\Ecal$ iff $\Jcal$ has the Baire property.

\begin{theorem}\label{conseqforE2}
   
   Let $\Jcal$ be an ideal on $\omega$. Then  
   the~following statements are equivalent:
   \begin{multicols}{2}
   \begin{enumerate}[label=\rm (\roman*)]
       \item\label{it:N*1}  $\Ncal^*_\Jcal\nsubseteq\meager$.
       \item\label{it:N*2}   $\Ncal_\Jcal^*\nsubseteq\meager\cap\mnula$.
       \item\label{it:N*3}   $\mathcal{E}\subsetneq\Ncal^*_\Jcal$.
       \item\label{it:N*4} $\Jcal$ is not meager.
   \end{enumerate}
   \end{multicols}
\end{theorem}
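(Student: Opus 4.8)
The plan is to assemble the equivalence from results already in place: the inclusion chain \eqref{eq:contfin}, the Main Lemma \autoref{capcomeag2} (in the form of \autoref{cor:ML2}), and \autoref{BaireForNJ2} together with Talagrand's \autoref{thm:talagrand}. The organising observation is that \eqref{eq:contfin} gives $\Ecal\subseteq\Ncal^*_\Jcal\subseteq\Ncal$, so \emph{every} member of $\Ncal^*_\Jcal$ is automatically null; and that $\Ecal\subseteq\Mcal\cap\Ncal$, since a closed null set is nowhere dense and $\Mcal,\Ncal$ are $\sigma$-ideals.

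First I would settle $\text{(i)}\Leftrightarrow\text{(ii)}$. Because $\Ncal^*_\Jcal\subseteq\Ncal$, any $Z\in\Ncal^*_\Jcal$ satisfies $Z\in\Mcal\cap\Ncal$ iff $Z\in\Mcal$; hence $\Ncal^*_\Jcal\subseteq\Mcal\cap\Ncal$ iff $\Ncal^*_\Jcal\subseteq\Mcal$, which is precisely the equivalence of the two statements.

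Next, for $\text{(iv)}\Rightarrow\text{(i)}$ and $\text{(iv)}\Rightarrow\text{(iii)}$, assume $\Jcal$ is not meager and take $T=2^{<\omega}$, so that $[T]=\cantor$. By \autoref{cor:ML2} the set $Z:=N^*_\Jcal(\bar c^T)$ is non-meager in $\cantor$, while $Z\in\Ncal^*_\Jcal$ by construction (recall $\bar c^T\in\omegabar$ from \autoref{def:cT}). Thus $Z\notin\Mcal$, giving $\Ncal^*_\Jcal\nsubseteq\Mcal$, i.e.\ (i). Since $\Ecal\subseteq\Mcal$, the same $Z$ witnesses $Z\notin\Ecal$, and combined with $\Ecal\subseteq\Ncal^*_\Jcal$ this yields the proper inclusion $\Ecal\subsetneq\Ncal^*_\Jcal$, i.e.\ (iii).

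Finally, I would close the cycle by proving the contrapositives $\text{(i)}\Rightarrow\text{(iv)}$ and $\text{(iii)}\Rightarrow\text{(iv)}$ at once. If $\Jcal$ is meager then by \autoref{thm:talagrand} it has the Baire property, so \autoref{BaireForNJ2} gives $\Ncal^*_\Jcal=\Ecal$. Then $\Ncal^*_\Jcal=\Ecal\subseteq\Mcal$ refutes (i) (hence also (ii)), and $\Ecal=\Ncal^*_\Jcal$ refutes (iii). Together with $\text{(i)}\Leftrightarrow\text{(ii)}$ and the implications out of (iv), this gives the full equivalence. I do not anticipate a genuine obstacle: all the substantive work---the filter-game construction producing a non-meager member of $\Ncal^*_\Jcal$---is packaged in \autoref{capcomeag2}, so the only care needed is the bookkeeping that $Z$ really lies in $\Ncal^*_\Jcal$ and that $\Ncal^*_\Jcal\subseteq\Ncal$, both immediate from \eqref{eq:contfin} and \autoref{def:cT}.
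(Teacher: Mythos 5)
Your proposal is correct and takes essentially the same approach as the paper's proof: the only substantive implication, getting a non-meager member of $\Ncal^*_\Jcal$ from (iv), is delegated to \autoref{cor:ML2}, and the converse direction is closed via \autoref{BaireForNJ2}, exactly as in the paper. The paper merely arranges the trivial links as a single cycle $\mathrm{(iv)}\Rightarrow\mathrm{(i)}\Rightarrow\mathrm{(ii)}\Rightarrow\mathrm{(iii)}\Rightarrow\mathrm{(iv)}$ using $\Ecal\subseteq\Mcal\cap\Ncal$, whereas you prove $\mathrm{(i)}\Leftrightarrow\mathrm{(ii)}$ separately and run two contrapositives --- a purely organizational difference.
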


\begin{proof} 
    $\ref{it:N*4}\rightarrow\ref{it:N*1}$ follows by \autoref{cor:ML2}; $\ref{it:N*1}\rightarrow\ref{it:N*2}$ is obvious; $\ref{it:N*2}\rightarrow\ref{it:N*3}$ is a consequence of $\Ecal\subseteq\meager\cap\Ncal$; $\ref{it:N*3}\rightarrow\ref{it:N*4}$ is a consequence of \autoref{BaireForNJ2}.
\end{proof}

In the case of $\Ncal_\Jcal$, whether $\Jcal$ is meager or not, $\Ncal_\Jcal$ contains non-meager sets.

\begin{corollary}
  For every ideal $\Jcal$ on $\omega$, $\Ncal_\Jcal\nsubseteq\meager$ and  $\NidealJ{\Jcal}\nsubseteq\meager\cap\mnula$.
\end{corollary}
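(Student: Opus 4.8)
The plan is to reduce both non-containments to the single assertion that $\Ncal_\Jcal$ always contains a non-meager subset of $\cantor$. Indeed, if some $Z\in\Ncal_\Jcal$ is non-meager, then trivially $Z\notin\meager$, and since $\meager\cap\mnula\subseteq\meager$ we also get $Z\notin\meager\cap\mnula$; thus a single non-meager member of $\Ncal_\Jcal$ settles both statements at once.

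To exhibit such a set I would split according to whether $\Jcal$ is meager (equivalently, by \autoref{thm:talagrand}, whether $\Jcal$ has the Baire property). If $\Jcal$ is not meager, then \autoref{cor:ML2} applied with $T=2^{<\omega}$ hands us exactly what we want: $N_\Jcal(\bar{c}^{2^{<\omega}})\in\Ncal_\Jcal$ is not meager in $\cantor$.

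If instead $\Jcal$ is meager, the construction $\bar c^T$ is no longer guaranteed to yield a non-meager set, so I would fall back on \autoref{BaireForNJ2}, which gives $\Ncal_\Jcal=\Ncal$. It then remains only to note that $\Ncal$ contains a non-meager null set, which is classical: by Rothberger's theorem (already invoked in the proof of \autoref{cor:notcomeager}) the ideal $\Ncal$ contains a co-meager subset of $\cantor$, and any co-meager set is non-meager by the Baire category theorem. Hence $\Ncal_\Jcal=\Ncal$ contains a non-meager set in this case as well.

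There is essentially no obstacle beyond organizing the case split correctly: the entire content is packaged in \autoref{cor:ML2} and \autoref{BaireForNJ2}, and the only subtlety is recognizing that the explicit witness $N_\Jcal(\bar c^T)$ is available only in the non-meager case, which forces the appeal to the classical co-meager null set when $\Jcal$ is meager.
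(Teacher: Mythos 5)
Your proof is correct and follows essentially the same route as the paper: the same case split on whether $\Jcal$ is meager, with the meager case handled via \autoref{BaireForNJ2} and Rothberger's theorem, and the non-meager case resting on the same key construction (\autoref{capcomeag2}/\autoref{cor:ML2}). The only cosmetic difference is that you invoke \autoref{cor:ML2} directly to produce the witness $N_\Jcal(\bar c^{2^{<\omega}})$, whereas the paper routes through \autoref{conseqforE2} and the inclusion $\Ncal^*_\Jcal\subseteq\Ncal_\Jcal$; these are the same argument unwound.
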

\begin{proof}
Since $\Ncal_\Jcal^*\subseteq\Ncal_\Jcal$, the conclusion is clear by \autoref{conseqforE2} when $\Jcal$ is not meager. Otherwise $\Ncal_\Jcal=\Ncal$, and $\Ncal$ contains a co-meager set.
\end{proof}

\autoref{relationNJ2} summarizes the situation in \autoref{charaktNj2} and~\ref{conseqforE2} when $\Jcal$ is a non-meager ideal on $\omega$.

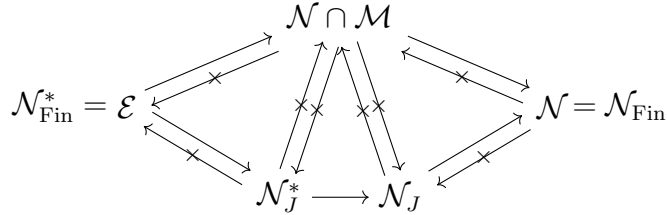
\begin{figure}[H]
\centering
\begin{tikzpicture}[scale=0.8, squarednode/.style={rectangle, draw=white, very thick, minimum size=7mm}]
\node (a) at (-1, -1.5) {$\NstaridealJ{\Jcal}$};
\node (f) at (1, -1.5) {$\NidealJ{\Jcal}$};
\node (bb) at (-4.6, 0) {$\NstaridealJ{\fin}=$};
\node (b) at (-3.5, 0) {$\mathcal{E}$};
\node (c) at (3.5, 0) {$\mathcal{N}$};
\node (cc) at (4.6, 0) {$=\NidealJ{\fin}$};
\node[squarednode] (d) at (0, 1.5) {$\mathcal{N}\cap\mathcal{M}$};

% \foreach \from/\to in {b/a,  b/d, d/c,  f/c, a/f, a/d, d/f}
% \draw [->] (\from) -- (\to);
% \draw[black, thin] (-1,-0.35) -- (0,0.35);
% \draw[black, thin] (-1,-0.25) -- (0,0.45);
% \draw[black, thin] (0,-0.45) -- (1,0.25);
% \draw[black, thin] (0,-0.35) -- (1,0.35);

\draw[->] (a) edge (f);

\foreach \from/\to in {b/a, b/d, d/c, f/c}
{
\path[->,shift left=0.75ex] (\from) edge (\to);
\path[->,shift left=0.75ex] (\to) edge node{\scriptsize $\times$} (\from);
}

\foreach \from/\to in {a/d, f/d}
{
\path[->,shift left=0.75ex] (\from) edge node{\scriptsize $\times$} (\to);
\path[->,shift left=0.75ex] (\to) edge node{\scriptsize $\times$} (\from);
}
\end{tikzpicture}

\caption{The situation when $\Jcal$ is an ideal on $\omega$ without the Baire property. An arrow denotes $\subseteq$, while a crossed arrow denotes $\nsubseteq$. The arrow on the bottom could be reversed, e.g.\ when $\Jcal$ is a maximal ideal.}
\label{relationNJ2}
\end{figure}

%%%%%%%%%%%%%%%%%%%%%%%%%%%%%%%%%%%%%%%%%%%%%%%%%%
%%%%%%%%%%%%%%%%%%%%%%%%%%%%%%%%%%%%%%%%%%%%%%%%%%

\section{The effect of nearly coherence of filters}\label{sec:near}

In this section, we prove a characterization of nearly coherence of filters (or ideals) in terms of the ideals $\Ncal_\Jcal$ and $\Ncal^*_\Jcal$. We first recall the notion of nearly coherence.

\begin{definition}[A.~Blass~{\cite{blass86}}]\label{def:NC}
Two filters $\Fcal_0$ and $\Fcal_1$ on~$\omega$ are \emph{nearly coherent} if there is a~finite-to-one function $\varphi\in{}^\omega\omega$ such that $\varphi^\to(\Fcal_0)\cup \varphi^\to(\Fcal_1)$ has the~finite intersection property. Dually, we say that two ideals $\Jcal_0$ and $\Jcal_1$ are \emph{nearly coherent} if there is a finite-to-one function $\varphi\in{}^\omega\omega$ such that $\varphi^\to(\Jcal_0)\cup\varphi^\to(\Jcal_1)$ is contained in some ideal.
\end{definition}

If $J_0$ and $J_1$ are nearly coherent ideals on $\omega$, then there is some ideal $K$ in $\omega$ which is $\leqKBpr$-above both $J_0$ and $J_1$.
As a consequence of \autoref{RBeq}~\ref{it:underKB'}:

\begin{lemma}\label{lem:coh}
  If $\Jcal_0$ and $\Jcal_1$ are nearly coherent ideals on $\omega$, then there is some ideal $\Kcal$ on $\omega$ such that $\Ncal^*_{\Jcal_0}\cup\Ncal^*_{\Jcal_1} \subseteq \Ncal^*_\Kcal\subseteq\Ncal_\Kcal \subseteq \Ncal_{\Jcal_0}\cap\Ncal_{\Jcal_1}$ (see \autoref{fig:coherence}).
\end{lemma}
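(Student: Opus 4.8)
The plan is to reduce Lemma~\ref{lem:coh} to an application of \autoref{RBeq}~\ref{it:underKB'}, so the real content is to extract, from near coherence of $\Jcal_0$ and $\Jcal_1$, a single ideal $\Kcal$ on $\omega$ that sits $\leqKBpr$-above both. First I would unwind \autoref{def:NC}: near coherence gives a finite-to-one $\varphi\in{}^\omega\omega$ such that $\varphi^\to(\Jcal_0)\cup\varphi^\to(\Jcal_1)$ is contained in some ideal on $\omega$; let $\Kcal$ be any such ideal (for concreteness, the ideal generated by that union, which exists precisely because it is contained in an ideal and hence has the finite union property). By the very definition of the projection $\varphi^\to$ and of $\leqKBpr$ in \autoref{def:RB}~(3), the inclusion $\varphi^\to(\Jcal_i)\subseteq\Kcal$ witnessed by the finite-to-one $\varphi$ says exactly that $\Jcal_i\leqKBpr\Kcal$ for each $i\in\{0,1\}$.

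Once $\Jcal_0\leqKBpr\Kcal$ and $\Jcal_1\leqKBpr\Kcal$ are in hand, I would simply invoke \autoref{RBeq}~\ref{it:underKB'}, which states that $K\leqKBpr J$ implies $\Ncal^*_K\subseteq\Ncal^*_J$ and $\Ncal_J\subseteq\Ncal_K$. Applying it with the roles $(K,J)=(\Jcal_i,\Kcal)$ yields, for each $i$,
\[
  \Ncal^*_{\Jcal_i}\subseteq\Ncal^*_{\Kcal}
  \quad\text{and}\quad
  \Ncal_{\Kcal}\subseteq\Ncal_{\Jcal_i}.
\]
Taking the union of the left-hand inclusions over $i\in\{0,1\}$ gives $\Ncal^*_{\Jcal_0}\cup\Ncal^*_{\Jcal_1}\subseteq\Ncal^*_{\Kcal}$, and taking the intersection of the right-hand inclusions gives $\Ncal_{\Kcal}\subseteq\Ncal_{\Jcal_0}\cap\Ncal_{\Jcal_1}$. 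The middle inclusion $\Ncal^*_{\Kcal}\subseteq\Ncal_{\Kcal}$ is immediate from \autoref{eq:contfin} (or directly from \autoref{def:NJ}, since $\Kcal^d\subseteq\Kcal^+$ gives $N^*_\Kcal(\bar c)\subseteq N_\Kcal(\bar c)$). Chaining the three pieces produces the desired
\[
  \Ncal^*_{\Jcal_0}\cup\Ncal^*_{\Jcal_1}\subseteq\Ncal^*_{\Kcal}\subseteq\Ncal_{\Kcal}\subseteq\Ncal_{\Jcal_0}\cap\Ncal_{\Jcal_1}.
\]

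The one point that deserves care—and what I expect to be the only genuine obstacle—is the very first bookkeeping step: confirming that the containment of $\varphi^\to(\Jcal_0)\cup\varphi^\to(\Jcal_1)$ in \emph{some} ideal really does deliver a \emph{single} finite-to-one map witnessing $\Jcal_i\leqKBpr\Kcal$ simultaneously for both $i$. This is exactly where it matters that \autoref{def:NC} supplies \emph{one} common $\varphi$ for both ideals rather than two separate maps; fortunately it does, and the remark immediately preceding the lemma (``there is some ideal $K$ which is $\leqKBpr$-above both $J_0$ and $J_1$'') records precisely this. I would also note in passing that $\Kcal$ is a genuine ideal: it is proper because $\varphi$ is finite-to-one (so $\varphi^\to$ preserves properness, as recorded at the end of \autoref{def:RB}) and because we assumed the union lies inside some ideal, guaranteeing the finite union property. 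After that, everything is a mechanical concatenation of inclusions, with no measure-theoretic or game-theoretic work left to do.
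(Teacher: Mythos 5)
Your proof is correct and takes essentially the same route as the paper, which likewise derives the lemma by noting that near coherence yields a single ideal $\Kcal$ that is $\leqKBpr$-above both $\Jcal_0$ and $\Jcal_1$ and then invoking \autoref{RBeq}~\ref{it:underKB'}. The only difference is that you spell out the bookkeeping (the common finite-to-one $\varphi$ and the properness of $\Kcal$) that the paper leaves implicit.
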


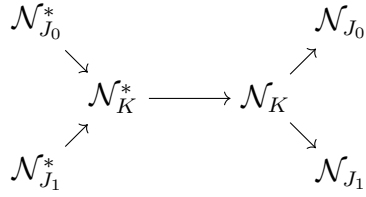
\begin{figure}[H]
\centering
\begin{tikzpicture}%[scale=0.9]
%\small{
\node (N*I) at (-1,2) {$\Ncal^*_{\Jcal_0}$};
\node (N*J) at (-1,0) {$\Ncal^*_{\Jcal_1}$};
\node (N*K) at (0,1) {$\Ncal^*_\Kcal$};
\node (NK) at (2,1) {$\Ncal_\Kcal$};
\node (NJ) at (3,0) {$\Ncal_{\Jcal_1}$};
\node (NI) at (3,2) {$\Ncal_{\Jcal_0}$};

\draw (N*I) edge[->] (N*K)
      (N*J) edge[->] (N*K)
      (N*K) edge[->] (NK)
      %(N*I) edge[->] (NI)
      %(N*J) edge[->] (NJ)
      (NK)  edge[->] (NI)
      (NK)  edge [->] (NJ);
%}
\end{tikzpicture}
\caption{Situation describing \autoref{lem:coh} (an arrow denotes $\subseteq$).}\label{fig:coherence}
\end{figure}

Since $\Ncal^*_\Jcal=\Ncal_\Jcal$ for any maximal ideal $\Jcal$ on $\omega$, we immediately obtain:

\begin{corollary}\label{cor:NCuf}
  If $\Jcal$ and $\Kcal$ are nearly coherent ideals and $\Kcal$ is \underline{maximal}, then $\Ncal^*_\Jcal\subseteq\Ncal_\Kcal\subseteq\Ncal_\Jcal$. In particular, if $\Jcal$ is also a \underline{maximal} ideal, then $\Ncal_\Jcal=\Ncal_\Kcal$.
\end{corollary}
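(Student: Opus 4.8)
The plan is to read the corollary off \autoref{lem:coh} directly, using the collapse $\Ncal^*_\Kcal=\Ncal_\Kcal$ that holds for maximal ideals (recorded just before \autoref{NJsigma2}). First I would apply \autoref{lem:coh} to the nearly coherent pair $\Jcal_0:=\Jcal$ and $\Jcal_1:=\Kcal$, renaming the intermediate ideal it produces to $\Kcal'$ so as not to clash with the given maximal ideal $\Kcal$. This yields an ideal $\Kcal'$ on $\omega$ with
\[\Ncal^*_\Jcal\cup\Ncal^*_\Kcal\subseteq \Ncal^*_{\Kcal'}\subseteq \Ncal_{\Kcal'}\subseteq \Ncal_\Jcal\cap\Ncal_\Kcal.\]

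Next, since $\Kcal$ is maximal we have $\Ncal^*_\Kcal=\Ncal_\Kcal$. Reading off only the inclusions involving $\Kcal$ from the displayed chain gives
\[\Ncal_\Kcal=\Ncal^*_\Kcal\subseteq \Ncal^*_{\Kcal'}\subseteq \Ncal_{\Kcal'}\subseteq \Ncal_\Kcal,\]
so the entire chain collapses and $\Ncal^*_{\Kcal'}=\Ncal_{\Kcal'}=\Ncal_\Kcal$. This is the one observation that makes the argument run: the auxiliary ideal $\Kcal'$ has its starred $\sigma$-ideal squeezed between two copies of $\Ncal_\Kcal$, pinning everything in between equal to $\Ncal_\Kcal$.

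Finally, I would combine this collapse with the two remaining inclusions. From $\Ncal^*_\Jcal\subseteq \Ncal^*_{\Kcal'}=\Ncal_\Kcal$ I get $\Ncal^*_\Jcal\subseteq\Ncal_\Kcal$, and from $\Ncal_\Kcal=\Ncal_{\Kcal'}\subseteq\Ncal_\Jcal$ I get $\Ncal_\Kcal\subseteq\Ncal_\Jcal$; together these give the desired $\Ncal^*_\Jcal\subseteq\Ncal_\Kcal\subseteq\Ncal_\Jcal$. For the ``in particular'' clause, if $\Jcal$ is also maximal then $\Ncal^*_\Jcal=\Ncal_\Jcal$, so the chain $\Ncal^*_\Jcal\subseteq\Ncal_\Kcal\subseteq\Ncal_\Jcal$ reads $\Ncal_\Jcal\subseteq\Ncal_\Kcal\subseteq\Ncal_\Jcal$, forcing $\Ncal_\Jcal=\Ncal_\Kcal$. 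I do not expect a genuine obstacle here: the whole statement is bookkeeping with the inclusions of \autoref{lem:coh}, and the only point demanding care is keeping the given maximal ideal $\Kcal$ distinct from the auxiliary ideal $\Kcal'$ supplied by the lemma when performing the squeeze.
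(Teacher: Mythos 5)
Your proposal is correct and is exactly the argument the paper intends: it derives the corollary from \autoref{lem:coh} together with the identity $\Ncal^*_\Kcal=\Ncal_\Kcal$ for maximal $\Kcal$, which is precisely what the paper's one-line justification (``Since $\Ncal^*_\Jcal=\Ncal_\Jcal$ for any maximal ideal\dots we immediately obtain'') compresses. Your squeeze yielding $\Ncal^*_{\Kcal'}=\Ncal_{\Kcal'}=\Ncal_\Kcal$ is a slightly longer route than simply reading $\Ncal^*_\Jcal\subseteq\Ncal_\Kcal$ and $\Ncal_\Kcal=\Ncal^*_\Kcal\subseteq\Ncal_\Jcal$ off the displayed chain, but it is sound and amounts to the same bookkeeping.
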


A.~Blass~\cite{blass86} has introduced the following principle, which was proved consistent with ZFC by Blass and Shelah~\cite{blassshelah}.

\begin{enumerate}[label=\rm (NCF)]
    \item\label{NCF} \emph{Near coherence of filters:} Any pair of filters on $\omega$ are nearly coherent.
\end{enumerate}

In fact, $\mathfrak{u}<\mathfrak{g}$ (which holds in Miller model~\cite{BlassSP,blassshelah3}) implies \ref{NCF} (\cite[Cor.~9.18]{blassbook}). On the other hand, it is possible to obtain not nearly coherence pairs of filters under CH and in random model~\cite[Sec.~4]{blass86}, as well as in Cohen model (e.g.~\autoref{cohenthm}).

As a consequence of \autoref{cor:NCuf}:

\begin{corollary}\label{cor:NCFNuf}
  \ref{NCF} implies that all $\Ncal_\Jcal$ with $\Jcal$ maximal ideal on $\omega$ are the same.
\end{corollary}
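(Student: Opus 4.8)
The plan is to observe that this statement is an immediate consequence of \ref{NCF} together with the ``in particular'' clause of \autoref{cor:NCuf}, so the proof will be very short. First I would fix two arbitrary maximal ideals $\Jcal$ and $\Kcal$ on $\omega$ and argue that they are nearly coherent. By \ref{NCF}, any pair of filters on $\omega$ is nearly coherent; applying this to the dual ultrafilters $\Jcal^d$ and $\Kcal^d$ (which are filters precisely because $\Jcal$ and $\Kcal$ are ideals) produces a finite-to-one $\varphi\in{}^\omega\omega$ witnessing near coherence. By the dual formulation in \autoref{def:NC}, this is exactly the assertion that the ideals $\Jcal$ and $\Kcal$ are nearly coherent.

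With near coherence in hand, I would simply invoke \autoref{cor:NCuf}. Since both $\Jcal$ and $\Kcal$ are maximal, the final sentence of that corollary yields $\Ncal_\Jcal=\Ncal_\Kcal$. As $\Jcal$ and $\Kcal$ were arbitrary maximal ideals on $\omega$, it follows that all the $\sigma$-ideals $\Ncal_\Jcal$ with $\Jcal$ maximal coincide, which is the claim.

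I expect no genuine obstacle here: the entire content is packaged in \ref{NCF} (which supplies near coherence) and in \autoref{cor:NCuf} (which converts near coherence of a pair of maximal ideals into equality of the associated $\sigma$-ideals). The only point deserving a remark is the passage between the filter formulation of \ref{NCF} and the ideal formulation used in \autoref{def:NC} and \autoref{cor:NCuf}; but this is purely formal, since a pair of filters is nearly coherent exactly when the dual pair of ideals is, directly from the definitions.
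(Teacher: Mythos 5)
Your proof is correct and follows exactly the route the paper intends: the paper derives \autoref{cor:NCFNuf} immediately from \autoref{cor:NCuf} by applying \ref{NCF} to an arbitrary pair of maximal ideals. Your extra remark about translating between the filter and ideal formulations of near coherence is fine and purely formal, as you say.
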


We prove the converse of \autoref{lem:coh} and of \autoref{cor:NCuf} and~\ref{cor:NCFNuf}, which gives us a characterization of nearly coherence and~\ref{NCF}. For this purpose, we use the following game, formulated by
T.~Eisworth, that characterizes nearly coherence. 

\begin{definition}[Eisworth~{\cite{Eis01}}]
Let $\Fcal_0$, $\Fcal_1$ be two filters on~$\omega$. The following game of length $\omega$ between two players is called the \emph{nearly coherence game $C_{\Fcal_0,\Fcal_1}$}:
\begin{itemize}
   \item In the $n$-th move, Player~I plays a~finite set $A_n\in[\omega]^{<\omega}$ and 
   Player~II responds with a~finite set $B_n\in[\omega]^{<\omega}$ disjoint from $A_n$. 
   \item After $\omega$ many moves, Player~II wins if $\bigcup\set{B_{2n+i}}{n\in\omega}\in\Fcal_i$ for $i\in\{0,1\}$. Otherwise,
   Player~I wins.
\end{itemize}
\end{definition}

\begin{theorem}[Eisworth~{\cite{Eis01}}]
Let $\Fcal_0$, $\Fcal_1$ be two filters on~$\omega$. Then $\Fcal_0$ and $\Fcal_1$ are nearly coherent iff Player~I has a winning strategy of the game $C_{\Fcal_0,\Fcal_1}$.
\end{theorem}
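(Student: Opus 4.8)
The plan is to prove the two implications of this (Eisworth) equivalence separately, after first recasting near coherence in a combinatorial form suited to the game. Since the filters here are free (their dual ideals contain $\Fin$), for a finite-to-one $\varphi\in\baire$ the projection $\varphi^{\to}(\Fcal_i)=\set{a\subseteq\omega}{\varphi^{-1}(a)\in\Fcal_i}$ is again a filter, and $\varphi^{\to}(\Fcal_0)\cup\varphi^{\to}(\Fcal_1)$ has the finite intersection property iff every member of the first meets every member of the second. Writing $P_n:=\varphi^{-1}[\{n\}]$ for the finite fibres of $\varphi$, a short computation identifies the generators of $\varphi^{\to}(\Fcal_i)$ as the sets $\set{n}{P_n\cap F_i\neq\emptyset}$ with $F_i\in\Fcal_i$. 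Hence $\Fcal_0$ and $\Fcal_1$ are nearly coherent iff there is a partition $\seqn{P_n}{n<\omega}$ of $\omega$ into finite pieces such that
\[(\forall\, F_0\in\Fcal_0)(\forall\, F_1\in\Fcal_1)(\exists\, n)\ P_n\cap F_0\neq\emptyset\neq P_n\cap F_1,\]
and, after deleting finitely many pieces from $F_0$, the single witness $n$ upgrades to infinitely many. This is the form I would play against.

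For the forward direction, assume such a partition $\seqn{P_n}{n<\omega}$ exists and let me describe a winning strategy for Player~I. Put $X_i:=\bigcup_n B_{2n+i}$ for $i\in\{0,1\}$, the two sets Player~II is trying to force into $\Fcal_0$ and $\Fcal_1$. After a partial play of length $m$, let $T_i$ be the set of indices $n$ such that $P_n$ already meets the part of $X_i$ played so far. Player~I's rule is: at an even move block $A_m:=\bigcup\set{P_n}{n\in T_1}$, and at an odd move block $A_m:=\bigcup\set{P_n}{n\in T_0}$; each is finite because only finitely many points have been played. Since Player~II's move must avoid $A_m$, every new $X_0$-contribution lands in pieces not yet met by $X_1$ and vice versa, so the invariant $T_0\cap T_1=\emptyset$ is preserved throughout. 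In the final play, therefore, \emph{no} piece $P_n$ meets both $X_0$ and $X_1$, which by the displayed criterion is incompatible with $X_0\in\Fcal_0$ and $X_1\in\Fcal_1$ holding simultaneously; hence Player~I wins. Note that an arbitrary finite partition is used here, so no reduction to interval partitions is required for this half.

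For the converse I would argue the contrapositive: if $\Fcal_0,\Fcal_1$ are \emph{not} nearly coherent then no strategy $\sigma$ of Player~I is winning. Given $\sigma$, I first distil an interval partition $\seqn{I_r}{r<\omega}$ from it by a confinement recursion: having fixed $\ell_r$, choose $\ell_{r+1}$ larger than every element of $\sigma(h)$ as $h$ ranges over the (finitely many, once the length is bounded) histories whose moves lie in $[0,\ell_r)$, and set $I_r:=[\ell_r,\ell_{r+1})$. Applying the failure of near coherence to this partition yields $F_0\in\Fcal_0$ and $F_1\in\Fcal_1$ that are \emph{interval-disjoint}: no $I_r$ meets both. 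Player~II then tries to realise sets $X_0\supseteq F_0$ and $X_1\supseteq F_1$ modulo finite, feeding the $F_0$-part of a fresh interval on even moves and the $F_1$-part on odd moves, always picking an unprocessed interval lying above the finite block just revealed so that the chosen piece is automatically disjoint from it. If $X_0\in\Fcal_0$ and $X_1\in\Fcal_1$ result, $\sigma$ is defeated, contradicting that it is winning.

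The main obstacle is exactly this last step. A block $A_m=\sigma(\cdots)$ answering a history confined to $[0,\ell_r)$ is only guaranteed to sit inside $[0,\ell_{r+1})$, hence it can intrude into the very interval $I_r$ Player~II wants to use; so the naive ``dump the whole piece'' play may lose points, and worse, may be forced to skip infinitely many intervals whenever $\sigma$ keeps its blocks high, leaving the captured set possibly outside the filter. Overcoming this is the technical heart of the argument: one must interleave the definition of the partition with a bookkeeping scheme guaranteeing that, modulo $\Fcal_0$ and $\Fcal_1$ respectively, enough pieces are captured intact, so that $X_0$ and $X_1$ are genuinely driven into the two filters. Arranging the confinement recursion and the freeness of the filters to cooperate at this point is the delicate part, and it is where I would expect to spend most of the effort.
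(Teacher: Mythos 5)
The paper offers no proof of this statement at all: it is imported verbatim from Eisworth's work \cite{Eis01}, so there is no in-paper argument to compare yours against; your attempt has to stand on its own. Your forward implication does stand: the reduction of near coherence to the combinatorial form ``every $F_0\in\Fcal_0$ and $F_1\in\Fcal_1$ meet a common fibre $P_n$'' is correct, and the blocking strategy preserving the invariant $T_0\cap T_1=\emptyset$ really does force the final sets $X_0,X_1$ to be fibre-separated, hence not simultaneously members of $\Fcal_0$ and $\Fcal_1$. That half is complete, and you are right that it needs no reduction to interval partitions.

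The converse implication, however, contains a genuine gap, which you yourself flag. Interval-disjointness of $F_0$ and $F_1$ (no $I_r$ meets both) is far from enough to let Player~II win: the set of indices $r$ with $I_r\cap F_0\neq\emptyset$ and the set with $I_r\cap F_1\neq\emptyset$ may interleave arbitrarily, while your proposed play consumes consecutive blocks of intervals alternately for the two targets. Whatever part of $F_0$ lies in a block spent on $F_1$, or in the transition intervals invaded by Player~I's responses, is never captured, and nothing guarantees these losses are finite; so neither $X_0\in\Fcal_0$ nor $X_1\in\Fcal_1$ is secured. This is exactly the hard content of the theorem, not residual bookkeeping. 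Note also a degree of freedom your plan does not exploit: $B_n$ need only avoid the \emph{current} block $A_n$, not all earlier ones, so Player~II may legally revisit intervals that were skipped; but turning this into a proof still requires showing that only finitely many points of $F_0$ (resp.\ $F_1$) are permanently blocked, and that step must invoke the failure of near coherence again --- if it did not, Player~I could never have a winning strategy, contradicting your own first half. As written, the proposal proves one implication and sketches a programme, with the decisive step missing, for the other.
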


We use the~nearly coherence game $C_{\Fcal_0,\Fcal_1}$ to prove the~following technical lemma.

\begin{mainlemma}\label{notnear2}
     Let $C\subseteq {}^\omega2$ be a closed self-supported set. %set of positive measure such that 
     %\[
     %    (\forall s\in 2^{<\omega})\  [s]\cap C\neq \emptyset \imp \leb{[s]\cap C}> 0.
     %\]
   If $\Jcal$ and $\Kcal$ are not nearly coherent ideals on $\omega$ then $N^*_\Jcal(\bar{c}^T)\notin\Ncal_\Kcal$ where $T$ is the~tree without maximal nodes such that $C=[T]$, and $\bar c^T$ is as in \autoref{def:cT}.
\end{mainlemma}
\begin{proof}
We use the nearly coherence game $C_{\Jcal^d,\Kcal^d}$ and build a strategy for Player~I, in order to get an~\mbox{$x\in N^*_\Jcal(\bar{c}^T)\smallsetminus N_\Kcal(\bar{c}')$} for any $\bar{c}'\in\omegabar$.

\underline{The~first move}: 
Player~I first moves with $A_0=\{0\}$, and put $s_0:= t_0$ (which is the~empty sequence).

\underline{The second and third moves, and further}:

After Player~II replies with $B_0\subseteq \omega\smallsetminus A_0$, Player~I finds $s_1\supseteq s_0$ in $T$ such that $s_1\supseteq t^n_0$ for every $n\in B_0$,  
and further finds $n_1\geq|s_1|$ such that $\mu\left(\bigcup_{\ell\geq n_1}c'_\ell\right)<\mu(C\cap[s_1])$, which implies that there is some $x_1\in C\cap [s_1]\smallsetminus\bigcup_{\ell\geq n_1}c'_\ell$.  Player~I moves with $A_1=n_1+1$. 

After Player~II replies with $B_1\subseteq \omega\smallsetminus A_1$, Player~I finds some $m_2\geq n_1$ such that $[s_2]\cap\bigcup_{n\in B_1}c'_n=\emptyset$ where $s_2:=x_1{\upharpoonright}m_2$. Player~I moves $A_2=n_2+1$ where $n_2<\omega$ is such that $s_2=t_{n_2}$. 

Afterwards, Player~II moves with $B_2\subseteq\omega\smallsetminus A_2$, and the same dynamic is repeated: Player~I finds an $s_3\supseteq s_2$ in $T$ such that $s_3\supseteq t^n_{n_2}$ for every $n\in B_2$, 
and some $n_3\geq|s_3|$ such that $\mu\left(
\bigcup_{\ell\geq n_3} c'_\ell\right)<\mu(C\cap[s_3])$, which implies that there is an $x_3\in C\cap [s_3]\smallsetminus\bigcup_{\ell\geq n_3}c'_\ell$. 
Player~I moves with $A_3=n_3+1$, and the game continues as described so far.

   Since Player~I does not have a winning strategy, there is some run as described above where Player~II wins. Thus, $F_0:=\bigcup_{n\in\omega}B_{2n}\in\Jcal^d$ and $F_1:=\bigcup_{n\in\omega}B_{2n+1}\in\Kcal^d$. Set $x:=\bigcup_{n\in\omega}s_n$. Note that $x\in c^T_\ell$ for any $\ell\in F_0$, and $x\notin c'_\ell$ for any $\ell\in F_1$, which means that $x\in N^*_\Jcal(\bar{c}^T)$ and $x\notin N_\Kcal(\bar{c}')$.
\end{proof}

An application of the previous result to $C=\cantor$ yields:

\begin{theorem}\label{thm:nNCNJ}
   If $\Jcal$ and $\Kcal$ are not near-coherent ideals on $\omega$ then $\Ncal^*_\Jcal\nsubseteq\Ncal_\Kcal$ and $\Ncal^*_\Kcal\nsubseteq\Ncal_\Jcal$.
   In particular, $\NidealJ{\Kcal}\neq \NidealJ{\Jcal}$ and $\Ncal^*_\Jcal\neq\Ncal^*_\Kcal$.
\end{theorem}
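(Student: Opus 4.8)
The plan is to obtain the theorem as a direct specialization of Main Lemma~\autoref{notnear2} to the whole space. First I would take $C=\cantor$ and check that it satisfies the hypotheses of~\autoref{notnear2}: it is closed, $\leb{\cantor}=1>0$, and for every $s\in 2^{<\omega}$ we have $[s]\cap\cantor=[s]$, which has measure $2^{-|s|}>0$; moreover $\cantor=[T]$ for $T=2^{<\omega}$, a tree without maximal nodes. Feeding this into the lemma with the non-nearly-coherent pair $\Jcal,\Kcal$ yields $N^*_\Jcal(\bar{c}^T)\notin\Ncal_\Kcal$. Since $\bar{c}^T\in\omegabar$ by~\autoref{def:cT}, the set $N^*_\Jcal(\bar{c}^T)$ is a witness belonging to $\Ncal^*_\Jcal$ (by \eqref{Nstarj2}), so $\Ncal^*_\Jcal\nsubseteq\Ncal_\Kcal$.

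The relation ``not nearly coherent'' is symmetric in $\Jcal$ and $\Kcal$, so exchanging their roles in the previous paragraph gives $\Ncal^*_\Kcal\nsubseteq\Ncal_\Jcal$ at no extra cost. For the ``in particular'' clause I would use the inclusions $\Ncal^*_\Jcal\subseteq\Ncal_\Jcal$ and $\Ncal^*_\Kcal\subseteq\Ncal_\Kcal$ from \eqref{eq:contfin}. If $\Ncal_\Jcal=\Ncal_\Kcal$ held, then $\Ncal^*_\Jcal\subseteq\Ncal_\Jcal=\Ncal_\Kcal$ would contradict $\Ncal^*_\Jcal\nsubseteq\Ncal_\Kcal$; hence $\Ncal_\Jcal\neq\Ncal_\Kcal$. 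Likewise, $\Ncal^*_\Jcal=\Ncal^*_\Kcal$ would give $\Ncal^*_\Jcal=\Ncal^*_\Kcal\subseteq\Ncal_\Kcal$, again contradicting $\Ncal^*_\Jcal\nsubseteq\Ncal_\Kcal$; hence $\Ncal^*_\Jcal\neq\Ncal^*_\Kcal$.

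There is essentially no obstacle at this level: all the genuine work has already been carried out in~\autoref{notnear2}, where the non-existence of a winning strategy for Player~I in Eisworth's game $C_{\Jcal^d,\Kcal^d}$ (guaranteed by the failure of near coherence) is converted, via the combinatorics of the tree $T=2^{<\omega}$ and the sequence $\bar{c}^T$, into a point $x\in N^*_\Jcal(\bar c^T)\smallsetminus N_\Kcal(\bar c')$ defeating every candidate $\bar c'\in\omegabar$. The only points to be careful about here are purely bookkeeping: confirming that $\cantor$ meets the measure hypotheses of the lemma and that the membership witnesses land in the correct ideals, plus noting that the definitional inclusions $\Ncal^*\subseteq\Ncal$ are exactly what turns a single non-membership into the separation of all four ideals.
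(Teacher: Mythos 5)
Your proposal is correct and matches the paper's own argument, which likewise obtains \autoref{thm:nNCNJ} as an immediate application of \autoref{notnear2} to $C=\cantor$ (so $T=2^{<\omega}$), with the symmetry of non-near-coherence and the inclusions $\Ncal^*_\Jcal\subseteq\Ncal_\Jcal$ handling the remaining clauses. The extra hypothesis-checking and the derivation of the ``in particular'' part are exactly the routine bookkeeping the paper leaves implicit.
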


%Unfortunately, it is known that any couple of filters are~nearly coherent under \ref{NCF}. Thus, we do not have instances in ZFC for the previous result. On the~other hand, the principle \ref{NCF} does not hold in the random model, neither in Cohen model, so examples of \autoref{thm:nNCNJ} can be obtained in these models.

%Thus, we can finally evaluate that the~notions $\NidealJ{\Kcal}$ and $\NstaridealJ{\Jcal}$ can be distinguished at~least in the~case when the~ideals $\Kcal$ and $\Jcal$ are not near-coherent.

It is clear that $\Fin$ is nearly coherent with any filter on $\omega$. Therefore, any pair of not nearly coherent filters must be non-meager. As a consequence of \autoref{charaktNj2} and~\ref{conseqforE2}:

\begin{corollary}\label{cor:nnc-NE}
   Let $\Jcal$ and $\Kcal$ be not nearly coherent ideals on~$\omega$. Then the ideals
$\mathcal{E}$, $\NstaridealJ{\Jcal}$, $\NidealJ{\Kcal}$ and $\mathcal{N}$ are pairwise different.
\end{corollary}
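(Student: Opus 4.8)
The plan is to verify the six pairwise inequalities among $\Ecal$, $\Ncal^*_\Jcal$, $\Ncal_\Kcal$ and $\Ncal$ one pair at a time, exploiting the inclusion chain $\Ecal\subseteq\Ncal^*_\Jcal\subseteq\Ncal_\Jcal\subseteq\Ncal$ (and its analogue for $\Kcal$) recorded in \autoref{eq:contfin}, together with the characterizations of the Baire property from the previous section. The preliminary observation, noted just above the statement, is that since $\Fin$ (indeed any meager ideal) is nearly coherent with every ideal, the hypothesis forces both $\Jcal$ and $\Kcal$ to be non-meager; this non-meagerness is the fuel for all but one of the comparisons.

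First I would dispose of the ``endpoints''. The bottom ideal $\Ecal$ is classically strictly below $\Ncal$ (for instance $\Ecal\subseteq\Mcal\cap\Ncal$, which is proper since $\Ncal$ contains a co-meager set by Rothberger's theorem), so $\Ecal\neq\Ncal$. Since $\Jcal$ is non-meager, \autoref{conseqforE2} gives $\Ecal\subsetneq\Ncal^*_\Jcal$, hence $\Ecal\neq\Ncal^*_\Jcal$; applying the same result to $\Kcal$ yields $\Ecal\subsetneq\Ncal^*_\Kcal\subseteq\Ncal_\Kcal$, hence $\Ecal\neq\Ncal_\Kcal$. At the top, \autoref{charaktNj2} applied to the non-meager $\Kcal$ gives $\Ncal_\Kcal\subsetneq\Ncal$, so $\Ncal_\Kcal\neq\Ncal$; applied to $\Jcal$ it gives $\Ncal^*_\Jcal\subseteq\Ncal_\Jcal\subsetneq\Ncal$, so $\Ncal^*_\Jcal\neq\Ncal$. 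This settles five of the six pairs using only non-meagerness.

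The remaining comparison, $\Ncal^*_\Jcal\neq\Ncal_\Kcal$, is the one that genuinely uses not-near-coherence, and I would read it off \autoref{thm:nNCNJ} directly: since $\Jcal$ and $\Kcal$ are not nearly coherent, $\Ncal^*_\Jcal\nsubseteq\Ncal_\Kcal$, so the two $\sigma$-ideals cannot be equal. I expect no real obstacle here, because the whole difficulty of the near-coherence content has already been absorbed into \autoref{notnear2} and \autoref{thm:nNCNJ}; the corollary is essentially a bookkeeping assembly of those facts with the strict-inclusion lemmas. The only point requiring a little care is to ensure that each ``$\neq$'' is backed by a genuine strict inclusion or a non-inclusion rather than by the weak inclusions of the chain alone, which is why I route $\Ecal\neq\Ncal_\Kcal$ through $\Ncal^*_\Kcal$ and $\Ncal^*_\Jcal\neq\Ncal$ through $\Ncal_\Jcal$.
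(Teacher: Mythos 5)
Your proposal is correct and follows essentially the same route as the paper: the paper likewise notes that not nearly coherent ideals must be non-meager (since $\Fin$, and hence any meager ideal, is nearly coherent with everything), and then derives all the separations from \autoref{charaktNj2}, \autoref{conseqforE2}, and \autoref{thm:nNCNJ}. Your explicit routing of $\Ecal\neq\Ncal_\Kcal$ through $\Ncal^*_\Kcal$ and of $\Ncal^*_\Jcal\neq\Ncal$ through $\Ncal_\Jcal$ is exactly the bookkeeping the paper leaves implicit.
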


The situation in \autoref{thm:nNCNJ} and \autoref{cor:nnc-NE} is illustrated in  \autoref{nearcoherence}.

\begin{figure}[H]
\centering
\begin{tikzpicture}%[scale=0.8, squarednode/.style={rectangle, minimum size=7mm}]
\node (a) at (-2, -1.5) {$\NstaridealJ{\Jcal}$};
\node (f) at (2, -1.5) {$\NidealJ{\Jcal}$};
%\node (bb) at (-4.6, 0) {$\NstaridealJ{\fin}=$};
\node (b) at (-3.5, 0) {$\mathcal{E}$};
\node (c) at (3.5, 0) {$\mathcal{N}$};
%\node (cc) at (4.6, 0) {$=\NidealJ{\fin}$};
\node (d) at (-2, 1.5) {$\NstaridealJ{\Kcal}$};
\node (ff) at (2, 1.5) {$\NidealJ{\Kcal}$};

\foreach \from/\to in { a/f, d/ff}%{ a/f, a/ff, d/f, d/ff}
\draw [->] (\from) -- (\to);
% \draw[black, thin] (-0.5,0) -- (0,0.35);
% \draw[black, thin] (-0.5,0.1) -- (0,0.45);
% \draw[black, thin] (-1,-0.45) -- (-.5,-0.8);
% \draw[black, thin] (-1,-0.35) -- (-.5,-0.7);

\foreach \from/\to in { a/ff, d/f, ff/a, f/d, a/d, f/ff, d/a, ff/f}
{
\path[-,shift left=0.75ex,draw=white,line width=3pt] (\from) edge (\to);
\path[->,shift left=0.75ex] (\from) edge node[near start]{\scriptsize /} (\to);
}

\foreach \from/\to in {b/a, b/d, f/c, ff/c}
{
\path[->,shift left=0.75ex] (\from) edge (\to);
\path[->,shift left=0.75ex] (\to) edge node{\scriptsize /} (\from);
}
\end{tikzpicture}

\caption{Diagram corresponding to the situation in \autoref{thm:nNCNJ} where $\Jcal$ and $\Kcal$ are not nearly coherent ideals on $\omega$. An arrow denotes $\subseteq$, and a crossed arrow denotes $\nsubseteq$. The arrow on the top could be reversed, e.g.\ when $\Kcal$ is a maximal ideal (likewise for the arrow on the bottom).}
\label{nearcoherence}
\end{figure}
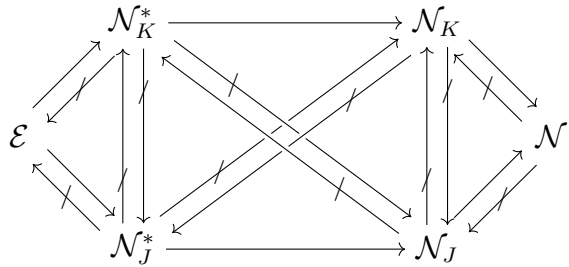

We summarize our results as a characterization of nearly coherence.

\begin{theorem}\label{nc-char}
Let $J$ and $K$ be ideals on $\omega$. The following statements are equivalent.
\begin{enumerate}[label =\rm (\roman*)]
    \item\label{it:nc} $J$ and $K$ are nearly coherent.
    \item\label{it:snc}
    There is some ideal $K'$ on $\omega$ such that $J \leqKBpr K'$ and $K\leqKBpr K'$.
    \item\label{it:aida} There is some ideal $K'$ on $\omega$ such that   $\Ncal^*_J\cup \Ncal^*_{K}\subseteq \Ncal^*_{K'}\subseteq \Ncal_{K'}\subseteq \Ncal_J \cap \Ncal_K$.
    \item\label{it:subset} $\Ncal^*_J\subseteq \Ncal_K$.
\end{enumerate}
\end{theorem}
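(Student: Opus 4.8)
The plan is to prove the cycle of implications $\ref{it:nc}\Rightarrow\ref{it:snc}\Rightarrow\ref{it:aida}\Rightarrow\ref{it:subset}\Rightarrow\ref{it:nc}$, after observing that almost all of the work is already in place: the only genuinely nontrivial direction is the last one, which is precisely the contrapositive of \autoref{thm:nNCNJ}.

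First I would prove $\ref{it:nc}\Rightarrow\ref{it:snc}$ by unwinding \autoref{def:NC}. If $J$ and $K$ are nearly coherent, fix a finite-to-one $\varphi\in{}^\omega\omega$ such that $\varphi^\to(J)\cup\varphi^\to(K)$ is contained in some ideal $K'$ on $\omega$. Then $\varphi^\to(J)\subseteq K'$ and $\varphi^\to(K)\subseteq K'$, so the single map $\varphi$ simultaneously witnesses $J\leqKBpr K'$ and $K\leqKBpr K'$, which is exactly \ref{it:snc}.

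Next, $\ref{it:snc}\Rightarrow\ref{it:aida}$ is pure monotonicity. Applying \autoref{RBeq}~\ref{it:underKB'} to $J\leqKBpr K'$ yields $\Ncal^*_J\subseteq\Ncal^*_{K'}$ and $\Ncal_{K'}\subseteq\Ncal_J$, and applying it to $K\leqKBpr K'$ yields $\Ncal^*_K\subseteq\Ncal^*_{K'}$ and $\Ncal_{K'}\subseteq\Ncal_K$. Combining these with the trivial inclusion $\Ncal^*_{K'}\subseteq\Ncal_{K'}$ produces the chain $\Ncal^*_J\cup\Ncal^*_K\subseteq\Ncal^*_{K'}\subseteq\Ncal_{K'}\subseteq\Ncal_J\cap\Ncal_K$ required by \ref{it:aida} (this is essentially the content already recorded in \autoref{lem:coh}). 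The implication $\ref{it:aida}\Rightarrow\ref{it:subset}$ is then immediate, since the chain gives in particular $\Ncal^*_J\subseteq\Ncal^*_{K'}\subseteq\Ncal_{K'}\subseteq\Ncal_J\cap\Ncal_K\subseteq\Ncal_K$.

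Finally, for $\ref{it:subset}\Rightarrow\ref{it:nc}$ I would argue by contraposition: if $J$ and $K$ are \emph{not} nearly coherent, then \autoref{thm:nNCNJ} gives $\Ncal^*_J\nsubseteq\Ncal_K$, so \ref{it:subset} fails. This last step is where all the real difficulty lives, and it has already been settled through \autoref{notnear2} via Eisworth's nearly coherence game; the remaining three implications are only bookkeeping with the order $\leqKBpr$ and the monotonicity \autoref{RBeq}. Thus the main obstacle is not in assembling the cycle but in the prior \autoref{thm:nNCNJ}, on which $\ref{it:subset}\Rightarrow\ref{it:nc}$ rests.
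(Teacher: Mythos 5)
Your proposal is correct and follows exactly the same cycle $\ref{it:nc}\Rightarrow\ref{it:snc}\Rightarrow\ref{it:aida}\Rightarrow\ref{it:subset}\Rightarrow\ref{it:nc}$ as the paper, using the same ingredients: the definition of near coherence for the first step, \autoref{RBeq}~\ref{it:underKB'} (equivalently \autoref{lem:coh}) for the second, triviality for the third, and the contrapositive of \autoref{thm:nNCNJ} for the last. You merely spell out details the paper labels as obvious, and your assessment that all the real work lives in \autoref{thm:nNCNJ} matches the paper's presentation.
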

\begin{proof}
$\ref{it:nc}\imp \ref{it:snc}$ is obvious; $\ref{it:snc}\imp \ref{it:aida}$ is immediate from \autoref{RBeq}; $\ref{it:aida}\imp\ref{it:subset}$ is obvious; and $\ref{it:subset}\imp \ref{it:nc}$ follows by (the contrapositive of) \autoref{thm:nNCNJ}.
\end{proof}

The non-nearly coherence of filters also gives us examples of non-meager ideals $J$ such that $\Ncal^*_J\neq\Ncal_J$. We do not know how to construct such an example in ZFC.

\begin{lemma}\label{noncoh:sum}
  Assume that $J_1$ and $J_2$ are non-nearly coherent ideals on $\omega$. Then $J_1\oplus J_2$ is non-meager and $\Ncal^*_{J_1\oplus J_2}\neq \Ncal_{J_1\oplus J_2}$.
\end{lemma}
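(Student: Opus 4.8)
The plan is to treat the two assertions separately, reducing both to results already established and exploiting the decompositions of $\Ncal_{J_1\oplus J_2}$ and $\Ncal^*_{J_1\oplus J_2}$ recorded in \autoref{exm:sum}.

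For the inequality $\Ncal^*_{J_1\oplus J_2}\neq\Ncal_{J_1\oplus J_2}$, recall from \autoref{exm:sum} that $\Ncal^*_{J_1\oplus J_2}=\Ncal^*_{J_1}\cap\Ncal^*_{J_2}$ and that $\Ncal_{J_1}\subseteq\Ncal_{J_1\oplus J_2}$. Since $J_1$ and $J_2$ are not nearly coherent, \autoref{thm:nNCNJ} provides a set $Z\in\Ncal^*_{J_1}\smallsetminus\Ncal_{J_2}$. Then $Z\in\Ncal^*_{J_1}\subseteq\Ncal_{J_1}\subseteq\Ncal_{J_1\oplus J_2}$, while $Z\notin\Ncal_{J_2}\supseteq\Ncal^*_{J_2}\supseteq\Ncal^*_{J_1}\cap\Ncal^*_{J_2}=\Ncal^*_{J_1\oplus J_2}$; hence $Z$ witnesses $\Ncal^*_{J_1\oplus J_2}\subsetneq\Ncal_{J_1\oplus J_2}$ (recall $\Ncal^*_{J_1\oplus J_2}\subseteq\Ncal_{J_1\oplus J_2}$ always). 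This part is routine once the decomposition is in hand.

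For non-meagerness I would use \autoref{conseqforE2}: it suffices to produce a \emph{single} non-meager set lying in $\Ncal^*_{J_1\oplus J_2}=\Ncal^*_{J_1}\cap\Ncal^*_{J_2}$, that is, in both $\Ncal^*_{J_1}$ and $\Ncal^*_{J_2}$. The natural candidate is $N^*_{J_1}(\bar c^T)\cap N^*_{J_2}(\bar c^T)$ for $T=2^{<\omega}$ and $\bar c^T$ as in \autoref{def:cT}; this set lies in $\Ncal^*_{J_1}\cap\Ncal^*_{J_2}$ because each factor does and these are ideals. To see it is non-meager, I would run the argument of \autoref{capcomeag2} but replace the one-filter meager game by Eisworth's nearly coherence game $C_{J_1^d,J_2^d}$. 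Given a co-meager $G\subseteq\cantor$ witnessed by dense open sets $\seqn{D_n}{n<\omega}$, Player~I follows a strategy that threads the branch through the $D_n$ and, after each response $B_k$ of Player~II, extends the current stem so that the eventual branch satisfies $x\in c^T_n$ for every $n\in B_k$, exactly as in \autoref{capcomeag2} using \ref{it:tn+1}--\ref{it:tn+1'} of \autoref{def:cT}. Since $J_1$ and $J_2$ are not nearly coherent, Eisworth's theorem guarantees that Player~I has no winning strategy in $C_{J_1^d,J_2^d}$, so there is a run defeating this strategy in which $F_0:=\bigcup_n B_{2n}\in J_1^d$ and $F_1:=\bigcup_n B_{2n+1}\in J_2^d$. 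The resulting $x:=\bigcup_n s_n$ lies in $\bigcap_n D_n\subseteq G$ and satisfies $x\in c^T_m$ for all $m\in F_0\cup F_1$; as $F_0\in J_1^d$ and $F_1\in J_2^d$ are both contained in $\set{m<\omega}{x\in c^T_m}$, we obtain $x\in N^*_{J_1}(\bar c^T)\cap N^*_{J_2}(\bar c^T)\cap G$. Thus this set meets every co-meager set, so it is non-meager, and \autoref{conseqforE2} then yields that $J_1\oplus J_2$ is not meager.

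The main obstacle is the bookkeeping in this combined game: one must verify that a single Player~I strategy can, at even and odd stages alike, both hit the dense sets $D_n$ and force $\bar c^T$-membership for every index played by Player~II, using the \emph{same} sequence $\bar c^T$ to serve both $N^*_{J_1}$ and $N^*_{J_2}$. This is precisely where the construction of $\bar c^T$ in \autoref{def:cT}—which permits the stem to be extended into $c^T_n$ for any prescribed finite set of indices compatible with the current level—is essential, and where the proof of \autoref{capcomeag2} should be adapted almost verbatim, now splitting Player~II's moves into the even and odd subsequences feeding $J_1^d$ and $J_2^d$ respectively.
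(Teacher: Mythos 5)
Your proposal is correct, and the two halves split neatly into ``same as the paper'' and ``genuinely different.'' For $\Ncal^*_{J_1\oplus J_2}\neq\Ncal_{J_1\oplus J_2}$ you and the paper use the same ingredients (the decomposition of \autoref{exm:sum} plus the non-nearly-coherence dichotomy); the paper argues by contradiction, deriving $\Ncal^*_{J_2}\subseteq\Ncal_{J_1}$ from the assumed equality and contradicting \autoref{nc-char}, whereas you exhibit a direct witness $Z\in\Ncal^*_{J_1}\smallsetminus\Ncal_{J_2}$ from \autoref{thm:nNCNJ} and track it through the inclusions --- the same mathematics, slightly more explicitly packaged. The real divergence is in the non-meagerness of $J_1\oplus J_2$. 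The paper's route is short and purely topological: non-nearly coherent ideals are non-meager, so $J'_1\times J'_2$ is a non-meager rectangle in ${}^{\N_1}2\times{}^{\N_2}2$ by Kuratowski--Ulam, and this rectangle is $J'_1\oplus J'_2$. Your route re-runs the machinery of \autoref{capcomeag2} inside Eisworth's game $C_{J_1^d,J_2^d}$: the strategy is legal because the two games have identical move structure, the stem-extension step covers $t^\ell_{n_k}$ for every $\ell\in B_k$ regardless of parity, and a run defeating Player~I yields $x\in\bigcap_n D_n$ with $x\in c^T_m$ for all $m\in F_0\cup F_1$, hence $x\in N^*_{J_1}(\bar c^T)\cap N^*_{J_2}(\bar c^T)$; then \autoref{conseqforE2}\ref{it:N*1}$\Leftrightarrow$\ref{it:N*4} closes the argument. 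This is longer than Kuratowski--Ulam but buys something the paper's proof does not state: an explicit non-meager member of $\Ncal^*_{J_1\oplus J_2}$, i.e.\ a direct realization of clause~\ref{it:N*1} of \autoref{conseqforE2} for the sum ideal, rather than an abstract non-meagerness fact about $J_1\oplus J_2$ as a subset of $\pts(\omega)$.

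One small piece of bookkeeping you elide: since $J_1$ and $J_2$ both live on $\omega$, the sum $J_1\oplus J_2$ must formally be taken as $J'_1\oplus J'_2$ for isomorphic copies $J'_e=g_e^\to(J_e)$ on a partition $\omega=\N_1\cup\N_2$; the identifications $\Ncal_{J'_e}=\Ncal_{J_e}$ and $\Ncal^*_{J'_e}=\Ncal^*_{J_e}$ come from \autoref{RBeq}\ref{it:underRB}, and your appeal to \autoref{exm:sum} should be read through this translation. This is harmless, but worth making explicit.
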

\begin{proof}
Partition $\omega=\N_1\cup \N_2$ into two infinite sets, let $g_e\colon \omega\to\N_e$ be a bijection for each $e\in\{1,2\}$, and let $J'_e= g_e^\to(J_e)$, which is an ideal on $\N_e$ isomorphic with $J_e$. Here, our interpretation of $J_1\oplus J_2$ is $J'_1\oplus J'_2$. It is clear that $\Ncal_{J'_1}=\Ncal_{J_1}$ and $\Ncal^*_{J'_1}=\Ncal^*_{J_1}$ (by \autoref{RBeq}~\ref{it:underRB}).

Recall that non-nearly coherent ideals must be non-meager (this also follows by \autoref{thm:nNCNJ}), 
so $J'_1\times J'_2$ is a non-meager subset of $\cantor= {}^{\N_1}2 \times {}^{\N_2}2$ by Kuratowski-Ulam Theorem, which implies that $J'_1\oplus J'_2$ is non-meager.

By \autoref{exm:sum}, $\Ncal^*_{J'_1\oplus J'_2}=\Ncal^*_{J'_1}\cap \Ncal^*_{J'_2} = \Ncal^*_{J_1}\cap \Ncal^*_{J_2}$ and $\Ncal_{J'_1 \oplus J'_2}$ contains both $\Ncal_{J_1}$ and $\Ncal_{J_2}$, so $\Ncal^*_{J'_1\oplus J'_2}= \Ncal_{J'_1\oplus J'_2}$ would imply that $\Ncal_{J_1}\cup \Ncal_{J_2}\subseteq \Ncal^*_{J_1}\cap\Ncal^*_{J_2}$, and in turn $\Ncal_{J_2}\subseteq \Ncal^*_{J_1}$, which implies $\Ncal^*_{J_2}\subseteq \Ncal_{J_1}$ (because $\Ncal^*_{J_e}\subseteq \Ncal_{J_e}$), contradicting \autoref{nc-char} and the fact that $J_1$ and $J_2$ are not nearly-coherent. Therefore, $\Ncal^*_{J'_1\oplus J'_2}\neq \Ncal_{J'_1\oplus J'_2}$
%
% We must also have that $J'_1\oplus\pts(\N_2)$ and $\pts(\N_1)\oplus J'_2$ are not nearly-coherent. Assume otherwise that there is some ideal $K$ on $\omega$ which is $\leqKBpr$-below both ideals. Since $J'_1\oplus\pts(\N_2) \leqKBpr J_1$ and $\pts(\N_1)\oplus J'_2 \leqKBpr J_2$ (by using $g_1$ and $g_2$), we conclude that $K$ is $\leqKBpr$ below $J_1$ and $J_2$ by the transitivity of $\leqKBpr$, i.e.\ they are nearly coherent by \autoref{nc-char}, a contradiction.
%
% Therefore, by \autoref{nc-char}, $\Ncal^*_{J'_1\oplus\pts(\N_2)}\nsubseteq \Ncal_{\pts(\N_1)\oplus J'_2}$. On the other hand, both ideals are between $\Ncal^*_{J'_1\oplus J'_2}$ and $\Ncal_{J'_1\oplus J'_2}$, so $\Ncal^*_{J'_1\oplus J'_2}\neq \Ncal_{J'_1\oplus J'_2}$.
\end{proof}

In contrast with the previous result, we do not know whether \ref{NCF} implies that all $\Ncal_J$ are the same for non-meager $J$.

\section{Cardinal characteristics}\label{sec:cardinalityNJ}

In this section, we focus on investigating the cardinal characteristics associated with $\Ncal_J$ and $\Ncal^*_J$.

We review some basic notation about cardinal characteristics. Many cardinal characteristics are defined using relational systems in the following way~\cite{Vojtas}. A \emph{relational system} is a triplet $\Rbf=\la X,Y,R\ra$ where $R$ is a relation and $X$ and $Y$ are non-empty sets.\footnote{It is typically assumed that $R\subseteq X\times Y$, but it is not required. In fact, $R$ could be a proper class relation like $\subseteq$ and $\in$.} Define
  \begin{linenomath}
  \begin{align*}
        \bfrak(\Rbf) & :=\min\set{|F|}{F\subseteq X,\ \neg\,(\exists\, y\in Y)\ (\forall\, x\in F)\  x\, R\, y}\\
        \dfrak(\Rbf) & :=\min\set{|D|}{D\subseteq Y,\ (\forall\, x\in X)\ (\exists\, y\in D)\ x\, R\, y}.
  \end{align*}
  \end{linenomath}
  
The \emph{dual of $\Rbf$} is defined by $\Rbf^\perp:=\la Y,X,R^\perp\ra$ where $y\, R^\perp\, x$ iff $\neg\, x\, R\, y$. Hence $\bfrak(\Rbf^\perp)=\dfrak(\Rbf)$ and $\dfrak(\Rbf^\perp)=\bfrak(\Rbf)$.
  
Given another relational system $\Rbf'=\la X', Y', R'\ra$, say that a pair $(\varphi_-,\varphi_+)$ is a \emph{Tukey connection from $\Rbf$ to $\Rbf'$} if $\varphi_-:X\to X'$, $\varphi_+:Y'\to Y$ and, for any $x\in X$ and $y'\in Y'$, if $\varphi_-(x)\, R'\, y'$ then $x\, R\, \varphi_+(y')$.

We say that \emph{$\Rbf$ is Tukey below $\Rbf'$}, denoted by $\Rbf\leqT\Rbf'$, if there is a Tukey connection from $\Rbf$ to $\Rbf'$. Say that $\Rbf$ is \emph{Tukey equivalent} to $\Rbf'$, denoted by $\Rbf\eqT\Rbf'$, if $\Rbf\leqT\Rbf'$ and $\Rbf'\leqT\Rbf$. It is known that $\Rbf\leqT\Rbf'$ implies $\dfrak(\Rbf)\leq\dfrak(\Rbf')$ and $\bfrak(\Rbf')\leq\bfrak(\Rbf)$. Hence $\Rbf\eqT\Rbf'$ implies $\dfrak(\Rbf)=\dfrak(\Rbf')$ and $\bfrak(\Rbf')=\bfrak(\Rbf)$.

We constantly use the relational system $\cbf^\Jcl_\Icl:=\la\Icl,\Jcl,\subseteq\ra$ discussed in~\cite[Ch.~2]{BJ}, and we identify $\Icl$ with the relational system $\cbf^\Icl_\Icl$. Denote $\add(\Icl,\Jcl):=\bfrak(\cbf^\Jcl_\Icl)$ and $\cof(\Icl,\Jcl):=\dfrak(\cbf^\Jcl_\Icl)$. These cardinal characteristics are interesting when $\Icl\subseteq\Jcl$ are ideals on some set $X$. It is well-known that, for any ideal $\Icl$ on $X$, we can express the \emph{cardinal characteristics associated with $\Icl$} as follows:
\begin{linenomath}
\begin{align*}
    \add(\Icl)& =\add(\Icl,\Icl), &  \cof(\Icl) &=\cof(\Icl,\Icl),\\
    \non(\Icl)& =\add([X]^{<\aleph_0},\Icl), & \cov(\Icl)& =\cof([X]^{<\aleph_0},\Icl).
\end{align*}
\end{linenomath}
In fact, via the relational system $\Cbf_\Icl:=\la X,\Icl,\in\ra$, we obtain $\non(\Icl)=\bfrak(\Cbf_\Icl)$ and $\cov(\Icl)=\dfrak(\Cbf_\Icl)$. The following easy claims illustrate basic relations between these cardinal characteristics.

\begin{fact}\label{wcov}
If $\Icl$ is an ideal on $X$, $\Icl\subseteq\Icl'$ and $\Jcl\subseteq\pts(X)\smallsetminus \{X\}$, then $(\cbf_{\Icl'}^\Jcl)^\perp \leqT \Cbf_\Icl\leqT\cbf_{[X]^{<\aleph_0}}^\Icl$. In particular, $\add(\Icl',\Jcl)\leq\cov(\Icl)$ and $\non(\Icl)\leq\cof(\Icl',\Jcl)$.
\end{fact}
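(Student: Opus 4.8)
The plan is to unwind the definitions and exhibit the two Tukey connections by hand; the ``in particular'' inequalities will then drop out from the general monotonicity of $\bfrak$ and $\dfrak$ together with the duality identities $\bfrak(\Rbf^\perp)=\dfrak(\Rbf)$ and $\dfrak(\Rbf^\perp)=\bfrak(\Rbf)$ recorded above. Recall that $\Cbf_\Icl=\la X,\Icl,\in\ra$, that $\cbf^\Jcl_{\Icl'}=\la\Icl',\Jcl,\subseteq\ra$, and hence that $(\cbf^\Jcl_{\Icl'})^\perp=\la\Jcl,\Icl',R\ra$ where, for $B\in\Jcl$ and $A\in\Icl'$, we have $B\mathrel{R}A$ iff $A\nsubseteq B$. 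So for the first reduction I must produce maps $\varphi_-\colon\Jcl\to X$ and $\varphi_+\colon\Icl\to\Icl'$ such that $\varphi_-(B)\in C$ implies $\varphi_+(C)\nsubseteq B$, and for the second I must produce $\psi_-\colon X\to[X]^{<\aleph_0}$ and $\psi_+\colon\Icl\to\Icl$ such that $\psi_-(x)\subseteq C$ implies $x\in\psi_+(C)$.

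For $(\cbf^\Jcl_{\Icl'})^\perp\leqT\Cbf_\Icl$ the idea is to use a single witnessing point. Since every $B\in\Jcl$ satisfies $B\neq X$, I can choose a point $\varphi_-(B)\in X\smallsetminus B$, and I let $\varphi_+$ be the inclusion $\Icl\hookrightarrow\Icl'$ (legitimate because $\Icl\subseteq\Icl'$), i.e.\ $\varphi_+(C)=C$. The verification is immediate: if $\varphi_-(B)\in C$, then $C$ contains the point $\varphi_-(B)$, which lies outside $B$, so $C=\varphi_+(C)\nsubseteq B$, as required. For $\Cbf_\Icl\leqT\cbf^\Icl_{[X]^{<\aleph_0}}$ I take $\psi_-(x):=\{x\}$ and $\psi_+$ the identity on $\Icl$; then $\psi_-(x)=\{x\}\subseteq C$ is literally the assertion $x\in C=\psi_+(C)$.

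Finally I read off the cardinal inequalities, and here it is worth noting that both come already from the first reduction (the second being recorded merely as one half of the Tukey equivalence $\Cbf_\Icl\eqT\cbf^\Icl_{[X]^{<\aleph_0}}$). Applying $\dfrak$ and $\bfrak$ to $(\cbf^\Jcl_{\Icl'})^\perp\leqT\Cbf_\Icl$ and using $\dfrak((\cbf^\Jcl_{\Icl'})^\perp)=\bfrak(\cbf^\Jcl_{\Icl'})=\add(\Icl',\Jcl)$, $\bfrak((\cbf^\Jcl_{\Icl'})^\perp)=\dfrak(\cbf^\Jcl_{\Icl'})=\cof(\Icl',\Jcl)$, together with $\dfrak(\Cbf_\Icl)=\cov(\Icl)$ and $\bfrak(\Cbf_\Icl)=\non(\Icl)$, yields $\add(\Icl',\Jcl)\leq\cov(\Icl)$ and $\non(\Icl)\leq\cof(\Icl',\Jcl)$. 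There is no genuinely hard step in this argument; the only thing to watch is the bookkeeping of the dual relation $R$ and the fact that a Tukey connection $\Rbf\leqT\Rbf'$ gives $\dfrak(\Rbf)\leq\dfrak(\Rbf')$ but $\bfrak(\Rbf')\leq\bfrak(\Rbf)$, so I will keep these two monotonicity directions in front of me before reading off the inequalities.
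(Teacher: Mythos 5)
Your proof is correct and follows essentially the same route as the paper: for the first reduction the paper likewise picks a point of $X\smallsetminus B$ (possible since $X\notin\Jcl$) and uses the inclusion $\Icl\hookrightarrow\Icl'$, then reads off the inequalities by duality, while the second (trivial) reduction is left to the reader there and written out by you. No gaps.
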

\begin{proof}
We only show the first Tukey connection. Define $F\colon \Jcl\to X$ such that $F(B)\in X\smallsetminus B$ (which exists because $X\notin \Jcl$), and define $G\colon \Icl\to \Icl'$ by $G(A):=A$. Then, for $A\in\Icl$ and $B\in\Jcl$, $F(B)\in A$ implies $B\nsupseteq A$. Hence, $(F,G)$ witnesses $(\cbf_{\Icl'}^\Jcl)^\perp \leqT \Cbf_\Icl$.
\end{proof}

\begin{fact}\label{cofIJrel}
If $\Icl\subseteq\Icl'$ and $\Jcl'\subseteq \Jcl$ then $\cbf^\Jcl_\Icl\leqT \cbf^{\Jcl'}_{\Icl'}$. In particular, $\add(\Icl',\Jcl')\leq \add(\Icl,\Jcl)$ and $\cof(\Icl,\Jcl)\leq \cof(\Icl',\Jcl')$.
\end{fact}

\begin{corollary}\label{corcovsub}
  If $\Jcl'\subseteq\Jcl$ are ideals on $X$, then $\cbf^{\Jcl}_{[X]^{<\aleph_0}} \leqT \cbf^{\Jcl'}_{[X]^{<\aleph_0}}$ and $\Cbf_{\Jcl}\leqT \Cbf_{\Jcl'}$. In particular, $\cov(\Jcl)\leq \cov(\Jcl')$ and $\non(\Jcl')\leq\non(\Jcl)$.
\end{corollary}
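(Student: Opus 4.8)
The plan is to observe that both Tukey reductions are pure monotonicity statements that follow either from \autoref{cofIJrel} or from an entirely explicit (identity/inclusion) Tukey connection, so the proof is bookkeeping of the definitions rather than any real construction.

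For the first reduction, $\cbf^{\Jcl}_{[X]^{<\aleph_0}} \leqT \cbf^{\Jcl'}_{[X]^{<\aleph_0}}$, I would simply instantiate \autoref{cofIJrel}. That fact states that $\Icl\subseteq\Icl'$ together with $\Jcl'\subseteq\Jcl$ yields $\cbf^{\Jcl}_{\Icl}\leqT\cbf^{\Jcl'}_{\Icl'}$. Taking $\Icl=\Icl'=[X]^{<\aleph_0}$ (so the hypothesis $\Icl\subseteq\Icl'$ holds trivially as an equality) and using the given $\Jcl'\subseteq\Jcl$, the conclusion is exactly $\cbf^{\Jcl}_{[X]^{<\aleph_0}}\leqT\cbf^{\Jcl'}_{[X]^{<\aleph_0}}$. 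No further work is needed here.

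For the second reduction, $\Cbf_{\Jcl}\leqT\Cbf_{\Jcl'}$, I would exhibit a Tukey connection directly. Recall $\Cbf_{\Jcl}=\la X,\Jcl,\in\ra$ and $\Cbf_{\Jcl'}=\la X,\Jcl',\in\ra$. Let $\varphi_-\colon X\to X$ be the identity and $\varphi_+\colon \Jcl'\to\Jcl$ be the inclusion map, which is well-defined precisely because $\Jcl'\subseteq\Jcl$. The Tukey condition to check is that, for $x\in X$ and $B\in\Jcl'$, $\varphi_-(x)\in B$ implies $x\in\varphi_+(B)$; since both maps act as the identity on the relevant objects, this reduces to the tautology $x\in B\Rightarrow x\in B$. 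Hence $(\varphi_-,\varphi_+)$ is a Tukey connection and $\Cbf_{\Jcl}\leqT\Cbf_{\Jcl'}$.

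Finally I would read off the cardinal inequalities from the monotonicity of $\bfrak$ and $\dfrak$ under $\leqT$. The first reduction gives $\dfrak(\cbf^{\Jcl}_{[X]^{<\aleph_0}})\leq\dfrak(\cbf^{\Jcl'}_{[X]^{<\aleph_0}})$, which by the identity $\cov(\cdot)=\cof([X]^{<\aleph_0},\cdot)=\dfrak(\cbf^{\,\cdot}_{[X]^{<\aleph_0}})$ recorded earlier is $\cov(\Jcl)\leq\cov(\Jcl')$. The second reduction gives $\bfrak(\Cbf_{\Jcl'})\leq\bfrak(\Cbf_{\Jcl})$, i.e.\ $\non(\Jcl')\leq\non(\Jcl)$, using $\non(\cdot)=\bfrak(\Cbf_{\,\cdot})$. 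There is essentially no obstacle in this corollary; the only point requiring mild care is keeping straight the direction of the Tukey arrow against the direction of the induced inequality on $\bfrak$ versus $\dfrak$, and recognizing that setting both lower ideals to $[X]^{<\aleph_0}$ is exactly the instance of \autoref{cofIJrel} that is needed.
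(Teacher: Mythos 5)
Your proof is correct and matches the paper's (implicit) argument: the paper presents this corollary as an immediate consequence of \autoref{cofIJrel} with $\Icl=\Icl'=[X]^{<\aleph_0}$, together with the trivial identity/inclusion Tukey connection for $\Cbf_{\Jcl}\leqT\Cbf_{\Jcl'}$, exactly as you do. The directions of the induced inequalities on $\bfrak$ and $\dfrak$ are also handled correctly.
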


We look at the cardinal characteristics associated with $\Ncal_J$ and $\Ncal^*_J$ when $J$ is an ideal on $\omega$. If $\Jcal$ has the~Baire property then
the cardinal characteristics associated with $\mathcal{N}_\Jcal$ equal to those associated with $\mathcal{N}$ because $\Ncal_J=\Ncal$ (\autoref{BaireForNJ2}). Moreover, since $\Ncal^*_J=\Ecal$, the~cardinal characteristics associated with $\NstaridealJ{\Jcal}$ equal to those associated with $\mathcal{E}$. We recall below some results about the cardinal characteristics associated with $\Ecal$. 

\begin{theorem}[{\cite{BartSh}}, see also~{\cite[Sec.~2.6]{BJ}}]\label{thm:E}
\ 
  \begin{enumerate}[label =\rm (\alph*)]
      \item $\min\{\bfrak,\non(\Ncal)\} \leq\non(\Ecal)\leq\min\{\non(\Mcal),\non(\Ncal)\}$.% and $\sfrak\leq\non(\Ecal)$.
      \item $\max\{\cov(\Mcal),\cov(\Ncal)\}\leq\cov(\Ecal)  \leq\max\{\dfrak,\cov(\Ncal)\}$.
      %and $\cov(\Ecal)\leq\rfrak$.
      \item $\add(\Ecal,\Ncal)=\cov(\meager)$ and $\cof(\Ecal,\Ncal)=\non(\meager)$.
      %\item $\add(\Mcal)\leq \add(\Ecal,\Mcal)\leq \bfrak$ and $\dfrak\leq \cof(\Ecal,\Mcal)\leq \cof(\Mcal)$. Due to Miller~\cite{Mi}
      \item $\add(\Ecal)=\add(\Mcal)$ and $\cof(\Ecal)=\cof(\Mcal)$.
  \end{enumerate}
\end{theorem}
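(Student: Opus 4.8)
The plan is to route everything through the combinatorial description of $\Ecal$ from \autoref{basicE} and \autoref{charE} (membership via $N^*(\bar c)$ with $\bar c\in\omegabar$), refined into the standard Bartoszy\'nski--Shelah ``interval-block'' form: $X\in\Ecal$ iff there are an interval partition $\la I_k:\, k<\omega\ra$ of $\omega$ and sets $J_k\subseteq 2^{I_k}$ with $\sum_k |J_k|\,2^{-|I_k|}<\infty$ such that $X\subseteq\set{x\in\cantor}{\forall^\infty k\ x\restriction I_k\in J_k}$. Two of the required bounds cost nothing: the inclusions $\Ecal\subseteq\Mcal$ and $\Ecal\subseteq\Ncal$ give, via \autoref{corcovsub}, both $\max\{\cov(\Mcal),\cov(\Ncal)\}\leq\cov(\Ecal)$ and $\non(\Ecal)\leq\min\{\non(\Mcal),\non(\Ncal)\}$, i.e.\ the lower bound of (b) and the upper bound of (a).

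For the remaining bound in (a) I would argue contrapositively: if $|X|<\min\{\bfrak,\non(\Ncal)\}$ then $X\in\Ecal$. Since $|X|<\non(\Ncal)$ we have $X\in\Ncal$, so by \autoref{basicN2} there is $\bar c\in\Omega^*_\varepsilon$ with $\sum_n\varepsilon_n<\infty$ (say $\varepsilon_n=2^{-n}$) and $X\subseteq N(\bar c)$; thus $\sum_n\mu(c_n)<\infty$ and each $S_x:=\set{n}{x\in c_n}$ is infinite. Put $f_x(k):=\min(S_x\smallsetminus k)+1$; as $|X|<\bfrak$ the family $\set{f_x}{x\in X}$ is $\leq^*$-bounded by some increasing $f$, and iterating $f$ (set $m_0=0$, $m_{k+1}=f(m_k)$) yields an interval partition $I_k=[m_k,m_{k+1})$ with $S_x\cap I_k\neq\emptyset$ for all large $k$. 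Setting $c'_k:=\bigcup_{n\in I_k}c_n$ gives $\sum_k\mu(c'_k)\leq\sum_n\mu(c_n)<\infty$, so $\bar c'\in\omegabar$ by \autoref{chomegabar}, and $X\subseteq N^*(\bar c')\in\Ecal$ by \autoref{charE}. The upper bound in (b) is the exact dual: from a covering of $\cantor$ by $\cov(\Ncal)$-many sets $N(\bar c^\alpha)$ and a dominating family $D$ of size $\dfrak$, the $\max\{\dfrak,\cov(\Ncal)\}$-many sets $N^*(\bar c^{\alpha,f})$ obtained by regrouping $\bar c^\alpha$ along the partition derived from $f\in D$ cover $\cantor$.

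The heart of the theorem, and what I expect to be the main obstacle, is (c). I would prove it as a Tukey equivalence $\cbf^\Ncal_\Ecal\eqT\Rbf$, where $\Rbf=\la\baire,\baire,=^\infty\ra$ and $x=^\infty y$ means $\exists^\infty n\ x(n)=y(n)$. Granting Bartoszy\'nski's characterization $\bfrak(\Rbf)=\cov(\Mcal)$ and $\dfrak(\Rbf)=\non(\Mcal)$ (equivalently, $\cov(\Mcal)$ is the least size of a family of eventually different reals covering $\baire$, and $\non(\Mcal)$ its dual), this equivalence yields $\add(\Ecal,\Ncal)=\bfrak(\cbf^\Ncal_\Ecal)=\cov(\Mcal)$ and $\cof(\Ecal,\Ncal)=\dfrak(\cbf^\Ncal_\Ecal)=\non(\Mcal)$. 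Constructing the two Tukey connections is the technical crux: using the interval-block form, from an $\Ecal$-set one reads off a single real encoding its admissible block-content, whose being infinitely-often equal to a given real forces the $\Ecal$-set into a prescribed null set; conversely one turns a real into an $\Ecal$-set so that containment in a null set reflects back as an ``$=^\infty$'' coincidence. The delicate point is arranging the measure constraint $\sum_k|J_k|2^{-|I_k|}<\infty$ to be compatible with the category-flavoured relation $=^\infty$, and all the real work sits in this bookkeeping.

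Finally I would deduce (d) from (c) together with the \emph{Miller--Truss} identities $\add(\Mcal)=\min\{\bfrak,\cov(\Mcal)\}$ and $\cof(\Mcal)=\max\{\dfrak,\non(\Mcal)\}$. Since $\Ecal\subseteq\Ncal$, \autoref{cofIJrel} gives $\add(\Ecal)\leq\add(\Ecal,\Ncal)=\cov(\Mcal)$ and dually $\cof(\Ecal)\geq\cof(\Ecal,\Ncal)=\non(\Mcal)$; a short partition argument (an unbounded family of interval partitions produces $\Ecal$-sets whose union escapes $\Ecal$, while any cofinal family must dominate) supplies $\add(\Ecal)\leq\bfrak$ and $\cof(\Ecal)\geq\dfrak$, whence $\add(\Ecal)\leq\add(\Mcal)$ and $\cof(\Ecal)\geq\cof(\Mcal)$. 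The reverse inequalities are the last combinatorial step: given fewer than $\min\{\bfrak,\cov(\Mcal)\}$ sets in $\Ecal$, one aligns their interval partitions using a bound (cost $<\bfrak$) and merges their block-data by a covering argument (cost $<\cov(\Mcal)$) to return the union to $\Ecal$, the cofinal version being dual; here the summability condition is absorbed because, with the partition fixed, the admissible block-data form a $\sigma$-directed family.
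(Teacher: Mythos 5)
First, a remark on the comparison itself: the paper does not prove this theorem at all --- it is imported from Bartoszy\'nski--Shelah \cite{BartSh} (see also \cite[Sec.~2.6]{BJ}) --- so your attempt can only be measured against the standard literature proof. With that said, your arguments for (a) and (b) are correct and complete: the upper bound in (a) and the lower bound in (b) follow from $\Ecal\subseteq\Mcal\cap\Ncal$ via \autoref{corcovsub}, and your regrouping argument (dominate the functions $f_x(k)=\min(S_x\smallsetminus k)+1$, pass to the induced interval partition, and replace $N(\bar c)$ by $N^*(\bar c')$ for the coarsened sequence, which lies in $\omegabar$ by \autoref{chomegabar} and hence gives an $\Ecal$-set by \autoref{charE}) is exactly the standard proof of $\min\{\bfrak,\non(\Ncal)\}\leq\non(\Ecal)$ and of its dual $\cov(\Ecal)\leq\max\{\dfrak,\cov(\Ncal)\}$.

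The genuine gap is in (c), and it propagates into (d). You correctly identify the right framework --- a Tukey equivalence between $\cbf^{\Ncal}_{\Ecal}$ and $\la\baire,\baire,=^\infty\ra$, whose $\bfrak$ and $\dfrak$ are $\cov(\Mcal)$ and $\non(\Mcal)$ by Bartoszy\'nski's characterization --- and this is indeed how \cite[Sec.~2.6]{BJ} proceeds. But the two Tukey connections are precisely the mathematical content of the Bartoszy\'nski--Shelah theorem, and you construct neither: the passage ``from an $\Ecal$-set one reads off a single real encoding its admissible block-content\dots conversely one turns a real into an $\Ecal$-set'' names the maps without defining them, and you yourself flag that ``all the real work sits in this bookkeeping.'' In particular, it is not at all routine to encode the block-data $\la J_k:\, k<\omega\ra$ of an $\Ecal$-set, subject to $\sum_k|J_k|2^{-|I_k|}<\infty$, into a single real so that failure of $=^\infty$ against it traps the $\Ecal$-set inside a prescribed null set; carrying this out, with the necessary interleaving of a bounding/dominating step to align partitions, \emph{is} the theorem. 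The same applies to (d): the auxiliary claims $\add(\Ecal)\leq\bfrak$ and $\cof(\Ecal)\geq\dfrak$ are only asserted in a parenthesis, and the reverse inequalities $\add(\Mcal)\leq\add(\Ecal)$ and $\cof(\Ecal)\leq\cof(\Mcal)$ are again only described (``aligns their interval partitions\dots merges their block-data by a covering argument''), not proved. So the proposal is a sound road map whose (a), (b) and the easy halves of (c), (d) are genuinely established, but the core of (c) and (d) remains an unproven sketch.
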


\begin{theorem}[{\cite[Lem.~7.4.3]{BJ}}]
  $\sfrak\leq \non(\Ecal)$ and $\cov(\Ecal)\leq \rfrak$.
\end{theorem}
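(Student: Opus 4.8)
The plan is to obtain both inequalities from a single Tukey connection, since they are dual to one another. Let $\Sbf:=\la[\omega]^\omega,[\omega]^\omega,\mathrm{Sp}\ra$ be the splitting relational system, where $A\mathrel{\mathrm{Sp}}s$ means that $s$ \emph{splits} $A$, i.e.\ $A\cap s$ and $A\smallsetminus s$ are both infinite. Then a dominating family for $\Sbf$ is a splitting family and an unbounded family is an unsplittable (reaping) family, so $\dfrak(\Sbf)=\sfrak$ and $\bfrak(\Sbf)=\rfrak$; dually $\bfrak(\Sbf^\perp)=\sfrak$ and $\dfrak(\Sbf^\perp)=\rfrak$. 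Since $\non(\Ecal)=\bfrak(\Cbf_\Ecal)$ and $\cov(\Ecal)=\dfrak(\Cbf_\Ecal)$, both target inequalities follow at once from $\Cbf_\Ecal\leqT\Sbf^\perp$.

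To build this connection I would, identifying $\cantor$ with $\pts(\omega)$, send a real $x$ to $s_x:=\set{n<\omega}{x(n)=1}$, and send an infinite $A\subseteq\omega$ to
\[
E_A:=\set{x\in\cantor}{x\frestr A \text{ is eventually constant}}.
\]
Here $s_x$ plays the role of the ``splitter'' in the domain of $\Sbf^\perp$, and $E_A$ the role of the witness in $\Ecal$. The connecting implication is purely formal: if $s_x$ does \emph{not} split $A$, then $\set{n\in A}{x(n)=1}$ or $\set{n\in A}{x(n)=0}$ is finite, i.e.\ $x\frestr A$ is eventually constant, i.e.\ $x\in E_A$. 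This is exactly the Tukey requirement $\varphi_-(x)\mathrel{\mathrm{Sp}^\perp}A\Rightarrow x\in\varphi_+(A)$ with $\varphi_-(x)=s_x$ and $\varphi_+(A)=E_A$.

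The only step with genuine content is checking that $E_A\in\Ecal$. Splitting $E_A$ into its ``eventually $0$'' and ``eventually $1$'' halves, the first equals $\bigcup_{m<\omega}C_m$ with $C_m:=\set{x\in\cantor}{x(n)=0\text{ for all }n\in A,\ n\ge m}$. Each $C_m$ is closed and, because $A$ is infinite, constrains infinitely many coordinates, so $\leb{C_m}=0$; the same holds for the other half. Thus $E_A$ is a countable union of closed null sets, hence $E_A\in\Ecal$ by \autoref{basicE}. I expect this small measure computation to be the main (and essentially the only) obstacle.

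A minor bookkeeping point remains: $s_x$ must be infinite to lie in $[\omega]^\omega$. The set $B$ of $x$ for which $x^{-1}(1)$ or $x^{-1}(0)$ is finite is countable, hence $B\in\Ecal$; redefining $\varphi_+(A):=E_A\cup B$ (still in $\Ecal$) and setting $s_x$ to an arbitrary fixed infinite co-infinite set for $x\in B$ repairs the domain issue while keeping the implication valid for every $x$. Unwinding the connection gives the two statements directly, which is how I would present them: for $\cov(\Ecal)\le\rfrak$, a reaping family $R$ of size $\rfrak$ yields the cover $\set{E_A\cup B}{A\in R}$ of $\cantor$, because for $x\notin B$ unsplittability provides $A\in R$ not split by $s_x$, so $x\in E_A$; and for $\sfrak\le\non(\Ecal)$, if $|X|<\sfrak$ then $\set{s_x}{x\in X\smallsetminus B}$ is not splitting, so some infinite $A$ is left unsplit by all of them, giving $X\subseteq E_A\cup B\in\Ecal$.
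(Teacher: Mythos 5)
Your proof is correct. The paper itself gives no argument for this statement — it is quoted from \cite[Lem.~7.4.3]{BJ} — and your construction (the $F_\sigma$ null sets $E_A$ of reals eventually constant on $A$, with the observation that $s_x$ fails to split $A$ exactly when $x\in E_A$) is precisely the standard argument behind that citation, here cleanly packaged as a single Tukey connection $\Cbf_\Ecal\leqT\Sbf^\perp$ so that both inequalities fall out by duality. The measure computation for $E_A$, the handling of the countably many exceptional reals via $B$, and the identification $\bfrak(\Sbf^\perp)=\sfrak$, $\dfrak(\Sbf^\perp)=\rfrak$ all check out against the paper's conventions for relational systems.
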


Thanks to the previous results, and the fact that $\Ecal\subseteq\NstaridealJ{\Jcal}\subseteq\NidealJ{\Jcal}\subseteq\mathcal{N}$, we obtain:

\begin{theorem}\label{covnonvscichon}
  %The covering numbers of $\Ncal_J$ and $\Ncal^*_J$ are between $\cov(\Ncal)$ and $\cof(\Mcal)$ (in the upper part of Cicho\'n's diagram), and their uniformity numbers are between $\add(\Mcal)$ and $\non(\Ncal)$ (in the lower part of Cicho\'n's diagram). Concretely,
  $\mathrm{ZFC}$ proves
  \[\begin{array}{ccccccccc}
      \cov(\Ncal) &\leq & \cov(\Ncal_J) & \leq & \cov(\Ncal^*_J) &\leq & \cov(\Ecal) &\leq & \min\{\cof(\Mcal),\rfrak\},\\[1ex] 
      \max\{\add(\Mcal), \sfrak\} &\leq & \non(\Ecal) &\leq & \non(\Ncal^*_J) &\leq & \non(\NidealJ{\Jcal}) &\leq & \non(\mathcal{N}).
  \end{array}\]
\end{theorem}

We now turn to the additivity and cofinality numbers. In the case of $J=\Fin$, we can characterize $\add(\Ncal)$ and $\cof(\Ncal)$ using slaloms.

\begin{definition}
Let $b=\la b(n):\, n<\omega\ra$ be a sequence of non-empty sets, and let $h\in\baire$. Denote
\[\begin{split}
    \prod b & := \prod_{n<\omega}b(n),\\
    \Scal(b,h) & := \prod_{n<\omega}[b(n)]^{\leq h(n)}.
\end{split}\]
Define the relational system $\Lc(b,h):=\la \prod b, \Scal(b,h), \in^*\ra$ where
\[x\in ^* y \text{ iff }\set{n<\omega}{x(n)\notin y(n)} \text{ is finite}.\]
Denote $\blc_{b,h}:=\bfrak(\Lc(b,h))$ and $\dlc_{b,h}:=\dfrak(\Lc(b,h))$.

When $b$ is the constant sequence $\omega$, we use the notation $\Lc(\omega,h)$ and denote its associated cardinal characteristics by $\blc_{\omega,h}$ and $\dlc_{\omega,h}$.
\end{definition}

\begin{theorem}[Bartoszy\'nski~{\cite{BA84,BartInv}}]\label{thmBart}
Assume that $h\in\baire$ diverges to infinity. Then, $\Lc(b,h)\eqT \Ncal$. In particular, $\blc_{\omega,h}=\add(\Ncal)$ and $\dlc_{\omega,h}=\cof(\Ncal)$.
\end{theorem}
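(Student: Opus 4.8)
The plan is to establish the Tukey equivalence $\Lc(b,h)\eqT\Ncal$ directly; the two cardinal equalities then follow for free, since Tukey equivalence preserves both $\bfrak$ and $\dfrak$, and $\add(\Ncal)=\bfrak(\cbf^{\Ncal}_{\Ncal})$, $\cof(\Ncal)=\dfrak(\cbf^{\Ncal}_{\Ncal})$ by the conventions fixed above. First I would reduce to the canonical constant case $b=\omega$, which is the one used in the applications: via a coordinatewise bijection $b(n)\cong\omega$ one gets $\Lc(b,h)\eqT\Lc(\omega,h)$ (when the $b(n)$ are infinite, so that the recoding is available), and, by regrouping coordinates along a suitable interval partition, one gets $\Lc(\omega,h)\eqT\Lc(\omega,h')$ for any two $h,h'$ diverging to infinity. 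A super-coordinate indexed by a block $I_n$ carries a product of per-coordinate slaloms of combined width $\prod_{i\in I_n}h(i)$, while an arbitrary block-slalom of width $h'(n)$ is absorbed into a product slalom by projecting to the coordinates; since $h\to\infty$ makes the per-coordinate widths on a block eventually exceed any prescribed bound, this \emph{invariance} is exactly what makes the hypothesis ``$h\to\infty$'' (rather than a specific $h$) the right one. It therefore suffices to fix one convenient $h$ and relate $\Lc(\omega,h)$ to $\Ncal$.

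The core is a dictionary between sequences of clopen sets of summable measure and slaloms. By \autoref{basicN2} (together with the interval-partition reformulation of \autoref{chomegabar}) a set is null exactly when it is contained in some $N(\bar c)$ with $\bar c\in\Omega^*_\varepsilon$, i.e.\ $\sum_n\mu(c_n)<\infty$; thus $\Ncal$ is already presented by such sequences and the whole problem becomes combinatorial. Fix an interval partition $\la I_n:\,n<\omega\ra$ and work block by block through the finite algebra of coordinates in $I_n$: a clopen set of small measure restricted to a block is a subset $J_n\subseteq 2^{I_n}$ with $|J_n|2^{-|I_n|}$ small, and summability $\sum_n\mu(c_n)<\infty$ becomes $\sum_n|J_n|2^{-|I_n|}<\infty$. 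Encoding which of the (comparatively few) admissible $J_n$ occurs turns this data into a single slalom coordinate of width $\le h(n)$, and conversely a width-$h$ slalom decodes into a clopen sequence of summable measure. This encoding step — matching the count of admissible blocks against the allowance $h(n)$ — is the heart of the correspondence.

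Using the dictionary I would build both connections. For $\Lc(\omega,h)\leqT\Ncal$ (which yields $\add(\Ncal)\le\blc_{\omega,h}$ and $\dlc_{\omega,h}\le\cof(\Ncal)$), send a real $f\in\baire$ to a null set $\varphi_-(f)$ that encodes the values $f(n)$ blockwise, and send a covering null set $A\supseteq\varphi_-(f)$ to the slalom $\varphi_+(A)$ read off from a clopen witness of $A$; summability of that witness forces $\varphi_+(A)$ to have width $\le h(n)$ for large $n$, while $\varphi_-(f)\subseteq A$ forces $f(n)\in\varphi_+(A)(n)$ eventually, i.e.\ $f\in^*\varphi_+(A)$. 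For $\Ncal\leqT\Lc(\omega,h)$ (the reverse inequalities) I would run the correspondence backwards: encode a null set $A$ by a real $\psi_-(A)$ coding a clopen witness $\bar c$ with $A\subseteq N(\bar c)$, and decode a slalom $S$ localizing $\psi_-(A)$ into a single null set $\psi_+(S)=N(\bar c_S)$, whose measure is summable precisely because of the width bound on $S$, so that $\psi_-(A)\in^* S$ gives $A\subseteq\psi_+(S)$. The divergence $h\to\infty$ is used throughout to absorb the finitely many exceptional blocks, matching the ``for all but finitely many $n$'' of the dictionary with the eventual relation $\in^*$ and with the finite-modification tolerance of $\omegabar$ recorded in \autoref{chomegabar}.

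The main obstacle is making this dictionary quantitatively tight. I must choose the interval partition and the measure scale on each block so that the number of admissible small subsets of $2^{I_n}$ matches the slalom width $h(n)$ on the nose, and so that all four maps $\varphi_-,\varphi_+,\psi_-,\psi_+$ are well defined and satisfy the defining implication of a Tukey connection exactly rather than approximately. The genuinely delicate point is the counting estimate that turns the single analytic condition $\sum_n\mu(c_n)<\infty$ into a uniform width-$h$ bound in both directions and uniformly over all null sets and all slaloms; this is where the classical work lies, and once it is in place the two Tukey connections together with the reductions to $b=\omega$ and to a single $h$ are routine bookkeeping.
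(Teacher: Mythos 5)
The paper does not prove this statement: it is quoted verbatim from Bartoszy\'nski's work \cite{BA84,BartInv} and used as a black box (most directly inside the proof of \autoref{omegabarFin}), so there is no in-paper argument to measure you against. Judged on its own terms, your outline has the right architecture: the reduction to $b=\omega$ and to a single $h$ by recoding and regrouping is standard (and you are right to flag that the literal statement for arbitrary $b$ needs the $b(n)$ to be infinite, or at least of size tending to infinity --- for bounded finite $b(n)$ the relational system degenerates); the ``In particular'' clause is immediate from Tukey equivalence; and the direction $\Ncal\leqT\Lc(\omega,h)$ really is the routine bookkeeping you describe --- code a null set by the block restrictions of a clopen witness with measure scale $\varepsilon_n/h(n)$, and turn a localizing slalom back into a summable clopen sequence. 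That half is essentially what the paper re-executes explicitly with $b^f$ and $h(n)=n+1$ in \autoref{omegabarFin}.

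The genuine gap is the other direction, $\Lc(\omega,h)\leqT\Ncal$, which carries the entire content of the theorem ($\add(\Ncal)\leq\blc_{\omega,h}$ and $\dlc_{\omega,h}\leq\cof(\Ncal)$). Your description --- send $f\in\baire$ to a null set $\varphi_-(f)$ encoding $f$ blockwise, then read a width-$h$ slalom off a clopen witness of any null $A\supseteq\varphi_-(f)$ --- asserts exactly the two things that do not follow from anything you set up: an arbitrary null superset of $\varphi_-(f)$ comes with clopen witnesses that bear no a priori relation to your chosen interval partition or measure scale, and extracting from such a witness at most $h(n)$ candidates for $f(n)$ for almost all $n$ is precisely Bartoszy\'nski's combinatorial lemma (in \cite{BJ} it is routed through the characterization of null sets by ``small sets'' and a delicate counting argument). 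You acknowledge this yourself (``this is where the classical work lies''), but a proof that defers that estimate has not proved the theorem --- the easy direction and the recoding reductions are the parts that were never in doubt. A secondary quibble: in your ``dictionary'' paragraph the admissible block-sets $J_n\subseteq 2^{I_n}$ of small relative measure are not ``comparatively few'' and are not counted by $h(n)$; the width $h(n)$ bounds how many such codes a slalom may offer per block, not how many exist, and conflating the two hides where the factor $1/h(n)$ in the measure scale must enter.
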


We propose the following relational system, which is practical to find bounds for the additivity and cofinality of $\Ncal_J$ and $\Ncal^*_J$.

\begin{definition}\label{def:Sbf}
Let $J$ be an ideal on $\omega$. For $c,d\in\omegabar$, define the relation
\[c\subseteq^J d \text{ iff }\set{n<\omega}{c_n\nsubseteq d_n}\in J.\]
Define the relational system $\Sbf_J:=\la \omegabar, \omegabar, \subseteq^J\ra$, and denote $\bfrak_J(\omegabar):=\bfrak(\Sbf_J)$ and $\dfrak_J(\omegabar):=\dfrak(\Sbf_J)$.
\end{definition}

It is clear that $\bfrak_J(\omegabar)$ is regular and $\bfrak_J(\omegabar)\leq\cf(\dfrak_J(\omegabar))\leq\dfrak_J(\omegabar)$.

\begin{theorem}\label{omegabarvsNJ}
Let $J$ be an ideal on $\omega$. Then $\Ncal_J\leqT \Sbf_J$ and $\Ncal^*_J\leqT \Sbf_J$. In particular, the additivities and cofinalities of $\Ncal_J$ and $\Ncal^*_J$ are between $\bfrak_J(\omegabar)$ and $\dfrak_J(\omegabar)$ (see \autoref{fig:bomegabar}).
\end{theorem}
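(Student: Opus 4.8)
The plan is to exhibit explicit Tukey connections, one for each ideal. For $\Ncal_J\leqT\Sbf_J$ I would take the upper map $\varphi_+\colon\omegabar\to\Ncal_J$ to be $\varphi_+(\bar d):=N_J(\bar d)$, which indeed lands in $\Ncal_J$ since $\bar d\in\omegabar$ and $N_J(\bar d)\subseteq N_J(\bar d)$ (cf.\ \eqref{Nj2}). For the lower map $\varphi_-\colon\Ncal_J\to\omegabar$ I would use the very definition of $\Ncal_J$ to choose, for each $X\in\Ncal_J$, a witness $\bar c=\varphi_-(X)\in\omegabar$ with $X\subseteq N_J(\bar c)$; this is just a selection of witnesses. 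The case $\Ncal^*_J\leqT\Sbf_J$ is handled identically with $N^*_J$ in place of $N_J$ throughout.

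To verify the Tukey condition for $\Ncal_J$, suppose $\varphi_-(X)=\bar c\subseteq^J\bar d$, i.e.\ $B:=\set{n<\omega}{c_n\nsubseteq d_n}\in J$; I must show $X\subseteq N_J(\bar d)=\varphi_+(\bar d)$. Since $X\subseteq N_J(\bar c)$ it suffices to check $N_J(\bar c)\subseteq N_J(\bar d)$. Fix $x\in N_J(\bar c)$, so $A:=\set{n<\omega}{x\in c_n}\in J^+$. Because $B\in J$, the set $A\smallsetminus B$ is still in $J^+$ (otherwise $A\subseteq(A\smallsetminus B)\cup B\in J$), and for $n\in A\smallsetminus B$ we have $x\in c_n\subseteq d_n$; hence $A\smallsetminus B\subseteq\set{n<\omega}{x\in d_n}$, and upward closure of $J^+$ gives $x\in N_J(\bar d)$.

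The verification for $\Ncal^*_J$ is the dual computation: if $\bar c\subseteq^J\bar d$ and $x\in N^*_J(\bar c)$, then $\set{n<\omega}{x\notin c_n}\in J$, and from the inclusion $\set{n<\omega}{x\notin d_n}\subseteq\set{n<\omega}{x\notin c_n}\cup\set{n<\omega}{c_n\nsubseteq d_n}$ together with the fact that $J$ is an ideal (closed under unions and subsets), I obtain $\set{n<\omega}{x\notin d_n}\in J$, i.e.\ $x\in N^*_J(\bar d)$. Thus $N^*_J(\bar c)\subseteq N^*_J(\bar d)$, completing the second Tukey connection.

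Finally, the ``in particular'' clause follows from the general principle that $\Rbf\leqT\Rbf'$ yields $\bfrak(\Rbf')\leq\bfrak(\Rbf)$ and $\dfrak(\Rbf)\leq\dfrak(\Rbf')$: applying this with $\Rbf=\Ncal_J$ (resp.\ $\Ncal^*_J$) and $\Rbf'=\Sbf_J$ gives $\bfrak_J(\omegabar)\leq\add(\Ncal_J)\leq\cof(\Ncal_J)\leq\dfrak_J(\omegabar)$, and likewise for $\Ncal^*_J$. I do not expect a genuine obstacle here; the only points requiring any care are that $\varphi_-$ depends on a choice of witness (harmless) and the two elementary combinatorial facts, namely that $J^+$ is closed upward and that $J$ is closed under unions and subsets, which drive the two monotonicity arguments.
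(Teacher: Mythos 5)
Your proposal is correct and is essentially the paper's own proof: the same Tukey connection (a witness-selection map into $\omegabar$ paired with $\bar d\mapsto N_J(\bar d)$, resp.\ $N^*_J(\bar d)$), driven by the monotonicity $\bar c\subseteq^J\bar d\Rightarrow N_J(\bar c)\subseteq N_J(\bar d)$, which the paper states as clear and you verify explicitly. Nothing is missing.
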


\begin{figure}[h!]
\centering
\begin{tikzpicture}%[scale=0.9]
\small{
\node (bI) at (-2,0) {$\bfrak_J(\omegabar)$};
\node (adN*I) at (0,-1) {$\add(\Ncal^*_J)$};
\node (cfN*I) at (2,-1) {$\cof(\Ncal^*_J)$};
\node (adNI) at (0,1) {$\add(\Ncal_J)$};
\node (cfNI) at (2,1) {$\cof(\Ncal_J)$};
\node (dI) at (4,0) {$\dfrak_J(\omegabar)$};

\draw (bI) edge[->] (adN*I)
      (bI) edge[->] (adNI)
      (adN*I) edge[->] (cfN*I)
      (adNI)  edge[->] (cfNI)
      (cfNI)  edge [->] (dI)
      (cfN*I)  edge [->] (dI);
}
\end{tikzpicture}
\caption{Diagram of inequalities between the cardinal characteristics associated with $\Sbf_J$, and the additivities and cofinalities of $\Ncal_J$ and $\Ncal^*_J$.}\label{fig:bomegabar}
\end{figure}
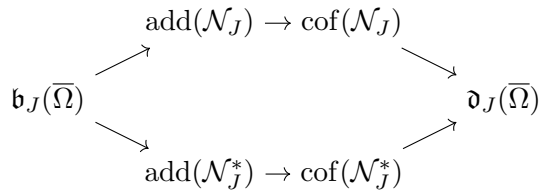

\begin{proof}
Define $F\colon \Ncal_J\to \omegabar$ such that $X\subseteq N_J(F(X))$ for any $X\in\Ncal_J$, and define $G\colon \omegabar \to \Ncal_J$ by $G(\bar d):=N_J(\bar d)$ for any $\bar d\in \omegabar$. It is clear that $(F,G)$ is a Tukey connection from $\Ncal_J$ into $\Sbf_J$, because $\bar c\subseteq^J \bar d$ implies $N_J(\bar c)\subseteq N_J(\bar d)$.

Similarly, we obtain a Tukey connection from $\Ncal^*_J$ into $\Sbf_J$ via functions $F^*\colon \Ncal^*_J\to \omegabar$ and $G^*\colon \omegabar \to \Ncal_J$ such that $X\subseteq N^*_J(F^*(X))$ for $X\in\Ncal^*_J$ and $G^*(\bar d):= N^*_J(\bar d)$.
\end{proof}

Analogous to \autoref{RBeq}, we have the following result about $\Sbf_J$.

\begin{theorem}\label{KBomegabar}
Let $J$ and $K$ be ideals on $\omega$.
\begin{enumerate}[label= \rm (\alph*)]
    \item\label{it:KBomegabar} If $K\leqKB J$ then $\Sbf_J\leqT \Sbf_K$, in particular $\bfrak_K(\omegabar)\leq\bfrak_J(\omegabar)$ and $\dfrak_J(\omegabar)\leq \dfrak_K(\omegabar)$.
    
    \item\label{it:KB'omegabar} If $K \leqKBpr J$ then $\Sbf_J\leqT \Sbf_K$, in particular $\bfrak_K(\omegabar)\leq\bfrak_J(\omegabar)$ and $\dfrak_J(\omegabar)\leq \dfrak_K(\omegabar)$.
    
    \item\label{it:RBomegabar} If $K \leqRB J$ then $\Sbf_K\eqT \Sbf_J$, in particular $\bfrak_K(\omegabar) =\bfrak_J(\omegabar)$ and $\dfrak_K(\omegabar) = \dfrak_J(\omegabar)$.
\end{enumerate}
\end{theorem}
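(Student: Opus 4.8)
The plan is to mirror the proof of \autoref{RBeq}, reusing the two reindexing operations attached to a finite-to-one function, but now packaging them as Tukey connections between the relational systems $\Sbf_J$ and $\Sbf_K$ instead of as inclusions between $\sigma$-ideals. As there, fix the witnessing finite-to-one $f\colon\omega\to\omega$, set $I_n:=f^{-1}[\{n\}]$, and for $\bar e\in\omegabar$ define $\bar e'$ by $e'_n:=\bigcup_{k\in I_n}e_k$ and $\bar e^-$ by $e^-_k:=e_{f(k)}$. The facts I will reuse verbatim from the proof of \autoref{RBeq} are $N(\bar e')=N(\bar e)$ and $N(\bar e^-)\subseteq N(\bar e)$; these guarantee that both operations send $\omegabar$ into $\omegabar$, so the maps I define below land in the correct sets. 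Recall that a Tukey connection witnessing $\Sbf_J\leqT\Sbf_K$ is a pair $(\varphi_-,\varphi_+)$ of maps $\omegabar\to\omegabar$ with $\varphi_-(\bar c)\subseteq^K\bar d\,\Rightarrow\,\bar c\subseteq^J\varphi_+(\bar d)$.

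For item \ref{it:KBomegabar}, assume $K\leqKB J$, so $f$ satisfies $K\subseteq f^\to(J)$, i.e.\ $A\in K\Rightarrow f^{-1}[A]\in J$. I would set $\varphi_-(\bar c):=\bar c'$ and $\varphi_+(\bar d):=\bar d^-$. To verify the connection, suppose $\bar c'\subseteq^K\bar d$, i.e.\ $E:=\set{n}{c'_n\nsubseteq d_n}\in K$. Since $c_k\subseteq c'_{f(k)}$ for every $k$ (because $k\in I_{f(k)}$), whenever $c_k\nsubseteq d_{f(k)}$ we must have $c'_{f(k)}\nsubseteq d_{f(k)}$, that is $f(k)\in E$; hence $\set{k}{c_k\nsubseteq d^-_k}\subseteq f^{-1}[E]\in J$. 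Thus $\bar c\subseteq^J\bar d^-$, giving $\Sbf_J\leqT\Sbf_K$.

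For item \ref{it:KB'omegabar}, assume $K\leqKBpr J$, witnessed by $f$ with $f^\to(K)\subseteq J$. Here I would use the opposite assignment $\varphi_-(\bar c):=\bar c^-$ and $\varphi_+(\bar d):=\bar d'$. Suppose $\bar c^-\subseteq^K\bar d$, i.e.\ $E:=\set{k}{c_{f(k)}\nsubseteq d_k}\in K$, and put $S:=\set{n}{c_n\nsubseteq d'_n}$. For $n\in S$ with $I_n\neq\emptyset$, any witness $x\in c_n\smallsetminus\bigcup_{k\in I_n}d_k$ shows $c_n\nsubseteq d_k$ (hence $k\in E$) for every $k\in I_n$, so $I_n\subseteq E$; consequently $f^{-1}[\set{n\in S}{I_n\neq\emptyset}]\subseteq E\in K$, whence $\set{n\in S}{I_n\neq\emptyset}\in f^\to(K)\subseteq J$. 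The only delicate point is the leftover set $\set{n\in S}{I_n=\emptyset}$, coming from $n$ outside the range of $f$: its preimage under $f$ is empty, hence in $K$, so $\omega\smallsetminus\mathrm{ran}(f)\in f^\to(K)\subseteq J$, and this leftover set lies in $J$ as well. Therefore $S\in J$, i.e.\ $\bar c\subseteq^J\bar d'$, and $\Sbf_J\leqT\Sbf_K$.

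Finally, item \ref{it:RBomegabar} is immediate from the first two: since $K\leqRB J$ implies both $K\leqKB J$ and $J\leqKBpr K$ (as noted after \autoref{def:RB}), part \ref{it:KBomegabar} gives $\Sbf_J\leqT\Sbf_K$ and part \ref{it:KB'omegabar} (with the roles of $J$ and $K$ exchanged) gives $\Sbf_K\leqT\Sbf_J$, so $\Sbf_K\eqT\Sbf_J$. In each item the stated cardinal (in)equalities then follow from the general principle that $\Rbf\leqT\Rbf'$ yields $\bfrak(\Rbf')\leq\bfrak(\Rbf)$ and $\dfrak(\Rbf)\leq\dfrak(\Rbf')$. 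The only genuinely subtle step in the whole argument is the empty-fiber bookkeeping in \ref{it:KB'omegabar}; everything else is a direct transcription of the set-containment computations of \autoref{RBeq} into the Tukey-connection format.
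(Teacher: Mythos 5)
Your proof is correct and follows essentially the same route as the paper: the same maps $\bar c\mapsto\bar c'$ and $\bar c\mapsto\bar c^-$, the same Tukey pairs $(F',F^-)$ for \ref{it:KBomegabar} and $(F^-,F')$ for \ref{it:KB'omegabar}, and \ref{it:RBomegabar} deduced from the first two. The only difference is cosmetic: your ``empty-fiber'' case-split in \ref{it:KB'omegabar} is unnecessary, since $f^{-1}[S]=\bigcup_{n\in S}I_n\subseteq E\in K$ already gives $S\in f^{\to}(K)\subseteq J$ directly from the definition of $f^{\to}(K)$, with points outside $\mathrm{ran}(f)$ contributing nothing to the preimage.
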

\begin{proof}
It is clear that~\ref{it:RBomegabar} follows from~\ref{it:KBomegabar} and~\ref{it:KB'omegabar}.

Let $f\colon\omega\to \omega$ be a finite-to-one function and denote $I_n:=f^{-1}[\{n\}]$. Like in the proof of \autoref{RBeq}, define $F'\colon \omegabar\to \omegabar$ by $F'(\bar c):=\bar c'$ where $c'_n:=\bigcup_{k\in I_n}c_k$, and define $F^-\colon \omegabar\to \omegabar$ by $F^-(\bar c):=\bar c^-$ where $c^-_k:=c_{f(k)}$.

If $K\subseteq f^{\to}(J)$ then $(F',F^-)$ is a Tukey connection from $\Sbf_J$ into $\Sbf_K$, which shows~\ref{it:KBomegabar}. To prove this, assume $\bar c,\bar d\in\omegabar$ and $\bar c'\subseteq^K \bar d$, and we show $\bar c \subseteq^J \bar d^-$. The hypothesis indicates that $\set{n<\omega}{c'_n\subseteq d_n}\in K^d$, which implies that $\set{k<\omega}{c'_{f(k)}\subseteq d_{f(k)}}\in J^d$. Since $c_k\subseteq c'_{f(k)}$, the previous set is contained in $\set{k<\omega}{c_k\subseteq d^-_k}$, so $\bar c \subseteq^J \bar d^-$.

To show~\ref{it:KB'omegabar}, we verify that, whenever $f^\to(K)\subseteq J$, $(F^-,F')$ is a Tukey connection from $\Sbf_J$ into $\Sbf_K$. Let $\bar c,\bar d\in\omegabar$ and assume that $\bar c^-\subseteq^K \bar d$, i.e.\ $\set{k<\omega}{c_{f(k)}\subseteq d_k}\in K^d$. Since $d_k\subseteq d'_{f(k)}$, this set is contained in $\set{k<\omega}{c_{f(k)}\subseteq d'_{f(k)}}$, so $\set{n<\omega}{c_n\subseteq d'_n}\in J^d$, i.e.\ $\bar c\subseteq^J \bar d'$.
\end{proof}

In the case $J=\Fin$, we obtain the following characterization of the additivity and cofinality of $\Ncal$.

\begin{theorem}\label{omegabarFin}
 $\bfrak_\Fin(\omegabar)=\add(\Ncal)$ and $\dfrak_\Fin(\omegabar)=\cof(\Ncal)$.
\end{theorem}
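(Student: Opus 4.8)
The plan is to prove the stronger statement that $\Sbf_\Fin\eqT\Ncal$ as relational systems, identifying $\Ncal$ with $\cbf^\Ncal_\Ncal$. Since a Tukey equivalence transfers both $\bfrak$ and $\dfrak$, and $\bfrak(\Ncal)=\add(\Ncal)$, $\dfrak(\Ncal)=\cof(\Ncal)$, this single equivalence gives both equalities at once. One half is already available: \autoref{omegabarvsNJ} applied to $J=\Fin$, together with $\Ncal_\Fin=\Ncal$, yields $\Ncal\leqT\Sbf_\Fin$, hence $\bfrak_\Fin(\omegabar)\leq\add(\Ncal)$ and $\cof(\Ncal)\leq\dfrak_\Fin(\omegabar)$. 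So the whole content lies in the reverse connection $\Sbf_\Fin\leqT\Ncal$.

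For that reverse direction I route through Bartoszy\'nski's slalom system. By \autoref{thmBart}, it suffices to produce a Tukey connection $\Sbf_\Fin\leqT\Lc(b,h)$ for a convenient countably-infinite-coordinate $b$ and some $h\to\infty$. I take $h(k):=k$ and let $b(k)$ be the (countable) set of all \emph{blocks} $p=(m_p,\langle e^p_j\rangle_{j<\ell_p})$, where $m_p\in\omega$, each $e^p_j\in\Omega$, the block is read as assigning $e^p_j$ to coordinate $m_p+j$, and its \emph{weight} $\mu\bigl(\bigcup_{j<\ell_p}e^p_j\bigr)$ is $<2^{-k}$. The left map $\varphi_-\colon\omegabar\to\prod_k b(k)$ encodes a sequence by its own canonical block decomposition: given $\bar c\in\omegabar$, use \autoref{chomegabar} to pick a strictly increasing $\langle n_k\rangle$ with $n_k\geq k$ and $\mu\bigl(\bigcup_{i\geq n_k}c_i\bigr)<2^{-k}$; set $J_k:=[n_k,n_{k+1})$ and let $\varphi_-(\bar c)(k)$ be the block recording $n_k$ together with $\langle c_i\rangle_{i\in J_k}$, whose weight is $<2^{-k}$, so $\varphi_-(\bar c)(k)\in b(k)$.

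The right map $\varphi_+\colon\Scal(b,h)\to\omegabar$ pastes together the blocks listed in a slalom: for $\psi\in\Scal(b,h)$ put $d_i:=\bigcup\{e^p_{\,i-m_p}:k\leq i,\ p\in\psi(k),\ i\in[m_p,m_p+\ell_p)\}$ and $\varphi_+(\psi):=\langle d_i\rangle_{i<\omega}$. Each $d_i$ is a \emph{finite} union (only $k\leq i$ and finitely many $p\in\psi(k)$ contribute), hence clopen. For membership in $\omegabar$, since each $\psi(k)$ is finite and each block has finite domain, the set $S_K:=\bigcup_{k\leq K}\bigcup_{p\in\psi(k)}[m_p,m_p+\ell_p)$ is finite, so for $N>\max S_K$ only blocks with index $k>K$ meet $[N,\infty)$, giving $\mu\bigl(\bigcup_{i\geq N}d_i\bigr)\leq\sum_{k>K}|\psi(k)|\,2^{-k}\leq\sum_{k>K}k\,2^{-k}\to 0$ as $K\to\infty$; thus $\varphi_+(\psi)\in\omegabar$ by \autoref{chomegabar}. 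Finally, if $\varphi_-(\bar c)\in^*\psi$, say $\varphi_-(\bar c)(k)\in\psi(k)$ for all $k\geq k_0$, then for each $i\geq n_{k_0}$ we have $i\in J_k$ for some $k\geq k_0$ with $k\leq n_k\leq i$, so the block $\varphi_-(\bar c)(k)\in\psi(k)$ is used in the definition of $d_i$ and $c_i\subseteq d_i$; this is exactly $\bar c\subseteq^\Fin\varphi_+(\psi)$. Hence $(\varphi_-,\varphi_+)$ witnesses $\Sbf_\Fin\leqT\Lc(b,h)\eqT\Ncal$, and combining with the first half we conclude $\Sbf_\Fin\eqT\Ncal$, so $\bfrak_\Fin(\omegabar)=\add(\Ncal)$ and $\dfrak_\Fin(\omegabar)=\cof(\Ncal)$.

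The step I expect to be the main obstacle is the very design of $\varphi_+$ and the verification $\varphi_+(\psi)\in\omegabar$. The block decomposition used by $\varphi_-$ is genuinely element-dependent, so $\varphi_+$ cannot recover which partition produced a given coordinate and must instead paste all admissible blocks at their \emph{recorded} locations. Encoding the starting index $m_p$ inside each block, capping every block's weight by $2^{-k}$, truncating to $k\leq i$ (to keep each $d_i$ a finite, hence clopen, union), and choosing $h(k)=k$ so that $\sum_k h(k)2^{-k}<\infty$ are exactly the ingredients that force the pasted sequence back into $\omegabar$ while still dominating $\bar c$ coordinatewise.
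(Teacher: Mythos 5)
Your proof is correct, but it takes a genuinely different route from the paper's. The paper does not exhibit a single Tukey connection $\Sbf_\Fin\leqT\Ncal$: it proves the two inequalities $\add(\Ncal)\leq\bfrak_\Fin(\omegabar)$ and $\dfrak_\Fin(\omegabar)\leq\cof(\Ncal)$ by separate two-step arguments. There, the element-dependence of the interval decomposition of $\bar c\in\omegabar$ is handled by a preliminary domination step: one first extracts from each $\bar c$ a function $f^{\bar c}$ with $\mu\bigl(\bigcup_{k\geq f^{\bar c}(n)}c_k\bigr)<\frac{1}{(n+1)2^n}$, dominates these using $\add(\Ncal)\leq\bfrak$ (respectively, a dominating family of size $\dfrak\leq\cof(\Ncal)$), and only then applies \autoref{thmBart} to the slalom system $\Lc(b^f,h)$ built over the interval partition determined by the single dominating $f$. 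Your proof instead absorbs the partition data into the slalom's coordinate sets by making each block self-locating (recording its starting index $m_p$) and capping its weight by $2^{-k}$, which yields one uniform Tukey connection $\Sbf_\Fin\leqT\Lc(b,h)$ and hence the stronger conclusion $\Sbf_\Fin\eqT\Ncal$ as relational systems; the price is the somewhat more delicate verification that the pasted sequence $\varphi_+(\psi)$ lands back in $\omegabar$, which your estimate $\sum_{k>K}k\,2^{-k}\to 0$ handles correctly (the truncation to $k\leq i$ keeping each $d_i$ clopen, and the condition $n_k\geq k$ guaranteeing that the relevant block is not truncated away, are exactly the points one must not omit). Both arguments rest on the same external ingredient, Bartoszy\'nski's characterization \autoref{thmBart} applied to a countably-infinite-coordinate $b$, so the difference is in the mechanics rather than the underlying idea; what your version buys is the Tukey equivalence itself and a single argument covering $\bfrak$ and $\dfrak$ simultaneously, while the paper's version avoids the block bookkeeping at the cost of routing through $\bfrak$ and $\dfrak$ separately.
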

\begin{proof}
Note that $\Ncal=\Ncal_\Fin\leqT \Sbf_\Fin$ by \autoref{omegabarvsNJ}, so $\bfrak_\Fin(\omegabar)\leq\add(\Ncal)$ and $\cof(\Ncal)\leq \dfrak_\Fin(\omegabar)$.

We show the converse inequality for $\add(\Ncal)$. It is enough to prove that, whenever $F\subseteq \omegabar$ has size ${<}\add(\Ncal)$, it has some upper $\subseteq^\Fin$-bound. For each $\bar c\in F$, find a function $f^{\bar c}\in \baire$ such that $\mu\big(\bigcup_{k\geq f^{\bar c}(n)}c_k\big)<\frac{1}{(n+1)2^n}$ for all $n<\omega$. Now $|F|<\add(\Ncal)\leq\bfrak$, so there is some increasing $f\in\baire$ with $f(0)=0$ dominating $\set{f^{\bar c}}{\bar c\in F}$, which means that, for any $\bar c\in F$, $\mu\big(\bigcup_{k\geq f(n)}c_k\big)<\frac{1}{(n+1)2^n}$ for all but finitely many $n<\omega$. By making finitely many modifications to each $\bar c\in F$, we can assume that the previous inequality is valid for all $n<\omega$. %, and define $\bar c'$ by $c'_n:=\bigcup_{k\in I_n}c_k$. As in the proof of \autoref{RBeq}, 

For each $n<\omega$, let $I_n:=[f(n),f(n+1))$.
Consider the functions $b^f$ and $h$ with domain $\omega$ such that $b^f(n):=\set{s\in{}^{I_n}\Omega}{\mu\left( \bigcup_{k\in I_n}s_k\right)<\frac{1}{(n+1)2^n}}$ and $h(n):=n+1$. Since $\Lc(b^f,h)\eqT \Lc(\omega,h)\eqT \Ncal$, we obtain that $\blc_{b^f,h}=\add(\Ncal)$ by \autoref{thmBart}. Since $F$ can be seen as a subset of $\prod b^f$, there is some $\varphi\in \Scal(b^f,h)$ such that, for any $\bar c\in F$, $\bar c\frestr I_n\in \varphi(n)$ for all but finitely many $n<\omega$. Define $\bar d\in{}^\omega \Omega$ by $d_k:=\bigcup_{s\in \varphi(n)}s_k$ for $k\in I_n$. Note that
\[\mu\left(\bigcup_{k\in I_n}d_k\right) = \mu\left( \bigcup_{s\in\varphi(n)} \bigcup_{k\in I_n}s_k\right)<\frac{1}{(n+1)2^n}(n+1)=\frac{1}{2^n},\]
thus $\bar d\in\omegabar$. On the other hand, for any $\bar c\in F$, $c_k\subseteq d_k$ for all but finitely many $k<\omega$, i.e.\ $\bar c\subseteq^\Fin \bar d$.

The proof of $\dfrak_\Fin(\omegabar)\leq \cof(\Ncal)$ is similar. Fix a dominating family $D$ of size $\dfrak$ formed by increasing functions $f$ such that $f(0)=0$. For each $f\in D$, note that $\dlc_{b^f,h}=\cof(\Ncal)$ by \autoref{thmBart}, so we can choose some witness $S^f\subseteq \Scal(b^f,h)$ and, for each $\varphi\in S^f$, define $d^{f,\varphi}_k:=\bigcup_{s\in \varphi(n)}s_k$ for $k\in[f(n),f(n+1))$. Then, $E:=\set{\bar d^{f,\varphi}}{\varphi^f\in S^f,\ f\in D}$ has size ${\leq}\cof(\Ncal)$ and it is $\Sbf_\Fin$-dominating, i.e.\ any $\bar c\in\omegabar$ is $\subseteq^\Fin$-bounded by some $\bar d\in E$.
\end{proof}

\begin{corollary}\label{addN-covM}
The additivites of $\Ncal_J$ and $\Ncal^*_J$ are between $\add(\Ncal)$ and $\cov(\Mcal)$ (at the bottom of Cicho\'n's diagram); the cofinalities of $\Ncal_J$ and $\Ncal^*_J$ are between $\non(\Mcal)$ and $\cof(\Ncal)$ (at the top of Cicho\'n's diagram).
\end{corollary}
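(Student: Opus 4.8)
The plan is to assemble the corollary entirely from results already in hand; the substantive content lives in \autoref{omegabarFin}, so the corollary itself is a short piece of bookkeeping combining monotonicity statements. There are four inequalities to establish, and they split naturally into two groups: the ``inner'' bounds $\add(\Ncal)\leq\add(\Ncal_J),\add(\Ncal^*_J)$ and $\cof(\Ncal_J),\cof(\Ncal^*_J)\leq\cof(\Ncal)$, proved through the relational system $\Sbf_J$, and the ``outer'' bounds involving $\cov(\Mcal)$ and $\non(\Mcal)$, proved through the inclusion chain $\Ecal\subseteq\Ncal^*_J\subseteq\Ncal_J\subseteq\Ncal$.

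For the first group, I would chain through $\bfrak_J(\omegabar)$ and $\dfrak_J(\omegabar)$. By \autoref{omegabarvsNJ}, the additivities of $\Ncal_J$ and $\Ncal^*_J$ lie above $\bfrak_J(\omegabar)$ and their cofinalities lie below $\dfrak_J(\omegabar)$. Next I would observe that $\Fin\leqKB J$ holds for every ideal $J$, since $\Fin\subseteq J$ gives $\Fin\leqKB J$ via the identity (as recorded in \autoref{def:RB}); hence \autoref{KBomegabar}~\ref{it:KBomegabar} yields $\bfrak_\Fin(\omegabar)\leq\bfrak_J(\omegabar)$ and $\dfrak_J(\omegabar)\leq\dfrak_\Fin(\omegabar)$. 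Finally, inserting the identifications $\bfrak_\Fin(\omegabar)=\add(\Ncal)$ and $\dfrak_\Fin(\omegabar)=\cof(\Ncal)$ from \autoref{omegabarFin} produces the chains
\[\add(\Ncal)=\bfrak_\Fin(\omegabar)\leq\bfrak_J(\omegabar)\leq\add(\Ncal_J),\add(\Ncal^*_J),\]
\[\cof(\Ncal_J),\cof(\Ncal^*_J)\leq\dfrak_J(\omegabar)\leq\dfrak_\Fin(\omegabar)=\cof(\Ncal).\]

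For the second group I would invoke the inclusions $\Ecal\subseteq\Ncal^*_J\subseteq\Ncal_J\subseteq\Ncal$ from \autoref{eq:contfin} together with the monotonicity of $\add(\cdot,\cdot)$ and $\cof(\cdot,\cdot)$ recorded in \autoref{cofIJrel}. Applying that fact with $\Icl=\Ecal\subseteq\Icl'=\Ncal_J$ and $\Jcl'=\Ncal_J\subseteq\Jcl=\Ncal$ gives $\add(\Ncal_J)\leq\add(\Ecal,\Ncal)$ and $\cof(\Ecal,\Ncal)\leq\cof(\Ncal_J)$, and the identical substitution with $\Ncal^*_J$ in place of $\Ncal_J$ (which also sits between $\Ecal$ and $\Ncal$) handles the starred ideal. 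Since \autoref{thm:E} supplies $\add(\Ecal,\Ncal)=\cov(\Mcal)$ and $\cof(\Ecal,\Ncal)=\non(\Mcal)$, the bounds $\add(\Ncal_J),\add(\Ncal^*_J)\leq\cov(\Mcal)$ and $\non(\Mcal)\leq\cof(\Ncal_J),\cof(\Ncal^*_J)$ follow.

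I do not anticipate a genuine obstacle here, as no new combinatorics is needed once \autoref{omegabarFin} is granted. The only point deserving care is orienting the hypotheses of \autoref{cofIJrel} correctly: the \emph{smaller} ideal $\Ecal$ must play the role of $\Icl$ while the \emph{ambient} ideal $\Ncal$ plays the role of $\Jcl$, so that the additivity comes out bounded \emph{above} by $\add(\Ecal,\Ncal)=\cov(\Mcal)$ and the cofinality bounded \emph{below} by $\cof(\Ecal,\Ncal)=\non(\Mcal)$, rather than the reverse.
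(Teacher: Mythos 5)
Your proposal is correct and follows essentially the same route as the paper's own proof: the inner bounds via $\Sbf_J$, $\Fin\leqKB J$, \autoref{KBomegabar}, \autoref{omegabarvsNJ} and \autoref{omegabarFin}, and the outer bounds via \autoref{cofIJrel} applied to $\Ecal\subseteq\Ncal^*_J\subseteq\Ncal_J\subseteq\Ncal$ together with $\add(\Ecal,\Ncal)=\cov(\Mcal)$ and $\cof(\Ecal,\Ncal)=\non(\Mcal)$ from \autoref{thm:E}. Your care in orienting the hypotheses of \autoref{cofIJrel} is exactly the right point to watch, and you have it right.
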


\begin{proof}
Since $\Fin\leqKB J$, by \autoref{KBomegabar} we obtain that $\bfrak_\Fin(\omegabar)\leq \bfrak_J(\omegabar)$ and $\dfrak_J(\omegabar)\leq\dfrak_J(\omegabar)$. Hence, by \autoref{omegabarvsNJ} and~\ref{omegabarFin}, we obtain 
\[\begin{split}
\add(\Ncal)\leq &\ \bfrak_J(\omegabar)\leq \min\{\add(\Ncal_J),\add(\Ncal^*_J)\}  \text{ and }\\ \max\{\cof(\Ncal_J),\cof(\Ncal^*_J)\}\leq &\ \dfrak_J(\omegabar)\leq \cof(\Ncal). 
\end{split}\]

On the other hand, by \autoref{cofIJrel} and \autoref{thm:E}, $\add(\Ncal_J)=\add(\Ncal_J,\Ncal_J)\leq \add(\Ecal,\Ncal)=\cov(\Mcal)$ and $\non(\Mcal)= \cof(\Ecal,\Ncal) \leq \cof(\Ncal_J,\Ncal_J)=\cof(\Ncal_J)$, likewise for $\Ncal^*_J$.
\end{proof}

\section{Consistency results}\label{sec:cons}

We show the behaviour of the cardinal characteristics associated with $\Ncal_J$ and $\Ncal^*_J$ in different forcing models. As usual, we start with the Cohen model, where the behaviour of these cardinal characteristics are similar to $\bfrak_J$ and $\dfrak_J$, in the sense of~\cite{Canjar}. Inspired by this reference, we present the following effect of adding a single Cohen real.

\begin{lemma}\label{cohenlemma}
Cohen forcing $\Cbb$ adds a real $\bar e\in\omegabar$ such that, for any ideal $J$ on $\omega$ in the ground model, there is some ideal $J'\supseteq J$ on $\omega$ in the generic extension, such that $\bar c\subseteq^{J'} \bar e$ for any $\bar c\in\omegabar$ in the ground model.
\end{lemma}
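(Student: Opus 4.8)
The plan is to realize $\bar e$ as the generic object of a countable atomless poset $\Pbb$ (hence forcing-equivalent to $\Cbb$), designed so that (i) every generic sequence lands in $\omegabar$ automatically, and (ii) for any ground-model $\bar c\in\omegabar$ the clopen sets $c_n$ get captured by $e_n$ cofinally often. Concretely, let $\Pbb$ consist of all finite sequences $p=\langle e^p_0,\dots,e^p_{m-1}\rangle$ of clopen sets (so $e^p_i\in\Omega$) subject to the budget constraint $\sum_{i<m}\mu(e^p_i)<1$, ordered by end-extension. Since $\Omega$ is countable, $\Pbb$ is countable; and it is atomless because any condition can be extended in infinitely many incompatible ways by appending distinct clopen sets of tiny measure (there is always room, as the remaining budget $1-\sum_{i<m}\mu(e^p_i)$ is positive). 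Thus $\Pbb$ is forcing-equivalent to $\Cbb$. The generic real $\bar e=\bigcup G$ satisfies $\sum_{n<\omega}\mu(e_n)\le 1<\infty$, whence $\mu\big(\bigcup_{n\ge N}e_n\big)\to 0$ and so $\bar e\in\omegabar$ by \autoref{chomegabar}.

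The heart of the argument is a single genericity statement: for every $\bar c\in\omegabar\cap V$ and every infinite $B\in V$, the set $\{n\in B: c_n\subseteq e_n\}$ is infinite. I would prove this by a density argument. Fix $\bar c$, $B$, and $K<\omega$, and let $D_{\bar c,B,K}$ be the set of conditions $p$ for which some $n\in B$ with $K\le n<|p|$ satisfies $c_n\subseteq e^p_n$. Given any $p$ of length $m$ with budget $\sigma:=\sum_{i<m}\mu(e^p_i)<1$, I use the crucial fact that $\bar c\in\omegabar$ gives $\mu(c_n)\to 0$ (this is \autoref{chomegabar}~\ref{it:borelch} applied to singletons, as noted in the proof of \autoref{charE}). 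Hence, $B$ being infinite, I may choose $n\in B$ with $n\ge\max(K,m)$ and $\mu(c_n)<1-\sigma$, and extend $p$ by setting $e_i:=\emptyset$ for $m\le i<n$ and $e_n:=c_n$; the new budget is $\sigma+\mu(c_n)<1$, so this is a legitimate condition in $D_{\bar c,B,K}$. Thus each $D_{\bar c,B,K}$ is dense, the generic meets all of them, and letting $K$ vary gives the claimed infinitude.

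With the genericity statement in hand, the lemma follows by a finite-union-property computation. Given a ground-model ideal $J$, let $J'$ be the ideal generated by $J\cup\{A_{\bar c}:\bar c\in\omegabar\cap V\}$, where $A_{\bar c}:=\{n<\omega: c_n\nsubseteq e_n\}$; by construction $J\subseteq J'$ and $\bar c\subseteq^{J'}\bar e$ for every $\bar c\in\omegabar\cap V$, so it only remains to check that this family has the finite union property (so that it indeed generates a proper ideal). A finite subfamily is absorbed into some $j\in J$ together with finitely many $A_{\bar c^0},\dots,A_{\bar c^{k-1}}$, and its complement is
\[
(\omega\smallsetminus j)\cap\{n: c^*_n\subseteq e_n\},\qquad c^*_n:=\textstyle\bigcup_{i<k}c^i_n .
\]
Here $\bar c^*\in\omegabar\cap V$, because it is a ground-model sequence of clopen sets whose tails satisfy $\mu\big(\bigcup_{n\ge N}c^*_n\big)\le\sum_{i<k}\mu\big(\bigcup_{n\ge N}c^i_n\big)\to 0$, and $B:=\omega\smallsetminus j$ is an infinite ground-model set (as $J$ is a proper ideal containing $\Fin$). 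The genericity statement applied to $\bar c^*$ and $B$ makes this complement infinite, so the finite union property holds and $J'$ is as required.

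The step I expect to be the main obstacle is the design of $\Pbb$: one must reconcile the demand that the generic lie in $\omegabar$ (which forces the $e_n$ to shrink in measure) with the demand that $e_n$ be large enough to contain a prescribed $c_n$. The total-measure budget $\sum\mu(e_n)<1$ is exactly the device that resolves this tension, since membership in $\omegabar$ only needs tail measures tending to zero while $\mu(c_n)\to 0$ guarantees that, at sufficiently late stages, capturing $c_n$ costs arbitrarily little of the remaining budget. A secondary point to handle with care is that the containment $c_n\subseteq e_n$ is required at the same index $n$, which is why the genericity statement must be phrased index-by-index rather than up to a finite-to-one rearrangement.
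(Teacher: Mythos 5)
Your proof is correct and follows essentially the same route as the paper's: a countable atomless poset of finite, measure-budgeted approximations to $\bar e$, a density argument exploiting that $\mu(c_n)\to 0$ for every ground-model $\bar c\in\omegabar$, and verification of the finite union property for $J\cup\set{\set{n<\omega}{c_n\nsubseteq e_n}}{\bar c\in\omegabar\cap V}$. The only cosmetic differences are your single global budget $\sum_{i<m}\mu(e^p_i)<1$ in place of the paper's block-wise bounds $2^{-(k+1)}$ on an interval partition, and your reduction of a finite family to the single pointwise union $\bar c^*$ rather than handling the family directly inside the density step.
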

\begin{proof}
Consider Cohen forcing $\Cbb$ as the poset formed by pairs of finite  sequences $p=(n^p,c^p)$  such that $n^p=\la n^p_k:\, k<m \ra$ is an increasing sequence of natural numbers, $c^p= \la c^p_i :\, i<n^p_{m-1}\ra$ is a sequence of clopen subsets of $\cantor$ and 
\begin{equation}\label{eq:cohencond}
    \mu\left(\bigcup_{i=n^p_k}^{n^p_{k+1}-1} c^p_i\right)<2^{-(k+1)}
\end{equation}
for any $k<m-1$. The order is $q\leq p$ iff $n^q$ end-extends $n^p$ and $c^q$ end-extends $c^p$. If $G$ is $\Cbb$-generic over the ground model $V$, we define $\bar e$ by $e_k:=c^p_k$ for some $p\in G$ (this value does not depend on such a $p$). It is easy to show that $\bar e \in \omegabar$. 

It is enough to show that, in the Cohen extension,  \[J\cup\set{\set{k<\omega}{c_k\nsubseteq e_k}}{\bar c\in \omegabar\cap V} \text{ has the finite union property,}\] 
i.e.\ $\omega$ cannot be covered by finitely many members of that collection. Then, the promised $J'$ will be the ideal generated by this collection.

So, in the ground model, fix $a\in J$, $F\subseteq \omegabar$ finite and $p\in\Cbb$ with $n^p=\la n^p_k:\, k<m \ra$ and $c^p= \la c^p_i :\, i<n^p_{m-1}\ra$. We have to show that there is some $q\leq p$ and some $k<\omega$ such that $q$ forces 
\[k\notin a\cup\bigcup_{\bar c\in F}\set{k<\omega}{c_k\nsubseteq e_k},\]
that is, $k\notin a$ and $c_k\subseteq c^q_k$ for any $\bar c\in F$. To see this, find some $n'\in \omega\smallsetminus a$ larger than $n^p_{m-1}$ such that, for any $\bar c\in F$,
\[\mu\left(\bigcup_{k\geq n'}c_k\right)<\frac{1}{(|F|+1)2^{m+1}}.\]
Define $q\in\Cbb$ such that $n^q$ has length $m+2$, it extends $n^p$, $n^q_m:=n'$ and $n^q_{m+1}:=n'+1$, and such that $c^q$ extends $c^p$, $c^q_i=\emptyset$ for all $n^p_{m-1}\leq i< n'$, and $c^q_{n'}:= \bigcup_{\bar c\in F}c_{n'}$. It is clear that $q\in\Cbb$ is stronger than $p$, and that $c^q_{n'}$ contains $c_{n'}$ for all $\bar c\in F$. So $k:=n'$ works.
\end{proof}

Since FS (finite support) iterations of (non-trivial) posets adds Cohen reals at limit steps, we have the following general consequence of the previous lemma.

\begin{theorem}\label{thm:FSit}
Let $\pi$ be a limit ordinal with uncountable cofinality and let $\Pbb=\la \Pbb_\alpha,\Qnm_\alpha:\, \alpha<\pi\ra$ be a FS iteration of non-trivial $\cf(\pi)$-cc posets. Then, $\Pbb$ forces that there is some (maximal) ideal $J$ such that $\bfrak_{J}(\omegabar)=\add(\Ncal_{J})=\add(\Ncal^*_{J})=\cof(\Ncal^*_{J})= \cof(\Ncal_{J}) =\dfrak_{J}(\omegabar)=\cf(\pi)$.
\end{theorem}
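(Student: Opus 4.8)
The plan is to reduce the whole statement to a single fact: it suffices to produce one ideal $J$ with $\bfrak_J(\omegabar)=\dfrak_J(\omegabar)=\kappa$, where $\kappa:=\cf(\pi)$. Indeed, by \autoref{omegabarvsNJ} (see \autoref{fig:bomegabar}) the four cardinals $\add(\Ncal_J),\add(\Ncal^*_J),\cof(\Ncal^*_J),\cof(\Ncal_J)$ all lie between $\bfrak_J(\omegabar)$ and $\dfrak_J(\omegabar)$, so collapsing these two to $\kappa$ forces all six cardinals to equal $\kappa$. Moreover, maximality will come for free: if $J_0$ is \emph{any} ideal with $\bfrak_{J_0}(\omegabar)=\dfrak_{J_0}(\omegabar)=\kappa$, I extend its dual filter to an ultrafilter and let $J\supseteq J_0$ be the resulting maximal ideal. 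Then $J_0\subseteq J$ gives $J_0\leqKB J$ via the identity, so \autoref{KBomegabar}\ref{it:KBomegabar} yields $\kappa=\bfrak_{J_0}(\omegabar)\le\bfrak_J(\omegabar)\le\dfrak_J(\omegabar)\le\dfrak_{J_0}(\omegabar)=\kappa$ (using $\bfrak_J(\omegabar)\le\dfrak_J(\omegabar)$). Hence $J$ is maximal with the same invariants, and I concentrate on $J_0$.

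To build $J_0$ I would run a transfinite recursion of length $\kappa$ threaded through the iteration, with \autoref{cohenlemma} as the single step. In the ground model I fix an increasing continuous cofinal sequence $\la\alpha_\xi:\xi<\kappa\ra$ of limit ordinals in $\pi$ such that, since $\Pbb$ is a FS iteration of non-trivial posets and therefore adds Cohen reals at limit stages, a Cohen real over the $\Pbb_{\alpha_\xi}$-extension already appears in the $\Pbb_{\alpha_{\xi+1}}$-extension; recoding it as a generic for the specific (countable, atomless) poset of \autoref{cohenlemma}, I read off the corresponding $\bar e_\xi\in\omegabar$. I then construct a sequence of names $\la\dot J_\xi:\xi<\kappa\ra$ in $V$, with $\dot J_\xi$ a $\Pbb_{\alpha_\xi}$-name for an ideal: start from $\Fin$; at successors let $J_{\xi+1}\supseteq J_\xi$ be the ideal furnished by \autoref{cohenlemma} applied over the $\Pbb_{\alpha_\xi}$-extension to $J_\xi$ and $\bar e_\xi$, so that $\bar c\subseteq^{J_{\xi+1}}\bar e_\xi$ for every $\bar c\in\omegabar$ of the $\Pbb_{\alpha_\xi}$-extension; at limits take unions. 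Finally I set $J_0:=\bigcup_{\xi<\kappa}J_\xi$, an increasing union of proper ideals each containing $\Fin$, hence a proper ideal.

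The verification of $\bfrak_{J_0}(\omegabar)=\dfrak_{J_0}(\omegabar)=\kappa$ rests on $\Pbb$ being $\kappa$-cc with $\kappa=\cf(\pi)$ regular uncountable: every real of the final extension—in particular every $\bar c\in\omegabar$, which is a Borel condition by \autoref{cor:omegabarBorel}—appears in some $\Pbb_\beta$-extension with $\beta<\pi$, and every family of fewer than $\kappa$ reals is captured in a single such $\Pbb_\beta$-extension (the sup of $<\kappa$ ordinals below $\pi$ stays below $\pi$). For $\dfrak_{J_0}(\omegabar)\le\kappa$: given $\bar c\in\omegabar$, choose $\xi$ with $\bar c$ in the $\Pbb_{\alpha_\xi}$-extension; then $\bar c\subseteq^{J_{\xi+1}}\bar e_\xi$ and $J_{\xi+1}\subseteq J_0$ give $\bar c\subseteq^{J_0}\bar e_\xi$, so $\set{\bar e_\xi}{\xi<\kappa}$ is $\subseteq^{J_0}$-dominating. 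For $\bfrak_{J_0}(\omegabar)\ge\kappa$: any $F\subseteq\omegabar$ with $|F|<\kappa$ lies in a single $\Pbb_{\alpha_\xi}$-extension, whence $\bar e_\xi$ is a common $\subseteq^{J_0}$-bound for $F$. Together with $\bfrak_{J_0}(\omegabar)\le\dfrak_{J_0}(\omegabar)$ this pins both invariants to $\kappa$.

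The main obstacle I expect is keeping the recursion \emph{local}: to apply \autoref{cohenlemma} at stage $\xi$ the ideal $J_\xi$ must genuinely belong to the $\Pbb_{\alpha_\xi}$-extension, since it plays the role of a ground-model ideal relative to the next Cohen real. This is why the recursion should be carried out on names in $V$ rather than on sets: one arranges each $\dot J_\xi$ to be a $\Pbb_{\alpha_\xi}$-name (successor steps use only $\dot J_\xi$ and the $\Pbb_{\alpha_{\xi+1}}$-name for $\bar e_\xi$; at a limit $\eta$ the union name $\dot J_\eta$ is a $\Pbb_{\alpha_\eta}$-name because $\la\dot J_\xi:\xi<\eta\ra\in V$ and $\alpha_\eta=\sup_{\xi<\eta}\alpha_\xi$ by continuity). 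Checking this locality, fixing ground-model names for the Cohen reals, and confirming that the finite-union property is preserved at each step—which is exactly the content of \autoref{cohenlemma}—constitute the technical heart; the cardinal computation and the passage to a maximal ideal are then immediate.
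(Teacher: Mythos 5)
Your proposal is correct and follows essentially the same route as the paper's proof: a recursion along the iteration applying \autoref{cohenlemma} at each Cohen stage and taking unions at limits, the squeeze of all six cardinals via \autoref{omegabarvsNJ}, capture of every $\bar c\in\omegabar$ (and of every family of size ${<}\cf(\pi)$) at an intermediate stage using the chain condition, and the passage to a maximal ideal via \autoref{KBomegabar}. The only differences are cosmetic — you index the recursion by a cofinal $\kappa$-sequence rather than by all limit ordinals below $\pi$, and your unboundedness argument places the whole small family at one stage instead of invoking transitivity of $\subseteq^{J}$ along the $\bar e_\alpha$'s.
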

\begin{proof}
Let $L:=\{0\}\cup\set{\alpha<\pi}{\alpha \text{ limit}}$.
For each $\alpha\in L$ let $\bar e_\alpha$ be a $\Pbb_{\alpha+\omega}$-name of a Cohen real in $\omegabar$ (in the sense of the proof of \autoref{cohenlemma}) over the $\Pbb_\alpha$-extension. We construct, by recursion, a sequence $\la \dot J_\alpha:\, \alpha\in L\cup\{\pi\}\ra$ such that $\dot J_\alpha$ is a $\Pbb_\alpha$-name of an ideal on $\omega$ and $\Pbb$ forces that $\dot J_\alpha\subseteq \dot J_\beta$ when $\alpha<\beta$. We let $\dot J_0$ be (the $\Pbb_0$-name of) any ideal $J_0$ in the ground model.\footnote{Recall that $\Pbb_0$ is the trivial poset, so its generic extension is the ground model itself.} For the successor step, assume we have constructed $\dot J_\alpha$ at a stage $\alpha\in L$. By \autoref{cohenlemma}, we obtain a $\Pbb_{\alpha+\omega}$-name $\dot J_{\alpha+\omega}$ of an ideal extending $\dot J_{\alpha}$, such that $\bar e_\alpha$ $\subseteq^{\dot J_{\alpha+\omega}}$-dominates all the $\bar c\in\omegabar$ from the $\Pbb_{\alpha}$-extension. 
For the limit step, when $\gamma$ is a limit point of $L$ (which includes the case $\gamma=\pi$), just let $\dot J_\gamma$ be the $\Pbb_\gamma$-name of $\bigcup_{\alpha<\gamma}\dot J_\alpha$. This finishes the construction.

We show that $\dot J_\pi$ is as required.
In the final generic extension, since $\bar e_\alpha \subseteq^{J_{\beta+\omega}} \bar e_\beta$ for all $\alpha<\beta$ in $L$ and $J_{\beta+\omega}\subseteq J_\pi$, we obtain $\bar e_\alpha \subseteq^{J_\pi} \bar e_\beta$. To conclude $\bfrak_{J_\pi}(\omegabar)=\dfrak_{J_\pi}(\omegabar)=\cf(\pi)$, it remains to show that $\set{\bar e_\alpha}{\alpha\in L}$ is $\subseteq^{J_\pi}$-dominating (because $L$ is cofinal in $\pi$, and the rest follows by \autoref{omegabarvsNJ}). Indeed, if $\bar c\in\omegabar$ in the final extension, then $\bar c$ is in some intermediate extension at $\alpha\in L$, so $\bar c \subseteq^{J_{\alpha+\omega}} \bar e_\alpha$ and hence $\bar c \subseteq^{J_\pi} \bar e_\alpha$.

It is then clear that $\bfrak_{J_\pi}(\omegabar)\leq\dfrak_{J_\pi}(\omegabar)\leq\cf(\pi)$. For $\cf(\pi)\leq\bfrak_{J_\pi}(\omegabar)$, if $F\subseteq \omegabar$ has size ${<}\cf(\pi)$, then we can find, for each $\bar c\in F$, some $\alpha_{\bar c}\in L$ such that $\bar c\subseteq^{J_\pi} \bar e_{\alpha_{\bar c}}$. Since $|F|<\cf(\pi)$, there is some $\beta\in L$ larger than $\alpha_{\bar c}$ for all $\bar c\in F$, so $\bar e_{\alpha_{\bar c}}\subseteq^{J_\pi} \bar e_\beta$. Then $\bar e_\beta$ is a $\subseteq^{J_\pi}$-upper bound of $F$ (because $\subseteq^{J_\pi}$ is a transitive relation).

Note that any (maximal) ideal extending $J_\pi$ also satisfies the conclusion (by \autoref{KBomegabar}).
\end{proof}

The previous results gives a lot of information about the effect of adding many Cohen reals.

\begin{theorem}\label{cohenthm}
Let $\lambda$ be an uncountable cardinal. Then $\Cbb_\lambda$ forces that, for any regular $\aleph_1\leq\kappa\leq\lambda$, there is some (maximal) ideal $J^\kappa$ on $\omega$ such that $\bfrak_{J^\kappa}(\omegabar)=\add(\Ncal_{J^\kappa})=\add(\Ncal^*_{J^\kappa})=\cof(\Ncal^*_{J^\kappa})= \cof(\Ncal_{J^\kappa}) =\dfrak_{J^\kappa}(\omegabar)=\kappa$.
\end{theorem}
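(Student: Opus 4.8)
The plan is to realize $\Cbb_\lambda$, for each fixed regular $\kappa$ with $\aleph_1\leq\kappa\leq\lambda$, as a FS iteration whose length has cofinality $\kappa$ and whose iterands are non-trivial ccc posets, and then to invoke \autoref{thm:FSit}. Concretely, first I would fix such a $\kappa$ and partition $\lambda=\bigsqcup_{\xi<\kappa}S_\xi$ into $\kappa$-many pieces, each of size $\lambda$ (possible since $\kappa\cdot\lambda=\lambda$). Writing $\Cbb_\lambda$ as the finite support product $\mathrm{Fn}(\lambda,2)$, this partition exhibits $\Cbb_\lambda$ as the finite support product $\prod^{\mathrm{fs}}_{\xi<\kappa}\Cbb_{S_\xi}$, where each $\Cbb_{S_\xi}\cong\Cbb_\lambda$ is again Cohen forcing adding $\lambda$ reals. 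A finite support product is a special case of a FS iteration (using the canonical names of the ground-model factors), so this presents $\Cbb_\lambda$ as a FS iteration $\la\Pbb_\xi,\Qnm_\xi:\, \xi<\kappa\ra$ of length $\pi:=\kappa$ with each $\Qnm_\xi$ a name for $\Cbb_\lambda$.

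Next I would check the hypotheses of \autoref{thm:FSit} for this presentation: $\pi=\kappa$ is a limit ordinal with uncountable cofinality $\cf(\pi)=\kappa$, and each iterand $\Cbb_\lambda$ is non-trivial and ccc, hence $\kappa$-cc (any $\aleph_1$-cc poset is $\kappa$-cc for $\kappa\geq\aleph_1$). Therefore \autoref{thm:FSit} applies and yields a $\Cbb_\lambda$-name for a (maximal) ideal $J^\kappa$ on $\omega$ with $\bfrak_{J^\kappa}(\omegabar)=\add(\Ncal_{J^\kappa})=\add(\Ncal^*_{J^\kappa})=\cof(\Ncal^*_{J^\kappa})=\cof(\Ncal_{J^\kappa})=\dfrak_{J^\kappa}(\omegabar)=\cf(\pi)=\kappa$, which is exactly the desired conclusion for this $\kappa$.

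Finally I would note that although the FS-iteration presentation depends on the chosen $\kappa$, every such presentation is forcing equivalent to the single poset $\Cbb_\lambda$ and hence produces the same generic extension. Consequently, in that one model the ideals $J^\kappa$ for all regular $\aleph_1\leq\kappa\leq\lambda$ coexist, giving the stated conclusion. I do not expect a genuine obstacle here: all the real work is already contained in \autoref{cohenlemma} and \autoref{thm:FSit}, and the only point requiring care is the bookkeeping that rewrites $\Cbb_\lambda$ as a length-$\kappa$ FS iteration of Cohen forcings (so that $\cf(\pi)=\kappa$) while keeping the iterands non-trivial and $\kappa$-cc; this is routine once the partition of $\lambda$ into $\kappa$ pieces of full size is in place.
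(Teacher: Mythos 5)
Your proposal is correct, and it rests on exactly the same two pillars as the paper's argument: \autoref{cohenlemma} and \autoref{thm:FSit}. The only difference is the re-indexing of $\Cbb_\lambda$ used to produce a FS iteration whose length has cofinality $\kappa$. The paper invokes the forcing equivalence $\Cbb_\lambda\cong\Cbb_{\lambda+\kappa}$ (the ordinal sum has cardinality $\lambda$) and reads $\Cbb_{\lambda+\kappa}$ as a FS iteration of single Cohen forcing of length $\lambda+\kappa$, which has cofinality $\kappa$; you instead partition $\lambda$ into $\kappa$ blocks of size $\lambda$ and read $\Cbb_\lambda$ as the finite support product of $\kappa$ copies of $\Cbb_\lambda$, i.e.\ a FS iteration of length $\kappa$ with ground-model iterands. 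Both presentations satisfy the hypotheses of \autoref{thm:FSit} (a limit length of uncountable cofinality $\kappa$, and non-trivial ccc --- hence $\kappa$-cc --- iterands), so both yield the required $J^\kappa$, and in either case one works in the single model added by $\Cbb_\lambda$ so that all the $J^\kappa$ coexist. The paper's version is marginally leaner in that it needs no partition and no remark that a FS product of ground-model posets is a special case of a FS iteration; yours has the mild advantage that the iteration length is literally $\kappa$. There is no gap in your argument.
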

\begin{proof}
Let $\kappa$ be a regular cardinal between $\aleph_1$ and $\lambda$. Recall that $\Cbb_\lambda$ is forcing equivalent with $\Cbb_{\lambda+\kappa}$, so we show that the latter adds the required $J^\kappa$. In fact, $\Cbb_{\lambda+\kappa}$ can be seen as the FS iteration of $\Cbb$ of length $\lambda+\kappa$. Since $\cf(\lambda+\kappa)=\kappa$, by \autoref{thm:FSit} we get that $\Cbb_{\lambda+\kappa}$ adds the required $J^\kappa$. 
Note that any (maximal) ideal extending $J^\kappa$ also satisfies the conclusion (by \autoref{KBomegabar}).
\end{proof}

Using sums of ideals, we can obtain from the previous theorem that, after adding many Cohen reals, there are non-meager ideals $K$ satisfying $\non(\Ncal^*_K)<\non(\Ncal_K)$ and $\cov(\Ncal_K)<\cov(\Ncal^*_K)$ (\autoref{cor:CohenI+J}). Before proving this, we calculate the cardinal characteristics associated with some operations of ideals.

\begin{lemma}\label{lem:intid}
  Let $\Icl$ and $\Jcl$ be ideals on an infinite set $X$. Then:
  \begin{enumerate}[label= \rm (\alph*)]
      \item\label{it:addIcapJ} $\min\{\add(\Icl),\add(\Jcl)\} \leq \add(\Icl\cap\Jcl)$ and $\cof(\Icl\cap\Jcl) \leq \max\{\cof(\Icl),\cof(\Jcl)\}$.
      \item $\non(\Icl\cap\Jcl)=\min\{\non(\Icl), \non(\Jcl)\}$ and $\cov(\Icl\cap\Jcl) = \max\{\cov(\Icl),\cov(\Jcl)\}$.
  \end{enumerate}
  For the following items, assume that $\Icl\cup\Jcl$ generates an ideal $\Kcl$.
  \begin{enumerate}[resume*]
      \item\label{it:gencof} $\min\{\add(\Icl),\add(\Jcl)\} \leq \add(\Kcl)$ and $\cof(\Kcl) \leq \max\{\cof(\Icl),\cof(\Jcl)\}$.
      \item\label{it:gencov} $\max\{\non(\Icl),\non(\Jcl)\} \leq \non(\Kcl)$ and $\cov(\Kcl) \leq \min\{\cov(\Icl),\cov(\Jcl)\}$.
  \end{enumerate}
\end{lemma}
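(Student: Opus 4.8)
The plan is to treat the four items through two structural observations and then read off the uniformity and covering statements from monotonicity while arguing the additivity and cofinality statements by hand. First I would record that $\Icl\cap\Jcl$ is again an ideal on $X$ (it is hereditary, closed under finite unions because both $\Icl$ and $\Jcl$ are, contains $[X]^{<\aleph_0}$, and omits $X$), and that, when $\Icl\cup\Jcl$ generates the ideal $\Kcl$, every member of $\Kcl$ lies below a set of the form $A\cup B$ with $A\in\Icl$ and $B\in\Jcl$: a finite union of members of $\Icl\cup\Jcl$ can be regrouped as $(\bigcup_i A_i)\cup(\bigcup_j B_j)$ with $\bigcup_i A_i\in\Icl$ and $\bigcup_j B_j\in\Jcl$. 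These two facts drive all the estimates, and throughout I would use that every cardinal in play is infinite, so the products $\cof(\Icl)\cdot\cof(\Jcl)$ and $\cov(\Icl)\cdot\cov(\Jcl)$ collapse to their maxima.

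For~\ref{it:gencov} I would simply invoke \autoref{corcovsub}: since $\Icl\subseteq\Kcl$ and $\Jcl\subseteq\Kcl$, we get $\cov(\Kcl)\leq\cov(\Icl)$, $\cov(\Kcl)\leq\cov(\Jcl)$, $\non(\Icl)\leq\non(\Kcl)$ and $\non(\Jcl)\leq\non(\Kcl)$, which are precisely the displayed inequalities. The uniformity half of item~(b) needs no monotonicity at all: a set $A$ fails to lie in $\Icl\cap\Jcl$ exactly when $A\notin\Icl$ or $A\notin\Jcl$, so the least size of such an $A$ is $\min\{\non(\Icl),\non(\Jcl)\}$. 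For the covering half of item~(b), \autoref{corcovsub} applied to $\Icl\cap\Jcl\subseteq\Icl,\Jcl$ gives $\max\{\cov(\Icl),\cov(\Jcl)\}\leq\cov(\Icl\cap\Jcl)$; for the reverse I would take coverings $C_\Icl\subseteq\Icl$ and $C_\Jcl\subseteq\Jcl$ of $X$ of minimal size and note that $\set{A\cap B}{A\in C_\Icl,\ B\in C_\Jcl}\subseteq\Icl\cap\Jcl$ still covers $X$ (any $x$ lies in some $A$ and some $B$), giving $\cov(\Icl\cap\Jcl)\leq\cov(\Icl)\cdot\cov(\Jcl)=\max\{\cov(\Icl),\cov(\Jcl)\}$.

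The additivity and cofinality statements in~\ref{it:addIcapJ} and~\ref{it:gencof} follow the same template. For $\add(\Icl\cap\Jcl)$ I would take $F\subseteq\Icl\cap\Jcl$ of size strictly below $\min\{\add(\Icl),\add(\Jcl)\}$; then $\bigcup F\in\Icl$ and $\bigcup F\in\Jcl$ separately, so $\bigcup F\in\Icl\cap\Jcl$. For $\add(\Kcl)$ I would take such an $F\subseteq\Kcl$, pick $A_C\in\Icl$ and $B_C\in\Jcl$ with $C\subseteq A_C\cup B_C$ for each $C\in F$, and use $\bigcup_{C}A_C\in\Icl$ together with $\bigcup_{C}B_C\in\Jcl$ to conclude $\bigcup F\subseteq(\bigcup_C A_C)\cup(\bigcup_C B_C)\in\Kcl$. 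Dually, from cofinal families $D_\Icl$ and $D_\Jcl$ of minimal size I would form $\set{A\cap B}{A\in D_\Icl,\ B\in D_\Jcl}$ to witness $\cof(\Icl\cap\Jcl)$ and $\set{A\cup B}{A\in D_\Icl,\ B\in D_\Jcl}$ to witness $\cof(\Kcl)$, each of size $\leq\cof(\Icl)\cdot\cof(\Jcl)=\max\{\cof(\Icl),\cof(\Jcl)\}$. None of these steps is genuinely hard; the only points requiring care are the structural description of $\Kcl$ (so that cofinal and additive control of $\Icl$ and $\Jcl$ transfers to $\Kcl$) and the routine remark that the relevant cardinals are infinite, so the two products above equal their maxima.
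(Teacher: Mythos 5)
Your proof is correct and, at bottom, takes the same route as the paper: the paper phrases everything through Tukey connections with the product relational system $\Icl\times\Jcl$ (together with \autoref{corcovsub} for the monotone directions), and your explicit decompositions $C\mapsto A_C\cup B_C$, the families $\set{A\cap B}{A\in D_\Icl,\ B\in D_\Jcl}$ and $\set{A\cup B}{A\in D_\Icl,\ B\in D_\Jcl}$, and the observation that the products of infinite cardinals collapse to maxima are exactly those Tukey connections and the identities $\bfrak(\Rbf\times\Rbf')=\min\{\bfrak(\Rbf),\bfrak(\Rbf')\}$ and $\dfrak(\Rbf\times\Rbf')\leq\dfrak(\Rbf)\cdot\dfrak(\Rbf')$ unpacked by hand. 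The only substantive point you rightly flag and the paper leaves implicit is that $\cov$ and $\cof$ of an ideal on an infinite set containing all finite sets are infinite, so no gap either way.
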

\begin{proof}
We use the product of relational systems to shorten this proof. If $\Rbf=\la X,Y,R\ra$ and $\Rbf'=\la X',Y',R'\ra$ are relational systems, we define $\Rbf\times\Rbf':=\la X\times X', Y\times Y', R^\times\ra$ with the relation $(x,x')\, R^\times\, (y,y')$ iff $x\, R\, y$ and $x'\, R'\, y'$. Recall that $\bfrak(\Rbf\times\Rbf')=\min\{\bfrak(\Rbf),\bfrak(\Rbf')\}$ and $\max\{\dfrak(\Rbf),\dfrak(\Rbf')\}\leq \dfrak(\Rbf\times\Rbf')\leq \dfrak(\Rbf)\cdot \dfrak(\Rbf')$ (so equality holds when some $\dfrak$-number is infinite), see e.g.~\cite[Sec.~4]{blassbook}.

As relational systems, it is easy to show that $\Icl\cap\Jcl \leqT \Icl\times\Jcl$ and $\Cbf_{\Icl\cap\Jcl}\leqT \Cbf_\Icl\times \Cbf_\Jcl$, which implies~\ref{it:addIcapJ}, $\min\{\non(\Icl), \non(\Jcl)\}\leq \non(\Icl\cap\Jcl)$ and $\cov(\Icl\cap\Jcl) \leq \max\{\cov(\Icl),\cov(\Jcl)\}$. The converse inequality for the uniformity and the covering follows by \autoref{corcovsub}, as well as~\ref{it:gencov}.

We can also show that $\Kcl\leqT \Icl\times\Jcl$, %and $\Cbf_\Kcl\leqT (\Cbf_\Icl^\perp \times\Cbf_\Jcl^\perp)^\perp$,
which implies~\ref{it:gencof}. %and~\ref{it:gencov}. We show the latter Tukey connection. Note that $\Rbf_0:=(\Cbf_\Icl^\perp \times\Cbf_\Jcl^\perp)^\perp = \la X\times X, \Icl\times \Jcl, R_0\ra$ were $(x,y)\, R_0\, (A,B)$ iff either $x\in A$ or $y\in B$. So define $F\colon X\to X\times X$ by $F(x):=(x,x)$, and $G\colon \Icl\times\Jcl \to \Kcl$ by $G(A,B):= A\cup B$. Therefore, $(F,G)$ is the desired Tukey connection.
\end{proof}

\begin{corollary}[of \autoref{cohenthm}]\label{cor:CohenI+J}
The poset $\Cbb_\lambda$ forces that, for any regular $\aleph_1\leq\kappa_1\leq\kappa_2\leq\lambda$, there is some non-meager ideal $K$ %(e.g.\ $J^{\kappa_1}\oplus J^{\kappa_2}$) 
such that 
\begin{linenomath}
\begin{align*}
\add(\Ncal^*_K)= \add(\Ncal_K) = \non(\Ncal^*_K) = \cov(\Ncal_K) & = \kappa_1,\\
\cof(\Ncal^*_K)= \cof(\Ncal_K) = \cov(\Ncal^*_K) = \non(\Ncal_K) & = \kappa_2.
\end{align*}
\end{linenomath}
\end{corollary}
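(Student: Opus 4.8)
The plan is to read off all eight invariants of $\Ncal_K$ and $\Ncal^*_K$ for $K=J^{\kappa_1}\oplus J^{\kappa_2}$ directly from the structural results already in place, never returning to the forcing itself. First I would apply \autoref{cohenthm} to obtain, in the $\Cbb_\lambda$-extension, maximal ideals $J^{\kappa_1},J^{\kappa_2}$ on $\omega$ with $\add(\Ncal_{J^{\kappa_i}})=\cof(\Ncal_{J^{\kappa_i}})=\kappa_i$. Since $J^{\kappa_i}$ is maximal, $\Ncal_{J^{\kappa_i}}=\Ncal^*_{J^{\kappa_i}}$; and as these are $\sigma$-ideals (\autoref{NJsigma2}), the chains $\add\leq\cov\leq\cof$ and $\add\leq\non\leq\cof$ together with $\add=\cof=\kappa_i$ force \emph{all four} invariants $\add,\cov,\non,\cof$ of $\Ncal_{J^{\kappa_i}}$ to equal $\kappa_i$. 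To form the sum I transport $J^{\kappa_1},J^{\kappa_2}$ to bijective copies on a fixed partition $\omega=\N_1\cup\N_2$; by \autoref{RBeq}\,\ref{it:underRB} this changes neither the associated ideals nor their invariants.

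Next I would translate the two targets into operations on the $\Ncal_{J^{\kappa_i}}$. By \autoref{exm:sum}, $\Ncal^*_K=\Ncal^*_{J^{\kappa_1}}\cap\Ncal^*_{J^{\kappa_2}}$, while $\Ncal_{J^{\kappa_1}}\cup\Ncal_{J^{\kappa_2}}$ generates the ideal $\Ncal_K$; so \autoref{lem:intid} applies with $\Icl=\Ncal_{J^{\kappa_1}}$, $\Jcl=\Ncal_{J^{\kappa_2}}$ and $\Kcl=\Ncal_K$. For the intersection, part (b) yields the exact equalities $\non(\Ncal^*_K)=\min\{\kappa_1,\kappa_2\}=\kappa_1$ and $\cov(\Ncal^*_K)=\max\{\kappa_1,\kappa_2\}=\kappa_2$, while part (a) gives $\add(\Ncal^*_K)\geq\kappa_1$ and $\cof(\Ncal^*_K)\leq\kappa_2$; combining with $\add(\Ncal^*_K)\leq\non(\Ncal^*_K)=\kappa_1$ and $\cof(\Ncal^*_K)\geq\cov(\Ncal^*_K)=\kappa_2$ pins down $\add(\Ncal^*_K)=\kappa_1$ and $\cof(\Ncal^*_K)=\kappa_2$. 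For the generated ideal, parts (c) and (d) give $\add(\Ncal_K)\geq\kappa_1$, $\cof(\Ncal_K)\leq\kappa_2$, $\cov(\Ncal_K)\leq\kappa_1$ and $\non(\Ncal_K)\geq\kappa_2$; then the chains $\kappa_1\leq\add(\Ncal_K)\leq\cov(\Ncal_K)\leq\kappa_1$ and $\kappa_2\leq\non(\Ncal_K)\leq\cof(\Ncal_K)\leq\kappa_2$ collapse to the four required equalities. This is the entire numerical content, and it amounts to pure bookkeeping once the inputs are fixed.

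The one point needing a separate argument is that $K$ is non-meager, which I would handle exactly as in \autoref{noncoh:sum}: each maximal $J^{\kappa_i}$ is non-meager, and under $\pts(\omega)\cong{}^{\N_1}2\times{}^{\N_2}2$ the ideal $K$ becomes the product of the two transported ideals, so it is non-meager by the Kuratowski--Ulam theorem (a product of two non-meager sets is non-meager). I do not anticipate a genuine obstacle here, since all the depth sits in \autoref{cohenthm} and \autoref{lem:intid}; the only thing demanding care is keeping the inequality directions of \autoref{lem:intid} straight---the intersection $\Ncal^*_K$ and the generated ideal $\Ncal_K$ behave oppositely for $\add/\cov$ and for $\non/\cof$---and consistently using $\kappa_1\leq\kappa_2$ to evaluate each $\min$ and $\max$.
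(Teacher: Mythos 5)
Your proposal is correct and follows essentially the same route as the paper: invoke \autoref{cohenthm} to get maximal ideals $J^{\kappa_i}$ whose four invariants all collapse to $\kappa_i$, identify $\Ncal^*_K$ and $\Ncal_K$ via \autoref{exm:sum}, and then squeeze all eight values using \autoref{lem:intid} together with $\add\leq\cov,\non\leq\cof$. Your explicit Kuratowski--Ulam check of non-meagerness is a detail the paper leaves implicit (it is the argument of \autoref{noncoh:sum}), but it is the intended justification.
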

\begin{proof}
In the $\Cbb_\lambda$-generic extension, let $J^\kappa$ be a maximal ideal as in \autoref{cohenthm}. We show that $K:= J^{\kappa_1}\oplus J^{\kappa_2}$ is the required ideal. By \autoref{exm:sum}, $\Ncal^*_K=\Ncal^*_{J^{\kappa_1}}\cap \Ncal^*_{J^{\kappa_2}}$ and $\Ncal_K$ is the ideal generated by $\Ncal^*_{J^{\kappa_1}}\cup \Ncal^*_{J^{\kappa_2}}$, so by \autoref{lem:intid} we can perform the following calculations:
\[\begin{split}
    \kappa_1 & =\min\{\add(\Ncal^*_{J^{\kappa_1}}),\add(\Ncal^*_{J^{\kappa_2}})\} \leq \add(\Ncal^*_K)\\
    & \leq \non(\Ncal^*_K)=\min\{\non(\Ncal^*_{J^{\kappa_1}}),\non(\Ncal^*_{J^{\kappa_2}})\}=\kappa_1;\\[1ex]
    \kappa_1 & =\min\{\add(\Ncal_{J^{\kappa_1}}),\add(\Ncal_{J^{\kappa_2}})\} \leq \add(\Ncal_K)\\
    &\leq \cov(\Ncal_K) \leq \min\{\cov(\Ncal_{J^{\kappa_1}}),\cov(\Ncal_{J^{\kappa_2}})\}=\kappa_1;\\[1ex]
    \kappa_2 & =\max\{\cov(\Ncal^*_{J^{\kappa_1}}),\cov(\Ncal^*_{J^{\kappa_2}})\} = \cov(\Ncal^*_K)\\
    & \leq \cof(\Ncal^*_K) \leq \max\{\cof(\Ncal^*_{J^{\kappa_1}}),\cof(\Ncal^*_{J^{\kappa_2}})\}=\kappa_2;\\[1ex]
    \kappa_2 & =\max\{\non(\Ncal_{J^{\kappa_1}}),\non(\Ncal_{J^{\kappa_2}})\} \leq \non(\Ncal_K)\\
    & \leq \cof(\Ncal_K) \leq \max\{\cof(\Ncal_{J^{\kappa_1}}),\cof(\Ncal_{J^{\kappa_2}})\}=\kappa_2.\qedhere
\end{split}\]
\end{proof}

We do not know how to force that there is some non-meager ideal $K$ such that $\add(\Ncal^*_K)\neq \add(\Ncal_K)$, likewise for the cofinality.

Using \autoref{cohenthm} and well-known forcing models, we can show that ZFC cannot prove more inequalities of the cardinal characteristics associated with our new ideals with the classical cardinal characteristics of the continuum of \autoref{fig:all20}, %(see details in \red{ref}), 
but leaving some few open questions. We skip most of the details in the following items, but the reader can refer to the definition of the cardinal characteristics and their inequalities in~\cite{BJ,blassbook,BHH}, and learn the forcing techniques from e.g.
~\cite{Br,BJ,blassbook,Mmatrix,mejia2}.

\begin{figure}
\begin{tikzpicture}%[scale=0.9]
\small{
 \node (aleph1) at (-1.5,-1) {$\aleph_1$};
 \node (addn) at (0,0){$\add(\Ncal)$};
 \node (adNJ) at (1,2) {$\add(\Ncal_J)$};
 \node (adNJ*) at (3,3) {$\add(\Ncal^*_J)$};
 \node (covn) at (0,10){$\cov(\Ncal)$};
 \node (cvNJ) at (1,8) {$\cov(\Ncal_J)$};
 \node (cvNJ*) at (3,7) {$\cov(\Ncal^*_J)$};
 \node (cove) at (5,6) {$\cov(\Ecal)$};
 \node (b) at (4,5) {$\bfrak$};
 \node (addm) at (4,0) {$\add(\Mcal)$} ;
 \node (nonm) at (4,10) {$\non(\Mcal)$} ;
 \node (none) at (7,4) {$\non(\Ecal)$};
 \node (d) at (8,5) {$\dfrak$};
 \node (covm) at (8,0) {$\cov(\Mcal)$} ;
 \node (cfm) at (8,10) {$\cof(\Mcal)$} ;
 \node (nonn) at (12,0) {$\non(\Ncal)$} ;
 \node (nnNJ) at (11,2) {$\non(\Ncal_J)$};
 \node (nnNJ*) at (9,3) {$\non(\Ncal^*_J)$};
 \node (cfn) at (12,10) {$\cof(\Ncal)$} ;
 \node (cfNJ) at (11,8) {$\cof(\Ncal_J)$};
 \node (cfNJ*) at  (9,7){$\cof(\Ncal^*_J)$};
 \node (s) at (-1,3) {$\sfrak$};
 \node (r) at (13,7) {$\rfrak$};
 \node (c) at (13.5,10) {$\cfrak$};
 \node (m) at (-1.5,0) {$\mathfrak m$};
 \node (p) at (-1.25,1) {$\mathfrak p$};
 \node (h) at (-1.125,2) {$\mathfrak h$};
 \node (g) at (-1.5,3.5) {$\mathfrak g$};
 \node (e) at (3,1) {$\mathfrak e$};
 \node (a) at (13.5,6.5) {$\mathfrak a$};
 \node (u) at (13.25,8.5) {$\mathfrak u$};
 \node (i) at (12.75,9) {$\mathfrak i$};

\foreach \from/\to in {
m/addn, cfn/c, aleph1/m, m/p, p/h, h/s, h/g, r/u, u/c,
h/b, p/addm, g/d, p/e, addn/e, e/none, e/covm,
b/a, a/c, cfm/i, r/i, i/c,
s/none, s/d, b/r, cove/r,
adNJ/nnNJ, adNJ*/nnNJ*, cvNJ/cfNJ, cvNJ*/cfNJ*,
addm/b, b/nonm, b/d, addm/covm, covm/d, nonm/cfm, d/cfm,
addm/none, cove/cfm, 
covm/cove, none/nonm,
addn/adNJ, addn/adNJ*,
covn/cvNJ, cvNJ/cvNJ*, cvNJ*/cove,
addn/covn, adNJ/cvNJ, adNJ*/cvNJ*,
addn/addm, covn/nonm, covm/nonn, cfm/cfn, nonn/cfn,
adNJ/covm, adNJ*/covm,
none/nnNJ*, nnNJ*/nnNJ, nnNJ/nonn,
nnNJ/cfNJ, nnNJ*/cfNJ*,
cfNJ*/cfn, cfNJ/cfn,
nonm/cfNJ*, nonm/cfNJ}
{
\path[-,draw=white,line width=3pt] (\from) edge (\to);
\path[->,] (\from) edge (\to);
}
}
\end{tikzpicture}
\caption{Cicho\'n's diagram and the Blass  diagram combined, also including the coverings and uniformities of our new ideals. An arrow $\mathfrak x\rightarrow \mathfrak y$ means that ZFC proves $\mathfrak x\le \mathfrak y$.}\label{fig:all20}
\end{figure}
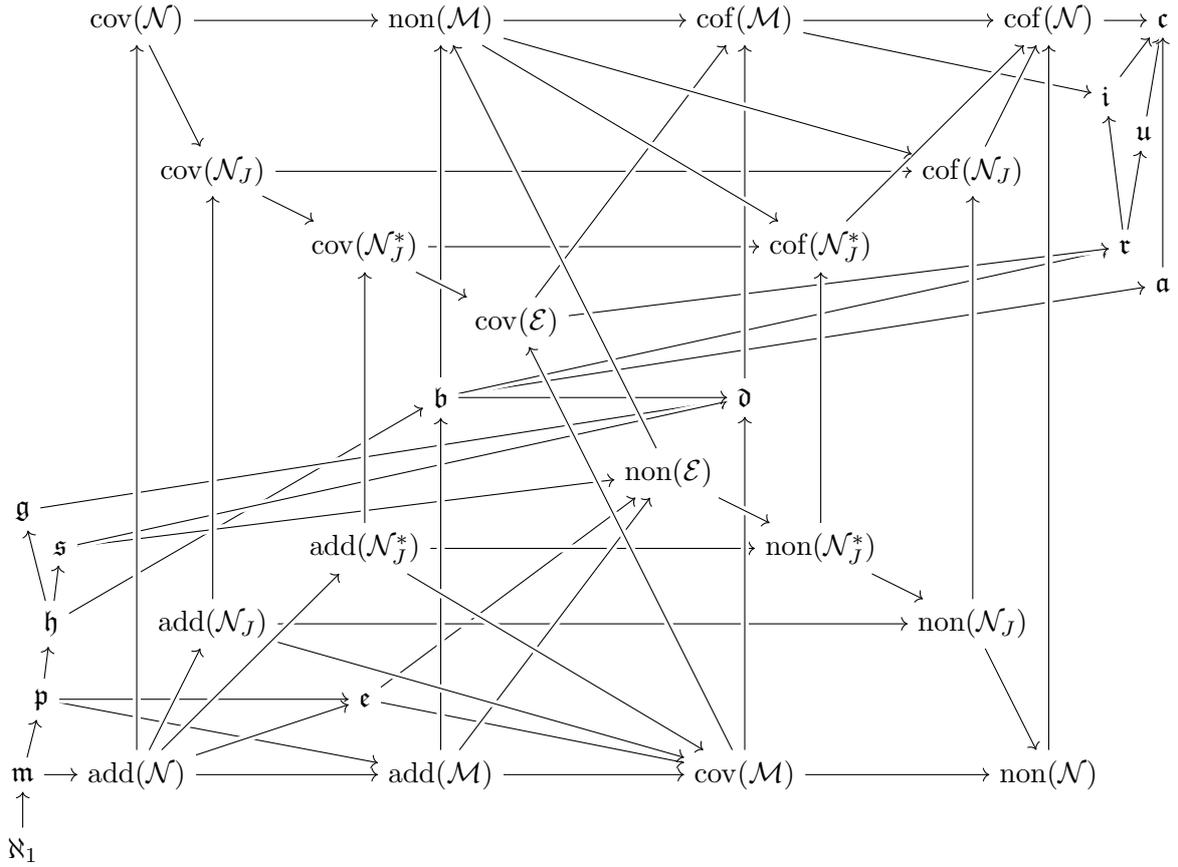

\begin{enumerate}[label=\rm (M\arabic*)]
    \item\label{modelCohen} Using $\lambda>\aleph_2$, $\Cbb_\lambda$ forces that there is some (maximal) ideal $J$ on $\omega$ such that $\non(\Mcal)=\gfrak =\afrak=\aleph_1< \add(\Ncal_J)=\cof(\Ncal_J) =\add(\Ncal^*_J) =\cof(\Ncal^*_J)<\cov(\Mcal)$ (see \autoref{cohenthm}).
    
    \item We can iterate the Hechler poset, followed by a large random algebra, to force $\non(\Ncal)=\aleph_1<\bfrak=\dfrak=\afrak<\cov(\Ncal)=\cfrak$. In this generic extension, we obtain $\add(\Ncal_J)= \add(\Ncal^*_J) = \non(\Ncal_J)= \non(\Ncal^*_J)=\gfrak =\aleph_1<\bfrak=\dfrak=\afrak<\cov(\Ncal_J)= \cov(\Ncal^*_J) = \cof(\Ncal_J)= \cof(\Ncal^*_J)=\cfrak$ for any ideal $J$ on $\omega$. This idea to force with a random algebra after some other FS iteration of ccc posets is original from Brendle, but some details can be found in~\cite[Sec.~5]{collapsing}.
    
    \item\label{M:Miller} In the Miller model, $\non(\Mcal)=\non(\Ncal)=\ufrak=\afrak=\aleph_1<\gfrak=\cfrak=\aleph_2$, so \ref{NCF} follows. Hence $\add(\Ncal_J)= \add(\Ncal^*_J) = \non(\Ncal_J)= \non(\Ncal^*_J)= \cov(\Ncal_J) = \cov(\Ncal^*_J)=\aleph_1 < \gfrak=\cfrak=\aleph_2$ for any ideal $J$ on $\omega$, e.g., since ZFC proves $\cov(\Ecal)\leq \rfrak\leq \ufrak$, $\cov(\Ecal)=\aleph_1$ in the Miller model. Although $\cof(\Ncal_J)$ and $\cof(\Ncal^*_J)$ are $\aleph_2$ when $J$ has the Baire property, we do not know what happens when $J$ does not have the Baire property (or just in the case of maximal ideals).
    
    \item In the Mathias model, $\cov(\Ecal)=\aleph_1<\hfrak=\cfrak=\aleph_2$ (the first equality due to the Laver property). Hence $\add(\Ncal_J)= \add(\Ncal^*_J)= \cov(\Ncal_J) = \cov(\Ncal^*_J)=\aleph_1 < \non(\Ncal_J)= \non(\Ncal^*_J)= \cof(\Ncal_J) = \cof(\Ncal^*_J)= \aleph_2$ (notice that $\hfrak\leq \sfrak\leq \non(\Ecal)$).
    
    \item\label{C:Hechler} Assume $\kappa$ and $\lambda$ cardinals such that $\aleph_1\leq \kappa\leq\lambda=\lambda^{\aleph_0}$. The FS iteration of Hechler forcing of length $\lambda\kappa$ (ordinal product) forces $\efrak= \gfrak=\sfrak =\aleph_1$, $\add(\Mcal)=\cof(\Mcal)=\kappa$ and $\non(\Ncal)=\rfrak=\cfrak=\lambda$. Thanks to \autoref{thm:FSit}, 
    there is a (maximal) ideal $J^\kappa$ on $\omega$ such that $\add(\Ncal_{J^\kappa})=\add(\Ncal^*_{J^\kappa})=\cof(\Ncal^*_{J^\kappa})= \cof(\Ncal_{J^\kappa})=\kappa$. In general, we can just say that the additivities and coverings of $\Ncal_J$ and $\Ncal^*_J$ are below $\kappa$, and that their uniformities and cofinalities are above $\kappa$ for any ideal $J$ on $\omega$.
    
    Since the Hechler poset makes the set of reals from the ground model meager, we have that any ideal $J_0$ on $\omega$ with a set of generators in the ground model (or in any intermediate extension) is meager in the final extension, so $\cov(\Ncal_{J_0})=\aleph_1$ and $\non(\Ncal_{J_0})=\cfrak=\lambda$. We still do not know how to obtain a maximal (or just non-meager) ideal $J'$ in the final extension such that $\cov(\Ncal_{J'})=\aleph_1$ and $\non(\Ncal_{J'})=\lambda$. %\red{Question: if an ideal has a meager set of generators, is the ideal itself meager?}
    
    \item\label{M:sigmac} With $\kappa$ and $\lambda$ as in~\ref{C:Hechler}, there is a FS iteration of length $\lambda\kappa$ of $\sigma$-centered posets forcing $\cov(\Ncal)=\aleph_1$, $\pfrak=\ufrak=\afrak=\ifrak=\kappa$ and $\non(\Ncal)=\cfrak=\lambda$. For example, by counting arguments, we make sure to force with all $\sigma$-centered posets of size ${<}\kappa$, while adding witnesses of $\ufrak$, $\afrak$ and $\ifrak$ of size $\kappa$ using a cofinal subset of $\lambda\kappa$ of size $\kappa$ (for $\ufrak$, use Mathias-Prikry forcing with ultrafilters, and for $\afrak$ and $\ifrak$ use Hechler-type posets as in \cite{Hechlermad}, all of which are $\sigma$-centered).    
    We have exactly the same situation of the previous model for the cardinal characteristics associated with $\Ncal_J$ and $\Ncal^*_J$.
    
    \item Over a model of CH, the countable support iteration of length $\omega_2$ of the tree forcing from~\cite[Lem.~2]{brendle99} forces $\dfrak= \cov(\Ecal)=\aleph_1<\non(\Ecal)=\aleph_2$. Concretely, this forcing is proper, $\baire$-bounding, and does not add random reals (see also~\cite[Sec.~7.3B]{BJ}), hence $\cov(\Ecal)\leq\max\{\dfrak,\cov(\Ncal)\}=\aleph_1$ (see \autoref{thm:E}). In this generic extension, all additivites and coverings of $\Ncal_J$ and $\Ncal^*_J$ are $\aleph_1$, while the uniformities and cofinalities are $\aleph_2$.
    
    \item Brendle's model~\cite{brendle02} of $\cof(\Ncal)<\afrak$ clearly satisfies that the cardinal characteristics associated with $\Ncal_J$ and $\Ncal^*_J$ are strictly smaller than $\afrak$ for every ideal $J$ on $\omega$.
\end{enumerate}

It is not clear from the above whether $\gfrak \leq \cof(\Ncal_J)$ in ZFC for all $J$, and whether we can force $\cov(\Ncal_J)<\pfrak$ and $\non(\Ncal_J)>\max\{\ufrak,\ifrak\}$ for some non-meager $J$.

\section{Some variations}\label{sec:var}

\subsection{Measure zero modulo ideals with respect to other Polish spaces}\label{subsec:Polish}

Our notion of $\Ncal_J$ and $\Ncal^*_J$ has been studied for the Cantor space $\cantor$, but how about other Polish spaces (with a measure)? We look at the same notions for other classical spaces like the real line $\R$, the unit interval $[0,1]$, and product spaces of the form $\prod b$ for some sequence $b=\la b(n):\, n<\omega\ra$ of finite discrete spaces such that $\set{n<\omega}{|b(n)|>1}$ is infinite.

We start with $\R$. Let $\Omega_{\R}$ be the collection of open subsets of $\R$ that can be expressed as a finite union of open intervals and, for any ideal $J$ on $\omega$ and $\bar c\in {}^\omega \pts(\R)$, define
\[\begin{split}
N^\R_J(\bar c) := \set{x\in \R}{ \set{n<\omega}{x\in c_n}\notin J},\\
N^{*\R}_J(\bar c) := \set{x\in \R}{ \set{n<\omega}{x\notin c_n}\in J}.
\end{split}\]
The upper index $\R$ can be omitted when we know we are working in $\R$.

We can characterize, as in \autoref{chomegabar}, the $\bar c\in {}^\omega \Omega_{\R}$ such that $N_\Fin(\bar c)$ has Lebesgue measure zero. Denote the collection of such $\bar c$ by $\omegabar_\R$
and define $\Ncal_J(\R)$ and $\Ncal^*_J(\R)$ in the natural way. As in \autoref{basicNE}, we obtain that $\Ncal_\Fin(\R)$ is $\Ncal(\R)$, the collection of measure zero subsets of $\R$. 

It is possible to restrict $\omegabar_\R$ to any (countable) base $B$ of open intervals, without affecting the resulting $\Ncal_J(\R)$ and $\Ncal^*_J(\R)$. Let $\omegabar_B$ be the collection of $\bar d\in\omegabar_\R$ such that each $d_n$ is a finite union of members of $B$. If $\bar c\in\omegabar_\R$, then only finitely many $c_n$ are unbounded, for which we define $d_n:=\emptyset$; for each bounded $c_n$, we can find some $d_n$ such that $c_n\subseteq d_n$, $d_n\smallsetminus c_n$ has measure ${<}2^{-n}$, and $d_n$ is a finite union of members of $B$. Then $\bar d\in\omegabar_B$, and it is clear that $N_J(\bar c)\subseteq N_J(\bar d)$ and $N^*_J(\bar c)\subseteq N^*_J(\bar d)$.

Also, as in \autoref{charE},

\begin{lemma}\label{charE:R}
  The collection $\Ncal^*_\Fin(\R)$ coincides with $\Ecal(\R)$, the ideal generated by the $F_\sigma$ measure zero subsets of $\R$.
\end{lemma}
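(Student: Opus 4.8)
The plan is to mirror the proof of \autoref{charE} (and the underlying \autoref{basicE}), proving the two inclusions separately, with the one genuinely new difficulty being that closed subsets of $\R$ need not be compact. Throughout I use the $\R$-analog of \autoref{chomegabar}, which gives in particular that $\bar c\in\omegabar_\R$ implies $\leb{\bigcup_{i\geq N}c_i}\to 0$ as $N\to\infty$ (so that all but finitely many $c_n$ are bounded and $\leb{c_n}\to 0$), and that $\sum_n\leb{c_n}<\infty$ is sufficient for $\bar c\in\omegabar_\R$ via the Borel--Cantelli lemma, since $N(\bar c)=\limsup_n c_n$.

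For the inclusion $\Ncal^*_\Fin(\R)\subseteq\Ecal(\R)$, fix $\bar c\in\omegabar_\R$; since $\Ecal(\R)$ is a downward closed $\sigma$-ideal it suffices to show that $N^*_\Fin(\bar c)=\bigcup_m\bigcap_{n\geq m}c_n\in\Ecal(\R)$. Each $c_n$ is a finite union of open intervals, so its closure $\overline{c_n}$ is a finite union of closed intervals with $\leb{\overline{c_n}}=\leb{c_n}$. Putting $B_m:=\bigcap_{n\geq m}\overline{c_n}$ gives a closed set with $\leb{B_m}\leq\inf_{n\geq m}\leb{c_n}=0$ because $\leb{c_n}\to 0$. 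Then $N^*_\Fin(\bar c)\subseteq\bigcup_m B_m$ is contained in a countable union of closed null sets, hence lies in $\Ecal(\R)$.

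For the reverse inclusion $\Ecal(\R)\subseteq\Ncal^*_\Fin(\R)$, let $X\in\Ecal(\R)$, so $X\subseteq\bigcup_n F_n$ for an increasing sequence of closed null sets. The obstacle here is that the $F_n$ need not be compact, so the compactness step of \autoref{basicE} does not apply directly. I resolve this by a $\sigma$-compact exhaustion: setting $K_j:=F_j\cap[-j,j]$ produces compact null sets, still increasing, with $\bigcup_j K_j=\bigcup_n F_n\supseteq X$ (any $x\in F_n$ lies in $K_j$ for every $j\geq\max\{n,\lceil|x|\rceil\}$). Now I mimic the compact case: for each $j$ choose an open $U_j\supseteq K_j$ with $\leb{U_j}<2^{-j}$, and use compactness of $K_j$ to extract a finite union $c_j\in\Omega_\R$ of the component intervals of $U_j$ with $K_j\subseteq c_j\subseteq U_j$. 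Then $\sum_j\leb{c_j}<\infty$ yields $\bar c\in\omegabar_\R$, and since the $K_j$ increase to a set containing $X$, each $x\in X$ lies in $c_j$ for all but finitely many $j$, i.e.\ $x\in N^*_\Fin(\bar c)$. Thus $X\subseteq N^*_\Fin(\bar c)$, witnessing $X\in\Ncal^*_\Fin(\R)$.

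The only new point beyond \autoref{charE} is the passage from closed null sets to compact null sets via the exhaustion $K_j=F_j\cap[-j,j]$; everything else is a routine transcription of the Cantor-space arguments, using closures to turn the open sets $c_n$ into closed sets in the first inclusion, and the Borel--Cantelli criterion in place of the summability condition of \autoref{basicN2}.
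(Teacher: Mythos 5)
Your proof is correct and takes essentially the same route as the paper's: for the forward inclusion you pass to the closures of the $c_n$ and use that their tail measures vanish, and for the reverse inclusion you reduce to compact null sets via a bounded exhaustion (the paper's ``without loss of generality each $F_n$ is bounded, hence compact'') and then run the finite-subcover argument of \autoref{basicE}. The only cosmetic differences are that you package the first inclusion through the sets $B_m=\bigcap_{n\geq m}\overline{c_n}$ and invoke Borel--Cantelli where the paper cites the interval-partition criterion of \autoref{chomegabar}; these are the same ideas.
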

\begin{proof}
If $\bar c\in\omegabar_\R$ then $\bar c':=\la \cl(c_n):\, n<\omega\ra$ (where $\cl(A)$ denotes the topological closure of $A$) still satisfies that the measure of $\bigcup_{n\geq m}\cl(c_n)$ tends to $0$ as $m\to\infty$, so $N^{*\R}_\Fin(\bar c)\subseteq N^{*\R}_\Fin(\bar c')$ and, clearly, $N^{*\R}_\Fin(\bar c')\in \Ecal(\R)$, so $N^{*\R}_\Fin(\bar c)\in \Ecal(\R)$. This shows $\Ncal^*_\Fin(\R)\subseteq\Ecal(\R)$.

For the converse contention, assume that $X=\bigcup_{n<\omega} F_n$ where each $F_n$ is a closed measure zero subset of $\R$. Without loss of generality, we can assume that the sequence $\la F_n:\, n<\omega\ra$ is increasing and that each $F_n$ is bounded (hence, compact). As in the proof of \autoref{basicE}, if we fix a sequence $\la \varepsilon_n:\, n<\omega\ra$ of positive real numbers such that $\sum_{i<\omega}\varepsilon_i<\infty$, we can cover each $F_n$ with some $c_n\in \Omega_\R$ of measure ${<}\varepsilon_n$. Then $\bar c\in\omegabar_\R$ and $X\subseteq N^*_\Fin(\bar c)$.
\end{proof}

As in \autoref{NJsigma2}, both $\Ncal_J(\R)$ and $\Ncal^*_J(\R)$ are $\sigma$-ideals on $\R$. Although all the results of this work can be verified for these ideals, these follow by the connection between the ideals and those ideals corresponding to the Cantor space. Before showing this connection, let us look at the case of other Polish spaces.

Consider the closed interval $[a,b]$ with $a<b$ in $\R$. For any base $B$ of open intervals with endpoints in $[a,b]$ (including some intervals of the form $[a,x)$ and $(y,b]$, and even allowing $[a,b]$ itself), we define $\omegabar_B$ as before, as well as $N^{[a,b]}_J(\bar c)$ and $N^{*[a,b]}_J(\bar c)$ (for any $\bar c\in {}^\omega \pts([a,b])$. It is easy to check that
\[
\begin{split}
    \set{X\subseteq [a,b]}{\exists\, \bar c\in \omegabar_B\colon X\subseteq N^{[a,b]}_J(\bar c)} & = \set{Y\cap [a,b]}{Y\in \Ncal_J(\R)},\\
    \set{X\subseteq [a,b]}{\exists\, \bar c\in \omegabar_B\colon X\subseteq N^{*[a,b]}_J(\bar c)} & = \set{Y\cap [a,b]}{Y\in \Ncal^*_J(\R)},
\end{split}
\]
which we denote by $\Ncal_J([a,b])$ and $\Ncal^*_J([a,b])$, respectively. It is clear that these are $\sigma$-ideals.

Now let $b=\la b(n):\, n<\omega\ra$ be a sequence of finite discrete spaces such that $|b(n)|>1$ for infinitely many $n$, and consider the perfect compact Polish space $\prod b$ (notice that the Cantor space is a particular case). The clopen sets $[s]_b:=\set{x\in\prod b}{s\subseteq x}$ for $s\in\sqw b:=\bigcup_{n<\omega}\prod_{i<n}b(i)$ form a base of $\prod b$, and every clopen subset of $\prod b$ is a finite union of basic clopen sets. The Lebesgue measure $\mu_b$ on $\prod b$ is the unique measure on (the completion of) the Borel $\sigma$-algebra such that $\mu_b([s]_b)=\prod_{i<|n|}\frac{1}{|b(i)|}$ for each $s\in\sqw b$.

Let $\Omega_b$ be the collection of clopen subsets of $\prod b$, and define $N^b_J(\bar c)$ and $N^{*b}_J(\bar c)$ as usual for $\bar c\in {}^\omega \pts(\prod b)$. Let $\omegabar_b$ be the collection of $\bar c\in {}^\omega \Omega_b$ such that $\mu_b(N_\Fin^b(\bar c))=0$, and define $\Ncal_J(\prod b)$ and $\Ncal^*_J(\prod b)$ in the natural way. As in the case of the Cantor space, we can verify that these are $\sigma$-ideals, $\Ncal_\Fin(\prod b)$ is the ideal of measure zero subsets of $\prod b$, and $\Ncal^*_\Fin(\prod b)$ is the ideal generated by the $F_\sigma$ measure zero subsets of $\prod b$.

We show the connection between $\prod b$ and $[0,1]$.
Without loss of generality, assume that each $b(n)$ is a natural number. Consider the map $F_b\colon \prod b\to[0,1]$ defined by
\[F_b(x):=\sum_{i<\omega}\frac{x_i}{b(0)\, b(1)\, \cdots\, b(i)}.\]
This is a continuous onto map, and actually an homeomorphism modulo some countable sets. Concretely, let $\Q_b$ be the set of $\sum_{i<|s|}\frac{x_i}{b(0)\, b(1)\, \cdots\, b(i)}$ for $s\in\sqw b$, which is a countable dense subset of $[0,1]$. Each member of $\Q_b\smallsetminus\{0,1\}$ has two pre-images under $F_b$, while each member of $\{0,1\}\cup((0,1)\smallsetminus \Q_b)$ has exactly one pre-image. Letting $\Q'_b:=F_b^{-1}[\Q_b]$, we obtain that $F_b\frestr(\prod b\smallsetminus \Q'_b)$ is a homeomorphism from $\prod b\smallsetminus \Q'_b$ onto $[0,1]\smallsetminus \Q_b$. Note that $\Q'_b$ is the collection of sequences $x\in \prod b$ such that, for some $m<\omega$, either $(\forall\, i\geq m)\ x_i=0$ or $(\forall\, i\geq m)\  x_i=b(i)-1$.

The $F_b$ gives a connection between the ideals $\Ncal_J$ and $\Ncal^*_J$ for $\prod b$ and $[0,1]$. Using the base on $[0,1]$ of open intervals with extremes in $\Q_b$, we can show that, for any $X\subseteq \prod b$ and $Y\subseteq [0,1]$,
\[
\begin{split}
    X \in \Ncal_J\left(\prod b\right) & \text{ iff } F_b[X] \in \Ncal_J([0,1]),\\
    Y \in \Ncal_J([0,1]) & \text{ iff } F_b^{-1}[Y] \in \Ncal_J\left(\prod b\right),
\end{split}
\]
and likewise for $\Ncal^*_J$. Since the Cantor space is a particular case of $\prod b$, the map $F_{b_2}$ for $b_2(n)=2$ can be used to transfer all the results of this work from the Cantor space to $[0,1]$, and $F_b$ can be used to transfer the results from $[0,1]$ to $\prod b$. For example, the map $F_b$ allows to get the Tukey equivalences $\Ncal_J(\prod b)\eqT \Ncal_J([0,1])$ and $\Cbf_{\Ncal_J(\prod b)} \eqT \Cbf_{\Ncal_J([0,1])}$, and likewise for $\Ncal^*_J$, so the cardinal characteristics associated with $\Ncal_J$ and $\Ncal^*_J$ do not depend on the Polish space we are working on.

Finally, we get a connection between $\R$ and $[0,1]$ as follows. Consider the maps\footnote{We allow $\mathrm{id}_A\colon A\to B$ for any $B\supseteq A$ according to our needs, e.g.\ we use $\mathrm{id}_{[0,1]}\colon [0,1]\to \R$.}

\begin{center}
\begin{tabular}{ll}
   $\mathrm{id}_{A}$ & defined as the identity map on a set $A$,\\[1ex]
   $\pi_{[0,1]}\colon \R\to[0,1]$  &  defined by $\pi_{[0,1]}(x):=x-\lfloor x\rfloor$,\\[1ex]
   $\hat{\pi}_{[0,1]}\colon \pts(\R)\to \pts([0,1])$  &  defined by $\hat{\pi}_{[0,1]}(X):= \pi_{[0,1]}[X]\cup\{1\}$, and\\[1ex]
   $F^\R\colon \pts([0,1])\to \pts(\R)$ &  defined by $F^\R(X):=\bigcup_{n\in \Z}(n+X)$. 
\end{tabular}
\end{center}

These maps can be used to show that $\Ncal_J(\R)\eqT \Ncal_J([0,1])$ and $\Cbf_{\Ncal_J(\R)}\eqT \Cbf_{\Ncal_J([0,1])}$, in detail: note that $\hat{\pi}_{[0,1]}$ sends sets in $\Ncal_J(\R)$ to $\Ncal_J([0,1])$, and $F^\R$ sends sets in $\Ncal_J([0,1])$ to $\Ncal_J(\R)$, therefore,

\begin{center}
\begin{tabular}{ll}
    $(\hat{\pi}_{[0,1]}\frestr\Ncal_J(\R),F^\R\frestr \Ncal_J(\R))$  &  is a Tukey connection for $\Ncal_J(\R)\leqT \Ncal_J([0,1])$;\\[1ex]
    $(\pi_{[0,1]},F^\R\frestr \Ncal_J(\R))$ & corresponds to $\Cbf_{\Ncal_J(\R)}\leqT \Cbf_{\Ncal_J([0,1])}$;\\[1ex]
    $(\mathrm{id}_{\Ncal_J([0,1])},\hat{\pi}_{[0,1]}\frestr\Ncal_J(\R))$ &  corresponds to $\Ncal_J([0,1])\leqT \Ncal_J(\R)$; and\\[1ex]
    $(\mathrm{id}_{[0,1]},\hat{\pi}_{[0,1]}\frestr\Ncal_J(\R))$ & corresponds to $\Cbf_{\Ncal_J([0,1])}\leqT \Cbf_{\Ncal_J(\R)}$.
\end{tabular}
\end{center}

\subsection{Alternative definition}

We explore one alternative weaker definition of $\Ncal_J$ and $\Ncal^*_J$. This is developed in detail by the first author in~\cite{Vthesis}, which we summarize here and compare with the notions defined in this paper. We work in the Cantor space $\cantor$ (but results can be transferred to other Polish spaces by using the connections presented in the previous subsection). Consider
\[\lone{\Omega}:=\set{\bar c\in {}^\omega \Omega}{\sum_{n<\omega}\mu(c_n) < \infty}.\]
We define our ideals with respect to $\lone{\Omega}$, that is,
\begin{equation}
\begin{split}
    \Ncal^-_J & :=\set{X\subseteq\cantor}{\exists\bar c\in \lone{\Omega}\colon X\subseteq N_J(\bar c)}\\
    \Ncal^{-*}_J & :=\set{X\subseteq \cantor}{\exists\bar c\in \lone{\Omega}\colon X\subseteq N^*_J(\bar c)}.
\end{split}
\end{equation}

% \rewrite{
% The following statement is a~technical lemma that we will use for proving several interesting results. It presents a~specific subset of $\omegabar$.  
% \begin{lemma} Consider the~following set $\lone{\Omega}=\set{\bar{c}\in {}^\omega\Omega}{\sum\limits_{n\in\omega}\leb{c_n}<\infty}$. Then
%   $\lone{\Omega}\subseteq\omegabar$.
% \end{lemma}

% \begin{proof}
% Let $\bar{c}\in\lone{\Omega}$ and define $n_i$ such that $\sum\limits_{n\geq n_i}\leb{c_n}<2^i$. Considering sequence $\bar{c^*}$ such that $c^*_n = \emptyset$ for $n<n_0$ and $c^*_n=\bigcup\limits_{i\in[n_l,n_{l+1})}c_i$ for $n\in [n_l,n_{l+1})$. Now it is easily to verify that $\bar{c^*}\in\omegabar$. Moreover, $N(\bar{c})\subseteq N(\bar{c^*})$. Thus we can conclude that $\bar{c^*}\in \omegabar$. 
% \end{proof}

% }

It is clear that $\Ncal^-_J\subseteq \Ncal_J$ and $\Ncal^{-*}_J\subseteq \Ncal^*_J$, but we do not know whether they can be equal for non-meager ideals. 
Most of the proofs of this work can be easily modified to obtain similar results for these collections.
By \autoref{basicN2} and \autoref{basicE}, we have that $\Ncal^-_\Fin=\Ncal$ and $\Ncal^{-*}_\Fin = \Ecal$, and the proof of \autoref{NJsigma2} can be modified to conclude that $\Ncal^-_J$ and $\Ncal^{-*}_J$ are $\sigma$-ideals.

The behaviour of sum of ideals presented in \autoref{exm:sum} can be repeated in this new context. However, \autoref{RBeq} cannot be exactly repeated because, whenever $\bar c\in \lone{\Omega}$, although the $\bar c'$ defined in the proof is in $\lone{\Omega}$, we cannot guarantee that $\bar c^-$ is in $\lone{\Omega}$ ($\bar c^-$ could repeat some $c_n$'s too many times so that the sum of the measures can diverge). Therefore, we can only state the following:

\begin{theorem}\label{RBeq-}
  Let $\Jcal$ and $\Kcal$ be ideals on $\omega$.
  \begin{enumerate}[label=\rm(\alph*)]
      \item\label{it:underKB-} If $\Kcal\leqKB \Jcal$ then  $\Ncal^-_\Jcal\subseteq\Ncal^-_\Kcal$.
      \item\label{it:underKB'-} If $K\leqKBpr J$ then $\Ncal^{-*}_K\subseteq \Ncal^{-*}_J$.
      \item\label{it:underRB-} If $\Kcal\leqRB \Jcal$ then $\Ncal^{-*}_\Jcal\subseteq\Ncal^{-*}_\Kcal$ and $\Ncal^-_\Jcal\subseteq \Ncal^-_\Kcal$.
  \end{enumerate}
\end{theorem}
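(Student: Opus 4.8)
The plan is to retrace the proof of \autoref{RBeq}, keeping careful track of which of the two auxiliary sequences built there stays inside $\lone{\Omega}$. I would fix a finite-to-one $f\colon\omega\to\omega$, set $I_n:=f^{-1}[\{n\}]$, and for $\bar c\in\lone{\Omega}$ form $\bar c'$ with $c'_n:=\bigcup_{k\in I_n}c_k$ and $\bar c^-$ with $c^-_k:=c_{f(k)}$, exactly as in that proof. The one genuinely new point is the asymmetric behaviour of these two operations with respect to $\lone{\Omega}$: since
\[\sum_{n<\omega}\mu(c'_n)\le \sum_{n<\omega}\sum_{k\in I_n}\mu(c_k)=\sum_{k<\omega}\mu(c_k)<\infty,\]
we get $\bar c'\in\lone{\Omega}$; but $\sum_{k<\omega}\mu(c^-_k)=\sum_{n<\omega}|I_n|\,\mu(c_n)$ may diverge (the map $f$ can repeat a given $c_n$ on the whole fibre $I_n$), so $\bar c^-$ need not lie in $\lone{\Omega}$. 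This is precisely the obstacle already flagged before the statement, and it is the only real obstacle: it is what forces us to drop some of the implications of \autoref{RBeq}, retaining only those witnessed by $\bar c'$.

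Next I would recall that the set-theoretic inclusions
\[K\subseteq f^\to(J)\ \Rightarrow\ N_J(\bar c)\subseteq N_K(\bar c'),\qquad f^\to(K)\subseteq J\ \Rightarrow\ N^*_K(\bar c)\subseteq N^*_J(\bar c')\]
are exactly items (i) and (ii) from the proof of \autoref{RBeq}; they are assertions about the clopen sets $c_n$ alone and never use $\bar c\in\omegabar$, so they hold verbatim here. Item \ref{it:underKB-} follows immediately: if $K\leqKB J$ is witnessed by $f$ with $K\subseteq f^\to(J)$, and $X\subseteq N_J(\bar c)$ with $\bar c\in\lone{\Omega}$, then $X\subseteq N_K(\bar c')$ with $\bar c'\in\lone{\Omega}$, so $X\in\Ncal^-_K$. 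Item \ref{it:underKB'-} is the symmetric statement: if $K\leqKBpr J$ is witnessed by $f$ with $f^\to(K)\subseteq J$, and $X\subseteq N^*_K(\bar c)$ with $\bar c\in\lone{\Omega}$, then $X\subseteq N^*_J(\bar c')$ with $\bar c'\in\lone{\Omega}$, hence $X\in\Ncal^{-*}_J$. Both arguments use only the $\bar c'$-inclusions; the complementary $\bar c^-$-inclusions of \autoref{RBeq}—which would have yielded $\Ncal^{-*}_K\subseteq\Ncal^{-*}_J$ from $K\leqKB J$ and $\Ncal^-_J\subseteq\Ncal^-_K$ from $K\leqKBpr J$—are exactly the ones lost to the obstacle above, which is why the theorem states only these two directions.

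Finally, for item \ref{it:underRB-} I would not argue directly but combine the first two parts, using the recorded implications that $K\leqRB J$ yields both $K\leqKB J$ and $J\leqKBpr K$. The first, through \ref{it:underKB-}, gives $\Ncal^-_J\subseteq\Ncal^-_K$; the second, through \ref{it:underKB'-} applied with the roles of $J$ and $K$ interchanged, gives $\Ncal^{-*}_J\subseteq\Ncal^{-*}_K$. Since every ingredient is either a routine $\lone{\Omega}$-bookkeeping check or a verbatim inclusion from \autoref{RBeq}, I expect the write-up to be short, with the measure estimate on $\bar c'$ versus $\bar c^-$ being the only substantive line.
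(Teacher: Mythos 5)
Your proposal is correct and follows exactly the route the paper intends: the paper leaves \autoref{RBeq-} without a written proof, justifying it only by the remark that in the proof of \autoref{RBeq} the sequence $\bar c'$ stays in $\lone{\Omega}$ while $\bar c^-$ need not, and your write-up simply makes that argument explicit, including the correct identification of which halves of \autoref{RBeq} survive and the derivation of~\ref{it:underRB-} from~\ref{it:underKB-} and~\ref{it:underKB'-} via $K\leqKB J$ and $J\leqKBpr K$.
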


Thanks to this theorem, we still have that $J\subseteq K$ implies $\Ncal^{-*}_J\subseteq \Ncal^{-*}_K\subseteq \Ncal^-_K\subseteq \Ncal^-_J$. Moreover:

\begin{theorem}\label{Baire-}
    If $J$ is an ideal on $\omega$ with the Baire property, then $\Ncal^-_J=\Ncal$ and $\Ncal^{-*}_J=\Ecal$.
\end{theorem}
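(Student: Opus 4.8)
My plan is to run the proof of \autoref{BaireForNJ2} through \autoref{RBeq-} in place of \autoref{RBeq}, and then to repair the single inclusion where the ``minus'' ideals lose the reverse $\leqRB$\nobreakdash-monotonicity. First I would invoke \autoref{thm:talagrand}: since $J$ has the Baire property, $\Fin\leqRB J$, so I fix a finite-to-one $\varphi\colon\omega\to\omega$ with $\varphi^\to(J)=\Fin$, that is, for every $A\subseteq\omega$ we have $A$ finite iff $\varphi^{-1}(A)\in J$. Write $I_n:=\varphi^{-1}(\{n\})$ for its (finite) fibres.

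Three of the four required containments are then immediate from the order-theoretic results already proved. From $\Fin\subseteq J$ and the monotonicity recorded right after \autoref{RBeq-} one gets $\Ecal=\Ncal^{-*}_\Fin\subseteq\Ncal^{-*}_J$ and $\Ncal^-_J\subseteq\Ncal^-_\Fin=\Ncal$. From $\Fin\leqRB J$ and \autoref{RBeq-}~\ref{it:underRB-} one gets $\Ncal^{-*}_J\subseteq\Ncal^{-*}_\Fin=\Ecal$. Together these already yield $\Ncal^{-*}_J=\Ecal$ together with the inclusion $\Ncal^-_J\subseteq\Ncal$.

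The only nontrivial point, and the step I expect to be the main obstacle, is the reverse inclusion $\Ncal\subseteq\Ncal^-_J$. Here \autoref{RBeq-} is of no help, since it only provides $\Ncal^-_J\subseteq\Ncal^-_\Fin$: the transfer $\bar c\mapsto\bar c^-$ with $c^-_k:=c_{\varphi(k)}$ that would give the opposite direction can destroy membership in $\lone{\Omega}$, as it repeats each $c_n$ exactly $|I_n|$ times. I would circumvent this by exploiting the freedom in the covering sequence. Given $X\in\Ncal$, choose a summable $\varepsilon\colon\omega\to(0,\infty)$ with $\sum_{n<\omega}|I_n|\,\varepsilon_n<\infty$ (for instance $\varepsilon_n:=\frac{1}{(|I_n|+1)(n+1)^2}$, which makes both $\sum_n\varepsilon_n$ and $\sum_n|I_n|\varepsilon_n$ dominated by $\sum_n (n+1)^{-2}$), and use \autoref{basicN2} to pick $\bar d\in\Omega^*_\varepsilon\subseteq\lone{\Omega}$ with $X\subseteq N(\bar d)=N_{\Fin}(\bar d)$. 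Now define $\bar c$ by $c_k:=d_{\varphi(k)}$. Grouping over the fibres gives $\sum_{k<\omega}\mu(c_k)=\sum_{n<\omega}|I_n|\,\mu(d_n)\le\sum_{n<\omega}|I_n|\,\varepsilon_n<\infty$, so $\bar c\in\lone{\Omega}$. Moreover, for any $x$ one has $\set{k<\omega}{x\in c_k}=\varphi^{-1}\bigl(\set{n<\omega}{x\in d_n}\bigr)$, so whenever $\set{n<\omega}{x\in d_n}$ is infinite the property $\varphi^\to(J)=\Fin$ forces this preimage not to lie in $J$, i.e.\ $x\in N_J(\bar c)$. Hence $X\subseteq N_{\Fin}(\bar d)\subseteq N_J(\bar c)$ with $\bar c\in\lone{\Omega}$, giving $X\in\Ncal^-_J$.

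The conceptual difficulty is therefore not in the combinatorics of $\varphi$ but in recognizing that the failure of reverse $\leqRB$-monotonicity for the $\ell^1$\nobreakdash-ideals can be repaired by \emph{prescribing} a fast enough decay of the measures $\mu(d_n)$ to absorb the multiplicities $|I_n|$ introduced by the fibres of $\varphi$. Once the summability $\sum_n|I_n|\varepsilon_n<\infty$ is arranged, the verifications that $\bar c\in\lone{\Omega}$ and that $N_{\Fin}(\bar d)\subseteq N_J(\bar c)$ are routine, and the four containments combine to give $\Ncal^-_J=\Ncal$ and $\Ncal^{-*}_J=\Ecal$.
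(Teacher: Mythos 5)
Your proposal is correct and follows essentially the same route as the paper: both reduce to the single nontrivial inclusion $\Ncal\subseteq\Ncal^-_J$ via the monotonicity results, fix a finite-to-one $\varphi$ witnessing $\Fin\leqRB J$ (Talagrand), and repair the $\ell^1$-failure of the pullback $c_k:=d_{\varphi(k)}$ by prescribing $\varepsilon_n$ small enough to absorb the fibre sizes $|\varphi^{-1}(n)|$ (the paper uses $\varepsilon(n)=\tfrac{1}{2^n(|f^{-1}(n)|+1)}$, you use $\tfrac{1}{(|I_n|+1)(n+1)^2}$, an immaterial difference). The verification that $N(\bar d)\subseteq N_J(\bar c)$ via $\varphi^{\to}(J)=\Fin$ is also identical to the paper's.
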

\begin{proof}
    Since $\Fin\leqKB J$, by \autoref{RBeq-}
    it is clear that $\Ncal^-_J\subseteq \Ncal$ and $\Ncal^{-*}_J\subseteq\Ecal$, and $\Ecal=\Ncal_\Fin^{-*}\subseteq \Ncal_J^{-*}$ because $\Fin\subseteq J$, so it remains to show that $\Ncal\subseteq \Ncal^-_J$.
    %It is enough to show that $\mathcal{N}\subseteq\NidealJ{\mathcal{J}}$.
    There is a finite-to-one function $f\in{}^\omega\omega$ such that
    $a\in\fin$ if and only if $f^{-1}[a]\in J$. Define
    \[\varepsilon(n):=\frac{1}{2^n(|f^{-1}(n)|+1)}.\]

Assume $X\in\mathcal{N}$, so by \autoref{basicN2} there is some $\bar{c}\in\Omega^*_\varepsilon$ such that $X\subseteq N(\bar{c})$. Let $c^-_k:= c_{f(k)}$ for $k<\omega$. Then $N(\bar{c})\subseteq N_J(\bar{c}^-)$ because, for any $x\in N(\bar{c})$,
\[
f^{-1}[\set{n\in\omega}{x\in c_n}]=\set{k\in\omega}{x\in c_{f(k)}}\in J^+.
\]
Finally, we have that $\bar{c}^-\in\lone{\Omega}$ because
 \[
 \sum_{k\in\omega}\leb{c^-_k}=\sum_{k\in\omega}\leb{c_{f(k)}}=\sum_{n\in\omega}|f^{-1}(n)|\cdot\leb{c_n}\leq\sum_{n\in\omega}|f^{-1}(n)|\cdot\varepsilon(n)\leq\sum_{n\in\omega}2^{-n}.
 \]
 Clearly, $X\subseteq N_J(\bar c')$, so $X\in\Ncal^-_J$.
%
% In a similar way, $\Ecal\subseteq\Ncal^{-*}_J$ follows: if $X\in\Ecal$ then, by \autoref{basicE}, $X\subseteq N^*(\bar c)$ for some $\bar c\in\Omega^*_\varepsilon$. Define $\bar c^-$ as before, so $\bar c^-\in\ell^1(\Omega)$ and $N^*(\bar c)\subseteq N^*_J(\bar c^-)$.
\end{proof}

However, we lose some results due to the asymmetry in \autoref{RBeq-}. 
The results in \autoref{sec:nBP} are valid in this new context, but we can not repeat everything from \autoref{sec:near} in this new context. We lose part of \autoref{lem:coh}, namely, we cannot guarantee that, whenever $J_0$ and $J_1$ are nearly coherent ideals on $\omega$, there is some ideal $K$ such that $\Ncal^-_K\subseteq \Ncal^-_{J_0}\cap \Ncal^-_{J_1}$ (but we can still say that $\Ncal^{-*}_{J_0}\cup \Ncal^{-*}_{J_1}\subseteq \Ncal^{-*}_K$). As a consequence, \autoref{cor:NCuf} and~\ref{cor:NCFNuf} cannot be guaranteed in this new context, e.g., we cannot claim anymore that \ref{NCF} implies that there is a single $\Ncal^-_J$ for maximal $J$.

The results using the nearly coherence game can be obtained in the new context, in particular, \autoref{notnear2} and~\autoref{cor:nnc-NE} still hold. However, in \autoref{nc-char},~\ref{it:aida} is reduced to $\Ncal^*_J\cup \Ncal^*_K\subseteq \Ncal^*_{K'}$, and it becomes unclear whether this reduced~\ref{it:aida} and~\ref{it:subset} are still equivalent to nearly coherence of $J$ and $K$ (we can just say that~\ref{it:subset} implies nearly coherence, and that nearly coherence implies the reduced~\ref{it:aida}). \autoref{noncoh:sum} is valid.

For the associated cardinal characteristics as discussed in \autoref{sec:cardinalityNJ}, the discussion about the covering and the uniformity is exactly the same, and \autoref{addN-covM} still holds. However, the proof of the latter and the discussion about $\la \omegabar,\subseteq^J\ra$ changes a lot. In this case, we must look at the relational system $\lone{\Omega,J}:=\la \lone{\Omega},\lone{\Omega},\subseteq^J\ra$ and to their associated cardinal characteristics $\bfrak_J(\lone{\Omega}):=\bfrak(\lone{\Omega,J})$ and $\dfrak_J(\lone{\Omega}):=\dfrak(\lone{\Omega,J})$. Although $\lone{\Omega,J}\leqT \Ncal^-_J, \Ncal^{-*}_J$ can be shown as in \autoref{omegabarvsNJ}, we cannot guarantee monotonicity with the orders $\leqKB$, $\leqKBpr$ and $\leqRB$ as in \autoref{KBomegabar}. We only have that $\lone{\Omega,J}\leqT \lone{\Omega,K}$ whenever $K\subseteq J$.

We can still prove that $\bfrak_\Fin(\lone{\Omega})=\add(\Ncal)$ and $\dfrak_\Fin(\lone{\Omega})=\cof(\Ncal)$ by some modification of the proof of \autoref{omegabarFin}, and thus deriving the result in \autoref{addN-covM}, namely, the additivities of $\Ncal^-_J$ and $\Ncal^{-*}_J$ are between $\add(\Ncal)$ and $\cov(\Mcal)$, and that their cofinalities are between $\non(\Mcal)$ and $\cof(\Ncal)$.

The consistency results in \autoref{sec:cons} can be verified for $\Ncal^-_J$, $\Ncal^{-*}_J$, and for $\bfrak_J(\lone{\Omega})$ and $\dfrak_J(\lone{\Omega})$ in place of $\bfrak_J(\omegabar)$ and $\dfrak_J(\omegabar)$, respectively. The point is to change the condition \eqref{eq:cohencond} for the Cohen poset by
\[
\sum_{i=n^p_k}^{n^p_{k+1}-1}\mu(c^p_i)<2^{-k}
\]
to guarantee that the Cohen generic $\bar e$ is in $\lone{\Omega}$.

\section{Open questions}\label{sec:Q}

We address here several open questions of this work.

\begin{question}
   Does some converse of the statements in \autoref{RBeq} hold in ZFC?
\end{question}

Concerning nearly coherence:

\begin{question}\label{Q:NCF}
  Does \ref{NCF} imply that there is only one $\Ncal_J$ for non-meager $J$?
\end{question}

%A positive answer to the previous question implies 
We may also ask whether (under \ref{NCF}) $\Ncal^*_J=\Ncal_J$ for any non-meager $J$. In contrast, we ask:

\begin{question}\label{Q:N*noN}
Can we prove in $\mathrm{ZFC}$ that there is a non-meager ideal $J$ on $\omega$ such that $\Ncal^*_J\neq \Ncal_J$?
\end{question}

Recall that we produced such an example in \autoref{noncoh:sum} under the assumption that there is a pair of non-nearly coherent ideals on $\omega$. The answer to the previous questions would expand our knowledge about the difference between $\Ncal_J$ and $\Ncal_K$ for different $J$ and $K$.

Concerning the cardinal characteristics, we showed that it is possible to have $\non(\Ncal^*_K)<\non(\Ncal_K)$ and $\cov(\Ncal_K)<\cov(\Ncal^*_K)$ for some non-meager ideal $K$ (in Cohen model, see \autoref{cor:CohenI+J}). However, we do not know what happens to the additivities and the cofinalities.
%do not know whether $\add(\Ncal_J)$ and $\add(\Ncal^*_J)$ are consistently different, neither whether ZFC proves some inequality between them. The same applies to the cofinalities.

\begin{question}\label{Q:addcof}
   Is it consistent that $\add(\Ncal_J)$ and $\add(\Ncal^*_J)$ are different for some non-meager ideal $J$ on $\omega$? The same is asked for the cofinalities.
\end{question}

\begin{question}\label{Q:addadd*}
   Does $\mathrm{ZFC}$ prove some inequality between $\add(\Ncal_J)$ and $\add(\Ncal^*_J)$? The same is asked for the cofinalities.
\end{question}

The second author~\cite{Mmatrix} has constructed a forcing model where the four cardinal characteristics associated with $\Ncal$ are pairwise different. Cardona~\cite{Cfriendly} has produced a similar model for $\Ecal$. In this context, we ask:

\begin{question}\label{fourNJ}
   Is it consistent with $\mathrm{ZFC}$ that, for some non-meager (or maximal) ideal $J$ on $\omega$, the four cardinal characteristics associated with $\Ncal_J$ are pairwise different?
\end{question}

In relation with the cardinal characteristics in \autoref{fig:all20}, to have a complete answer that no other inequality can be proved for the cardinal characteristics associated with our ideals, it remains to solve:

\begin{question}\label{Q:g}
   Does $\mathrm{ZFC}$ prove $\gfrak\leq\cof(\Ncal_J)$ for any ideal $J$ on $\omega$?
\end{question}

\begin{question}\label{Q:cov}
   Is it consistent that $\cov(\Ncal_J)<\pfrak$ for some maximal ideal $J$?
\end{question}

\begin{question}\label{Q:non}
   Is it consistent that $\max\{\ufrak,\ifrak\}<\non(\Ncal_J)$ for some maximal ideal $J$?
\end{question}

Concerning the structure of our ideals, we ask:

\begin{question}
   What is the intersection of all $\Ncal_J$? What is the union of all $\Ncal^*_J$?
\end{question}

Note that such intersection is $\Ncal_{\min}$ and the union is $\Ncal^*_{\max}$, where
\[\begin{split}
    \Ncal_{\min} & :=\bigcap\set{\Ncal_J}{J \text{ maximal ideal}},\\
    \Ncal^*_{\max} & :=\bigcup\set{\Ncal_J}{J \text{ maximal ideal}}.
\end{split}
\]
According to \autoref{cor:NCFNuf}, under~\ref{NCF}, $\Ncal_{\min}=\Ncal^*_{\max}$. It is curious what would these families be when allowing non-nearly coherent ideals.

We finish this paper with a brief discussion about strong measure zero sets.
Denote by $\SNcal$ the ideal of strong measure zero subsets of $\cantor$. We know that $\SNcal\subseteq \Ncal$ and $\Ecal\nsubseteq \SNcal$ (because perfect subsets of $\cantor$ cannot be in $\SNcal$). Under Borel's conjecture, we have $\SNcal\subseteq\Ecal$, however, $\cov(\SNcal)<\cov(\Ecal)$ holds in Cohen's model~\cite{P90} (see also~\cite[Sec.~8.4A]{BJ}), which implies $\SNcal\nsubseteq \Ecal$. This motivates to ask:

\begin{question}
   Does ZFC prove that there is some non-meager ideal $J$ such that $\SNcal\subseteq \Ncal_J$, or even $\SNcal\subseteq \Ncal^*_J$?
\end{question}

In the model from~\autoref{cohenthm} this can not happen for some maximal ideals because ZFC proves $\cov(\Mcal) \leq \non(\SNcal)$ (Miller~\cite{Mi}).

%%%%%%%%%%%%%%%%%  Bibliography  %%%%%%%%%%%%%%%%%%%%%%%%%

\bibliography{biblio}
\bibliographystyle{alpha}

\end{document}

\begin{figure}[h]
\centering
\begin{tikzpicture}[scale=1]
\small{
\node (x) at (-2,0) {$\aleph_1$};
\node (a) at (0, 0) {$\add(\mathcal{N})$};
\node (b) at (4, 0) {$\add(\NstaridealJ{\Jcal})$};
\node (bbb) at (2, -1.5) {$\add(\NidealJ{\Jcal})$};
\node (c) at (6, 0) {$\non(\NstaridealJ{\Jcal})$};
\node (d) at (8, 0) {$\non(\NidealJ{\Jcal})$};
\node (aaa) at (6, -1.5) {$\mathfrak{s}$};
\node (e) at (10, 0) {$\non(\mathcal{N})$};
\node (aa) at (0, 2) {$\cov(\mathcal{N})$};
\node (bb) at (2, 2) {$\cov(\NidealJ{\Jcal})$};
\node (cc) at (4, 2) {$\cov(\NstaridealJ{\Jcal})$};
\node (ccc) at (4, 3.5) {$\mathfrak{r}$};
\node (dd) at (6, 2) {$\cof(\NstaridealJ{\Jcal})$};
\node (ddd) at (8, 3.5) {$\cof(\NidealJ{\Jcal})$};
\node (ee) at (10, 2) {$\cof(\mathcal{N})$};
\node (g) at (12,2) {$\mathfrak{c}$};
}

\foreach \from/\to in {x/a,a/b,b/c,c/d,d/e,aa/bb,bb/cc,cc/dd,ee/g,
a/aa,a/bbb,bb/ddd,b/cc,c/dd,e/ee,d/ddd,ddd/ee, aaa/c}
\draw [->] (\from) -- (\to);
\foreach \from/\to in {bbb/bb,bbb/d, dd/ee,cc/ccc} \draw [line width=.15cm,
white] (\from) -- (\to);
\foreach \from/\to in {bbb/bb,bbb/d, dd/ee,cc/ccc} \draw [->] (\from) -- (\to);
\end{tikzpicture}
\end{figure}

\vs{
I guess we can add the implications from $\add(\mathcal{N}_\Jcal)$ to $\add(\mathcal{N}^*_\Jcal)$ as well as from $\cov(\mathcal{N}^*_\Jcal)$ to $\cov(\mathcal{N}_\Jcal)$, but not seen the proof or argument right now.} \dm{Good question. The diagram would be nice if this is true, but I can't see an argument yet.} \vs{for me this is a pretty nice picture :D :D :D} \dm{Indeed, the figure is beautiful.}

Note that if we consider an~ideal $\Jcal$ on~$\omega$ which has the~Baire property then by \autoref{BaireForNJ} the~mentioned cardinal invariants of $\mathcal{N}_\Jcal$ equal to original ones.

\begin{figure}[H]
\centering
\setlength{\unitlength}{1.5mm}
\begin{picture}(76,25)
\put(23,1.5){\makebox(0,0){\footnotesize ?}}
\put(24,21.5){\makebox(0,0){\footnotesize ?}}
\put(50,1.5){\makebox(0,0){\footnotesize ?}}
\put(53,21.5){\makebox(0,0){\footnotesize ?}}
\put(3,0){\makebox(0,0){$\aleph_1$}}
\put(17,0){\makebox(0,0){\footnotesize add($\mnula_\Jcal$)}}
\put(30.8,0){\makebox(0,0){\footnotesize $\min\{\be_\Jcal,\mathfrak{d}^*_\Jcal\}$}}
\put(45,0){\makebox(0,0){ $\mathfrak{d}^*_\Jcal$}}
\put(59,0){\makebox(0,0){\footnotesize non($\mnula_\Jcal$)}}
\put(31,10){\makebox(0,0){$\be_\Jcal$}}
\put(45,10){\makebox(0,0){$\de_\Jcal$}}
\put(17,20){\makebox(0,0){\footnotesize cov($\mnula_\Jcal$)}}
\put(30.7,20){\makebox(0,0){$\mathfrak{b}^*_\Jcal$}}
\put(45,20){\makebox(0,0){\footnotesize $\max\{\mathfrak{b}^*_\Jcal,\de_\Jcal\}$}}
\put(59,20){\makebox(0,0){\footnotesize cof($\mnula_\Jcal$)}}
\put(73,20){\makebox(0,0){$\con$}}
\put(5,0){\vector(1,0){7.5}}
\put(21,0){\vector(1,0){4}}
\put(37,0){\vector(1,0){5.5}}
\put(48,0){\vector(1,0){6.5}}
\put(22,20){\vector(1,0){6.5}}
\put(33,20){\vector(1,0){5.5}}
\put(51,20){\vector(1,0){4}}
\put(63,20){\vector(1,0){8.5}}
\put(32.5,10){\vector(1,0){11}}
\put(17,2){\vector(0,1){16}}
\put(59,2){\vector(0,1){16}}
\put(31,2){\vector(0,1){6}}
\put(45,2){\vector(0,1){6}}
\put(31,12){\vector(0,1){6}}
\put(45,12){\vector(0,1){6}}
\end{picture}
\end{figure}